\pdfoutput=1
\documentclass{ucetd}

\usepackage{subfigure,epsfig,amsfonts}
\usepackage[square]{natbib}
\usepackage{amsmath}
\usepackage{amssymb}
\usepackage{amsthm}
\usepackage{appendix}
\usepackage{lscape}
\usepackage[nottoc,notlot,notlof]{tocbibind}

\newtheorem{thm}{Theorem}[section]
\newtheorem{prop}[thm]{Proposition}
\newtheorem{lem}[thm]{Lemma}
\let\oldlem\lem
\renewcommand{\lem}{\oldlem\normalfont}
\newtheorem{cor}[thm]{Corollary}
\theoremstyle{definition}
\newtheorem{exmp}{Example}[section]

\theoremstyle{definition}

\newtheorem{assump}[thm]{Assumption}

\theoremstyle{remark}
\newtheorem{remark}[thm]{Remark}

\newcommand{\indep}{\rotatebox[origin=c]{90}{$\models$}}

\title{Exponential Series Approaches for Nonparametric Graphical Models}
\author{Eric Janofsky}
\department{Statistics}
\division{Physical Sciences}
\degree{Doctor of Philosophy}
\date{June 2015}

\dedication{\emph{To grammy}}
\epigraph{Epigraph Text}

\begin{document}
\maketitle

\makecopyright
\makededication

\tableofcontents
\listoffigures
\listoftables

\acknowledgments
First and foremost I thank my advisor John Lafferty. His depth and breadth of knowledge has had a profound influence on me since I first took his statistical machine learning elective in 2012. His assistance, patience and encouragement over the last several years has been invaluable to the completion of this thesis. I also thank my committee members Matthew Stephens and Lek-Heng Lim for their constructive input, as well as the rest of the faculty of the Department of Statistics for the outstanding education I received in the last five years, and before that during my undergraduate studies.

I thank my family for their love and support, especially my wife Liz. She has acted as a source of strength and inspiration throughout this journey, all while pursuing her own doctorate. I couldn't have done it without her.


\abstract

Markov Random Fields (MRFs) or undirected graphical models are parsimonious representations of joint probability distributions. Variables correspond to nodes of a graph, with edges between nodes corresponding to conditional dependencies. For a \emph{pairwise MRF}, the joint density factorizes as a product over edges of the graph. This thesis studies high-dimensional, continuous-valued pairwise MRFs. We are particularly interested in approximating pairwise densities whose logarithm belongs to a Sobolev space. For this problem we propose the method of exponential series \citep{crain1974estimation,barron1991approximation}, which approximates the log density by a finite-dimensional exponential family with the number of sufficient statistics increasing with the sample size.

We consider two approaches to estimating these models. The first is regularized maximum likelihood. This involves optimizing the sum of the log-likelihood of the data and a sparsity-inducing regularizer. We provide consistency and edge selection guarantees for this method. We then propose a variational approximation to the likelihood based on tree-reweighted, nonparametric message passing. This approximation allows for upper bounds on risk estimates, leverages parallelization and is scalable to densities on hundreds of nodes. We show how the regularized variational MLE may be estimated using a proximal gradient algorithm. We demonstrate our method's efficacy in density estimation and model selection in comparison to other approaches in the literature using simulated data and MEG signal data.

We then consider estimation using regularized score matching. This approach uses an alternative scoring rule to the log-likelihood, which obviates the need to compute the normalizing constant of the distribution. For general continuous-valued exponential families, we provide parameter and edge consistency results. As a special case we detail a new approach to sparse precision matrix estimation which has statistical performance competitive with the graphical lasso \citep{yuan2007model} and computational performance competitive with the state-of-the-art glasso algorithm \citep{friedman2008sparse}. We then describe results for model selection in the nonparametric pairwise model using exponential series. The regularized score matching problem is shown to be a convex program; we provide scalable algorithms based on consensus alternating direction method of multipliers (ADMM, \citep{boyd2011distributed}) and coordinate-wise descent.  We use simulations to compare our method to others in the literature as well as the aforementioned TRW estimator.

\mainmatter

\chapter{Introduction}

Density estimation is one of the fundamental tools in statistics and machine learning \citep{silverman1986density}. Nonparametric density estimators are used for prediction, goodness-of-fit testing \citep{fan1994testing,bickel1973some}, generative models for classification \citep{fix1989discriminatory,john1995estimating}, inferring independence and conditional independence, estimating statistical functionals \citep{beirlant1997nonparametric,poczos2012nonparametric}, as well as exploratory data analysis and visualization. In many fields including the biological and social sciences, information technology and machine learning, it is now commonplace to analyze large data sets with complex dependencies between many variables. This includes gene expression levels measured using microarrays, brain activity measurements from fMRI or MEG technology, prices of financial instruments, and the activity of individuals on social media web sites. The use of nonparametric density estimators is limited, however, by the curse of dimensionality: with even moderate sample size, high-dimensional space will invariably have large regions where the data is sparse, leading to uninformative predictions. As such nonparametric estimators typically have poor risk guarantees in high-dimensions. Furthermore, many methods suffer from a computational curse of dimensionality and heuristics or approximations to tune the models become necessary.

In this work we consider nonparametric estimation of \emph{pairwise densities}, a class of densities intimitely tied to undirected graphical models. The undirected graphical model or Markov Random Field \citep{jordan2004graphical,lauritzen1996graphical} is a well-studied framework for representing joint dependence structures of random variables. An undirected graph $G = (V,E)$ consists of a vertex set $V=\{1,\ldots,d\}$ corresponding to the elements of the random vector $X=(X_1,\ldots,X_d)$, and an edge set $E\subseteq V\times V$. Each edge $e\in E$ is an unordered pair of elements $j,k\in V$, $e=(j,k)$. For any subset $A\subseteq V$, we define the subset $\{ X_A : X_i, i\in A\}$. Furthermore, for sets $A,B,C$ we write $X_A \indep X_B \big| X_C$ to mean $X_A$ and $X_B$ are independent conditional on $X_C$. The random vector $X$ is \emph{Markov} with respect to the graph $G=(V,E)$ if for every $j,k\in V$, $X_j \indep X_k \big| (X_l : l \not=j,k)$ if and only if $(i,j)\not\in E$.

The fundamental theorem of undirected graphical models is the Hammersley-Clifford theorem \citep{dobruschin1968description}, which states that if the density of $X$ $p(x)$ is positive, then the following are equivalent: 

\begin{enumerate}
	\item $X$ is Markov with respect to the graph $G$;
	\item The density of $X$, $p(x)$ can be factorized over the cliques of $G$:
\begin{equation}
	p(x) = \prod_{C \in cl(G)} \psi_C(x_C), \label{hclif}
\end{equation}
where $x_C=\{x_i : i\in C\}$.
\end{enumerate}

This thesis considers nonparametric estimation of the pairwise graphical model for continuous-valued data, where the joint density can be further factored into a product of potential functions over edges:
\begin{equation}
	p(x) = \prod_{i\in V} \psi_i(x_i)\prod_{(j,k)\in E} \psi_{jk}(x_j,x_k). \label{pair}
\end{equation}
Pairwise graphical models have been used extensively in modeling discrete data. The Ising model \citep{ising1925beitrag} for $\{ 0,1 \}$-valued variables has the density
\begin{equation*}
	p(x) = \exp\left\{ \sum_{i\in V} \theta_i x_i + \sum_{(j,k)\in E} \theta_{jk}x_j x_k  - Z(\theta) \right\},
\end{equation*}
which can be seen as a pairwise graphical model with $\psi_i = \exp\{\theta_i x_i \} $ and $\psi_{jk}=\exp\{\theta_{jk} x_j x_k \}$ and $Z(\theta)$ a normalizing constant. The Ising model can be generalized to discrete variables with more than two levels, but there are a finite possibilities for pairwise discrete potentials with finite number of levels.

The class of continuous-valued pairwise models is considerably more complex than discrete ones, as the potential functions $\{\psi_i,\psi_{jk}\}$ could be any positive-valued functions such that $p$ integrates to 1. The class of continuous pairwise graphical models includes some familiar models, which we describe below.

\begin{exmp}{Gaussian graphical model}

Let $X\in\mathbb{R}^d$ be a Gaussian-distributed random variable with mean $\mu$ and covariance matrix $\mathbb{E}\big((X-\mu)(X-\mu)^\top\big)=\Sigma\succ 0$. Denote $\Omega=\Sigma^{-1}$. Then $X$ has density
\begin{eqnarray*}
	p(x) &=& \sqrt{\frac{\mid \Omega \mid}{(2\pi)^d }} 
			\exp\bigg\{-\frac{1}{2} (x-\mu)^\top \Omega (x-\mu)\bigg\}  \\
		&\propto& \prod_{i\in V} \exp\bigg\{-\frac{1}{2}\Omega_{ii} x_{i} ^2 + (\Omega \mu)_i x_i \bigg\} \notag\\
		& &	\times\prod_{(i,j)\in E} \exp\bigg\{-\frac{1}{2} \Omega_{ij} x_i x_j\bigg\}. \notag
\end{eqnarray*}
From the factorization above, it can be seen that the Gaussian graphical model is not only a Markov random field, but also belongs to the pairwise class of densities. Following from \eqref{hclif}, observing that the Gaussian density is positive over $\mathbb{R}^d$, two Gaussian variables $X_i,X_j$ are conditionally independent given the others if and only if $(\Sigma^{-1})_{ij} = 0$.
\end{exmp}

\begin{exmp}{Gaussian copula graphical model}

Let $X$ be Gaussian distributed with mean $\mu$ and covariance matrix $\Sigma$, and suppose that for each $i\in V$, $Y_i = g_i(X_i)$ where $g_i$ is some monotonic increasing, smooth function. Suppose further that the $g_i$ are centered and scaled so that $\mathbb{E}[g_i(X_i)]=\mathbb{E}[X_i]$ and $\text{var}[g_i(X_i)]=\text{var}[X_i]$. Write $f=g^{-1}$. Denote the vector of functions $f=(f_1,f_2,\ldots,f_d)$ and their derivatives by $f'=(f'_1,f'_2,\ldots f'_d)$. After applying a change of variables to the Gaussian density, we find that $Y$ has density
\begin{eqnarray*}
	p(y)&=& \sqrt{\frac{\mid \Omega \mid}{(2\pi)^d }}  
			\exp\bigg\{ -\frac{1}{2} (f(y)-\mu)^{\top} \Omega (f(y)-\mu) \bigg\}
			\prod_{i\in V} f_i'(y_i) \\
		&\propto& \prod_{i\in V} 
			f'_i(y_i)\exp\bigg\{-\frac{1}{2}\Omega_{ii} f_i(y_i) ^2 + (\Omega\mu)_i f_i(y_i) \bigg\} \\
		& &	\times\prod_{(i,j)\in E} \exp\bigg\{-\frac{1}{2} \Omega_{ij} f_i(y_i) f_j(y_j)\bigg\}.
\end{eqnarray*}

Thus the Gaussian copula density is also a Markov random field and has a pairwise factorization. Following from \eqref{hclif}, two variables $Y_i,Y_j$ are conditionally independent given the others if and only if $\Omega_{ij} = 0$. Gaussian copulas have been used extensively in finance and risk management \citep{cherubini2004copula} for their ability to model dependence between many variables which are (marginally) non-Gaussian.
\end{exmp}

\begin{exmp}{Forests}\label{forest}

A \emph{tree} $T$ is an undirected graph where each pair of vertices is connected by exactly one simple path. Equivalently, a tree is a connected graph with no cycles. A \emph{forest} is an undirected graph where each pair of vertices is connected by at no more than one simple path. Equivalently, a forest is a graph with no cycles. A \emph{spanning tree} $T$ of a connected graph $G$ is a tree containing the vertices of $G$ and a subset of the edges of $G$. A \emph{spanning forest} of a graph $G$ is a graph consisting of a spanning tree for each connected component of $G$. In the sequel will use the term spanning tree to refer to a spanning tree or forest unambiguously whether or not $G$ is connected.

A tree or forest must have cliques of size no more than two. Thus, from the Hammersley-Clifford theorem, a tree density has the factorization
\begin{equation}
	p(x) = \prod_{i\in V} \psi_i(x_i) \prod_{(i,j)\in E} \psi_{ij}(x_i,x_j).
\end{equation}
It follows that all tree distributions have pairwise densities. In particular, a tree can always be factorized in the form
\begin{equation}
	p(x) = \prod_{i \in V} p_i(x_i) \prod_{(i,j)\in E} \frac{p_{ij}(x_i,x_j)}{p_i(x_i)p_j(x_j)} \label{treefac},
\end{equation}
where $\{p_i\}$ are the set of univariate densities and $\{p_{ij}\}$ are the set of bivariate densities of the joint distribution $p$.

\end{exmp}

\section{Previous Work}
Little work has been done in studying the nonparametric estimation of pairwise densities. \citep{gu2002smoothing,gu1993smoothing2} considered log-ANOVA density estimation, where the log-density can be factored into low-order terms, pairwise densities being a special case. They suggest an estimator for the log-density:
\begin{align}
	\min_\eta \left\{ -\frac{1}{n}\sum_{k=1}^n \eta(X^k) + \log \intop e^\eta + \lambda \Vert \eta \Vert_{\mathcal{H}}^2\right\},
\end{align}
such that $\eta$ has a given pairwise factorization, where $\Vert\cdot\Vert_{\mathcal{H}}$ is a norm in a reproducing kernel Hilbert space (RKHS); the resulting density estimate is proportional to $e^{\eta (x)}$. Due to the representer theorem \citep{kimeldorf1971some}, this becomes a finite-dimensional optimization problem. However, due to the difficulty of computing $\log\intop e^\eta$ it can only be used in low-dimensional problems, such as dimension up to 3. This work also assumes the ANOVA factorization structure is known. \citep{jeon2006effective} proposed a smoothing spline estimator based on minimizing the \emph{Bregman score} \eqref{bregman} for log-ANOVA densities, and applied it to undirected graphical model estimation. This solves the problem
\begin{align}
	\min_\eta \left\{\frac{1}{n}\sum_{k=1}^n e^{-\eta(X^k)} + \intop \eta(y)\rho(y)dy + \lambda\Vert \eta\Vert_{\mathcal{H}}^2\right\},
\end{align}
for some given density $\rho$; the resulting density estimate is proportional to $\rho(x)e^{\eta(x)}$. The authors show that solving this problem only requires calculation of one-dimensional integrals, and is thus more scalable. This approach has some limitations. The issue of computing the normalizing constant remains; this is a problem for inference and for choosing the smoothing parameter $\lambda$ when using cross-validation to minimize the $\text{KL}$ risk. Also, their procedure for graph selection is a heuristic. The theoretical properties of this estimator are not yet known. 

When the graph is assumed to be a forest (Example \ref{forest}), density estimation and structure learning was considered in \citep{forest2011}. Due to the tree entropy factorization \eqref{treeent}, learning a forest density can be done in two steps: estimating the univariate and bivariate marginals, and learning the graph structure. For the first task, they estimate marginals using kernel density estimation. For the second task, they use a nonparametric estimator of mutual information \eqref{mutinfo}, and then estimate the maximum likelihood forest using Kruskal's algorithm (Figure \ref{kruskal}), with weights corresponding to mutual information between edges. They show consistency guarantees for their approach in terms of $\text{KL}$ risk and graph selection.

There has been a large amount of work on parametric pairwise models. For learning Gaussian models with sparse precision matrix, the graphical lasso, or the $L_1$-regularized maximum likelihood is the most popular approach \citep{yuan2007model}; this solves the problem
\begin{align}
	\min_{\Omega\succ 0} \left\{\text{trace}(\Omega\widehat{\Sigma}) - \log\left|\Omega\right| + \lambda\Vert\Omega\Vert_1\right\}, \label{glasso}
\end{align}
$\widehat{\Sigma}$ being the sample covariance matrix $\widehat{\Sigma}=\frac{1}{n}\sum_{k=1}^n (X^k)(X^k)^\top$ and $\Vert\Omega\Vert_1=\sum_{i,j}\left|\Omega_{ij}\right|$ . The glasso algorithm solves the resulting problem using block-coordinate descent \citep{banerjee2008model,friedman2008sparse}. The graphical lasso is known to have good properties in terms of parameter and structure selection consistency \citep{rothman2008sparse,ravikumar2011high}. There exist other approaches for sparse estimation of $\Omega$ such as the the graphical Danzig selector \citep{yuan2010high}, and CLIME \citep{cai2011constrained}. The parallel lasso \citep{meinshausen2006high} infers graph structure in a Gaussian graphical model without direct estimation of the covariance or precision matrix by running a sequence of neighborhood lasso fits in parallel. For a node $i$, they estimate the neighborhood of $i$ by solving
\begin{align}
	\widehat{\beta}^i = \underset{\beta:\beta_i=0}{\text{argmin}} \left\{ \frac{1}{2n}\sum_{k=1}^n \left( X^k_i - \beta^\top X^k \right)^2 + \lambda\Vert\beta\Vert_1\right\}.
\end{align}
The corresponding neighborhood of $i$ is the support of $\widehat{\beta}^i$. They show consistency of neighborhood estimates, which may then be aggregated to form an edge set. The precision matrix can then be fit by estimating the Gaussian likelihood subject to sparsity constraints on the precision matrix. \citep{liu2012high} proposes the SKEPTIC estimator for structure learning of the semiparametric Gaussian copula model. The estimator plugs in a matrix of rank correlations (Kendall's $\tau$ or Spearman's $\rho$) into the graphical lasso \eqref{glasso}; they show this estimator achieves the parametric rates for edge selection and parameter estimation.


This thesis includes several contributions to the literature. In Chapter \ref{esestsec} we introduce the exponential series approximation to pairwise densities. We propose an estimator for pairwise densities based on regularized maximum likelihood estimation of a particular exponential family whose sufficient statistics are basis elements. We use a method for edge selection using convex regularization, and provide risk and model selection guarantees in Section \ref{main}. While the exact problem is in general not tractable, in Chapter \ref{trwsec} we propose a convex variational upper bound on the likelihood based on a nonparametric tree-reweighted relaxation \citep{wainwright2005new,wainwright2003tree}, which can be computed efficiently and in parallel. Our method provides an upper bound on the normalizing constant, guaranteeing an upper bound on risk estimates. The approximation leads to a natural variational maximum likelihood estimator, as well as an approach for marginalization. We train our method using a projected gradient algorithm, which can effortlessly be scaled to relatively sparse graphs on hundreds of nodes. In Section \ref{expersec} we compare our method to several other approaches to large-scale density estimation, including the graphical lasso, mixtures of Gaussians with the EM algorithm, and kernel forest density estimation. We demonstrate our method by estimating the graph from an MEG neuroimaging dataset.

In Chapter 4 we consider a different approach to estimation and graph selection using an alternative scoring rule to the log-likelihood. It is based on minimizing the log-gradient between the model distribution and data distribution, or equivalently minimizing the \emph{Fisher divergence}. This method obviates the need for computing a normalizing constant. We show that the optimization amounts to a second-order cone program, and provide two types of scalable algorithms specially tailored to the problem. Our method, which we denote QUASR for \textbf{Qua}dratic \textbf{S}coring and \textbf{R}egularization, produces parameter and graph selection consistency for general pairwise exponential families with only weak regularity conditions. En route, we derive a new method for sparse precision matrix estimation which performs competitively with the regularized MLE \eqref{glasso}. Finally, we show how this approach produces graph selection guarantees for the pairwise nonparametric model when the sufficient statistics are basis elements.

\section{Notation and Preliminaries}

Throughout this thesis we assume we are given independent and identically distributed data $X^1,X^2,\ldots,X^n$, where $X^k=(X_1^k,\ldots,X_d^k)$, drawn from the density $p(x)$ with respect to a reference measure $\nu(x)$. In the context of density estimation, we assume the unknown log density $f=\log p$ belongs to the Sobolev space of functions on $[0,1]^d$, $W_r^2$, so that for any multi-index $\alpha$ with $\mid\alpha\mid\leq r$,
\begin{equation}
	f^{(\alpha)} := \frac{\partial^{(\alpha)}f}{\partial x_1^{\alpha_1}\cdots \partial x_d^{\alpha_d}}
\end{equation}

has bounded $L_2(\nu)$ norm:
\begin{equation}
	\Vert f^{(\alpha)}\Vert := \intop_{\mathcal{X}} \mid f^{(\alpha)}\mid^2 \nu(dx) < \infty.
\end{equation}
This implies that $p$ is bounded away from zero and infinity and that $p(x)$ has bounded support.

We use the asymptotic notations $O(\cdot)$, $o(\cdot)$, $\Omega(\cdot)$ and $\asymp$. For two functions $f(n),g(n)$, $f=O(g)$ if $f/g \leq c$ for a constant $c>0$ as $n\rightarrow\infty$; $f=o(g)$ if $f/g\rightarrow 0$ as $n\rightarrow\infty$; $f=\Omega(g)$ if $f> c g$ for some $c>0$ as $n\rightarrow\infty$; $f\asymp g$ if $f = c g$ for some $c>0$ as $n\rightarrow\infty$. We say that $f=O_p(g)$ if $f=O(g)$ with probability approaching one as $n\rightarrow\infty$.

Let $\{\phi_k,\phi_{k'l},k,k',l=1,2,\ldots\}$ be a tensor product basis for $L_2[0,1]^2$, so that $\phi_{kl}=\phi_k\phi_l$. We suppose this basis is uniformly bounded and orthonormal. Consider the density $p$  having a pairwise factorization \ref{pair}, so that $f=\log p$ can be expressed with the basis expansion
\begin{equation}
	 f(x)= f_0(x) + \theta_0^* + \sum_{i,j\in V} \sum_{k,l=1}^\infty (\theta^*)_{ij}^{kl}\phi_k(x_i) \phi_l(x_j)  + \sum_{i\in V} \sum_{k=1}^\infty (\theta^*)_i^k \phi_k(x_i) \label{basis},
\end{equation}

$\exp f_0$ is some base measure which has the same pairwise factorization as $p$. We will take $f_0=0$, but our results also apply whenever $f_0$ has the same smoothness assumptions as $f$. $\theta,\phi$ represent the parameters and sufficient statistics vectorized. $\theta_e,\theta_v$ denote the vectors of edge and vertex parameters, respectively. 
For a $f\in W_r^2$, for any $i,j\in V$ and $r_i+r_j=r$, we additionally have that
\begin{align}
	\sum_k (\theta^{^*k}_i)^2 k^{2r} = C_1<\infty, \\
	\sum_{k,l} (\theta^{*kl}_{ij})^2 k^{2r_i}l^{2r_j} = C_2<\infty.
\end{align} 

This implies that $\left|(\theta^*)_i^k\right| = o(k^{-r-1/2})$ and $\left|(\theta^*)_{ij}^{kl}\right|=o(k^{-r_i-1/2}l^{-r_j-1/2})$.

 We denote $\Pi$ to be the set of densities on $[0,1]^d$.
For risk analysis in Chapter 2 we use the relative entropy, also known as the Kullback-Leibler divergence:
\begin{equation}
	\text{KL}(p \mid \widehat{p}) = \intop_{\mathcal{X}} p(x) \log \bigg(\frac{p(x)}{\widehat{p}(x)}\bigg) d\nu(x) = \mathbb{E}_p\bigg[\log\bigg(\frac{p(X)}{\widehat{p}(X)}\bigg)\bigg].
\end{equation}

It can be shown that $\text{KL}(p \mid \widehat{p})\geq 0$ and equals zero only if $p=\widehat{p}$ $\nu$-almost everywhere. Relative entropy is a natural risk measure for density estimation. It is invariant to invertible changes of variables, and shares a natural connection to maximum likelihood. 
Convergence in $\text{KL}$ is strong in the sense that it implies convergence in several other risk measures. In particular, define the $L_1$, Hellinger and Total Variation distances as follows:
\begin{eqnarray}
	D_1(p \mid \widehat{p}) &=& \intop_{\mathcal{X}} \mid p(x)-\widehat{p}(x) \mid d\nu(x), \\
	D_H(p \mid \widehat{p}) &=& \intop_{\mathcal{X}} \big| p(x)^{1/2} - \widehat{p}(x)^{1/2} \big| ^2 d\nu(x), \\
	D_{TV}(p \mid \widehat{p}) &=& \sup_{A\in \mathcal{X}} \bigg| \intop_A p(x) d\nu(x) - \intop_A \widehat{p}(x) d\nu(x) \bigg|,
\end{eqnarray}

By Pinsker's inequality and \citep{reiss1989approximate,kullback1967lower}, we have
\begin{eqnarray}
	\text{KL}(p \mid \widehat{p}) &\geq& \frac{D_1(p \mid \widehat{p})^2}{2}, \\
	\text{KL}(p \mid \widehat{p}) &\geq& D_H(p \mid \widehat{p})^2, \\
	\text{KL}(p \mid \widehat{p}) &\geq& 2 D_{TV}(p \mid \widehat{p})^2.
\end{eqnarray}

%


\chapter{ Exponential Families and the Exponential Series Regularized MLE}\label{esestsec}

\section{Exponential Series Approximation}
Consider an approximation to $f=\log p$ \eqref{basis} by truncating the basis expansion in the following way:
\begin{equation}
	\log p_\theta(x) = \sum_{i=1}^d\sum_{k=1}^{m_1} \theta_i^k\phi_k(x_i) + \sum_{i,j\in V}\sum_{k=1}^{m_2}\sum_{l=1}^{m_2} \theta_{ij}^{kl}\phi_{kl}(x_i,x_j) - Z(\theta), \label{exseries}
\end{equation}

This approximation is an exponential family with sufficient statistics corresponding to the basis functions $\{\phi_k(x_i),i\in V,k\leq m_1\}$ and $\{\phi_k(x_i)\phi_l(x_j), i,j\in V, k,l \leq m_2\}$. $Z(\theta)$ is chosen so the density integrates to one. The idea of representing a density as an exponential expansion was first used for goodness of fit testing in \citep{neyman1937smooth}. Nonparametric estimation of univariate distributions using exponential series has been studied previously in \citep{crain1974estimation,crain1977information,barron1991approximation}. For simplicity we assume that each univariate component is truncated after $m_1$ terms and bivariate components after $m_2$ terms. In practice we could attempt to vary truncation for each variable, though this could be unwieldy for large problems. When not ambiguous we will write $\log p_\theta (x) = \langle \theta, \phi (x) \rangle - Z(\theta)$. Given $n$ i.i.d. samples $X^1,\ldots,X^n$, the \emph{exponential series MLE estimator} finds the regularized maximum likelihood estimator of $\theta$:
\begin{eqnarray}
	\widehat{\theta} &:=& \underset{\theta}{\text{argmin}}\left\{-\frac{1}{n}\sum_{r=1}^n \log p_\theta (X^r) + \lambda \mathcal{R}(\theta)\right\}, \\
				&=&  \underset{\theta}{\text{argmin}} \bigg\{ -\mathcal{L}(\theta) + \lambda\mathcal{R}(\theta)\bigg\} \label{mlsol}.
\end{eqnarray}
$\mathcal{R}$ is a convex regularizer; we discuss our particular choice of regularization in Section \ref{structure}. The resulting density estimate is $p_{\widehat{\theta}}(x)$. The fact that $\theta$ depends on the truncation parameters $m_1,m_2$ and regularization parameter $\lambda$ is left implicit.

Exponential series is a natural approach for the estimation of pairwise densities. The product factorization of pairwise densities can be expressed naturally by exponential series. Indeed, many common parametric graphical models are exponential families. Each truncated series forms an exponential family. We detail exponential families and the regularized maximum likelihood in the sequel.

\section{Exponential Families}
This section presents background on exponential families and their use in graphical modeling. For an in-depth treatment, see \citep{brown1986fundamentals,wainwright2008graphical}.

Consider the random vector $X=(X_1,\ldots,X_d)$ taking values on the support $\mathcal{X}=\bigotimes_{i=1}^d \mathcal{X}_i$. The \emph{exponential family} with sufficient statistics $\phi(x)=(\phi_1(x),\phi_2(x),\ldots,\phi_M(x))$ is the family of probability distributions
\begin{equation}
	\bigg\{ p_\theta: p_\theta(x) = \exp\bigg\{ \langle\theta,\phi(x)\rangle - Z(\theta)\bigg\} , \theta\in\mathcal{P} \bigg\}, \label{exfam}
\end{equation}
where $\langle \theta , \phi(x) \rangle = \sum_{a=1}^M \theta_a \phi_a(x)$.  $Z(\theta)$ is called the \emph{log-partition function}, and is given by
\begin{equation}
	Z(\theta) = \log \int_{\mathcal{X}} \exp\bigg\{ \langle \theta,\phi \rangle \bigg\} \nu(dx),
\end{equation}
and ensures the density $p_\theta(x)$ integrates to one. The family is indexed by the parameters $\theta$, called the \emph{natural parameters} belonging to the space $\mathcal{P} = \{\theta: Z(\theta)<\infty \}$. 

An exponential family is \emph{minimal} if there is no choice of parameters $\theta\not=0$ such that $\langle\theta,\phi(X) \rangle = C$ $\nu$-a.e., where $C$ is a constant. If a family is minimal, there is a bijection between the natural parameter space $\mathcal{P}$ and the densities belonging to the family. An exponential family is \emph{regular} if $\mathcal{P}$ is an open set.

\begin{exmp}[Gaussian exponential family]
	Consider the Gaussian density with mean $\mu$ and covariance $\Sigma\succ 0$. The Gaussian family is an exponential family with sufficient statistics $\{ x , xx^\top \}$ and natural parameters $(\theta_1,\theta_2)=\{\Omega\mu,-\frac{1}{2}\Omega \}$. Since $\Sigma\succ 0$, the natural parameter space is
\begin{equation*}
	\mathcal{P} = \{(\theta_1,\theta_2)\in \mathbb{R}^d \times \mathbb{R}^{d\times d} : \theta_2 \prec 0 \}.
\end{equation*}
The space of negative definite matrices is an open convex set, so it follows that $\mathcal{P}$ is convex and the Gaussian family is regular. Furthermore, linear independence of monomials implies that $x$ and $xx^\top$ are linearly independent, and so the Gaussian family is also minimal.
\end{exmp}

\begin{remark}[Exponential series]
Consider the exponential series family of \eqref{exseries}. When $\{\phi_k,\phi_{kl}\}$ is an orthogonal basis which satisfies the Haar condition, the exponential series family corresponds to a minimal exponential family. Furthermore, since the exponential series are defined have compact support $[0,1]^d$ and the sufficient statistics $\phi$ are bounded above and below, any choice of $\theta\in \mathbb{R}^M$ will produce a valid $Z(\theta)<\infty$; in other words $\mathcal{P}=\mathbb{R}^M$, which is a convex and open set, so the exponential series family is regular.
\end{remark}
%
%
\section{Mean Parametrization}

An exponential family is parametrized by its so-called \emph{natural parameters} $\theta$ \eqref{exfam}. Alternatively, an exponential family may also be characterized by a vector of \emph{mean parameters} $\mu$. The connection between these seemingly disparate entities will be described in Lemma \ref{conn1}.

Let $p$ be any density on $\mathcal{X}$ with respect to $\nu$. We define the mean parameter $\mu_a$ corresponding to the sufficient statistic $\phi_a$ by
\begin{equation}
	\mu_a = \intop p(x) \phi_a(x) d\nu(x) = \mathbb{E}_p[\phi_a].
\end{equation}
Consider the set of vectors that correspond to the moments of some distribution:  $\mathcal{M}=\{\mu: \exists p\in\Pi, \mu_a=\mathbb{E}_p[\phi_a],\forall a=1,\ldots,M\}$. In particular, the elements of $\mathcal{M}$ need not correspond to mean parameters of an exponential family. Furthermore, $\mathcal{M}$ is a convex set. To see this, let $\mu_1,\mu_2\in\mathcal{M}$ be mean parameters corresponding to two distributions $p_1,p_2$ with respect to $\nu$. Then 
\begin{equation*}
	\lambda\mu_1+(1-\lambda)\mu_2 = \mathbb{E}_{\lambda p_1 +(1-\lambda)p_2}[\phi].
\end{equation*}
For discrete variables, one can further show that $\mathcal{M}$ is a convex polytope \citep{wainwright2008graphical}. This fact is exploited in many inference algorithms for discrete graphical models, but this is not true for continuous random variables. Characterizing $\mathcal{M}$ for general continuous sufficient statistics is in general very challenging. It is closely connected to the so-called \emph{moment problem} \citep{landau1987moments} which has been studied since the late 19th century. For polynomial sufficient statistics, $\mathcal{M}$ can be characterized by a sequence of semidefinite constraints on the moments \citep{lasserre2009moments}. This is suggested by the positive semidefinite constraint on the covariance matrix $\Sigma$ for Gaussian densities.

We now state several important facts relating the natural parameters $\theta$ and the mean parameters $\mu$. For proofs, see \citep{wainwright2008graphical}.

\begin{lem}\label{conn1} Suppose $\theta$ corresponds to the natural parameters of an exponential family with sufficient statistics $\phi(x)$ and corresponding mean vector $\mu(\theta)=\mathbb{E}_{p_\theta}[\phi]$. Let $Z(\theta)$ be the corresponding log-partition function, and define its gradient $\nabla Z(\theta): \mathcal{P} \rightarrow \mathcal{M}$. The following hold:

\begin{enumerate} 
	\item $\theta$ and $\mu$ are related by the mapping 
	\begin{equation}
		\nabla Z(\theta) = \mu(\theta);
	\end{equation}
	\item $\nabla^2 Z(\theta) = \text{cov}_\theta[\phi]$;
	\item $Z(\theta)$ is a convex function, and strictly so if the family is minimal, so that $\delta^\top(\text{cov}_\theta[\phi])\delta > 0$ for each $\delta\not=0$  \label{strongconv};
	\item The mapping $\nabla Z(\theta): \mathcal{P} \rightarrow \mathcal{M}$ is one-to-one if and only if the family is minimal;
	\item The mapping $\nabla Z(\theta)$ is onto the interior of $\mathcal{M}$, $\text{int}(\mathcal{M})$ if the family is minimal\label{onto}.
\end{enumerate}
\end{lem}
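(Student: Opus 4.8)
The plan is to establish parts (1)--(3) by differentiating the log-partition function under the integral sign, part (4) from the resulting strict convexity together with an explicit degeneracy construction, and part (5) by a convex-duality argument, which is where the real work lies.

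First I would justify interchanging differentiation and integration. Since the family is regular, $\mathcal{P}$ is open, and on any compact neighborhood of an interior point $\theta$ the integrand $\phi_a(x)\exp\{\langle\theta,\phi(x)\rangle\}$ is dominated by an integrable envelope (exponential families are analytic on $\text{int}(\mathcal{P})$), so dominated convergence applies. Differentiating $Z(\theta)=\log\int_{\mathcal{X}}\exp\{\langle\theta,\phi\rangle\}\nu(dx)$ once gives
$$\partial_a Z(\theta) = \frac{\int \phi_a \exp\{\langle\theta,\phi\rangle\}\,\nu(dx)}{\int \exp\{\langle\theta,\phi\rangle\}\,\nu(dx)} = \mathbb{E}_{p_\theta}[\phi_a] = \mu_a(\theta),$$
which is (1). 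Differentiating a second time and applying the quotient rule produces
$$\partial_a\partial_b Z(\theta) = \mathbb{E}_{p_\theta}[\phi_a\phi_b] - \mathbb{E}_{p_\theta}[\phi_a]\mathbb{E}_{p_\theta}[\phi_b] = \text{cov}_\theta(\phi_a,\phi_b),$$
which is (2). Since a covariance matrix is positive semidefinite, $\nabla^2 Z\succeq 0$ and $Z$ is convex; moreover for any $\delta\neq 0$ we have $\delta^\top(\text{cov}_\theta[\phi])\delta = \text{var}_\theta(\langle\delta,\phi\rangle)$, which vanishes only if $\langle\delta,\phi\rangle$ is $\nu$-a.e.\ constant. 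Minimality forbids exactly this, so the variance is strictly positive and $Z$ is strictly convex, giving (3).

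For (4), the forward direction is immediate: the gradient of a strictly convex function is injective, so minimality (hence strict convexity by (3)) implies $\nabla Z$ is one-to-one. For the converse I would argue contrapositively: if the family is not minimal there is $\delta\neq 0$ and a constant $C$ with $\langle\delta,\phi(x)\rangle = C$ $\nu$-a.e. Then for any $t$ the density $p_{\theta+t\delta}$ equals $p_\theta$ pointwise (the factor $e^{tC}$ is absorbed into the partition function), so $\mu(\theta+t\delta)=\mu(\theta)$ while $\theta+t\delta\neq\theta$, and $\nabla Z$ fails to be injective.

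The hard part is (5), showing $\nabla Z$ maps onto $\text{int}(\mathcal{M})$. I would use convex duality: fix $\mu\in\text{int}(\mathcal{M})$ and consider the conjugate objective $g(\theta)=\langle\theta,\mu\rangle - Z(\theta)$. Since $Z$ is strictly convex and differentiable, $g$ is concave, and any interior stationary point satisfies the first-order condition $\nabla Z(\theta)=\mu$, so it suffices to show the supremum of $g$ is attained in $\text{int}(\mathcal{P})$. The key estimate is coercivity: because $\mu$ lies in the \emph{interior} of the mean-parameter set $\mathcal{M}$, one can show $g(\theta)\to -\infty$ as $\|\theta\|\to\infty$, which together with continuity forces an interior maximizer. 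Establishing this coercivity---ruling out recession directions along which $g$ stays bounded---is the main obstacle, and is precisely where the interiority of $\mu$ (rather than mere membership in $\mathcal{M}$) is essential; a full treatment appears in \citep{wainwright2008graphical}.
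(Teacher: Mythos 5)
The paper does not actually prove this lemma: immediately before the statement it says ``For proofs, see \citep{wainwright2008graphical}'' and moves on. So there is no in-paper argument to compare yours against; what you have written is the standard proof from that reference, and it is correct. Parts (1)--(3) by differentiation under the integral sign, the variance identity $\delta^\top(\text{cov}_\theta[\phi])\delta = \text{var}_\theta(\langle\delta,\phi\rangle)$ linking minimality to strict convexity, the injectivity of the gradient of a strictly convex function for the forward direction of (4), and the degeneracy direction $\langle\delta,\phi\rangle = C$ (noting $Z(\theta+t\delta)=Z(\theta)+tC<\infty$ so the perturbed parameter stays in $\mathcal{P}$) for the converse are all sound. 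For (5) you correctly identify that the entire difficulty is the coercivity of $\theta\mapsto\langle\theta,\mu\rangle-Z(\theta)$ for $\mu\in\text{int}(\mathcal{M})$ --- ruling out recession directions --- and you defer that estimate to the same reference the paper cites; since the paper defers the whole lemma, this is not a gap relative to the paper, though a self-contained write-up would need to supply that argument (and would implicitly use regularity of the family, i.e.\ openness of $\mathcal{P}$, so that the maximizer is an interior stationary point).
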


\begin{remark}
 The exponential series family is minimal when the orthogonal series satisfies the \emph{Haar condition}, so its likelihood is strictly convex, and \eqref{mlsol} is a convex problem so long as the regularizer $\mathcal{R}$ is convex. 
\end{remark}

\section{Duality} \label{dualitysec}

For any function $Z$ taking values $\theta\in\mathcal{P}$, we define the \emph{Fenchel conjugate}, $Z^*$, as follows:
\begin{align}
	Z^*(\mu) &= \text{sup}_{\theta\in\mathcal{P}} \left\{\langle \theta,\mu \rangle - Z(\theta)\right\} \label{fenchel},
\end{align}
When $Z$ corresponds to the log-partition function this equation bears a strong resemblance to the maximum of the log-likelihood \eqref{mlsol}. Indeed when $\mu $ corresponds to empirical mean parameters  $\widehat{\mu}= \frac{1}{n}\sum_{k=1}^n \phi(X^k)$ it is precisely that, though \eqref{fenchel} is well-defined when $\mu$ doesn't correspond to a $\mu\in\mathcal{M}$.

For $\mu\in\text{int}\mathcal{M}$ corresponding to a minimal family, let $\theta(\mu)$ denote the unique natural parameters corresponding to $\mu$. Denote 
\begin{equation}
H(\mu):=H(p_{\theta(\mu)}) = -\mathbb{E}_{p_{\theta(\mu)}}[\log p_{\theta(\mu)}]
\end{equation}
the entropy of the density $p_{\theta(\mu)}$. Furthermore, denote the univariate entropy by 
\begin{equation}
H_{i}\left(\mu\right):=-\mathbb{E}_{p_{\theta(\mu)}}\big[\log p_{i,\theta(\mu)}\left(X_{i}\right) \big],
\end{equation}
and bivariate mutual information 
\begin{equation}
I_{ij}\left(\mu\right):=\mathbb{E}_{p_{\theta(\mu)}}\bigg[\log \left(\frac{p_{ij\theta(\mu)}(X_{i},X_{j})}{p_{i,\theta(\mu)}(X_i)p_{j,\theta(\mu)}(X_j)}\right)\bigg]. \label{mutinfo}
\end{equation}

\begin{thm}
The Fenchel conjugate of the log-partition function $Z$ is given by
		\begin{align} Z^*(\mu) &= 
			\begin{cases}
			-H(\mu), & \mu \in \text{int}\mathcal{M}; \\
			\infty, & \mu\not\in\mathcal{M};
			\end{cases} \label{logpartfenchel}
		\end{align}
			for $\mu\in\mathcal{M}\backslash\text{int}\mathcal{M}$, $Z^*(\mu)$ is given by the limit of $Z^*(\mu^k)$ for any sequence $\{\mu^k\}$, $\mu^k\in\text{int}\mathcal{M}$.
\end{thm}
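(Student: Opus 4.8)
The plan is to treat the three regimes in \eqref{logpartfenchel} separately, since each rests on a different structural fact about the gradient map $\nabla Z$ from Lemma \ref{conn1}. Throughout I work in a minimal family, so that $\nabla Z$ is a bijection from $\mathcal{P}$ onto $\text{int}\mathcal{M}$ (the one-to-one part of Lemma \ref{conn1} together with part \ref{onto}), and so that, by part \ref{strongconv}, the map $\theta \mapsto \langle\theta,\mu\rangle - Z(\theta)$ whose supremum defines the conjugate in \eqref{fenchel} is strictly concave.

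For the interior case $\mu \in \text{int}\mathcal{M}$, I would first use strict concavity to conclude that the supremum in \eqref{fenchel}, if attained, is attained at the unique stationary point, characterized by $\nabla Z(\theta) = \mu$. Because $\nabla Z$ is onto $\text{int}\mathcal{M}$, this stationary point exists; call it $\theta(\mu)$. What remains is a direct computation: substituting $\langle\theta(\mu),\phi(x)\rangle = \log p_{\theta(\mu)}(x) + Z(\theta(\mu))$ into $\langle\theta(\mu),\mu\rangle = \mathbb{E}_{p_{\theta(\mu)}}[\langle\theta(\mu),\phi\rangle]$ cancels the two copies of $Z(\theta(\mu))$ and leaves $Z^*(\mu) = \mathbb{E}_{p_{\theta(\mu)}}[\log p_{\theta(\mu)}] = -H(\mu)$, which is exactly the claimed value.

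For the exterior case, the goal is to exhibit a direction along which the conjugate objective diverges. Since $\mathcal{M}$ is convex, for $\mu \notin \overline{\mathcal{M}}$ strict separation yields a direction $\delta$ and a constant $c$ with $\langle\delta,\mu\rangle > c \geq \langle\delta,\mu'\rangle$ for every $\mu' \in \mathcal{M}$. The key estimate is that $Z$ grows at most linearly along $\delta$: since $\frac{d}{dt}Z(t\delta) = \langle\delta,\nabla Z(t\delta)\rangle = \langle\delta,\mu(t\delta)\rangle \leq c$, as $\mu(t\delta) \in \mathcal{M}$, integrating gives $Z(t\delta) \leq Z(0) + ct$. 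Evaluating the objective at $\theta = t\delta$ then gives $\langle t\delta,\mu\rangle - Z(t\delta) \geq t(\langle\delta,\mu\rangle - c) - Z(0)$, and since $\langle\delta,\mu\rangle - c > 0$ this tends to $+\infty$ as $t \to \infty$, so $Z^*(\mu) = \infty$.

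The remaining, and most delicate, case is the boundary $\mu \in \mathcal{M}\setminus\text{int}\mathcal{M}$, where $\nabla Z$ need not attain $\mu$, so the clean variational computation above is unavailable; the same difficulty afflicts any point of $\overline{\mathcal{M}}\setminus\mathcal{M}$, where strict separation also fails and the exterior argument does not directly apply. Here I would appeal to convex-analytic structure rather than direct computation: $Z^*$, being a Fenchel conjugate, is a closed (lower semicontinuous) convex function, and a finite convex function is continuous on the relative interior of its domain. Combining lower semicontinuity at the boundary with continuity along interior approximating sequences identifies $Z^*(\mu)$ with $\lim_k Z^*(\mu^k)$ for any $\mu^k \to \mu$ with $\mu^k \in \text{int}\mathcal{M}$, the limit being finite precisely when $\mu \in \mathcal{M}$ and $+\infty$ otherwise. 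I expect this boundary analysis to be the main obstacle, since one must rule out a lower-semicontinuous jump and verify that the limiting value is independent of the approximating sequence; the interior and exterior cases, by contrast, reduce cleanly to the surjectivity of $\nabla Z$ and to a single separation-plus-linear-growth estimate, respectively.
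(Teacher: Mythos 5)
Your proposal is essentially correct, but there is nothing in the paper to compare it against: the thesis states this theorem as background material and gives no proof, deferring (as it does for Lemma \ref{conn1}) to \citep{wainwright2008graphical}, where this is Theorem 3.4. Your argument is in fact the standard one from that reference: surjectivity of $\nabla Z$ onto $\text{int}\mathcal{M}$ plus the stationarity computation for the interior case, a separating-hyperplane/linear-growth estimate for the exterior, and closedness of the conjugate for the boundary. Two small technical points are worth tightening. In the exterior case, your bound $\frac{d}{dt}Z(t\delta)=\langle\delta,\mu(t\delta)\rangle\le c$ presupposes that the ray $\{t\delta\}$ stays inside $\mathcal{P}$; this needs a word of justification in general (e.g.\ the separation gives $\langle\delta,\phi(x)\rangle\le c$ a.e., whence $Z(\theta+t\delta)\le Z(\theta)+tc<\infty$ directly), though it is vacuous for the exponential series family where $\mathcal{P}=\mathbb{R}^M$. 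In the boundary case, lower semicontinuity of $Z^*$ only gives $Z^*(\mu)\le\liminf_k Z^*(\mu^k)$; the matching upper bound and the independence of the limit from the approximating sequence require the standard convex-analysis facts about closed convex functions restricted to line segments from the relative interior (Rockafellar, Theorems 7.5 and 10.2), which you correctly identify as the delicate step but do not carry out. Neither point is a conceptual gap; both are routine to fill in, and your decomposition into the three regimes is exactly the right one.
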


%
%
%
%
%
%
%
%

\begin{exmp}[Gaussian Entropy]
Let $X\sim N(0,\Sigma)$, $\Sigma\succ 0$, and $\Omega=\Sigma^{-1}$. Denote $p$ the density of $X$. Then
\begin{align}
	-\mathbb{E}_p[\log p] &= \frac{d}{2}\log(2\pi) - \frac{1}{2}\log\left| \Omega\right| + \frac{1}{2}\mathbb{E}_p[X^\top\Omega X].
\end{align}

Now, $X^\top\Omega X=\text{trace}(\Omega XX^\top)$, and since the trace is a linear operator, we may move the expectation inside the trace, giving
\begin{align}
	\mathbb{E}_p[X^\top \Omega X] &= \text{trace}\left(\Omega\mathbb{E}_p[XX^\top]\right) \\
	&= \text{trace}\left(\Omega\Sigma\right) \\
	&= \text{trace}(I_d)=d.
\end{align}

Thus the Gaussian entropy is
\begin{align}
	\frac{d}{2}\left(1+\log(2\pi)\right)-\frac{1}{2}\log\left| \Omega\right|.
\end{align}
\end{exmp}

\begin{exmp}[Tree Entropy and Maximum Likelihood Trees]
	Let $\theta$ be the parameters of a minimal exponential family which is tree-structured: that is, any edge parameters $\theta_{ij}=0$ for $(i,j)\not\in T$, where $T$ is the edge set for a tree. Because of the tree density factorization \eqref{treefac},
	\begin{align}
		H(\mu(\theta)) &= - \mathbb{E}[\log p_{\theta(\mu)}] \\
				&= \sum_{i\in V} H_i(\mu(\theta))  - \sum_{(i,j)\in T}I_{ij}(\mu(\theta)). \label{treeent}
	\end{align}
Thus, for a tree-factored distribution has a simple expression for its entropy in terms of the univariate entropies and bivariate mutual informations.

\end{exmp}

\section{Main Results}\label{main}
\subsection{Sparsity} \label{structure}
For our risk analysis, make the sparsity assumption on $\theta^*$, that $\theta^*\in\tilde{\mathcal{P}}$, where
\begin{equation}
	\tilde{\mathcal{P}}=\tilde{\mathcal{P}}(E) := \bigg\{\theta: \Vert\theta_{ij}\Vert_2=0,\forall (i,j)\not\in E \bigg\} \subseteq\mathcal{P}.
\end{equation}

However, the set $E$ is unknown. Recall that for the pairwise graphical model, $(i,j)\not{\in} E$ when $\theta_{kl}^{ij}=0$ for each $k,l=1,\ldots,\infty$; in other words, when $\Vert \theta^{i,j}\Vert_2=0$. For clarity, we refer to $\theta^*_v,\mu_v$ to be the vectors of vertex parameters, $\theta_e,\mu_e$ to be the vectors of edge parameters, and $\theta_{ij},\mu_{ij}$ to be the vector of parameters corresponding to edge $(i,j)$. To encourage edge sparsity we consider the penalty
\begin{equation}
	\mathcal{R}(\theta_e) = \sum_{i, j\in V,i<j} \Vert \theta_{ij}\Vert_2. 
\end{equation}
	
$\mathcal{R}$ defines a norm over the edge parameters. Furthermore, $\mathcal{R}$ has some special properties which we detail below.

\begin{prop}
The dual norm of $\mathcal{R}$ is
\begin{equation}
	\mathcal{R}^*(\theta_e) = \max_{(i,j)\in E} \Vert \theta_{ij} \Vert_2 .
\end{equation}
\end{prop}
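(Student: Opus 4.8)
The plan is to read off the dual norm directly from its definition, $\mathcal{R}^*(\theta_e) = \sup_{\mathcal{R}(v_e)\le 1}\langle \theta_e, v_e\rangle$, and to establish the claimed identity by proving the two matching inequalities. The structural observation driving everything is that $\mathcal{R}$ is \emph{separable across edge blocks}: writing any edge vector as the concatenation of its blocks $\{v_{ij}\}_{i<j}$, the inner product decomposes as $\langle \theta_e, v_e\rangle = \sum_{i<j}\langle \theta_{ij}, v_{ij}\rangle$, and the constraint set decouples as $\sum_{i<j}\|v_{ij}\|_2 \le 1$. This is precisely the $\ell_{1,2}$/$\ell_{\infty,2}$ block-norm duality, and within each block we only need that the Euclidean norm is self-dual.

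For the upper bound I would apply Cauchy--Schwarz within each block and then a H\"older-type step across blocks. For any feasible $v_e$,
\[
\langle \theta_e, v_e\rangle \;=\; \sum_{i<j}\langle \theta_{ij}, v_{ij}\rangle \;\le\; \sum_{i<j}\|\theta_{ij}\|_2\,\|v_{ij}\|_2 \;\le\; \Big(\max_{i<j}\|\theta_{ij}\|_2\Big)\sum_{i<j}\|v_{ij}\|_2 \;\le\; \max_{i<j}\|\theta_{ij}\|_2,
\]
the last step using $\mathcal{R}(v_e)=\sum_{i<j}\|v_{ij}\|_2\le 1$. Taking the supremum over feasible $v_e$ gives $\mathcal{R}^*(\theta_e)\le \max_{i<j}\|\theta_{ij}\|_2$.

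For the reverse inequality I would exhibit an explicit maximizer, so that the bound is attained. Assuming $\theta_e\neq 0$ (the case $\theta_e=0$ being trivial since both sides vanish), let $(i^*,j^*)$ be a block achieving the maximum, with $\|\theta_{i^*j^*}\|_2>0$. Define $v_e$ by $v_{i^*j^*}=\theta_{i^*j^*}/\|\theta_{i^*j^*}\|_2$ and $v_{ij}=0$ for all other blocks. Then $\mathcal{R}(v_e)=\|v_{i^*j^*}\|_2=1$, so $v_e$ is feasible, and $\langle \theta_e, v_e\rangle = \langle \theta_{i^*j^*}, \theta_{i^*j^*}\rangle/\|\theta_{i^*j^*}\|_2 = \|\theta_{i^*j^*}\|_2 = \max_{i<j}\|\theta_{ij}\|_2$. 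Combining the two bounds yields the claimed formula. I do not anticipate a genuine obstacle here: the only points requiring care are the separability of the inner product and constraint across blocks, and the normalization in the witness $v_e$ (which requires the nonzero-block caveat). The achievability direction is where one must be explicit, but the construction above is the natural choice and makes the supremum a maximum.
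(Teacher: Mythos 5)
Your proof is correct. The paper itself gives no proof of this proposition --- it simply remarks that $\mathcal{R}$ is the $(1,2)$-group penalty and that its dual is the $(\infty,2)$-group penalty, treating the identity as a known fact about group norms. Your argument is exactly the standard derivation that underlies that remark: the block-separability of the inner product and of the constraint set, Cauchy--Schwarz within each edge block followed by a H\"older step across blocks for the upper bound, and an explicit normalized witness supported on a maximizing block for achievability. Both directions are handled correctly, the $\theta_e=0$ degenerate case is noted, and the construction shows the supremum is attained, so nothing is missing.
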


$\mathcal{R}$ is known as the (1,2)-group penalty, and its dual the $(\infty,2)$-group penalty. In our application, the groups correspond to parameters of given edges. Group penalties are best known from their use in the group lasso \citep{yuan2006model}, which is used to encourage group sparsity in regression coefficients.
For a vector of parameters $\theta$ denote its projection onto $\tilde{\mathcal{P}}$ by $\theta_{\tilde{\mathcal{P}}}=\sum_{(i,j)\in E}\Vert\theta_{ij}\Vert$, and its projection onto its orthogonal complement by $\theta_{\tilde{\mathcal{P}}^\perp}=\sum_{(i,j) \in E^c}\Vert\theta_{ij}\Vert$.
\begin{prop}
	$\mathcal{R}$ is decomposable with respect to $\tilde{\mathcal{P}}$. That is,
	\begin{align}
		\mathcal{R}(\delta+\theta) = \mathcal{R}(\delta)+\mathcal{R}(\theta),
	\end{align}
	for each $\delta\in\tilde{\mathcal{P}}^\perp$ and $\theta\in\mathcal{P}$.
\end{prop}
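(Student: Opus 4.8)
The plan is to use the fact that $\mathcal{R}$ is a \emph{group-separable} norm whose group partition is aligned with the two subspaces. Concretely, $\mathcal{R}(\theta)=\sum_{i<j}\Vert\theta_{ij}\Vert_2$ is a sum of block Euclidean norms indexed by the edges, the vertex blocks carry no penalty, and each edge block lies entirely in $\tilde{\mathcal{P}}$ (when $(i,j)\in E$) or entirely in $\tilde{\mathcal{P}}^\perp$ (when $(i,j)\in E^c$). First I would record the explicit descriptions of the subspaces: $\theta\in\tilde{\mathcal{P}}$ iff $\theta_{ij}=0$ for every $(i,j)\notin E$, and dually $\delta\in\tilde{\mathcal{P}}^\perp$ iff $\delta$ has zero vertex part and $\delta_{ij}=0$ for every $(i,j)\in E$; equivalently, $\delta$ is supported on the edge blocks indexed by $E^c$. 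The point of the argument is that the group support of $\delta$ (the $E^c$-blocks) is disjoint from the group support of the penalized, in-model part of $\theta$ (the $E$-blocks).

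The computation I would carry out is to split the group sum along the partition $E\sqcup E^c$. Writing $\theta=\theta_{\tilde{\mathcal{P}}}+\theta_{\tilde{\mathcal{P}}^\perp}$ for the projections onto the two subspaces, and using $\delta_{ij}=0$ on $E$, this gives
\begin{equation*}
\mathcal{R}(\delta+\theta)=\sum_{(i,j)\in E}\Vert\theta_{ij}\Vert_2+\sum_{(i,j)\in E^c}\Vert\delta_{ij}+\theta_{ij}\Vert_2 = \mathcal{R}(\theta_{\tilde{\mathcal{P}}})+\mathcal{R}(\delta+\theta_{\tilde{\mathcal{P}}^\perp}),
\end{equation*}
where the second equality is an honest additive split because the two pieces occupy disjoint edge groups. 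As a cross-check I would also run the dual-norm argument: using the dual $\mathcal{R}^*(z)=\max_{(i,j)}\Vert z_{ij}\Vert_2$ established above, I would pick normalized subgradients $z_\delta\in\partial\mathcal{R}(\delta)$ and $z_\theta\in\partial\mathcal{R}(\theta)$ supported on the $E^c$- and $E$-blocks respectively, note that their disjoint supports force $\mathcal{R}^*(z_\delta+z_\theta)\le 1$, and read off $\mathcal{R}(\delta+\theta)\ge\langle z_\delta+z_\theta,\delta+\theta\rangle$ with the cross terms $\langle z_\delta,\theta_{\tilde{\mathcal{P}}}\rangle$ and $\langle z_\theta,\delta\rangle$ vanishing; combined with the triangle inequality this pins down the additivity.

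The one step carrying all the content — and the step I expect to be the main obstacle — is the $E^c$-block $\sum_{E^c}\Vert\delta_{ij}+\theta_{ij}\Vert_2$. The triangle inequality gives ``$\le$'' on each term unconditionally, but equality in the block Euclidean norm holds only when $\delta_{ij}$ and $\theta_{ij}$ are nonnegatively parallel, so without further structure the $E^c$-sum does not separate into $\sum_{E^c}\Vert\delta_{ij}\Vert_2+\sum_{E^c}\Vert\theta_{ij}\Vert_2$. This is exactly the place where the model-subspace structure of $\tilde{\mathcal{P}}$ does the work: in the decomposability identity the penalized argument carries no mass on the $E^c$-blocks, i.e. $\theta_{\tilde{\mathcal{P}}^\perp}=0$, so $\delta$ and $\theta$ occupy disjoint groups and each $E^c$-term reduces cleanly to $\Vert\delta_{ij}\Vert_2$. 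Then the first sum above is $\mathcal{R}(\theta)$ and the second is $\mathcal{R}(\delta)$, yielding $\mathcal{R}(\delta+\theta)=\mathcal{R}(\delta)+\mathcal{R}(\theta)$. I would close by remarking that this disjoint-group-support mechanism is the general reason group-structured norms are decomposable with respect to any union of their constituent groups, here the union indexed by $E$.
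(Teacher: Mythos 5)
Your proof is correct, and in fact the paper offers no proof of this proposition at all --- it is stated as a standard fact about group-separable norms --- so there is no ``paper argument'' for you to diverge from; your disjoint-group-support computation is exactly the standard one. The substance of your write-up is the observation that each penalized block $\theta_{ij}$ lies wholly inside $\tilde{\mathcal{P}}$ (when $(i,j)\in E$) or wholly inside $\tilde{\mathcal{P}}^\perp$ (when $(i,j)\in E^c$), so that for $\delta\in\tilde{\mathcal{P}}^\perp$ and $\theta\in\tilde{\mathcal{P}}$ the sum $\sum_{i<j}\Vert\delta_{ij}+\theta_{ij}\Vert_2$ splits term by term into $\sum_{E^c}\Vert\delta_{ij}\Vert_2+\sum_{E}\Vert\theta_{ij}\Vert_2=\mathcal{R}(\delta)+\mathcal{R}(\theta)$. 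The dual-norm cross-check is also valid (the cross terms vanish and the triangle inequality closes the gap), but it is redundant given the direct computation.

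One point deserves emphasis, and you handled it well: the proposition as printed, with $\theta\in\mathcal{P}$ arbitrary, is false. Since $\mathcal{P}=\mathbb{R}^M$ for the exponential series family, taking $\theta=-\delta$ for any nonzero $\delta\in\tilde{\mathcal{P}}^\perp$ gives $\mathcal{R}(\delta+\theta)=0$ while $\mathcal{R}(\delta)+\mathcal{R}(\theta)=2\mathcal{R}(\delta)>0$. The hypothesis must be $\theta\in\tilde{\mathcal{P}}$, which is the definition of decomposability in \citep{negahban2012unified} and is the form the paper actually uses later (its estimation-error lemma applies the decomposability bound at the information projection $\tilde{\theta}$, which does lie in $\tilde{\mathcal{P}}$). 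You identified this as the step ``carrying all the content'' and restricted to $\theta_{\tilde{\mathcal{P}}^\perp}=0$ accordingly, so your proof establishes precisely the corrected statement that the paper needs.
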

The following proposition characterizes the \emph{subspace compatibility constant} for $\mathcal{R}$, which is necessary in the proofs.
\begin{prop}
\begin{align}
	\sup_{u\in\tilde{\mathcal{P}} \backslash \{0\}}\frac{\mathcal{R}(u)}{\Vert u \Vert} &\leq \sqrt{\left| E\right|}.
\end{align}
\end{prop}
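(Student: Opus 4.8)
The plan is to collapse the supremum into a single application of the Cauchy--Schwarz inequality, exploiting the group structure of $\mathcal{R}$ together with the support restriction that defines $\tilde{\mathcal{P}}$.

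First I would fix an arbitrary $u\in\tilde{\mathcal{P}}\setminus\{0\}$ and unpack $\mathcal{R}(u)=\sum_{i<j}\Vert u_{ij}\Vert_2$. By the definition of $\tilde{\mathcal{P}}$ we have $u_{ij}=0$ for every $(i,j)\notin E$, so all the off-support group norms vanish and the sum collapses to $\mathcal{R}(u)=\sum_{(i,j)\in E}\Vert u_{ij}\Vert_2$. This is precisely where the cardinality $|E|$ enters: $\mathcal{R}(u)$ is now the $\ell_1$ norm of a vector of length $|E|$ whose entries are the group norms $\Vert u_{ij}\Vert_2$.

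Next I would apply Cauchy--Schwarz to this $|E|$-term sum, writing each summand as $1\cdot\Vert u_{ij}\Vert_2$:
\begin{align}
	\sum_{(i,j)\in E}\Vert u_{ij}\Vert_2 \;\le\; \sqrt{\sum_{(i,j)\in E} 1}\,\sqrt{\sum_{(i,j)\in E}\Vert u_{ij}\Vert_2^2} \;=\; \sqrt{|E|}\,\sqrt{\sum_{(i,j)\in E}\Vert u_{ij}\Vert_2^2}.
\end{align}
The remaining step is to recognize that $\sum_{(i,j)\in E}\Vert u_{ij}\Vert_2^2=\Vert u\Vert^2$, again because $u$ carries no mass outside $E$, so the right-hand side equals exactly $\sqrt{|E|}\,\Vert u\Vert$. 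Dividing by $\Vert u\Vert>0$ and taking the supremum over all $u\in\tilde{\mathcal{P}}\setminus\{0\}$ yields the claimed bound.

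There is no genuine obstacle here; the only point worth pinning down is the convention that $\Vert\cdot\Vert$ denotes the Euclidean ($\ell_2$) norm of the vectorized parameters, so that the Pythagorean identity $\sum_{(i,j)\in E}\Vert u_{ij}\Vert_2^2=\Vert u\Vert^2$ holds and the individual group norms assemble into the global norm. Once that is fixed, this is just the standard computation of the subspace compatibility constant for a group-$(1,2)$ penalty, and the bound is in fact tight: it is attained by any $u$ whose group norms $\Vert u_{ij}\Vert_2$ are equal across all $(i,j)\in E$, which is the equality case of Cauchy--Schwarz.
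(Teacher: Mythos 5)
Your proof is correct and is exactly the standard Cauchy--Schwarz computation that the paper relies on (the proposition is stated without an explicit proof and is invoked in the appendix in the form $\mathcal{R}(\delta_{\tilde{\mathcal{P}}})\leq \sqrt{|E|}\,\Vert\delta_{\tilde{\mathcal{P}}}\Vert$). The one convention you flag is the right one to pin down: since $\mathcal{R}$ only penalizes edge groups, $\sum_{(i,j)\in E}\Vert u_{ij}\Vert_2^2$ equals $\Vert u\Vert^2$ when $\Vert\cdot\Vert$ is the Euclidean norm of the edge parameters (and is $\leq\Vert u\Vert^2$ if vertex parameters are included), so the bound holds either way.
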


We will state our main theoretical result for the regularized exponential series MLE. A full derivation of the results are in Appendix \ref{proofs}.

We start with three assumptions:
\begin{assump}
\emph{Haar Condition}: Any truncated collection of basis elements$\{\bar{\phi}\}$
 is linearly independent $\nu-a.e.$.
\end{assump}
\begin{assump} \label{assump2}
The univariate basis functions satisfy for each $k$, $\left|\phi_k\right| \leq b(k)=O(k^\alpha)$ for some $\alpha\geq 0$.
\end{assump}

\begin{assump} \label{assump3}
		 For each $i,j\in V$,
	\begin{align}
		\underline{\epsilon} & \leq p_{ij}(x_i,x_j) \leq \bar{\epsilon},
		\end{align}
		for absolute constants $\underline{\epsilon}>0,\bar{\epsilon}<\infty$.
\end{assump}

These assumptions are mild. Many  bases satisfy assumptions (1) and (2), such as the standard polynomial or trigonometric bases. For the orthonormal Legendre basis, $\left|\phi_k\right| \leq \sqrt{2k+1}$, so it satisfies Assumption \ref{assump2} with $\alpha=\frac{1}{2}$. The use of an overcomplete basis creates some statistical difficulties as the resulting truncated exponential family is no longer minimal. Assumption \ref{assump3} is mild for density estimation as we only require boundedness of the bivariate marginals of $p$ rather than of $p$ itself.

The natural first question is whether the problem \eqref{mlsol} has a solution at all, and if so, how many solutions. We begin by showing the existence and uniqueness of \eqref{mlsol}.
\begin{lem} \label{uniqueness}
Suppose $n>m_1$. The solution \eqref{mlsol} exists and is unique with probability one.
\end{lem}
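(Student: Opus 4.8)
The plan is to separate existence from uniqueness, working with the convex objective in \eqref{mlsol} written as
\[
g(\theta) = -\langle\theta,\widehat\mu\rangle + Z(\theta) + \lambda\mathcal{R}(\theta_e), \qquad \widehat\mu = \frac{1}{n}\sum_{r=1}^n\phi(X^r),
\]
which is finite on all of $\mathbb{R}^M$ (since $\mathcal{P}=\mathbb{R}^M$ by regularity) and continuous. Uniqueness I would obtain for free from strict convexity: under the Haar condition (Assumption 1) the truncated family is minimal, so by Lemma \ref{conn1}(\ref{strongconv}) $Z$ is strictly convex; adding the linear term $-\langle\theta,\widehat\mu\rangle$ preserves this and adding the convex norm $\lambda\mathcal{R}$ cannot destroy it, so $g$ has at most one minimizer. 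This part is deterministic. The hypothesis $n>m_1$ and all probabilistic content enter only through existence.

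For existence I would show $g$ is coercive, equivalently that its recession function $g_\infty$ is strictly positive on every nonzero direction $\delta$. Decomposing $g$ into its three summands (each finite everywhere, so recession functions add) and using that $\mathcal{R}$ is positively homogeneous together with the Laplace-principle limit $Z(s\delta)/s\to\operatorname*{ess\,sup}_x\langle\delta,\phi(x)\rangle$,
\[
g_\infty(\delta) = Z_\infty(\delta) - \langle\delta,\widehat\mu\rangle + \lambda\mathcal{R}(\delta_e), \qquad Z_\infty(\delta) = \operatorname*{ess\,sup}_x\langle\delta,\phi(x)\rangle .
\]
Since each sample gives $\langle\delta,\phi(X^r)\rangle\le Z_\infty(\delta)$, averaging yields $Z_\infty(\delta)-\langle\delta,\widehat\mu\rangle\ge 0$, so $g_\infty(\delta)\ge 0$ always and I only need to exclude equality. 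If $\delta_e\ne 0$ then $\mathcal{R}(\delta_e)>0$ forces $g_\infty(\delta)>0$ deterministically. The only remaining directions are the unregularized ones $\delta=(\delta_v,0)$ with $\delta_v\ne0$, where I must show the strict inequality $Z_\infty(\delta)>\langle\delta,\widehat\mu\rangle$.

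For such a direction, writing $h_i(x_i)=\sum_{k\le m_1}(\delta_v^{(i)})_k\phi_k(x_i)$, separability gives $Z_\infty(\delta)=\sum_i\sup_{x_i}h_i$ and $\langle\delta,\widehat\mu\rangle=\sum_i\frac{1}{n}\sum_r h_i(X_i^r)$, so equality would force, at some node $i$ with $\delta_v^{(i)}\ne0$, that every sample $X_i^r$ attains $\max h_i$. Equivalently, the nonzero function $\psi=\sum_{k\le m_1}(\delta_v^{(i)})_k\phi_k-\max h_i$ lies in the $(m_1+1)$-dimensional span of $\{\phi_1,\dots,\phi_{m_1},1\}$ and vanishes at all $n$ points $X_i^1,\dots,X_i^n$. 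I would rule this out by showing the $n\times(m_1+1)$ matrix with rows $(\phi_1(X_i^r),\dots,\phi_{m_1}(X_i^r),1)$ has full column rank $m_1+1$ with probability one when $n>m_1$, so no such $\psi$ exists at any node; a union over the finitely many nodes keeps probability one, and combining with continuity gives attainment of the minimum.

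The main obstacle is precisely this last full-rank claim, which is exactly where $n>m_1$ (that is, $n\ge m_1+1$, matching the adjoined constant column) is needed. The argument is that the Haar condition guarantees the generalized Wronskian/Gram determinant $\det[\phi_l(y_r)]_{r,l\le m_1+1}$ (with the constant function adjoined) is not identically zero as a function of $(y_1,\dots,y_{m_1+1})$; since the $X_i^r$ are drawn from a density bounded away from zero on $[0,1]$, the zero set of this determinant has Lebesgue measure zero, so some $(m_1+1)$-subset of the samples yields a nonsingular submatrix almost surely. The delicate point is upgrading linear independence $\nu$-a.e.\ (a statement about functions) to almost-sure non-vanishing of this determinant over sample configurations; the remaining steps are routine convex-analytic bookkeeping of recession functions.
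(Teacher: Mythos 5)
Your uniqueness argument is the same as the paper's: minimality under the Haar condition gives strict convexity of $Z$, hence of the whole objective, and this part is deterministic. For existence you take a genuinely different route. The paper first uses the penalty to confine the edge parameters to a bounded set, then reduces existence to solvability of the vertex moment equations $\widehat{\mu}_v=\mu_v(\widehat{\theta})$, which by the surjectivity of $\nabla Z$ onto $\text{int}\,\mathcal{M}$ (Lemma \ref{conn1}) holds precisely when $\widehat{\mu}_v\in\text{int}\,\mathcal{M}_v$; it then outsources the probabilistic content --- that this interiority holds almost surely once $n>m_1$ --- to the citation of Crain (1976). You instead prove coercivity directly through recession functions, using $Z_\infty(\delta)=\operatorname{ess\,sup}_x\langle\delta,\phi(x)\rangle$ and positive homogeneity of $\mathcal{R}$ to dispose of all directions with $\delta_e\neq 0$, and you correctly reduce the remaining vertex-only directions to the strict inequality $\operatorname{ess\,sup} h_i>\frac{1}{n}\sum_r h_i(X_i^r)$, which is exactly the statement $\widehat{\mu}_v\in\text{int}\,\mathcal{M}_v$. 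This is a more self-contained and more transparent derivation of the equivalence the paper invokes implicitly, and it makes visible where $n>m_1$ enters.

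The step you flag as delicate is a real gap, and it is worth being precise about why. Under the paper's Assumption 1 --- linear independence merely $\nu$-a.e. --- it is \emph{not} true that the generalized Vandermonde determinant $\det[\phi_l(y_r)]$ vanishes only on a Lebesgue-null set of configurations. Take $\phi_1=1_{[0,1/2]}$ adjoined to the constant function: these are linearly independent in $L_2[0,1]$, yet the $2\times 2$ determinant $\phi_1(y_1)-\phi_1(y_2)$ vanishes whenever both points land in the same half of $[0,1]$, an event of positive probability for every finite $n$. So the inference ``determinant not identically zero, density bounded below, hence almost-surely nonsingular'' needs an additional hypothesis: either analyticity of the basis functions (so that the determinant, as a nontrivial real-analytic function of the configuration, has a null zero set) or the classical Chebyshev-system form of the Haar condition (determinant nonzero at every tuple of distinct points). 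Both hold for the polynomial and trigonometric bases the paper actually works with, and the paper has the identical dependence hidden inside the reference to Crain. With that hypothesis made explicit, your argument closes and is otherwise sound.
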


\subsection{Risk Consistency}

We now present consistency of $p_{\widehat{\theta}}$ in terms of the $\text{KL}$ risk.

\begin{thm}\label{riskthm}
Suppose that the regularization parameter is chosen to be 
\begin{align}
\lambda_n\asymp\sqrt{\frac{m_2^{2+4\alpha}\log(m_2d)}{n}},
\end{align}
 and the truncation parameters satisfy
\begin{align}
	m_1= \Omega\left(d^{\frac{1}{r-\alpha-1/2}}\right), \\
	m_2 = \Omega\left(\left| E\right|^{\frac{1}{r-\alpha-1/2}}\right).
\end{align}
 Then the regularized exponential series MLE $\widehat{\theta}_\lambda$ satisfies
	\begin{align}
		\text{KL}(p \mid p_{\widehat{\theta}}) = O_p\left( m_2^{-2r}\left| E\right|+m_1^{-2r}d + \frac{m_2^{2+4\alpha} \log (d m_2)}{n}\left| E\right| + \frac{m_1}{n} d\right). \label{klrate}
	\end{align}
\end{thm}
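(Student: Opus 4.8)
The plan is to split the KL risk into a bias (approximation) part and a variance (estimation) part via the information-projection Pythagorean identity for exponential families. Let $p_{\theta^*_m}$ denote the member of the truncated family \eqref{exseries} whose mean parameters match those of $p$, i.e. the population MLE / $I$-projection of $p$ onto the truncated family, so that $\mathbb{E}_{p_{\theta^*_m}}[\phi]=\mathbb{E}_p[\phi]$. Because the family is exponential and the moments match, a direct computation with $Z$ gives the exact decomposition
\[
	\text{KL}(p\mid p_{\widehat\theta}) = \text{KL}(p\mid p_{\theta^*_m}) + \text{KL}(p_{\theta^*_m}\mid p_{\widehat\theta}),
\]
and I would bound the two pieces separately.

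For the approximation term I would invoke a Barron--Sheu-type lemma: when $\log(p/p_{\theta^*_m})$ is uniformly bounded, $\text{KL}(p\mid p_{\theta^*_m})$ is comparable to the squared $L_2(\nu)$ norm of the truncation tail of $\log p$. The required uniform boundedness is exactly where the growth conditions $m_1=\Omega(d^{1/(r-\alpha-1/2)})$ and $m_2=\Omega(|E|^{1/(r-\alpha-1/2)})$ enter: combined with the coefficient decay $|(\theta^*)^k_i|=o(k^{-r-1/2})$, $|(\theta^*)^{kl}_{ij}|=o(k^{-r_i-1/2}l^{-r_j-1/2})$ and the statistic growth $|\phi_k|=O(k^\alpha)$ of Assumption \ref{assump2}, they force the dropped tail, summed over $d$ vertices and $|E|$ edges, to be negligible. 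The size of the tail itself follows from the Sobolev bounds $\sum_k(\theta^{*k}_i)^2k^{2r}<\infty$ and $\sum_{k,l}(\theta^{*kl}_{ij})^2k^{2r_i}l^{2r_j}<\infty$: discarding terms beyond $m_1$ (resp.\ $m_2$) costs $O(m_1^{-2r})$ per vertex and $O(m_2^{-2r})$ per edge, producing the first two terms $m_1^{-2r}d+m_2^{-2r}|E|$.

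For the estimation term I would use that $\text{KL}(p_{\theta^*_m}\mid p_{\widehat\theta})$ equals the Bregman divergence $D_Z(\widehat\theta,\theta^*_m)=Z(\widehat\theta)-Z(\theta^*_m)-\langle\widehat\theta-\theta^*_m,\nabla Z(\theta^*_m)\rangle$, which by the strict convexity of $Z$ (Lemma \ref{conn1}, part \ref{strongconv}) is sandwiched between constant multiples of $\Vert\widehat\theta-\theta^*_m\Vert_2^2$ through restricted eigenvalues of $\text{cov}_\theta[\phi]$. It then remains to bound $\Vert\widehat\theta-\theta^*_m\Vert_2^2$ via the decomposable-regularizer M-estimation machinery. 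This needs (i) restricted strong convexity of the empirical negative log-likelihood near $\theta^*_m$, inherited from the population Fisher information via the minimality/Haar condition and a uniform Hessian concentration, and (ii) the gradient condition $\mathcal{R}^*(\nabla\mathcal{L}(\theta^*_m))\le\lambda_n/2$ with high probability. Since $\nabla\mathcal{L}(\theta^*_m)=\widehat\mu-\mu(\theta^*_m)$ is a centered empirical average and each edge block has $m_2^2$ entries bounded by $O(m_2^{2\alpha})$, a Bernstein bound on the per-block $\ell_2$ norms with a union bound over the $O(d^2)$ edges yields precisely $\lambda_n\asymp\sqrt{m_2^{2+4\alpha}\log(m_2 d)/n}$. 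Multiplying $\lambda_n^2$ by the subspace compatibility bound $|E|$ gives $O(m_2^{2+4\alpha}\log(dm_2)|E|/n)$ for the penalized edge block, while the unpenalized vertex block, of total dimension $m_1d$, contributes the usual parametric variance $O(m_1 d/n)$; these are the last two terms.

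The main obstacle I anticipate is controlling the restricted strong convexity constant uniformly as $m_1,m_2$ grow with $n$: the relevant smallest eigenvalue of the Fisher information can degrade with dimension, and transferring restricted strong convexity from the population Hessian $\text{cov}_{\theta^*_m}[\phi]$ to its empirical counterpart demands a concentration bound that is uniform over a neighborhood of $\theta^*_m$ and scales correctly in the statistic bound $m_2^{2\alpha}$. This is entangled with preserving the uniform boundedness of the log-density ratio used in the approximation step, so the truncation growth conditions must simultaneously suppress the approximation tail and keep the curvature nondegenerate.
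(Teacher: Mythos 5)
Your proposal follows essentially the same route as the paper: the Pythagorean decomposition at the information projection, the Barron--Sheu bound $\text{KL}(p\mid p_{\tilde\theta})\le \tfrac12 e^{2\gamma_m}\Delta_m^2$ with the truncation growth conditions keeping $\gamma_m$ bounded, and an estimation bound built from strong convexity of $Z$, decomposability of the group penalty with subspace compatibility $\sqrt{|E|}$, a Hoeffding-plus-union-bound choice of $\lambda_n$ for the edge blocks, and a second-moment (Markov) bound giving $m_1 d/n$ for the unpenalized vertex block. Your one anticipated obstacle is actually moot here: the Hessian of the exponential-family negative log-likelihood is $\nabla^2 Z(\theta)=\text{cov}_\theta[\phi]$, a deterministic population quantity, so no empirical-Hessian concentration is needed --- the paper simply invokes strong convexity of $-\mathcal{L}$ over a fixed compact neighborhood $\mathcal{S}$ of the target with constant $c_{\mathcal{S}}$.
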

\begin{cor}
The optimal choice of truncation dimensions $m_1,m_2$ is
\begin{align*}
	&m_1 \asymp \max\left\{ n^{\frac{1}{2r+1}},d^{\frac{1}{r-\alpha-1/2}}\right\}, \\
	&m_2 \asymp \max\left\{n^{\frac{1}{2r+2+4\alpha}},\left| E\right|^{\frac{1}{r-\alpha-1/2}}\right\}.
\end{align*}

Consider typical choices of $r=2$, $\alpha=\frac{1}{2}$ (such as the Legendre basis).
\begin{itemize}
\item
The dimension $d$ and edge cardinality $\left| E \right|$ may scale as
\begin{align}
	d= o(\sqrt{n}), \\
	\left| E\right| = o(n^{\frac{1}{5}}/\log(n)),
\end{align}	
with the risk still approaching zero as $n\rightarrow \infty$. 

\item Suppose that $d=O(n^{\frac{1}{5}})$ and $\left| E\right| =O(n^{\frac{1}{8}})$. Then by choosing
\begin{align}
	m_1\asymp n^{\frac{1}{5}}, \\
	m_2\asymp n^{\frac{1}{8}}, \\
	\lambda\asymp \frac{\log^{1/2}(nd)}{n^\frac{1}{4}},
\end{align}
the risk decreases as
\begin{align}
		\text{KL}(p \mid p_{\widehat{\theta}}) &= O_p\left(\frac{\left|E\right|\log(dn)}{n^{\frac{1}{2}}}+\frac{d}{n^{\frac{4}{5}}}\right).
\end{align}
\end{itemize}

\end{cor}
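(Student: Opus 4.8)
The plan is to treat this as a bias–variance balancing problem followed by substitution. The key observation is that the four terms of the risk bound \eqref{klrate} decouple into two groups that can be optimized independently: the pair $m_1^{-2r}d + \frac{m_1}{n}d$ depends on $m_2$ through nothing (only $m_1$, with common factor $d$), while the pair $m_2^{-2r}\left|E\right| + \frac{m_2^{2+4\alpha}\log(dm_2)}{n}\left|E\right|$ depends only on $m_2$ (with common factor $\left|E\right|$). I would therefore minimize each group separately over its own truncation parameter.

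For $m_1$ I would minimize $m_1^{-2r} + m_1/n$; differentiating, or applying AM--GM, the approximation bias $m_1^{-2r}$ balances the estimation variance $m_1/n$ when $m_1^{2r+1}\asymp n$, giving $m_1\asymp n^{1/(2r+1)}$. For $m_2$ I would minimize $m_2^{-2r} + \frac{m_2^{2+4\alpha}\log(dm_2)}{n}$; dropping the slowly varying logarithm to leading polynomial order, balancing $m_2^{-2r}$ against $m_2^{2+4\alpha}/n$ yields $m_2^{2r+2+4\alpha}\asymp n$, i.e. $m_2\asymp n^{1/(2r+2+4\alpha)}$. These unconstrained optima must then be reconciled with the admissibility floors $m_1=\Omega(d^{1/(r-\alpha-1/2)})$ and $m_2=\Omega(\left|E\right|^{1/(r-\alpha-1/2)})$ required by Theorem \ref{riskthm}; taking the larger of optimizer and floor gives the stated $m_1\asymp\max\{n^{1/(2r+1)},d^{1/(r-\alpha-1/2)}\}$ and $m_2\asymp\max\{n^{1/(2r+2+4\alpha)},\left|E\right|^{1/(r-\alpha-1/2)}\}$.

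For the specialization $r=2,\alpha=\frac12$, I would substitute $2r+1=5$, $2r+2+4\alpha=8$, and $r-\alpha-1/2=1$, so the floors become $d$ and $\left|E\right|$ and the unconstrained optima become $n^{1/5}$ and $n^{1/8}$. To obtain the consistency conditions (first bullet) I would examine the floor-dominated regime $m_1\asymp d$, $m_2\asymp\left|E\right|$: the bias terms become $d^{-3}$ and $\left|E\right|^{-3}$ (harmless), while the variance terms become $\frac{d^2}{n}$ and $\frac{\left|E\right|^5\log(d\left|E\right|)}{n}$, which vanish exactly when $d=o(\sqrt n)$ and $\left|E\right|^5\log n = o(n)$; the conservative sufficient condition $\left|E\right|=o(n^{1/5}/\log n)$ yields the latter. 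For the second bullet, with $d=O(n^{1/5})$ and $\left|E\right|=O(n^{1/8})$ the floors are inactive, so $m_1\asymp n^{1/5}$, $m_2\asymp n^{1/8}$ are admissible; substituting and using $m_2^{2+4\alpha}=m_2^4\asymp n^{1/2}$ gives $\lambda\asymp n^{-1/4}\log^{1/2}(nd)$ as prescribed, and collecting terms leaves $\frac{\left|E\right|\log(dn)}{n^{1/2}}$ (dominating $n^{-1/2}\left|E\right|$) and $\frac{d}{n^{4/5}}$ as the surviving contributions.

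The main obstacle is bookkeeping rather than depth: one must handle the logarithmic factor in the $m_2$ balance correctly (it perturbs the exact optimizer only by a poly-logarithmic amount, so it is safe to drop when extracting polynomial rates but must be retained in the final risk statement), and in each bullet one must verify which of the two regimes (optimizer-dominated versus floor-dominated) is active and confirm that the admissibility constraints of Theorem \ref{riskthm} hold before substituting. Some care is also needed to check that after substitution the two claimed surviving terms genuinely dominate the remaining two.
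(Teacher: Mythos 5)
Your proposal is correct and follows essentially the same route the paper takes implicitly: the corollary is a direct consequence of Theorem \ref{riskthm}, obtained by balancing the bias term against the variance term separately for $m_1$ and $m_2$, imposing the admissibility floors $m_1=\Omega(d^{1/(r-\alpha-1/2)})$ and $m_2=\Omega(|E|^{1/(r-\alpha-1/2)})$, and then substituting $r=2$, $\alpha=\tfrac12$ in the two regimes. Your bookkeeping in both bullets (including the $|E|^5\log(\cdot)/n$ check in the floor-dominated regime and the identification of the dominant terms in the second bullet) matches the stated conclusions.
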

\begin{remark}
The result of \ref{riskthm} holds uniformly for any set $B$ of pairwise densities $p$ with bounded Sobolev norm. In particular, assuming $\mid E \mid = o(n^{1/2})$ and $d=o(n^{4/5})$,

\begin{align}
	\lim_{t\rightarrow\infty}\lim_{n\rightarrow\infty}\sup_{p\in B} \mathbb{P}\left(\text{KL}(p \mid \widehat{p}) \geq \frac{\mid E\mid \log dn}{n^{\frac{1}{2}}} t + \frac{d}{n^{\frac{4}{5}}} t \right)= 0.
\end{align}
\end{remark}
\begin{remark}
 Theorem \ref{riskthm} shows that the risk of $p_{\widehat{\theta}}$ adapts to the unknown sparsity of $\theta^*$, in that the risk contains a factor of $\left| E\right|$ rather than $d^2$. However, this is not sufficient for model selection consistency, which requires further assumptions. In particular consistency in $\text{KL}$ risk does not require an \emph{incoherence} condition. We consider model selection in the next section.
\end{remark}
\subsection{Model Selection}

Our result for model selection consistency requires more stringent assumptions, in addition to those in the previous section. We denote the vector of truncated parameters by $\bar{\theta}^*$. We index the (infinite) vector of omitted parameters by $T$, so that the vector of parameters are $\theta^*_T$. We use the subscript $E$ to denote the collection of parameters of edges in $E$ (as well as all vertex parameters), and $E^c$ to denote parameters for edges in $E^c$.  Denote the covariance matrix of the sufficient statistics $\phi$ by
\begin{align}
		\Gamma := \text{cov}_{p}[\phi].
\end{align}
	
For two index sets $A,B$ denote $\Gamma_{AB}$ to be the cross-covariance between $\phi_A$ and $\phi_B$, $\text{cov}[\phi_A,\phi_B]$. \begin{assump}
For a constant $\kappa_\Gamma<\infty$,
	\begin{align}
		\Vert\Gamma_{EE}^{-1}\Vert_2 &\leq \frac{\kappa_\Gamma}{m\sqrt{d+\left| E\right|}}, \\
		\kappa_T := \Vert \Gamma_{ET}\Vert_{\infty}.
	\end{align}
\end{assump}
where $\Vert A\Vert_\infty=\max_i \sum_j \left| A_{ij}\right|$ here denotes the matrix $\infty$ norm and $\Vert A\Vert_2$ the matrix operator norm. Additionally, we define the following:
\begin{align}
	\bar{K}_{\theta^\dagger} := \mathbb{E}_{\theta^\dagger} \left[U \cdot(\bar{\phi}-\mathbb{E}_{\theta^\dagger}[\bar{\phi}])\right],
\end{align}

where 
\begin{align}
U=(\Vert \bar{\phi}_E-\mathbb{E}_{\theta^\dagger}[\bar{\phi}_E]\Vert_1+\Vert \phi_T-\mathbb{E}_{\theta^\dagger}[\phi_T]\Vert_1)^2,
\end{align}

and $\theta^\dagger:=\theta^*+z(\widehat{\theta}-\theta^*)$ and $z\in[0,1]$.
\begin{assump}
For some $\kappa_R<\infty$, for all $z\in[0,1]$ and all $\widehat{\theta}$ satisfying

\begin{align}
\widehat{\theta}_{E^c}=0, \\
\Vert\bar{\theta}^*_E-\widehat{\theta}_E\Vert_\infty &\leq 2\kappa_\Gamma\left(\Vert\widehat{\mu}_E-\mu^*_E\Vert_\infty + \lambda_n/m + (\kappa_T+1)\Vert\theta^*_T\Vert_\infty\right),
\end{align}

we have that
\begin{align}
	\Vert\bar{K}_{\theta^\dagger}\Vert_\infty &\leq \kappa_R\max\{b(m_2)^2,b(m_1)\}.
\end{align}
\end{assump}
This assumption may appear opaque so we will elaborate. If $p_{\theta}$ is bounded, the third central moment of the univariate statistic $\phi_k$ is

\begin{align}
	\mathbb{E}_\theta[ (\phi_k-\mathbb{E}_\theta[\phi_k])^3]
		&\leq b(k) \mathbb{E}_\theta[ (\phi_k-\mathbb{E}_\theta[\phi_k])^2] \\
		&\leq \bar{\epsilon}b(k)\intop \phi_k^2 \\
		&\leq \bar{\epsilon}b(k).
\end{align}

This holds similarly for bivariate sufficient statistics. $\bar{K}_{\theta^\dagger}$ is a sum of a third central moment of a sufficient statistic and the third cross-moments of that statistic with the other sufficient statistics. We thus require that the third cross central moments between sufficient statistics decay sufficiently rapidly, so that the sum is on the order as stated. In our proof, this factors in to the \emph{remainder term}, which is the bias from truncating the infinite expansion of the log density.
Finally, we have an \emph{irrepresentable condition}

\begin{assump}
\begin{align}
\max_{(i,j)\in E^c} \Vert \Gamma_{ij,E}\Gamma_{EE}^{-1}\Vert_2 \leq \frac{1-\tau}{\sqrt{d+E}}, && \text{for some } \tau\in (0,1].
\end{align}
\end{assump}
Here $\Vert A\Vert_2$ is the matrix operator norm. This condition is reminiscent of the irrepresentable condition for sparse additive models in \citep{ravikumar2009sparse}, in that it involves the operator norm rather than the matrix $\infty$ norm. It guarantees that no sets of variables in $E$ and $E^c$ are too strongly influenced.
In the following theorem we assume $\kappa_\Gamma,\kappa_T,\kappa_R$ grow as constants, though they are tracked in the supplementary lemmas. Finally we define $\rho^*=\min_{(i,j)\in E} \Vert\bar{\theta}^*\Vert_\infty$ to be the minimum $\infty$ norm of the edge parameters.
\begin{thm}\label{modelselect}
Denote $\widehat{E}$ to be the edge set learned from $\widehat{\theta}_{\lambda_n}$; $\widehat{E}:=\{(i,j): \Vert\widehat{\theta}_{ij}\Vert_2=0\}$. If the truncation dimensions $m_1,m_2$ and regularization parameter $\lambda_n$ satisfy
 \begin{align}
 	&m_2\asymp n^{\frac{1}{2r+4\alpha+1}}, \\
	&m_1\asymp n^{\frac{1}{2r+4\alpha+1}}, \\
	&\lambda_n \asymp \sqrt{\frac{\log(nd)}{n^{\frac{2r-1}{2r+4\alpha+1}}}}
 \end{align}
 and suppose that the number of variables $d$ and $\rho^*$ satisfy
 \begin{align}
 	d=o\left(e^{n^{\frac{2r-1}{2r+4\alpha+1}}}\right), \\
	\frac{1}{\rho^*} = o\left(\sqrt{\frac{n^{\frac{2r+1}{2r+1+4\alpha}}}{\log(nd)}}\right),
 \end{align}
 then
 \begin{align}
 	\mathbb{P}(\widehat{E}=E)\rightarrow 1.
 \end{align}
 \end{thm}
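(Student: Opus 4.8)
The plan is to follow the standard primal-dual witness (PDW) strategy for establishing support recovery in $\ell_1/\ell_{1,2}$-regularized $M$-estimation, adapted to the exponential series likelihood. The idea is to construct a candidate solution $\widehat\theta$ that is forced to satisfy the edge-sparsity constraint $\widehat\theta_{E^c}=0$, and then to certify that this restricted solution is in fact the unique global optimizer of the full problem \eqref{mlsol} with high probability. If the certification succeeds, then the true support is recovered, i.e. $\widehat E = E$, and the additional $\rho^*$ lower-bound condition ensures none of the genuine edges are erroneously zeroed out.

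The key steps, in order, are as follows. First I would set up the restricted program: minimize the penalized negative log-likelihood over the subspace $\tilde{\mathcal P}(E)$, yielding a solution $\widehat\theta$ with $\widehat\theta_{E^c}=0$. By Lemma \ref{uniqueness} and the strict convexity from Lemma \ref{conn1}(\ref{strongconv}), this restricted solution exists and is unique. Second, I would write the KKT/stationarity conditions for the full problem: $-(\widehat\mu_E-\mu(\widehat\theta)_E) + \lambda_n \widehat z_E = 0$ on $E$, where $\widehat z$ is a subgradient of $\mathcal R$, and then verify strict dual feasibility on $E^c$, namely $\mathcal R^*(\widehat z_{E^c}) < 1$ (equivalently $\max_{(i,j)\in E^c}\Vert \widehat z_{ij}\Vert_2 < 1$). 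Third, I would perform a Taylor expansion of the gradient of $Z$ around $\bar\theta^*$, using $\nabla^2 Z = \text{cov}[\phi] = \Gamma$ from Lemma \ref{conn1}, so that the deviation $\widehat\theta_E - \bar\theta^*_E$ is controlled via $\Gamma_{EE}^{-1}$ times the empirical gradient error plus a remainder. The remainder is the third-order term governed by $\bar K_{\theta^\dagger}$, which Assumption 5 bounds by $\kappa_R\max\{b(m_2)^2,b(m_1)\}$, and the truncation bias enters through $\Vert\theta^*_T\Vert_\infty$. Fourth, substituting this expansion into the $E^c$ block and applying the irrepresentable condition (Assumption 6) together with the bound on $\Vert\Gamma_{EE}^{-1}\Vert_2$ and $\kappa_T$ yields $\mathcal R^*(\widehat z_{E^c}) \le (1-\tau) + o(1) < 1$, establishing strict dual feasibility. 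Fifth, I would show the empirical gradient deviation $\Vert\widehat\mu_E - \mu^*_E\Vert_\infty$ concentrates: since the sufficient statistics are bounded by $b(k)=O(k^\alpha)$ (Assumption \ref{assump2}), a Bernstein or bounded-difference argument over the $O(d\, m_2^2)$ coordinates gives $\Vert\widehat\mu-\mu^*\Vert_\infty = O_p\big(b(m_2)\sqrt{\log(nd)/n}\big)$, which the stated scaling of $\lambda_n$ dominates. Finally, the $\rho^*$ condition guarantees that each true edge parameter exceeds the estimation error in $\ell_\infty$, so no true edge is set to zero, giving $\widehat E = E$ with probability tending to one.

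The main obstacle will be the third step: controlling the remainder term from the nonlinear Taylor expansion of the log-partition gradient while simultaneously juggling the truncation bias $\theta^*_T$ from the omitted (infinite) tail of the basis expansion. Unlike the finite-dimensional graphical lasso, here the model is misspecified at every finite truncation, so $\bar\theta^*$ is not a true stationary point and the bias $\Vert\theta^*_T\Vert_\infty = o(m_2^{-r-\alpha-1/2}\cdot\text{something})$ (from the Sobolev decay rates following \eqref{basis}) must be shown to be negligible relative to $\lambda_n/m$. The delicate balance is that $m_1,m_2$ must grow fast enough to kill the bias but slowly enough that the variance term $b(m_2)\sqrt{\log(nd)/n}$ and the remainder $\kappa_R b(m_2)^2$ stay controlled; the prescribed exponent $\tfrac{1}{2r+4\alpha+1}$ is precisely the value that equates these competing rates, so verifying that each piece indeed matches under this choice is where the bookkeeping is heaviest. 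A secondary subtlety is that the overcomplete/Haar setup makes the truncated family minimal only under Assumption 1, which is what makes $\Gamma_{EE}$ invertible and the PDW construction well-posed in the first place.
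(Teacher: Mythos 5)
Your proposal follows essentially the same route as the paper's proof: a primal--dual witness construction, a Hilbert-space Taylor expansion of $\nabla Z$ whose third-order remainder is controlled by $\bar{K}_{\theta^\dagger}$ and whose truncation bias enters through $\Vert\theta^*_T\Vert_\infty$, strict dual feasibility on $E^c$ via the irrepresentable condition, concentration of $\widehat{\mu}$, and a bias--variance balance that fixes $m_2$ and $\lambda_n$. One bookkeeping point matters, though: the bivariate statistics $\phi_k\phi_l$ are bounded by $b(k)b(l)\leq b(m_2)^2$, so the Hoeffding/union-bound step gives $\Vert\widehat{\mu}-\mu^*\Vert_\infty = O_p\big(b(m_2)^2\sqrt{\log(nd)/n}\big)=O_p\big(m_2^{2\alpha}\sqrt{\log(nd)/n}\big)$ rather than your $b(m_2)\sqrt{\log(nd)/n}$, and it is precisely this $m_2^{2\alpha}$ factor, balanced against the tail bias $m_2^{-r-1/2}$, that yields the exponent $\frac{1}{2r+4\alpha+1}$ in the theorem --- your stated rate would lead to a different truncation scaling. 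A smaller execution detail: because the remainder depends on $\Delta_E$ itself, the paper closes the $\ell_\infty$ bound on $\widehat{\theta}_E-\bar{\theta}^*_E$ with a Brouwer fixed-point argument on the map $F(\Delta_E)$ rather than a direct inversion of the linearized system.
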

 
\section{Discussion}
 
Proofs and supporting lemmas for this chapter may be found in Chapter 5, but we will briefly discuss the results here. In \citep{barron1991approximation}, the optimal choice of truncation for univariate exponential series approximation was $m_1\asymp n^{\frac{1}{2r+1}}$, and for the bivariate problem we have $m_2\asymp n^{\frac{1}{2r+2}}$. Our truncation is of a lower order. In our proofs, in order for our estimator to adapt to the unknown sparsity of $E$ we require exponential concentration for the sufficient statistics. To do this, we use Hoeffding's inequality, which gives that
\begin{align}
	\Vert\widehat{\mu}_{ij}-\mu_{ij}\Vert = O_p\left(\sqrt{\frac{m_2^{2+4\alpha}}{n}}\right).
\end{align}
\citep{barron1991approximation} use Chebyshev's inequality, gives a tighter bound of $\sqrt{\frac{m_2^2}{n}}$ but doesn't give exponential concentration.

Our model selection results require conditions on the covariance matrix of the sufficient statistics $\Gamma$, particularly an incoherence condition. This is natural; for example, for Gaussian graphical model selection, the same incoherence condition is required on the covariance matrix of the sufficient statistics; in this application the covariance has the simple expression $\Sigma\otimes\Sigma$ \citep{ravikumar2011high}. For the typical choices of $r=2$ and $\alpha=\frac{1}{2}$, the dimension may scale nearly exponentially with the sample size,
	\begin{align}
		d=o\left(e^{n^\frac{3}{7}}\right),
	\end{align}
and $\rho^*$ may scale as
\begin{align}
	\frac{1}{\rho^*} &= o\left(\frac{n^{5/14}}{\log^{1/2}(nd)}\right),
\end{align}
	with $\widehat{E}=E$ with probability approaching one. Observe that the $\text{KL}$ risk of $p_{\widehat{\theta}}$ may diverge rapidly while still having the correct sparsity pattern with high probability. For the parametric Gaussian graphical model the optimal rate is $o(e^n)$, \citep{ravikumar2011high} which is also the optimal rate for forests \citep{liu2012exponential}. The optimal choice of $m_2$ for model selection is larger than that for the risk analysis, so by \emph{oversmoothing} the edge potentials, we get better sample complexity for the model selection problem. This phenomenon was also found for graph selection for forests \citep{liu2012exponential}.
	
\chapter{Tree-Reweighted Variational Likelihood Approximation} \label{trwsec}

First-order optimization procedures for solving the regularized maximum likelihood problem requires evaluation of $Z(\theta)$ and its gradient $\nabla Z(\theta)=\intop_\mathcal{X}p_\theta(x) \phi(x) d\nu(x)$ . For a pairwise density on $d$ nodes, these computations still require a d-dimensional integral even when the graph is sparse. Using the junction tree algorithm \citep{koller2009probabilistic}, it is possible to factorize the joint density into terms which have no more variables than the \emph{treewidth} of the graph, making these calculations simpler. However, the treewidth of a graph may in general be large, and we are interested in procedures which work for general graphs.  

Monte Carlo methods are one popular approach for approximating partition functions \citep{gilbert1992global}. However, it may take a very long time for suitable convergence, and such methods generally don't provide finite-time bounds on the accuracy of the approximation.

Here we pursue a tree-reweighted variational approach \citep{wainwright2005new}, which replaces $Z(\theta)$ by a surrogate $Q(\theta)$. Our method has several key advantages:
\begin{enumerate}
	\item It computes an approximation to $Z(\theta)$ and its gradient $\nabla Z(\theta)$ all in one pass using a parallelizable message passing algorithm;
	\item $Q(\theta)$ is guaranteed to be an upper bound on $Z(\theta)$ for each $\theta$, with the tightness dictated by variational parameters;
	\item It is based on convex optimization of the variational parameters, so there are deterministic stopping criteria for computing $Q(\theta)$;
	\item $Q(\theta)$, like $Z(\theta)$, will be strictly convex in $\theta$, so there are criterion for global convergence of the approximate maximum likelihood.
\end{enumerate}

\section{Problem Formulation}

Recall that a density with a tree graph can be factorized in the form \eqref{treefac}. A density corresponding to a graph $G$ with cycles cannot in general be factorized. Instead we will consider the collection $\mathcal{T}$ of spanning trees of $G$. Consider an exponential family following the graph $G$ having natural parameters $\theta$. For a spanning tree $T\in\mathcal{T}$, consider the vector $\theta^T$  that obeys $T$: $\theta^T_{ij}=0$ if $(i,j)\not\in T$; in shorthand we write $\theta^T_{ij}$ to be the vector of parameters corresponding to edge $(i,j)$.

Now consider writing the parameter value $\theta$ as a convex combination of spanning tree parameters:
\begin{eqnarray*}
\theta & = & \sum_{T\in\mathcal{T}}\alpha_{T}\theta^{T},
\end{eqnarray*}
by the convexity of the log-partition function $Z\left(\theta\right)$,
we have
\begin{eqnarray}
Z\left(\theta\right) & \leq & \sum_{T\in\mathcal{T}}\alpha_{T}Z\left(\theta^{T}\right).
\end{eqnarray}

Now, we may form the tightest upper bound on the log-partition function
by solving
\begin{align}
Q(\theta, \alpha):= &\min_{\left\{ \theta^{T}\right\} _{T\in\mathcal{T}}}\left\{ \sum_{T\in\mathcal{T}}\alpha_{T}Z\left(\theta^{T}\right)\right\} \label{primal} \\
s.t. \qquad &\theta=\sum_{T}\alpha_{T}\theta^{T}. \notag
\end{align}

Observe that since $Z$ is a convex function, \eqref{primal} is convex in $\{\theta^T\}_{T\in\mathcal{T}}$, and the constraints are linear, so the problem is convex. However, the number of spanning trees of a general loopy graph $G$
could be very large, perhaps even super-exponential in the number of edges \citep{cayley1889theorem}, so the number of parameters to minimize over is
in general very large. Contrary to expectation, it is possible to efficiently solve this problem. To begin, we will look at the dual problem to \eqref{primal}.

\section{Solution to the Dual Problem}
For an edge $(i,j)$, write
\begin{equation}
	\alpha_{ij} = \sum_{T\in\mathcal{T}} \alpha_T 1\{(i,j)\in T \}.
\end{equation}
This is the \emph{edge appearance probability}: the probability edge $(i,j)$ is observed when drawing a spanning tree at random according to the distribution $\{\alpha_T\}$. The set of such edge appearance probability vectors $\{\alpha_{ij}\}$ which can be written as convex combinations of tree indicator vectors, is known as the \emph{spanning tree polytope}, which we denote $\mathcal{S}_\mathcal{T}$. Let
 \begin{eqnarray} 
	\tilde{\Pi} &=& \bigg\{\{q_i,q_{ij}\}: \intop_{\mathcal{X}_j} q_{ij}(x_i,x_j)dx_j=q_i(x_i), \intop_{\mathcal{X}_i\times\mathcal{X}_j} q_{ij}=1, \\
	&& \qquad \intop_{\mathcal{X}_i} q_i =1, q_{ij}\geq 0, q_i \geq 0, (i,j)\in E    \bigg\}, \notag \\
	\tilde{\mathcal{M}} &=& \bigg\{\mu: \exists \{q_i,q_{ij}\}\in\tilde{\Pi}:\mathbb{E}_{q_i}[\phi_i]=\mu_i,\mathbb{E}_{q_{ij}}[\phi_{ij}]=\mu_{ij},(i,j)\in E \bigg\}.
\end{eqnarray}
That is, $\tilde{\Pi}$ is the set of univariate and bivariate densities over $E$ which respect marginalization, and $\tilde{\mathcal{M}}$ is the set of mean parameters which can arise from elements of $\tilde{\Pi}$. To derive the dual problem to \eqref{primal}, we first define the Lagrangian
\begin{eqnarray}
	L(\{\theta^T\}_{T\in\mathcal{T}},\tau) &=& \sum_{T\in\mathcal{T}}\alpha_{T}Z\left(\theta^{T}\right) + \tau^\top \big( \theta - \sum_{T\in\mathcal{T}} \alpha_T \theta^T \big) \\
	&=& \langle \tau , \theta \rangle - \sum_{T\in\mathcal{T}} \alpha_T \big( \langle \tau, \theta^T \rangle - Z(\theta^T) \big)
\end{eqnarray}
To minimize $L$ with respect to $\theta^T$ for a given $T\in\mathcal{T}$, we set the associated derivative to zero: $\frac{\partial L}{\partial \theta^T}=0$. Denoting the optimum by $\theta^T_*$, the solution may be written in terms of the \emph{Fenchel conjugate} of $Z$  (section \ref{dualitysec}):
\begin{equation}
	Z_T^*(\tau) := \max_{\theta^T} \left\{\langle\tau,\theta^T\rangle - Z(\theta^T)\right\} =\langle \tau, \theta^T_* \rangle - Z(\theta^T_*).
\end{equation}
Recall that the dual of $Z$ for a tree-structured parametrization \eqref{treeent} takes the form
\begin{align}
	Z_T^*(\tau) &= \sum_{i\in V} H_i(\tau) - \sum_{(i,j)\in T} I_{ij}(\tau), &\tau\in\tilde{\mathcal{M}},
\end{align}
and so the dual to  \eqref{primal} is
\begin{equation}
\max_{\tau\in\tilde{\mathcal{M}}} \bigg\{ \langle\theta,\tau\rangle+\sum_{i\in V}H_{i}\left(\tau\right)-\sum_{\left(i,j\right)\in E}\alpha_{ij}I_{ij}\left(\tau\right) \bigg\}. \label{dual}
\end{equation}

For some distributions, such as discrete pairwise models and Gaussian models, the entropy and mutual information have a closed form and \eqref{dual} can be solved explicitly. Unfortunately, for continuous models there is typically no such expression. In the following section we show that \eqref{dual} is equivalent to a functional optimization problem, which we solve using message passing.

\section{Functional Message Passing}

Observe that the dual problem finds an optimum over the space of mean parameters realizable by distributions in $\tilde{\Pi}$, so \eqref{dual} is equivalent to the following functional optimization:
\begin{align}
\max_{q \in\tilde{\Pi}} & \bigg\{\sum_{i\in V}\bigg(\langle \theta_{i} , \mathbb{E}_{q_{i}}[\phi_i]\rangle -\mathbb{E}_{q_i}[\log q_i]\bigg)\nonumber \\
 & + \sum_{(i,j)\in E} \bigg(\langle \theta_{ij} , \mathbb{E}_{q_{ij}}[\phi_{ij}] \rangle - \alpha_{ij}\mathbb{E}_{q_{ij}}\bigg[\log\bigg(\frac{q_{ij}}{q_i q_j}\bigg)\bigg]\bigg)\bigg\} \label{eq:1}
\end{align}

If $q_{i}^{*},q_{ij}^{*}$ are solutions to \eqref{eq:1},
the solution to the dual problem \eqref{dual} is given by
\begin{align}
\tau_{i}^{*}(\theta) & =  \mathbb{E}_{q_{i}^{*}}[\phi_i], &i\in V, \label{pseudomoments} \\
\tau_{ij}^{*}(\theta) & =  \mathbb{E}_{q_{ij}^{*}}[\phi_{ij}],\qquad&(i,j)\in E.
\end{align}
The optimization \eqref{eq:1} is an optimization of a convex functional
over a space of linear functional constraints $\tilde{\Pi}$. Any solution to the stationary conditions of the associated Lagrangian will thus correspond to a global optimum. We may derive the stationary conditions using standard arguments from calculus of variations. Write the constraints as
\begin{eqnarray}
C_{i}\left(q_{i}\right) & = & 1-\intop q_{i}\left(x_{i}\right)dx_{i}\\
C_{ij}\left(x_{j},q_{ij}\right) & = & q_{j}\left(x_{j}\right)-\intop q_{ij}\left(x_{i},x_{j}\right)dx_{i}, \\
C_{ji}((x_i,q_{ij}) &=& q_i(x_i) - \intop q_{ij}(x_i,x_j)dx_j,
\end{eqnarray}
and let $\eta_{i},\eta_{ij}\left(x_{j}\right),\eta_{ji}(x_i)$ be the Lagrange
multipliers associated with these constraints. The second and third multipliers are real-valued functions. This gives us the stationary conditions
\begin{eqnarray}
\log q_{i}\left(x_{i}\right) & = & \langle \theta_{i},\phi_i(x_i)\rangle +\sum_{r\in N\left(i\right)}\eta_{ri}\left(x_{i}\right)+\eta_{i},\\
\alpha_{ij}\log \frac{q_{ij}(x_{i},x_{j})}{q_i(x_i)q_j(x_j)}) & = & \langle\theta_{ij},\phi_{ij}(x_i,x_j)\rangle-\eta_{ji}\left(x_{i}\right)-\eta_{ij}\left(x_{j}\right),
\end{eqnarray}
we may simplify the second condition to get
\begin{align}
\log q_{ij}\left(x_{i},x_{j}\right) & = \eta_{i}+\eta_{j}+\langle\theta_{ij},\phi_{ij}(x_i,x_j)\rangle/\alpha_{ij}\notag\\
   & \qquad+\langle\theta_{i},\phi_{i}(x_i)\rangle +\langle\theta_{j},\phi_{j}(x_j)\rangle \notag\\
   & \qquad+\sum_{r\in N\left(i\right)\backslash j}\eta_{ri}\left(x_{i}\right)/\alpha_{ij}+\sum_{r\in N\left(j\right)\backslash i}\eta_{rj}\left(x_{j}\right)/\alpha_{ij}. 
\end{align}

For each $\left(i,j\right)\in E$ , we define a message $M_{ij}:\mathcal{X}_{j}\rightarrow\mathbb{R}$,
and $M_{ji}:\mathcal{X}_{i}\rightarrow\mathbb{R}$, by
\begin{eqnarray}
M_{ij}\left(x_{j}\right) & = & e^{\eta_{ij}\left(x_{j}\right)/\alpha_{ij}}, \\
M_{ji}\left(x_{i}\right) & = & e^{\eta_{ji}\left(x_{i}\right)/\alpha_{ij}},
\end{eqnarray}
so that the solution to \eqref{eq:1} takes the form
\begin{eqnarray}
q_{i}\left(x_{i}\right) & \propto & \exp\{\langle\theta_{i},\phi_i\rangle\}\prod_{j\in N\left(i\right)}M_{ji}\left(x_{i}\right)^{\alpha_{ji}},\\
q_{ij}\left(x_{i},x_{j}\right) & \propto & \exp\{\langle \theta_{ij},\phi_{ij}\rangle/\alpha_{ij}+\langle \theta_i,\phi_i\rangle+\langle\theta_j,\phi_j\rangle\}\\
 &  & \times\frac{\prod_{r\in N\left(i\right)\backslash j}M_{ri}\left(x_{i}\right)^{\alpha_{ri}}\prod_{r\in N\left(j\right)\backslash i}M_{rj}\left(x_{j}\right)^{\alpha_{rj}}}{M_{ji}\left(x_{i}\right)^{1-\alpha_{ji}}M_{ij}\left(x_{j}\right)^{1-\alpha_{ij}}}.\nonumber \label{pseudo}
\end{eqnarray}

\begin{figure}
\centering
\includegraphics[scale=.65]{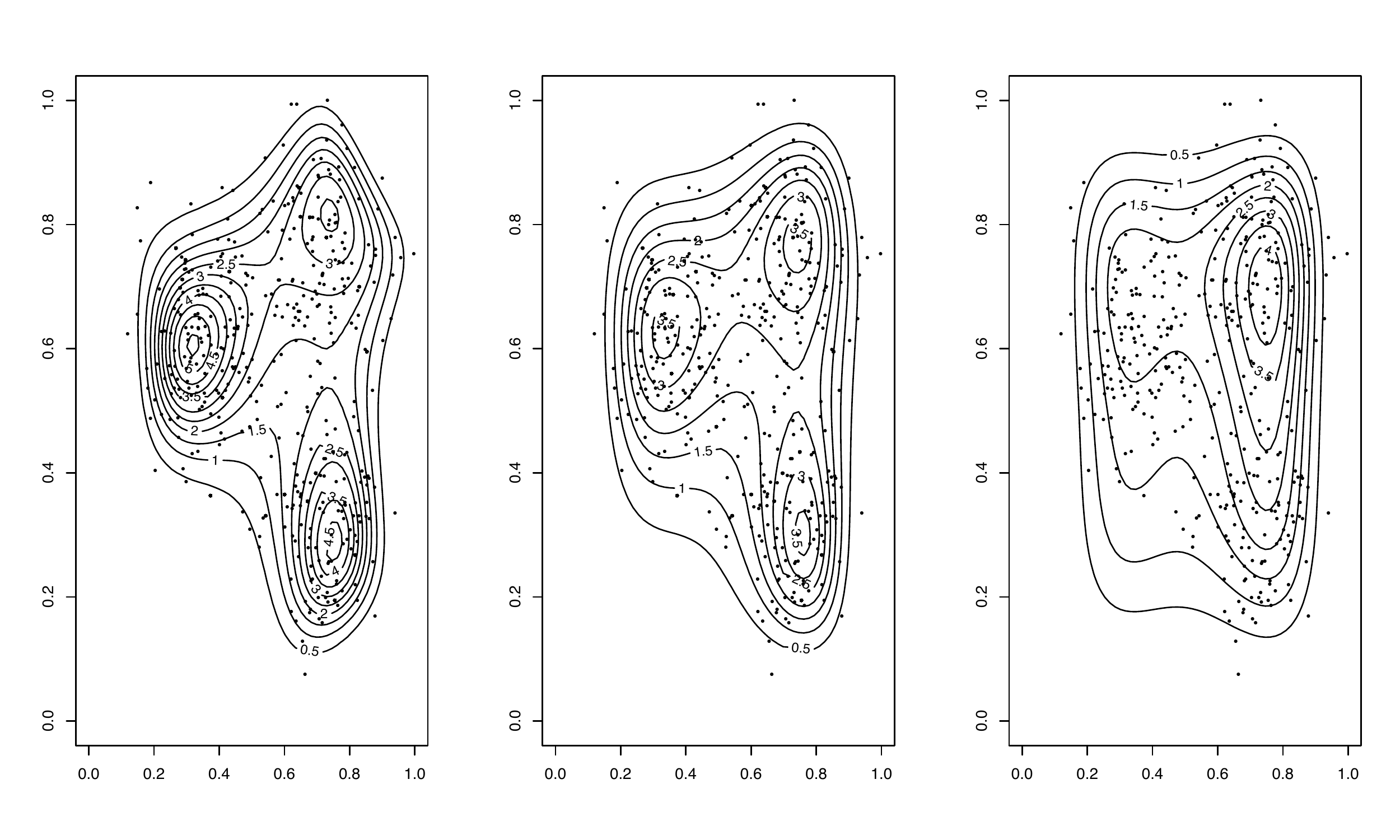}
\caption{Bivariate pseudo densities under increasing regularization. Left plot is unregularized; right plot is fully regularized (dimensions are independent). Simulated data is a mixture of three spherical Gaussians.}
\end{figure}

These are \emph{pseudodensities}: they are valid densities which obey the marginalization constraints, but they may not together correspond to the marginal distributions of any higher-dimensional joint distribution. By enforcing the marginalization constraints for $\{q_i\}$, $\{q_{ij}\}$, we find that the messages follow the fixed-point conditions
\begin{align}
M_{ij}\left(x_{j}\right) & \propto  \intop_{\mathcal{X}_{i}} \exp\{\langle \theta_{ij},\phi_{ij}\rangle/\alpha_{ij}+\langle\theta_i,\phi_i\rangle\}
\frac{\prod_{r\in N\left(i\right)\backslash j}M_{ri}\left(x_{i}\right)^{\alpha_{ri}}}{M_{ji}\left(x_{i}\right)^{1-\alpha_{ji}}}dx_{i} \notag\\
&= \intop_{\mathcal{X}_{i}} \exp\{\langle \theta_{ij},\phi_{ij}\rangle/\alpha_{ij}\rangle\} \left\{\frac{q_i(x_i)}{M_{ji}(x_i)}\right\} dx_i.
\end{align}
To find the fixed point corresponding to the pseudomarginal densities
we run the algorithm in figure \ref{funcbp} to convergence:

\begin{figure}
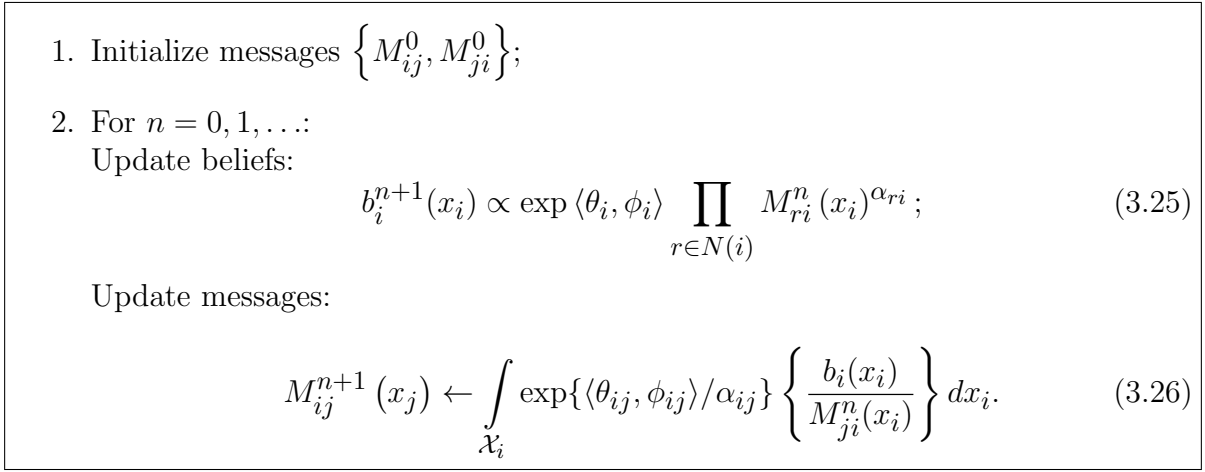
 \label{funcbp}
\begin{center}
\framebox{\begin{minipage}[t]{.95\columnwidth}

\begin{enumerate}
\item Initialize messages $\left\{ M_{ij}^{0},M_{ji}^0\right\}$;

\item For $n=0,1,\ldots$: \\
Update beliefs:
\begin{equation}
	b_i^{n+1}(x_i) \propto \exp{\langle \theta_i , \phi_i \rangle }\prod_{r\in N\left(i\right)}M_{ri}^{n}\left(x_{i}\right)^{\alpha_{ri}};
\end{equation}
Update messages:
\begin{equation}
M_{ij}^{n+1}\left(x_{j}\right)  \leftarrow  \intop_{\mathcal{X}_{i}} \exp\{\langle \theta_{ij},\phi_{ij}\rangle/\alpha_{ij}\}
\left\{\frac{b_i(x_i)}{M_{ji}^{n}(x_i)}\right\} dx_i.
\end{equation}
\end{enumerate}
\end{minipage}}
\caption{Functional Belief Propagation}
\end{center}
\end{figure}

The beliefs are normalized to integrate to 1, so they correspond to a proper density. At convergence, the beliefs $\{b_i\}$ correspond to the univariate pseudodensities $\{q_i^*\}$. The message updates can be performed in parallel. Furthermore, more elaborate schedules exist which may speed up convergence. For example, updates can be formed dynamically. Also, message updates corresponding to beliefs which have reached convergence can be skipped. See \citep{gonzalez2011parallel} for a treatment on different parallel scheduling methods. If the fixed point updates do converge, they will converge to the unique fixed point corresponding to the global minimum of \eqref{dual}. The fixed-point updates are not guaranteed to converge. In our experiments we only encountered stability issues after taking too large of a step in the ISTA algorithm for estimation, resulting in an unstable candidate step; if we encounter a convergence problem we simply take a smaller step. As such we don't find the need for damping or other techniques to encourage convergence.

To perform message passing in practice we discretized messages and approximated integrals using a Riemann sum approximation. We found this to give very accurate results in experiments. Other approximations for continuous message passing exist \citep{JMLR:noorshams13a,sudderth2010nonparametric} which could be more memory and computation efficient for large problems.

\section{Optimizing Edge Weights} \label{frankwolfe}
\begin{figure}
\centering
\includegraphics[scale=.8]{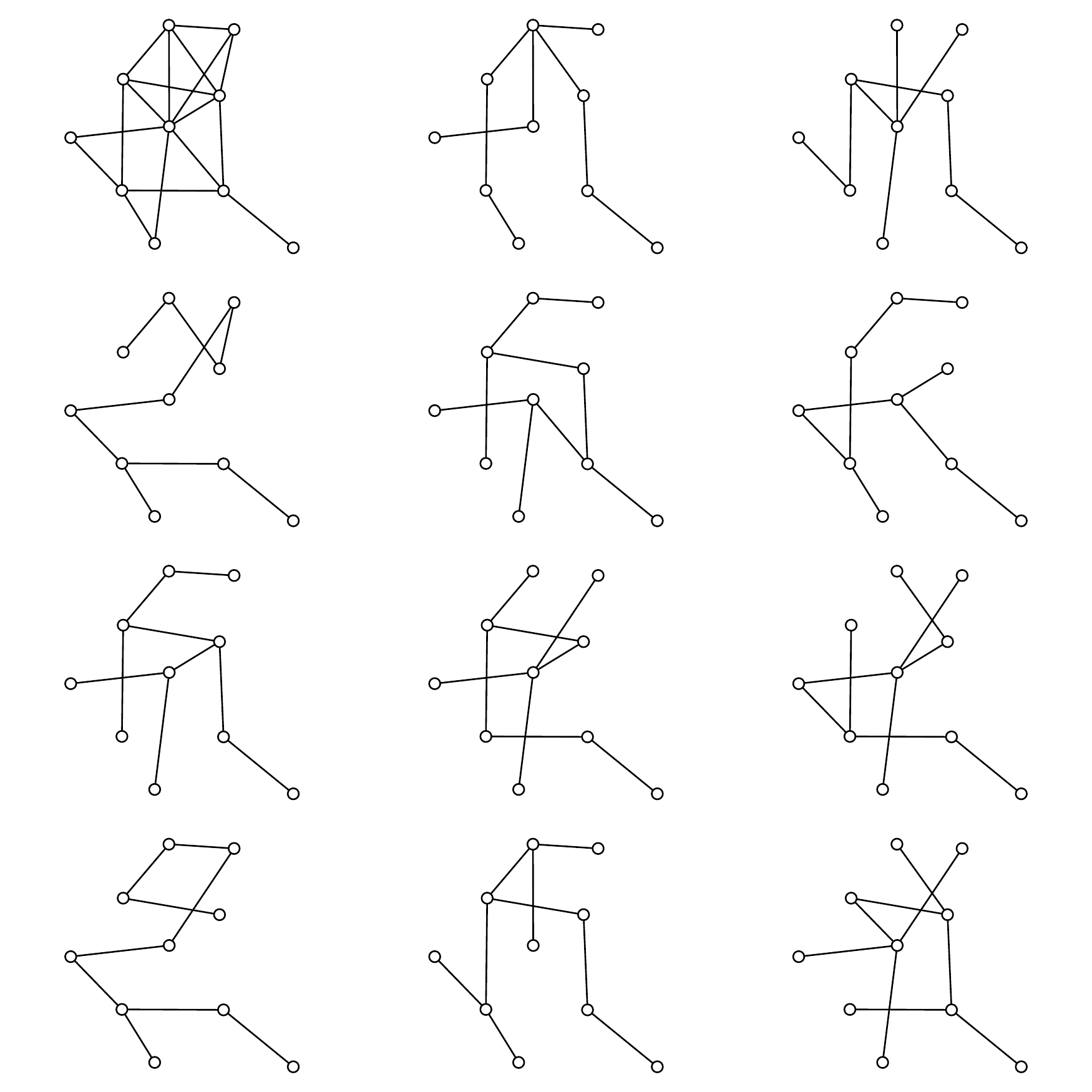}
\caption{Top left: A graph on 10 nodes. Rest: Corresponding spanning trees (not all figured).}
\end{figure}

The previous analysis outlines how to compute $Q(\theta,\alpha)$ for a set of fixed edge weights $\{\alpha_{ij}\}$. In this section we show how the edge weights can be optimized to produce tighter bounds on the likelihood by solving 
\begin{equation}
	Q(\theta):= \min_{\alpha\in\mathcal{S}_\mathcal{T}} Q(\theta,\alpha). \label{edgeweight}
\end{equation}

Our analysis follows that of \citep{wainwright2005new}. From Danskin's theorem \citep{bertsekas1999nonlinear}, observing the form of \eqref{dual} it follows that the function $Q(\theta,\alpha)$ is convex as a function of $\alpha$, with gradient
\begin{equation}
	\nabla_{\alpha_{ij}} Q(\theta,\alpha) = -I_{ij}(\tau^*(\theta)),
\end{equation}

where $\tau^*(\theta)$ are the pseudomoments from solving the dual problem \eqref{pseudomoments}. \eqref{edgeweight} is the minimization of a convex objective over a convex polytope, so it is a convex problem. However, the number of constraints characterizing $\mathcal{S}_\mathcal{T}$ are typically prohibitively large. To avoid dealing with them directly, we employ the following strategy. Suppose we have the current iterate $\alpha^t$. We linearize $Q(\theta,\alpha)$ about $\alpha^t$ and solve
\begin{eqnarray}
	& \min_s & \nabla_{\alpha} Q(\theta,\alpha)^\top (s-\alpha^t), \\
	& s.t.& s\in\mathcal{S}_\mathcal{T}. \notag
\end{eqnarray}

This is a linear program, so the solution must always fall on at least one vertex of $\mathcal{S}_\mathcal{T}$. From observing the structure of $\mathcal{S}_\mathcal{T}$ as being supported by spanning tree indicator vectors, a solution $s$ is equal to the indicator vector of a maximum weight spanning tree with weights $\{ I_{ij}(\tau^*(\theta^t))\}$. Finding the maximum weight spanning tree can be done efficiently in $O(\big| E\big| \log \big| E \big|)$ time using Kruskal's algorithm (Figure \ref{kruskal}). Lastly we update the edge weights $\alpha^{t+1}\leftarrow c\alpha^t+(1-c)s$, where $c$ is a step size $c\in (0,1)$. To ensure convergence guarantees, $c$ can either be set to $c=\frac{2}{2+t}$, or it can be chosen using line search. This technique is known as the \emph{Frank-Wolfe} algorithm \citep{bertsekas1999nonlinear}, and is known to converge at the rate of $O(1/t)$.

\begin{figure}
\begin{center}
\framebox{\begin{minipage}[t]{.95\columnwidth}%
\begin{enumerate}
	\item Input edge weights $\{w_{ij}\}$;
	\item Initialize edge set $T^0 = \emptyset$;
	\item For $k=1,\ldots,d-1:$ \\
		Find largest $w_{i^*j^*}$ such that $T^{k-1} \cup (i^*,j^*)$ doesn't form a cycle; \\
		Set $T^{k} \leftarrow T^{k-1} \cup (i^*,j^*).$
	\item Output edge set $T^{d-1}$.
\end{enumerate}
\end{minipage}}
\caption{Kruskal's Algorithm} \label{kruskal}
\end{center}
\end{figure}

\section{Variational Maximum Likelihood}

Variational regularized maximum likelihood replaces the regularized maximum likelihood equation \eqref{mlsol} with
\begin{equation} 
\underset{\theta\in\mathcal{P}}{\text{max}} \bigg\{ \langle\theta,\widehat{\mu}\rangle-Q\left(\theta\right) - \lambda \mathcal{R}(\theta_e) \bigg\} \label{approxreg},
\end{equation}
$Q\left(\theta\right)$ is the variational approximation
to the true log-partion function $Z\left(\theta\right)$.

This shares many features in common with the regularized MLE. For example, by Danskin's theorem $\nabla Q(\theta) = \tau^*(\theta)$, the optimal pseudomoments from message passing. Furthermore, $\nabla^2_{a,b}Q(\theta) = \text{cov}_{q^*}(\phi_a,\phi_b)$, and the minimality of $\phi$ implies $Q$ is strictly convex, since for any $a\not = 0$, $a^\top \nabla^2 Q(\theta) a = \text{var}_{q^*}(a^\top \phi) \not = 0$ $\nu-a.e.$.

\subsection{Optimization Algorithms}

Both the exact optimization problem in \eqref{mlsol} and the approximate problem \eqref{approxreg} can be written as minimization of a smooth (strictly convex) function plus a non-smooth convex function,
\begin{equation}
	\min_{\theta\in\mathcal{P}} \bigg\{-\mathcal{L}(\theta) + \lambda\mathcal{R}(\theta)\bigg\}.
\end{equation}
Several algorithms have been designed to solve problems of this form; for a review see \citep{bach2011convex}. We will focus on what are known as \emph{proximal gradient} methods \citep{nesterov2013gradient,beck2009fast}, a class of first-order methods which have proven effective for large scale, non-smooth optimization.
\subsubsection{ISTA}
The simplest such algorithm is called the \emph{iterative-shrinkage thresholding algorithm}, or ISTA, which works as follows. Fix the current estimate $\theta^t$. Linearize $\mathcal{L}$ about the current point and solve:
\begin{equation}
	p_L(\theta^t) = \underset{\theta\in\mathcal{P}}{\text{argmin}} \bigg\{ \langle \nabla\mathcal{L}(\theta^t) , \theta^t - \theta \rangle + \lambda\mathcal{R}(\theta) + \frac{L}{2}\Vert \theta -\theta^t\Vert^2 \bigg\},
\end{equation}
where $L$ is a step size. The squared norm term is called the proximal term, which encourages the solution not to be too far from the current step $\theta^t$. After some manipulation, is can be made equivalent to
\begin{equation}
	p_L(\theta^t)=\underset{\theta\in\mathcal{P}}{\text{argmin}}\bigg\{ \frac{1}{2}\bigg\| \theta - \bigg(\theta^t - \frac{1}{L}(-\nabla\mathcal{L}(\theta^t))\bigg)\bigg\| ^2 + \frac{\lambda}{L}\mathcal{R}(\theta)\bigg\} \label{proxupdate}.
\end{equation}

For the group regularizer $\mathcal{R}(\theta) = \sum_{(i,j)\in E} \Vert \theta_{ij} \Vert$, the solution has a closed form and is given by
\begin{eqnarray}
	(p_L(\theta^t))_i &= & \theta_i^t - \frac{1}{L}\bigg(-\nabla_{i}\mathcal{L}(\theta^t)\bigg); \\
	(p_L(\theta^t))_{ij} &= & \bigg(1-\frac{\lambda/L}{\Vert \theta_{ij}^t-\frac{1}{L}(-\nabla_{ij}\mathcal{L}(\theta^t))\Vert}\bigg)_{+}\bigg(\theta_{ij}^t-\frac{1}{L}(-\nabla_{ij}\mathcal{L}(\theta^t))\bigg),
\end{eqnarray}
and we set the update steps to $\theta^t_{i}=(p_L(\theta^t))_i$ and $\theta^t_{ij}=(p_L(\theta^t))_{ij}$ for each $i\in V$ and $(i',j)\in E$. $(\cdot)_+:=\max(\cdot,0)$. 
In the absence of regularization, the proximal gradient method simplifies to gradient descent. When $\lambda>0$, due to the soft thresholding, it may produce exactly sparse solutions, where all edge parameters $\theta_{ij}$ for a particular edge $(i,j)$ are zero. This is an advantage over other methods which only produce a sparse estimate up to numerical error, and necessitate truncation.

Since exact bounds on the Hessian of $\mathcal{L}$ aren't known, we must choose $L$ using line search. We employ the backtracking line search from \citep{beck2009fast}, in Figure \ref{istals}.

\begin{figure}
\begin{center}
\framebox{\begin{minipage}[t]{0.95\columnwidth}
\begin{enumerate}
	\item Input current iterate $\theta^t$;
	\item Fix a $L_0>0$, $\delta>1$;
	\item Find the smallest nonnegative integer $l$ such that for $L'=\delta^l L_0$,
	\begin{equation}
		\mathcal{L}(\theta^t)-\mathcal{L}(p_{L'}(\theta^t)) \leq \langle p_{L'}(\theta^t)-\theta^t , \nabla\mathcal{L}(\theta^t) \rangle + \frac{L'}{2}\Vert \theta^t - p_{L'}(\theta^t) \Vert^2,
	\end{equation}
	\item Update $\theta^{t+1}$ with \eqref{proxupdate}, using $L=\delta^{l}L_0$.
\end{enumerate}
\end{minipage}}
\caption{ISTA Line Search} \label{istals}
\end{center}
\end{figure}

\subsubsection{FISTA}

The accelerated counterpart to ISTA is the \emph{fast iterative thresholding-scaling algorithm}, or FISTA \citep{beck2009fast}. It is analogous to the accelerated gradient method in smooth optimization, which has shown to be an optimal first-order method for smooth optimization \citep{nemirovsky1983problem}. Instead of the new parameter iterates being a projection of the previous, it is a projection of a linear combination of the two previous iterates. The updates with line search are given in \ref{fistals}.

\begin{figure}
\begin{center}
\framebox{\begin{minipage}[t]{0.95\columnwidth}
\begin{enumerate}
	\item Input iterates $\theta^t,\theta^{t-1},y^t$;
	\item Fix a $L_0>0$, $\delta>1$;
	\item Find the smallest nonnegative integer $l$ such that for $L'=\delta^l L_0$,
	\begin{equation}
		\mathcal{L}(y^t)-\mathcal{L}(p_{L'}(y^t)) \leq \langle p_{L'}(y^t)-y^t , \nabla\mathcal{L}(y^t) \rangle + \frac{L'}{2}\Vert y^t - p_{L'}(y^t) \Vert^2,
	\end{equation}
	\item Set $a_{t+1} = \frac{1+\sqrt{1+4a_t^2}}{2}$,
	\item Update $\theta^{t}=p_L(y^t)$, using $L=\delta^{l}L_0$,
	\item Update $y^{t+1}=\theta^{t}+\frac{a_t-1}{a_{t+1}}(\theta^t-\theta^{t-1})$.
\end{enumerate}

\end{minipage}}
\caption{FISTA Line Search} \label{fistals}
\end{center}
\end{figure}

FISTA requires essentially the same computation at each iteration, in particular the same number of gradient evaluations. In addition to the standard proximal gradient algorithms, we found success initializing $L_0$ using a secant rule:
\begin{equation}
	L_0 = \frac{\langle \theta^t-\theta^{t-1},-\nabla\mathcal{L}(\theta^t)+\nabla\mathcal{L}(\theta^{t-1})\rangle}{\Vert \theta^{t} - \theta^{t-1}\Vert^2}.
\end{equation}
Typically we find initializing with the secant rule finds a direction of sufficient descent with little backtracking.
\subsubsection{Discussion}

ISTA and its accelerated counterpart FISTA \citep{beck2009fast} have linear convergence, that is at the rate $O(C^t)$ for some $C<1$, when $-\mathcal{L}$ is strongly convex. In contrast, if $-\mathcal{L}$ is only Lipschitz, ISTA converges at the sub-linear rate $O(1/k)$, and FISTA at the rate $O(1/k^2)$. Recall that with strong convexity, gradient descent also has linear convergence, so proximal gradient descent behaves like gradient descent despite the objective not being smooth. Thus proximal gradient methods have superior theoretical guarantees to competitors such as subgradient descent. The negative log-likelihood $-\mathcal{L}$ is only strictly convex, but it is strongly convex in a neighborhood of the solution \citep{kakade2010learning}. Thus we may think of these proximal gradient methods converging linearly after a sufficient "burn-in" phase.

FISTA, like the accelerated gradient algorithm has been shown to outperform ISTA in some real problems \citep{beck2009fast}. However, it does have some disadvantages. It is not guaranteed to decrease the objective after each iterate. Further, ISTA may converge rapidly when well-initialized. In our application, this will commonly happen, because parameters are estimated over a range of $\lambda$, each solution used as a warm start for the next.

\subsection{Choosing Tuning Parameters}\label{warmstart}
As of yet we have not discussed how to practically choose the truncation parameters $m_1,m_2$ and the regularization parameter $\lambda$. We suppose the existence of a held-out tuning set; in the absence, one may use cross-validation. We choose $m_1,m_2,\lambda$ to minimize the negative log-likelihood risk in the held out set. To save on computation, we use the idea of \emph{warm starts} which we detail in the sequel.
First, observe that the first-order necessary conditions for the regularized MLE are:
\begin{align}
	\widehat{\mu}_{ij} - \mu_{ij}(\widehat{\theta}_\lambda) - \lambda \widehat{Z}_{ij} &= 0,  &i\in V; \\
	\widehat{\mu}_{i}-\mu_{i}(\widehat{\theta}_\lambda) &= 0,  &(i,j)\in E. \label{foncvert}
\end{align}
where $\widehat{Z}$ denotes the sub gradient of the regularizer $\mathcal{R}$, at $\widehat{\theta}$, which is
\begin{equation}
	\widehat{Z}_{ij} = \begin{cases}
		\{x:\Vert x\Vert\leq 1\}, & \Vert\theta_{ij}\Vert = 0, \\
		\frac{\theta_{ij}}{\Vert\theta_{ij}\Vert}, & o/w. 	
	\end{cases}
\end{equation}

$\widehat{\theta}_{ij}=0$ when
\begin{equation}
	\lambda \geq \Vert\widehat{\mu}_{ij}-\mu_{ij}(\widehat{\theta}_\lambda)\Vert.
\end{equation}

When $\widehat{\theta}_{ij}=0$ for each $(i,j)\in E$, it's clear that $\mu_{ij}=\mu_i\mu_j=\widehat{\mu}_i\widehat{\mu}_j$ by independence and the moment-matching condition \eqref{foncvert}. This allows us to choose an upper bound $\lambda_{max}$ such that the solution will have no nonzero edge parameters:
\begin{equation}
	\lambda_{start} \geq \max_{(i,j)\in E}\Vert\widehat{\mu}_{ij}-\widehat{\mu}_i\widehat{\mu}_j\Vert.
\end{equation}

The idea behind warm starting is the following: we begin by estimating $\widehat{\theta}_{\lambda_{start}}$, which amounts to $d$ univariate density estimation problems which can be performed in parallel. Then we fit our model on a path of $\lambda$ decreasing from $\lambda_{start}$, initializing each new problem with the previous solution $\widehat{\theta}_{\lambda}$. The solution path for the regularized MLE is smooth as a function of $\lambda$, suggesting nearby choices of $\lambda$ will provide values of $\widehat{\theta}$ which are close to one another.
 
We can also incorporate warm-starting in choosing $m_1,m_2$. For a given $\lambda$, we first estimate the model for first-order polynomials, corresponding to $m_1=m_2=1$. We then increment the truncation parameters by increasing the degree of the polynomial of he sufficient statistics. We augment the previous parameter estimate vector with zeros in the place of the added parameters, and warm start \emph{ISTA} from this vector.

\section{Experiments}\label{expersec}

Our simulations were conducted on a workstation operating 23 Intel(R) Xeon(R) CPU E5-2420 1.90GHz processors using $\tt{R}$ with backend computations written in $\tt{C++}$ and compiled using the $\tt{Rcpp}$ package \citep{Eddelbuettel:2011aa}. Calculations, including message passing were parallelized using the Threading Building Blocks $\tt{C++}$ library.
We discretized messages uniformly over $[0,1]$ on a grid of 128 points, and bivariate pseudodensities approximated on a $128 \times 128$ grid. All integrals were approximated using Riemann sums over these discretizations. We fit our model using the ISTA algorithm, stopping after an objective improvement of less than $10^{-4}$ or 1000 iterations. We selected the tuning parameters as described in Section \ref{warmstart} using warm starts. To choose the edge weights, we generate a series of random spanning trees and take the average edge appearance probability as the edge weight. This produces a valid vector in the spanning tree polytope. We conducted experiments optimizing edge weights by the algorithm in Section \ref{frankwolfe}, but found the risk improvement in our simulations to be small relative to the additional computational requirement.
\begin{figure}
\begin{center}
\includegraphics[scale=.65]{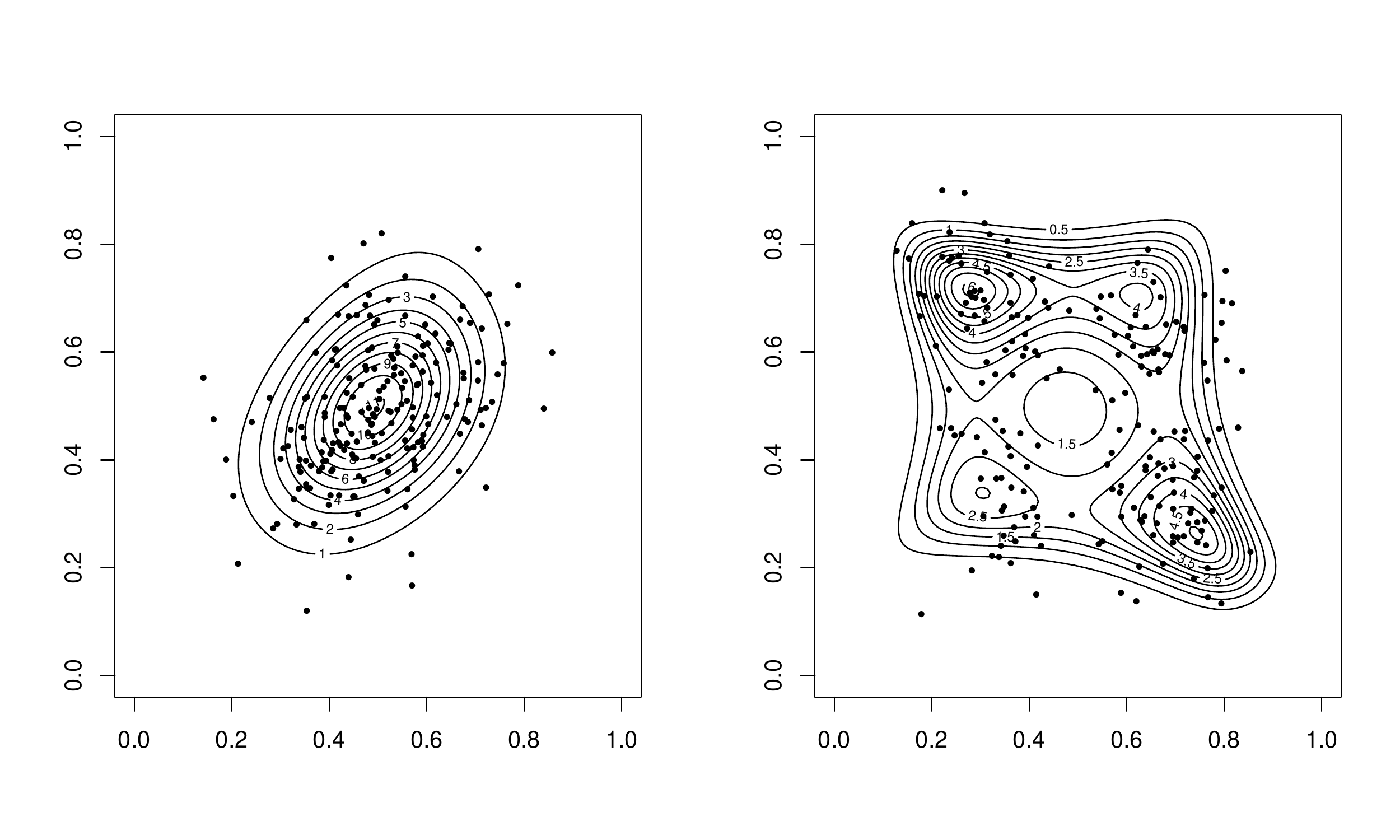}
\caption{Bivariate pseudo density contours estimated from high-dimensional synthetic data. (a) Gaussian data; (b) non-Gaussian (copula) data. \label{figmarg}}
\end{center}
\end{figure}
We generated three types of data. First we generate independent, identically distributed random Gaussian vectors each with mean $\{0.5,\ldots,0.5\}$ and covariance $\Sigma$. $\Sigma$ is scaled to have diagonal $1/8^2$ and sparse off-diagonals. The sparsity pattern was generated by randomly including edges with probability $2/d$, so the expected number of edges is $d-1$. Furthermore, we generate non-Gaussian data by generating Gaussian data by the aformentioned procedure and marginally applying the transformation $y=\text{sign}(x-0.5) \mid x - 0.5 \mid ^{0.6}/5 + 0.5$. That is, the transformed data is distributed as a Gaussian copula. These two models follow a pairwise factorization. Finally, we generated data as a mixture of three non-Gaussian distributions, each with edge structure of a randomly generated spanning tree. The distributions are given equal mixing weights. The resulting mixture of trees is not a pairwise distribution.
\subsection{Risk Paths}
We first examine the risk paths, varying $\lambda$ and hence the number of included edges for typical runs of our simulation. We compare the TRW and it's relaxed version (refitting the model under the selected sparsity constraint and setting $\lambda=0$) to the graphical lasso (using the glasso $\tt{R}$ package) and its relaxed counterpart. Examples of estimated pseudodensities are shown in \ref{figmarg}. For Gaussian data, we see that TRW performs slightly worse than glasso for sparse graphs, while the gap widens as the graph becomes denser. There are two clear reasons for this. Since the Gaussian model is correct, glasso automatically provides a correctly specified model. TRW must select the number of basis elements and so may include too many (or few) parameters. Furthermore, the variational bound from TRW worsens as the graph becomes denser.
\begin{figure}
\centering
\includegraphics[scale=1]{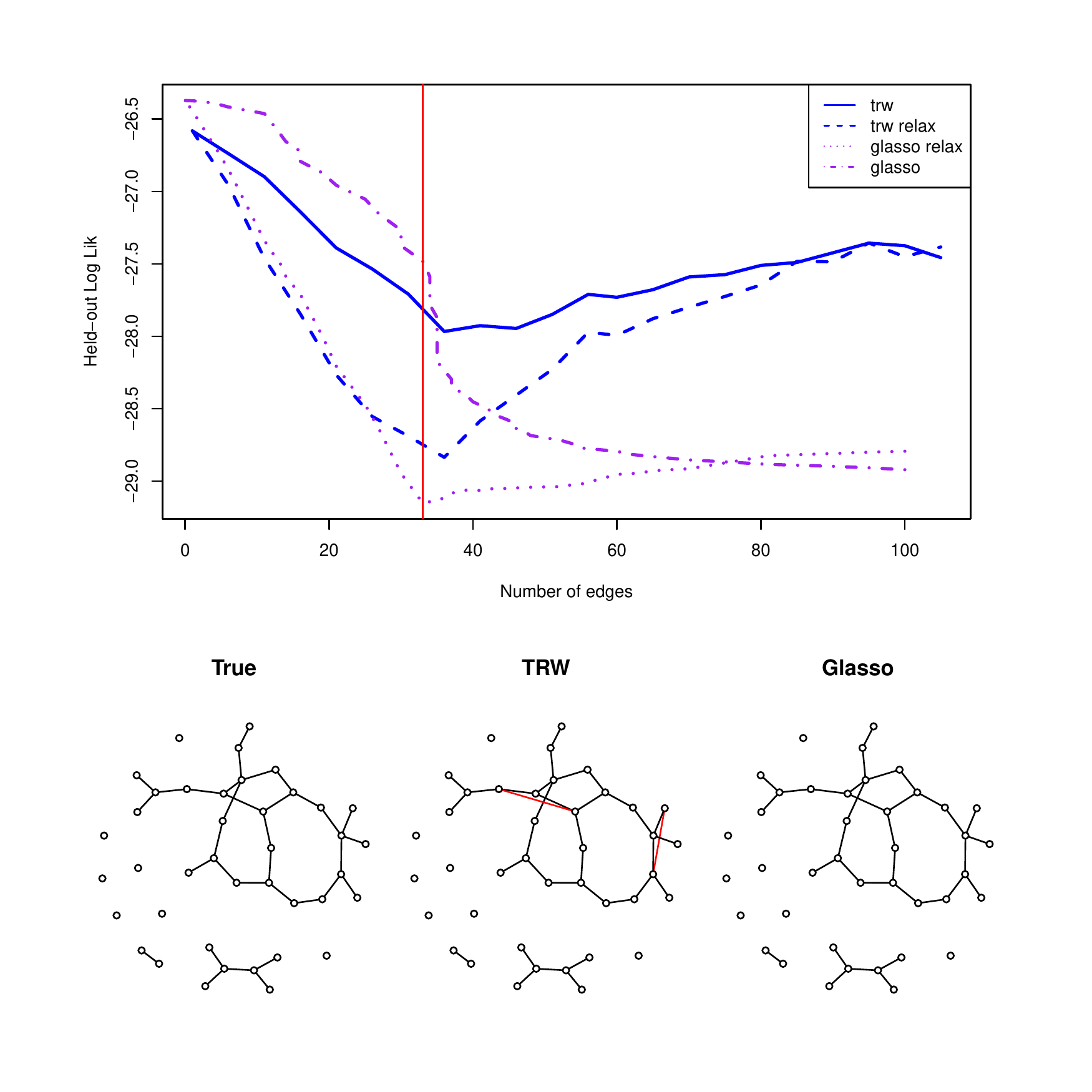}
\caption{Risk path on simulated Gaussian data. Top row is negative log-likelihood risk on held-out data; bottom row are selected graphs. Red edges are false inclusions; gray edges are false omissions.}
\end{figure}
For the non-Gaussian data, TRW clearly outperforms the glasso. The risk paths between TRW and glasso look markedly different. TRW demands many more parameters to get the optimal fit, so the performance from the relaxed TRW suffers, while the regularized version benefits by reducing overfitting, thus the relaxed and regular versions have very similar risk.
As is well-known, model selection using held-out risk is precarious as the risk path is often flat. In our simulations TRW did quite well. Both glasso and TRW included two false edges for the Gaussian simulation, while TRW omitted several edges. For the non-Gaussian data, glasso included several more false egdes than did TRW.
\subsection{Density Estimation}
We continue by comparing held-out risk estimates to several other high-dimensional density estimation algorithms in Table \ref{simtable}: glasso, spherical Gaussian mixture model (with number of components chosen by BIC, \citep{fraley2002model}), and the Kernel maximum spanning tree estimator of \citep{forest2011}. We consider sparse Gaussian, sparse Gaussian copula and tree mixture data for dimensions between 30 to 120. For all simulations we set $n=100$ and hold out 300 observations for testing. We repeat the simulations (but keeping the generating distribution fixed) 5 times and report the standard errors in parentheses.
 For Gaussian data, TRW performs competitively. It dominates the forest and spherical mixture density estimators but is slightly worse than glasso. For the Gaussian copula data TRW outperforms other methods; TRW can capture both non-Gaussianity and the cyclical dependence structure, while glasso can only capture the latter, and the forest density estimator the former. For mixtures of trees the results are similar to the copula simulation, with TRW dominating the other methods, despite the data not following a pairwise factorization.

\begin{landscape}
\begin{table}
\centering

\begin{tabular}{lrrrrrrr}
  \hline
   & d & Glasso & Glasso Refit & TRW & TRW Refit & Forest & Gaussian Mixture \\
  \hline
 Gaussian & 30 & -21.003 (0.175) & -21.22 (0.318) & -20.544 (0.107) & -20.783 (0.244) & -18.969 (0.247) & -19.716 (0.16) \\
   & 50 & -34.9 (0.185) & -35.136 (0.292) & -34.191 (0.254) & -34.675 (0.258) & -31.242 (0.349) & -32.697 (0.438) \\
   & 100 & -67.944 (0.19) & -67.698 (0.516) & -67.237 (0.263) & -67.721 (0.334) & -59.91 (0.701) & -65.418 (0.446) \\
   & 120 & -81.573 (0.531) & -81.441 (0.916) & -80.911 (0.498) & -81.35 (0.649) & -72.763 (1.301) & -78.907 (0.393) \\ \\
   Copula & 30 & -0.453 (0.191) & -0.508 (0.373) & -3.343 (0.275) & -2.85 (0.236) & -2.128 (0.243) & 1.04 (0.16) \\
    & 50 & -0.1 (0.204) & -0.193 (0.425) & -5.559 (0.279) & -4.817 (0.253) & -3.286 (0.401) & 1.802 (0.163) \\
   & 100 & 0.431 (0.295) & 0.54 (0.253) & -10.959 (0.393) & -9.91 (0.459) & -5.359 (0.261) & 3.396 (0.557) \\
    & 120 & 2.398 (0.243) & 2.75 (0.445) & -12.111 (0.434) & -11.283 (0.388) & -4.738 (0.427) & 4.313 (0.182) \\ \\
   Mixture of Trees & 30 & 0.406 (0.108) & 0.429 (0.142) & -3.215 (0.293) & -2.899 (0.29) & -1.589 (0.27) & 1.024 (0.098) \\
    & 50 & 0.67 (0.164) & 0.889 (0.306) & -5.217 (0.287) & -4.777 (0.242) & -2.415 (0.444) & 1.676 (0.222) \\
    & 100 & 2.5 (0.179) & 2.675 (0.251) & -10.461 (0.141) & -9.805 (0.228) & -3.313 (0.365) & 3.342 (0.244) \\
    & 120 & 2.837 (0.541) & 2.855 (0.499) & -12.365 (0.382) & -11.534 (0.459) & -4.382 (0.408) & 4.153 (0.534) \\
 \hline
\end{tabular}
\caption{Held-out risk estimates for synthetic data. Standard deviations for 5 replicates in parenthesis. \label{simtable}}
\end{table}

\end{landscape}
\begin{figure}
\centering
\includegraphics[scale=1]{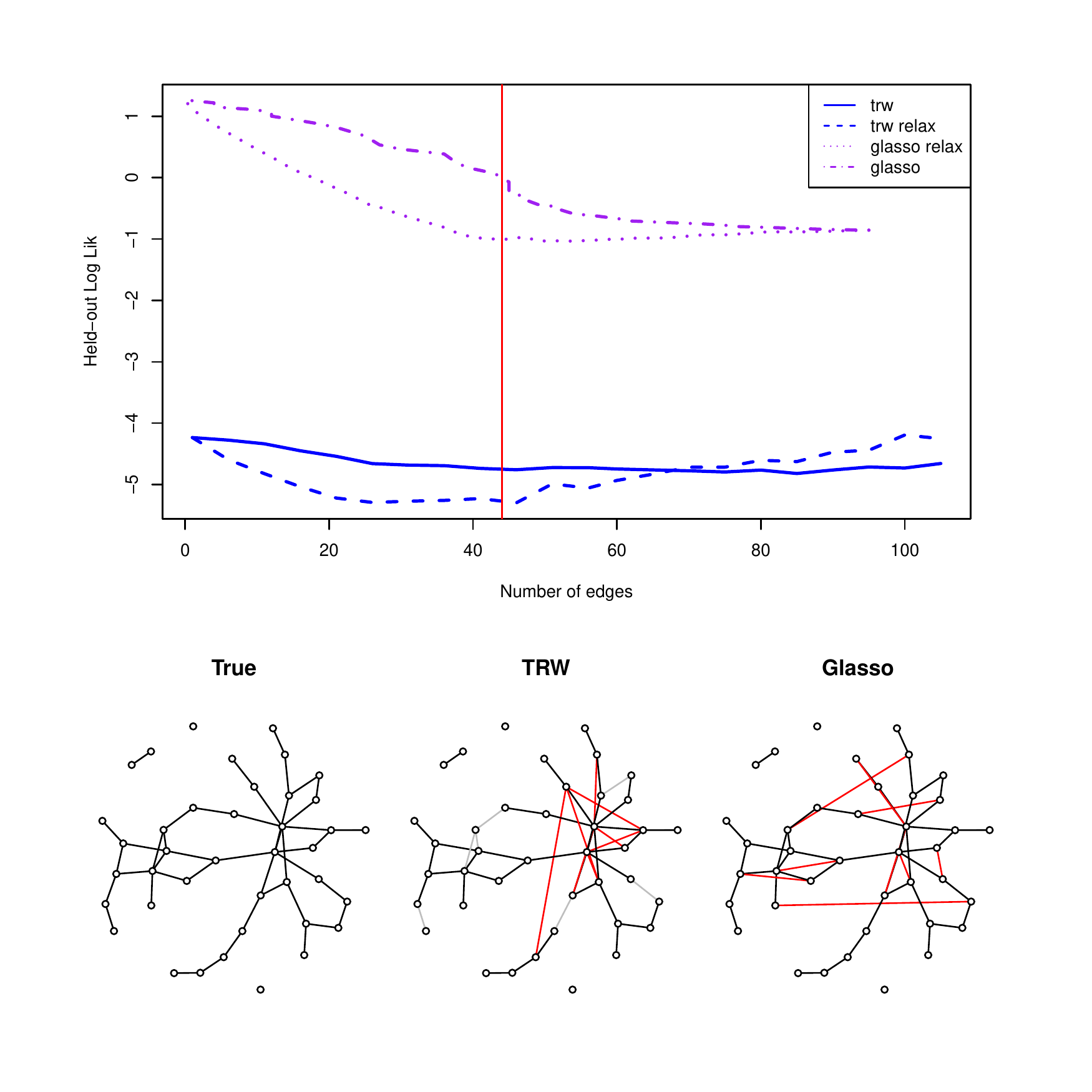}
\caption{Performance on simulated Gaussian copula data. Top row is negative log-likelihood risk on held-out data; bottom row are selected graphs. Red edges are false inclusions; gray edges are false omissions.}
\end{figure}

\subsection{ROC Curves}

Figures \ref{roctrw1} and \ref{roctrw2} display ROC curves for two types of experiments. We generate non-Gaussian data as before, with $n=d=30$. The curves trace the true negative and true positive percentages of the algorithms, varying the regularization parameter $\lambda$. We choose $m_1,m_2$ so that the resulting curve has the largest value of $\max_{\lambda}TP(\lambda)+TN(\lambda)$. The plotted curves are an average of 20 repetitions. In Figure \ref{roctrw1} the edges are chosen to be a randomly-generated spanning tree; in Figure \ref{roctrw2} the edges are included with equal probability $2/d$, which we call the ER graph. We compare the TRW estimator to the SKEPTIC estimator from \citep{liu2012high}. The SKEPTIC estimator was particularly devised for estimating Gaussian copula graphical models, while our estimator is designed for a superset of those models. 

Overall the SKEPTIC performs better in terms of area under the curve (AUC). It also generally has a better TP\% for moderate or small values of TN\%. However the TRW estimator still performs quite well in these two metrics. For large TN\% the TRW estimator manages a better TP\%. We can only speculate on why the TRW estimate performs better here, but it may be because when the selected graph is very sparse it has few or no cycles, and the tree-based approximation of TRW is more powerful. This is consistent with the experiments, as the phenomenon is more pronounced for the tree simulation than the loopy graph simulation.
\begin{figure}
\centering
\includegraphics[scale=.5]{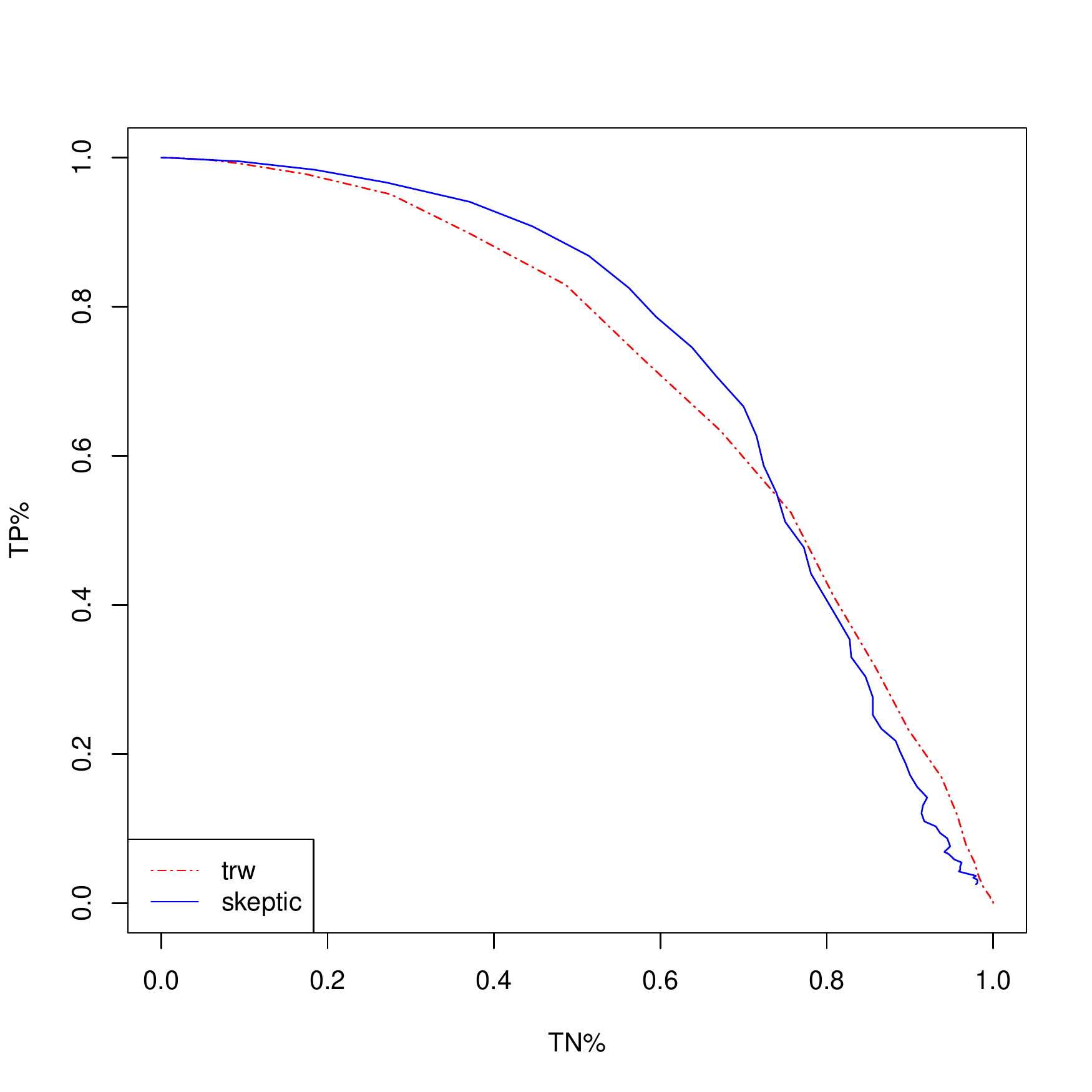}
\caption{ROC Curve, tree graph, n=d=30. Solid line: SKEPTIC estimator. Dotted line: TRW.} \label{roctrw1}
\end{figure}

\begin{figure} 
\centering
\includegraphics[scale=.5]{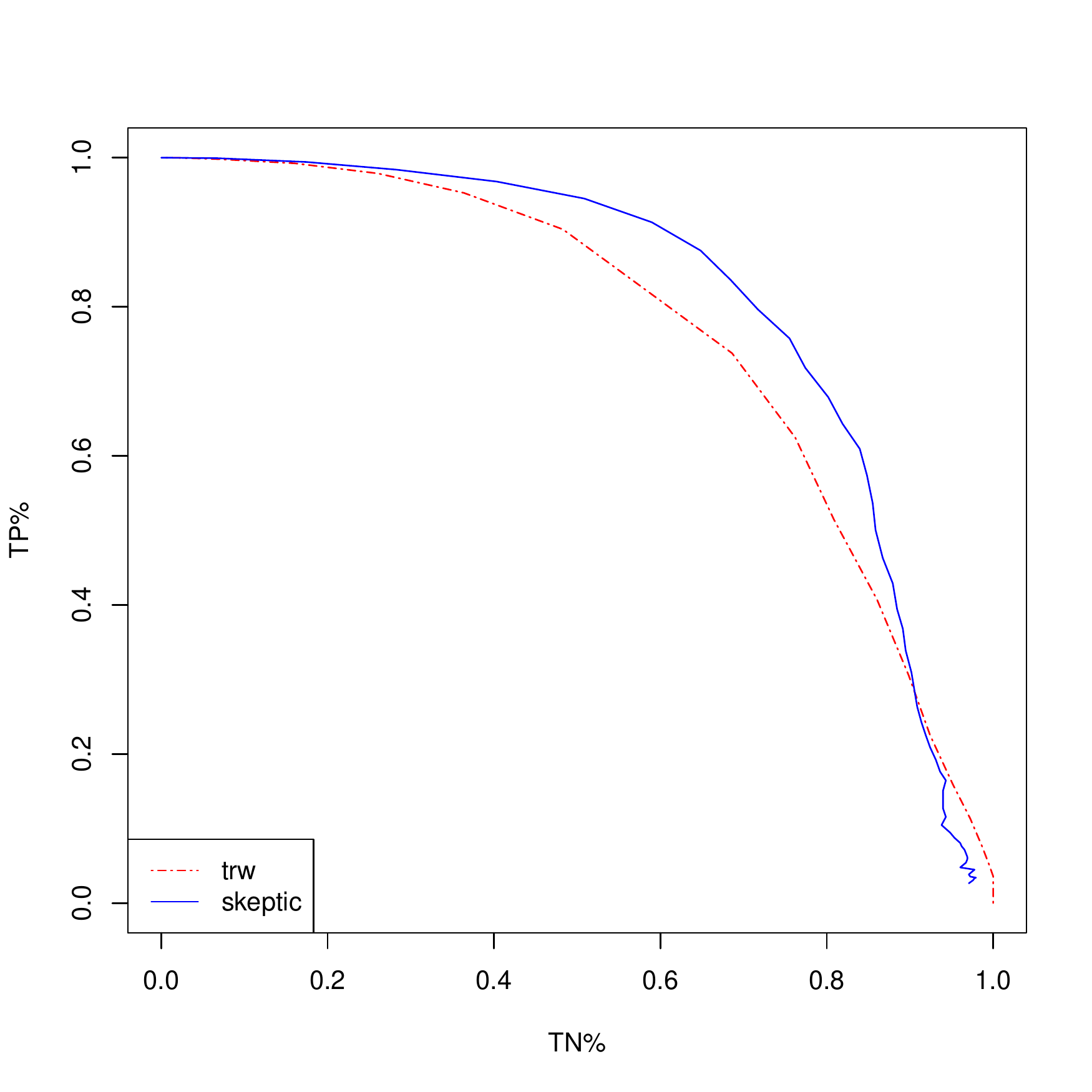}
\caption{ROC Curve, ER graph, n=d=30. Solid line: SKEPTIC estimator. Dotted line: TRW.} \label{roctrw2}
\end{figure}

\subsection{MEG Data}
Magnetoencephalography or MEG is a neuroimaging technique for mapping brain activity using electrical currents in the brain. The resulting signals are high-frequency and have a complex non-linear relation to one another. There has been interest in using various neuroimaging techniques for mapping regional brain networks \citep{kramer2011emergence}, and particularly in understanding differences in connectivity related to neurodegenerative diseases \citep{stam2010use}. We explore this on the MEG data from \citep{vigario1998independent}, which contains measurements from 122 sensors. We scale the data to be contained in the unit cube and remove large outliers (marginally larger than 6 standard deviations). Two features of this data are the temporal dependence of the signals and the presence of artifacts. Since our main motivation is graph estimation we will not address these issues, besides restricting our attention to a small timespan of the dataset. We use the first 400 observations in the series, randomly assigning 100  as training data, and the other 300 as test data.
\begin{figure}
\centering
\includegraphics[scale=.85]{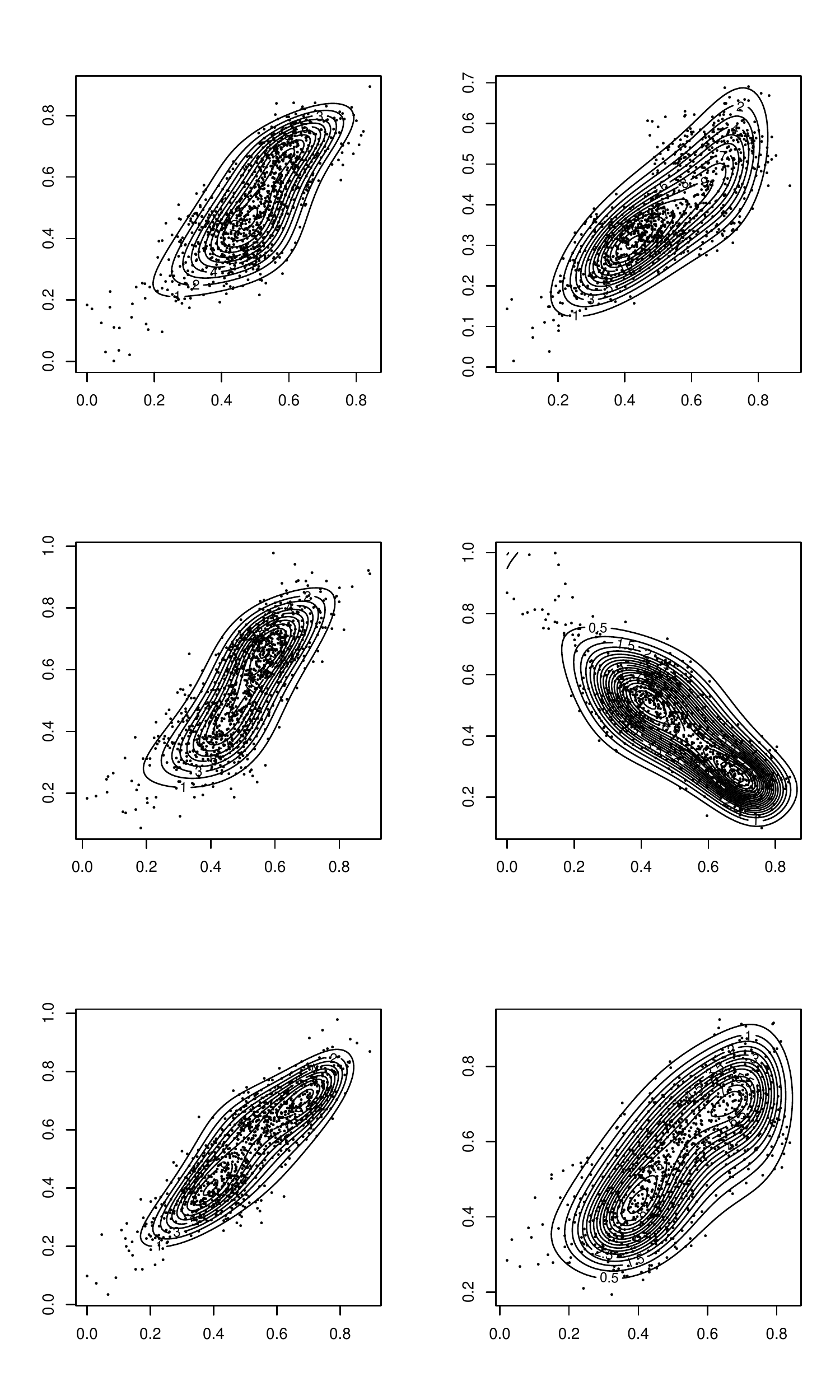}
\caption{Six estimated pseudodensities plotted with 1000 observations of the MEG data. \label{megplots}}
\end{figure}
\begin{figure}
\centering
\includegraphics[scale=1.05]{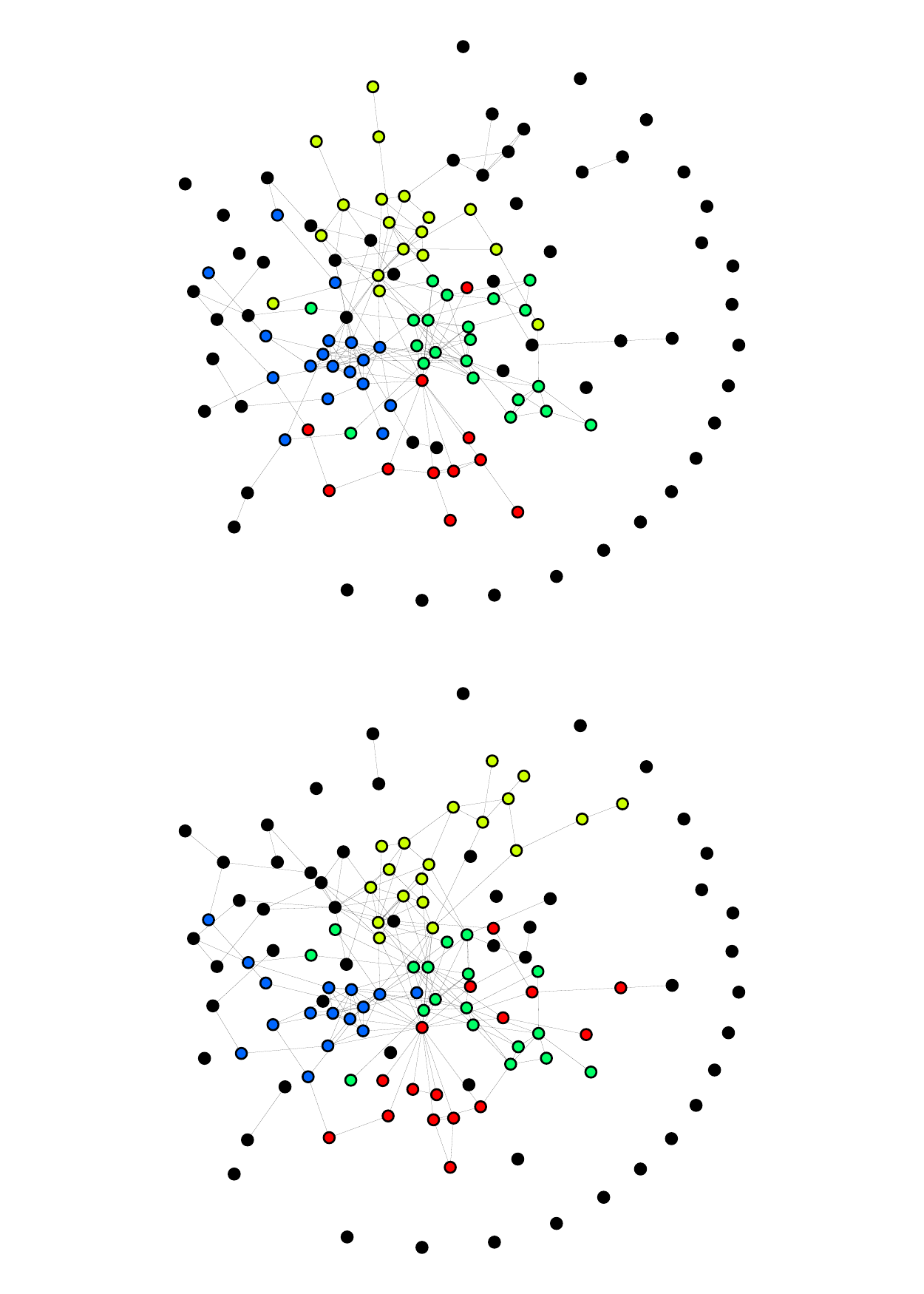}
\caption{Estimated graphs from MEG data. Top: Graph estimated from TRW ; Bottom: Graph learned from glasso} \label{megplot}
\end{figure}

Inspecting two-dimensional projections of the data and their corresponding pseudodensities in Figure \ref{megplots}, the data displays clearly non-linear and multi-modal behavior which TRW can capture, but glasso cannot.
We compare the glasso and TRW methods for graph estimation. We select a graph to minimize the held-out risk for the relaxed TRW, which has 199 edges. We then estimate the graph using the graphical lasso, setting the regularizaition parameter to include the same number of edges. The estimated graphs are shown in figure \ref{megplot}. For clarity, we color code vertices from the top four clusters produced from running the learned graphs through the community detection algorithm of \citep{newman2006finding}. Note that the position of vertices here does not correspond to the actual location of the sensors on the scalp. Comparing the graphs from the minimum risk estimators, the two graphs share many features in common, each having one large connected component containing several smaller densely connected communities. However, they have clear differences, disagreeing on 66 edges, or one-third of the edges in each respective graph. We believe due to the complex nature of the observed signals in imaging data such as MEG, our method may be able to bring new and better insights to understanding functional connectivity of brain networks.

\section{Discussion}

In this section we detail a tree-reweighted variational approximation for continuous-valued exponential families, which we apply to the exponential series estimator of chapter 1. Evaluating the variational likelihood involves message passing which can be effectively parallelized for high-dimensional problems, and provides a lower-bound to the likelihood (and upper bound to risk). We describe a proximal gradient algorithm for estimating the regularized MLE. Our experiments show this approach has very attractive performance in both risk and model selection performance, compared to other methods in the literature. We also demonstrate our method on a data set of MEG signals.

\chapter{Regularized Score Matching}

\section{Introduction}

Undirected graphical models are an invaluable class of statistical models. They have been used successfully in fields as diverse as biology, natural language processing, statistical physics and spatial statistics. The key advantage of undirected graphical models is that its joint density may be factored according to the cliques of a graph corresponding to the conditional dependencies of the underlying variables. The go-to approach for statistical estimation is the method of maximum likelihood (MLE). Unfortunately, with few exceptions, MLE is intractable for high-dimensional graphical models, as it requires computation of the normalizing constant of the joint density, which is a $d$-fold convolution. Even exponential family graphical models \citep{wainwright2008graphical}, which are the most popular class of parametric models, are generally non-normalizable, with a notable exception being the Gaussian graphical model. Thus the MLE must be approximated.  State of-the-art methods for graphical structure learning avoid this problem by performing neighborhood selection \citep{yang2012graphical,meinshausen2006high,ravikumar2010high}. However, this approach only works for special types of pairwise graphical models whose conditional distributions form GLMs. Furthermore, these procedures do not by themselves produce parameter estimates.

In this chapter we demonstrate a powerful new method for graph structure learning and parameter estimation based on minimizing the regularized Hyv\"arinen score of the data. It works for any continuous pairwise exponential family, as long as it follows some weak smoothness and tail conditions. Our method allows for multiple parameters per vertex/edge. We prove high-dimensional model selection and parameter consistency results, which adapt to the underlying sparsity of the natural parameters. As a special case, we derive a new method for estimating sparse precision matrices with very competitive estimation and graph learning performance. We also consider how our method can be used to do model selection for the general nonparametric pairwise model by choosing the sufficient statistics to be basis elements with degree growing with the sample size. We show our method can be expressed as a second-order cone program, and which we provide highly scalable algorithms based on ADMM and coordinate-wise descent.
\section{Background}

\subsection{Graphical Models}
Suppose $X=(X_1,\ldots,X_d)$ is a random vector with each entry having support $\mathcal{X}_i$, $i=1,\ldots,d$. Let $G=(V,E)$ be an undirected graph on $d$ vertices corresponding to the elements of $X$. An undirected graphical model or Markov random field is the set of distributions which satisfy the Markov property or condition independence with respect to $G$. From the Hammersley-Clifford theorem, if $X$ is Markov with respect to $G$, the density of $X$, $p$ can be decomposed as
\begin{align}
	p(x) \propto \exp\left\{\sum_{c\in \text{cl}(G)} \psi_c(x_c)\right\},
\end{align}
where $\text{cl}(G)$ is the collection of cliques of $G$. The pairwise graphical model supposes the density can be further factored according to the edges of $G$,
\begin{align}
	p(x) \propto \exp\left\{\sum_{i,j\in V,i\leq j}\psi_{ij}(x_i,x_j)\right\}.
\end{align}


For a pairwise exponential family, we parametrize $\psi_i,\psi_{ij}$ by
\begin{align}
	\psi_{ii}(x_i) &:= \sum_{u\leq m} \theta_i^u \phi_i^u(x_i), & i\in V, \\
	\psi_{ij}(x_i,x_j) &:= \sum_{u\leq m} \theta_{ij}^u \phi_{ij}^u(x_i,x_j), & (i,j)\in E.
\end{align}
Here $m$ denotes the maximum number of statistics per edge or vertex. We denote $\theta$ to be the vectorization of the parameters, $\theta:=(\theta_{11}^\top,\ldots,\theta_{d1}^\top,\theta_{22}^\top,\ldots,\theta_{dd}^\top)^\top$.
%

\subsection{Scoring Rules}
A \emph{scoring rule} \citep{dawid2005geometry} $S(x,Q)$ is a function which measures the predictive accuracy of a distribution $Q$ on an observation $x$. A scoring rule is \emph{proper} if $\mathbb{E}_p[S(X,Q)]$ is uniquely minimized at $Q=P$. When $Q$ has a density $q$, we equivalently denote the scoring rule $S(X,q)$. A \emph{local} scoring rule only depends on $q$ through its evaluation at the observation $x$. A proper scoring rule induces an entropy
\begin{align}
	H(p) = \mathbb{E}_{p}\left[S(X,p)\right],
\end{align}
as well as a divergence
\begin{align}
	D(p,q) = \mathbb{E}_p \left[ S(X,q) - S(X,p)\right].
\end{align}
An optimal score estimator is an estimator which minimizes the empirical score 
\begin{align}
\frac{1}{n}\sum_{r=1}^n S(X^r,q),
\end{align}
 over some class of densities.
%
%
%
%

\begin{exmp}
The \emph{log score} takes the form $l(x,q):=-\log q(x)$. The corresponding entropy is the Shannon entropy $H(p):=-\mathbb{E}_p\left[\log p\right]$, its corresponding divergence is the \emph{Kullback-Leibler Divergence} $\text{KL}(p \mid q) =\mathbb{E}_{X\sim p}\left[\log\frac{p(X)}{q(X)}\right]$ and the optimal score estimator is the maximum likelihood estimator. It is a proper and local scoring rule. This scoring rule was implemented in Chapter 2.
\end{exmp}

\begin{exmp}
Consider the \emph{Bregman score},
\begin{align}
b(x,q):=-g'(q(x))-\intop \rho(dy)\left(g(q(y))-q(y)g'(q(y))\right), \label{bregman}
\end{align}
where $g:\mathbb{R}^+\rightarrow\mathbb{R}$ is a convex, differentiable function and $\rho$ is some baseline measure. The corresponding entropy is $H(p)=-\intop \rho(dy)g(p)$ and divergence 
\begin{align}
D(p,q)=\intop \rho(dy)\left(g(p)-\left(g(q)+g'(q)(p-q)\right)\right).
\end{align}
When $g(x)=\log(x)$, after removing the constant term, $b$ has the form
\begin{align}
	b(x,q)& = -\frac{1}{q(x)} -\intop \rho(dy) \log(q(y)) \\
	&= -e^{-f(x)} - \intop \rho(dy) f(y),
\end{align}
where $f=\log q$. This is a proper scoring rule \citep{dawid2014theory}, but it is not local because it depends on values of $q$ besides the observation $x$. An estimation procedure for nonparametric graphical models using smoothing splines was based on this scoring rule in \citep{jeon2006effective}.
\end{exmp}
\subsection{Hyv{\"a}rinen Score}
Consider densities $q$ which are twice continuously differentiable over $\mathcal{X}=\mathbb{R}^d$ and satisfy
\begin{align}
	& \Vert p(x) \nabla \log q(x)\Vert\rightarrow 0, \text{ for all } \Vert x \Vert \rightarrow \infty. \label{boundary}
\end{align}	
where $X\sim p$. Consider the scoring rule
\begin{align}
	h(x,q) &=\frac{1}{2}\Vert\nabla \log q(x)\Vert^2_2 + \Delta \log q(x),
\end{align}
where $\nabla$ denotes the gradient operator and  $\Delta$ is the operator 
\begin{align}
	\Delta \phi(x) = \sum_{i\in V} \frac{\partial^2\phi(x)}{\partial x_i^2}.
\end{align}
This is a proper and local scoring rule \citep{parry2012proper}. Using integration by parts, it can be shown it induces the \emph{Fisher divergence}:
\begin{align}
	\text{F}(p \mid q) = \mathbb{E}_{X\sim p}\left[ \bigg\Vert \nabla \log \frac{p(X)}{q(X)} \bigg\Vert_2^2 \right].
\end{align}

The optimal score estimator is called the \emph{score matching} estimator \citep{hyvarinen2005estimation,hyvarinen2007some}. The Hyv{\"a}rinen score is homogeneous in $q$ \citep{parry2012proper}, so that it does not depend on the normalizing constant of $q$, which for multivariate exponential families is typically intractable. Second, for natural exponential families the objective of the optimal score estimator is quadratic, so the estimating equations corresponding to score matching are linear in the natural parameters \citep{forbes2014linear}. Maximum likelihood for exponential families generally involves a complex mapping from the sufficient statistics of the data to the natural parameters \citep{wainwright2008graphical,brown1986fundamentals}, necessitating specialized solvers.

\subsection{Score Matching for Exponential Families} \label{smef}
%
%
%
%
%
%

For a pairwise density, define $\phi_{\cdot,i}=\left(\phi_{1i}^\top,\ldots,\phi_{di}^\top\right)^\top$. For $i\in V$, denote $a_i(x) := \frac{\partial}{\partial x_i} \phi_{\cdot,i}$ and
\begin{align}
	(K(x))_{\cdot,i}:= \frac{\partial^2\phi_{\cdot,i}}{\partial x_i^2}.
\end{align}
Taking derivatives,
\begin{align}
	\frac{\partial}{\partial x_i} \langle \phi(x), \theta\rangle & =  \left\langle \frac{\partial\phi_{\cdot,i}}{\partial x_i},\theta_{\cdot,i}\right\rangle ,
\end{align}
thus $h$ takes the form
\begin{align}
	h(x,\theta)
	&= \sum_{i\in V} \left( \frac{1}{2}\theta_{\cdot,i}^\top a_i(x)a_i(x)^\top\theta_{\cdot,i} +K_{\cdot,i}(x)^\top\theta_{\cdot,i}\right).
\end{align}
$h$ is a sum of $d$ positive-semidefinite quadratic forms, so it is also psd quadratic. Alternatively, we may write $h(x,\theta)=\theta^\top A(x)\theta + K(x)^\top \theta$, where $A(x)$ is a psd matrix with at most $2md$ non-zero entries per row, and $K(x)$ is a vector with $K_{ij}=\frac{\partial^2 \phi_{ij}}{\partial x_i ^2}+ 1\{i\not = j\}\frac{\partial^2 \phi_{ij}}{\partial x_j ^2}$. If we write $\tilde{\theta}=\left(\theta_{\cdot,1}^\top,\theta_{\cdot,2}^\top,\ldots,\theta_{\cdot,d}^\top\right)^\top$, where $\tilde{\theta}_{ij}=\tilde{\theta}_{ji}$, we may write the scoring rule as
\begin{align}
	h(x,\tilde{\theta}) = \frac{1}{2}\tilde{\theta}^\top \tilde{A}(x)\tilde{\theta} + \tilde{K}(x)^\top\tilde{\theta},\label{pairscore}
\end{align}
where $\tilde{A}(x)$ is a block-diagonal matrix,
\begin{align}
	\tilde{A}(x)= \left(\begin{array}{cccc}
		a_1(x)a_1(x)^\top &&&\\
		& a_2(x)a_2(x)^\top & & \\
		&& \ddots & \\
		&&& a_d(x)a_d(x)^\top
	\end{array}\right)
\end{align}
and $\tilde{K}=(K_{\cdot,1}^\top,\ldots,K_{\cdot,d}^\top)^\top$. We will alternate between these two equivalent representations of $h$ based on convenience.

%
%
%
%
%
%

\begin{remark}[Bounded supports]

From the differentiability assumption we see that our derivations do not generally apply when $\mathcal{X}_i$ is a half-bounded or bounded support as the density may not be differentiable at the boundary. However, in \citep{hyvarinen2007some} a proper scoring rule was derived for half-bounded supports, which may be shown to have the same form as \eqref{pairscore}, after modifying slightly the formulas for $a_i(x),K(x)$. Here we derive a similar formula for densities on $[0,1]^d$.
\begin{prop}\label{boundedscorematch}
Consider random vectors taking values in $[0,1]^d$, with density $q$; suppose $X\sim p$. If $q$ is twice continuously differentiable and satisfies
\begin{align}
	&\Vert p(x) \nabla\log q(x) \otimes x(1-x) \Vert \rightarrow 0, \text{ for all } x\text{ approaching the boundary},
\end{align}
 where $\otimes$ denotes the tensor product $x\otimes y:= (x_1y_1,\ldots,x_dy_d)$, then 
 \begin{align}
 	h(x,q) &:= \frac{1}{2}\Vert \nabla \log q(x) \otimes x(1-x) \Vert_2^2 \notag\\
	&\qquad+ \sum_{i\in V}\left( -2(2x_i-1)x_i(1-x_i)\frac{\partial \log q(x)}{\partial x_i} + x_i(1-x_i)\frac{\partial^2 \log q(x)}{\partial x_i^2}\right),
 \end{align}
is a proper scoring rule. In  particular when $q$ is an exponential family with natural parameters $\theta$ and sufficient statistics $\phi$,
$h(x,\tilde{\theta})= \frac{1}{2} \tilde{\theta}^\top \tilde{A}(x)\tilde{\theta}+K(x)^\top\theta$ is a proper scoring rule, where
\begin{align}
	K(x)_{ij} &= -2(2x_i-1)x_i(1-x_i)\frac{\partial\phi_{ij}}{\partial x_i}+(x_i(1-x_i))^2\frac{\partial^2\phi_{ij}}{\partial x_i^2}, \\
	a_i(x) &= x_i(1-x_i)\frac{\partial \phi_{\cdot,i}}{\partial x_i}.
\end{align}
and $\tilde{A}(x)=\text{diag}(a_i(x)a_i(x)^\top)$.
\end{prop}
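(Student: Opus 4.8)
The plan is to obtain $h$ as the local scoring rule that induces a \emph{weighted} Fisher divergence, exactly mirroring the construction of the unbounded-support Hyv\"arinen score (the special case of per-coordinate weight $w_i\equiv 1$, which gives $h=\frac12\|\nabla\log q\|_2^2+\Delta\log q$). I introduce the weights $w_i(x)=(x_i(1-x_i))^2$, which vanish at the two faces $\{x_i=0\}$ and $\{x_i=1\}$, and define the candidate divergence
\begin{align}
D(p,q)=\frac12\int_{[0,1]^d} p(x)\sum_{i\in V} w_i(x)\left(\frac{\partial\log p}{\partial x_i}-\frac{\partial\log q}{\partial x_i}\right)^2 dx.
\end{align}
Since each $w_i>0$ on the interior, $D(p,q)\geq 0$, with equality iff $\nabla\log p=\nabla\log q$ almost everywhere, which for two densities forces $p=q$. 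Hence it suffices to exhibit a scoring rule $h$, local in $q$, with $\mathbb{E}_p[h(X,q)]-\mathbb{E}_p[h(X,p)]=D(p,q)$; such an $h$ is automatically proper.

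The core computation is to remove the dependence on derivatives of $p$. Expanding the square in $D$, the $(\partial_i\log p)^2$ term is independent of $q$ and the $(\partial_i\log q)^2$ term is already local in $q$; the only obstruction is the cross term $-\int p\sum_i w_i\,\partial_i\log p\,\partial_i\log q\,dx$. Writing $p\,\partial_i\log p=\partial_i p$ and integrating by parts in $x_i$ gives
\begin{align}
-\int p\, w_i\,\partial_i\log p\,\partial_i\log q\, dx = \left[-w_i\,p\,\partial_i\log q\right]_{\partial} + \int p\left(\frac{\partial w_i}{\partial x_i}\frac{\partial\log q}{\partial x_i}+w_i\frac{\partial^2\log q}{\partial x_i^2}\right)dx.
\end{align}
The boundary term at $x_i\in\{0,1\}$ equals $w_i\,p\,\partial_i\log q=x_i(1-x_i)\cdot\big(x_i(1-x_i)\,p\,\partial_i\log q\big)$, whose bracketed factor is a component of $p\,\nabla\log q\otimes x(1-x)$; by hypothesis this tends to $0$ at the boundary, while $x_i(1-x_i)$ stays bounded, so the boundary contribution vanishes. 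Collecting the surviving terms and discarding the $q$-free piece yields
\begin{align}
h(x,q)=\sum_{i\in V}\left(\frac12 w_i(x)\Big(\frac{\partial\log q}{\partial x_i}\Big)^2+\frac{\partial w_i}{\partial x_i}\frac{\partial\log q}{\partial x_i}+w_i(x)\frac{\partial^2\log q}{\partial x_i^2}\right),
\end{align}
and substituting $\partial_i w_i=-2(2x_i-1)x_i(1-x_i)$ together with $w_i=(x_i(1-x_i))^2$ reproduces the stated form, the first sum being $\frac12\|\nabla\log q\otimes x(1-x)\|_2^2$ (the second-derivative coefficient then reads $(x_i(1-x_i))^2$, consistently with the $K$ formula below). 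By construction $\mathbb{E}_p[h(X,q)]-\mathbb{E}_p[h(X,p)]=D(p,q)\geq 0$, so $h$ is proper.

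For the exponential-family specialization I substitute $\log q=\langle\phi,\theta\rangle$, the homogeneity of the score eliminating the normalizer. Using the earlier identity $\partial_{x_i}\langle\phi,\theta\rangle=\langle\partial_{x_i}\phi_{\cdot,i},\theta_{\cdot,i}\rangle$ and its second-order analogue, the quadratic part becomes $\frac12\sum_i\theta_{\cdot,i}^\top\big(w_i\,\partial_i\phi_{\cdot,i}\,\partial_i\phi_{\cdot,i}^\top\big)\theta_{\cdot,i}=\frac12\tilde\theta^\top\tilde A(x)\tilde\theta$ with $a_i(x)=x_i(1-x_i)\partial_i\phi_{\cdot,i}$, while the linear part becomes $\langle K(x),\theta\rangle$ with $K(x)_{ij}=\partial_i w_i\,\partial_i\phi_{ij}+w_i\,\partial_i^2\phi_{ij}$, which is precisely the stated $K$. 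The main obstacle is the rigorous treatment of the boundary: I must justify the integration by parts (absolute integrability of $w_i\,\partial_i p\,\partial_i\log q$ on $[0,1]^d$, permitting Fubini) and verify that the decay of $p\,\nabla\log q\otimes x(1-x)$ at every face is exactly what is required. The \emph{double} vanishing at the boundary, from both $w_i$ and the hypothesis, is what pins down $(x_i(1-x_i))^2$ as the correct weight rather than the single factor $x_i(1-x_i)$.
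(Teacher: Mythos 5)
Your proof is correct and follows essentially the same route as the paper's: both define the weighted Fisher divergence $\mathbb{E}_p\left[\Vert(\nabla\log p-\nabla\log q)\otimes x(1-x)\Vert_2^2\right]$, argue it vanishes iff $p=q$, expand the square, and integrate the cross term by parts with the boundary hypothesis killing the boundary contributions. Your side remark that the second-derivative coefficient in $h$ must be $(x_i(1-x_i))^2$ to be consistent with the stated $K$ is also correct; the single factor $x_i(1-x_i)$ in the displayed $h(x,q)$ is a typo in the proposition's statement.
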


Thus, all of the results in this work may be effortlessly carried over to exponential families over bounded supports.

\end{remark}

\section{Previous Work}
There is a small but growing literature on applications using the Hyv\"arinen score for estimation. \citep{sriperumbudur2013density} consider using the Hyv\"arinen score for density estimation in a reproducing kernel Hilbert space (RKHS). They consider the optimization for a density $q$,
\begin{align}
	\min_q \left\{\frac{1}{n}\sum_{k=1}^n h(X^k,q) + \frac{\lambda}{2}\Vert q\Vert_{\mathcal{H}}^2\right\},
\end{align}
where $\Vert\cdot\Vert_{\mathcal{H}}^2$ is the norm of the RKHS. After an application of the representer theorem \citep{kimeldorf1971some}, they show this may be expressed as a finite-dimensional quadratic program. They derive rates for convergence to the true density with respect to the Fisher divergence.

\citep{vincent2011connection} shows that the denoising autoencoder may be expressed as a type of score matching estimator, which they call \emph{denoising score matching}. Suppose that $\tilde{X}$ is a version of a sample $X$ which has been corrupted by Gaussian noise, so that its conditional distribution has the score $\partial \log q(\tilde{x}\mid x) = \frac{1}{\sigma^2} (x-\tilde{x})$. Suppose we seek to fit the corrupted data according to a density of the form
\begin{align}
	\log p(\tilde{x}\mid W,b,c) \propto -\frac{1}{\sigma^2}\left(\langle c,\tilde{x}\rangle-\frac{1}{2}\Vert \tilde{x}\Vert_2^2+\text{softplus}\left(\sum_j\langle W_j,\tilde{x}\rangle + b_j\right)\right),
\end{align}
where $\text{softplus}(x)=\max(0,x)$, then minimizing the Fisher divergence between the model density and $q(\tilde{x}\mid x)$ can be shown to be equivalent to minimizing
\begin{align}
	\mathbb{E}_{q(\tilde{x},x)}\left[\Vert W^\top\text{sigmoid}(W\tilde{X}+b)+c-X\Vert^2\right].
\end{align}
This is a simple denoising autoencoder with a single hidden layer, encoder $f(\tilde{x})=\text{sigmoid}(W\tilde{x}+b)$, and decoder $f'(y)=W^\top y+c$.

Score matching has also been used for learning natural image statistics \citep{kingma2010regularized,koster2009estimating}.

\section{Score Matching Estimator}
Define the statistics
\begin{align}
	\widehat{\Gamma} = \frac{1}{n}\sum_{r=1}^n A(X^r), \\
	\widehat{K} = \frac{1}{n}\sum_{r=1}^n K(X^r).
\end{align}

The \emph{regularized score matching estimator} is a solution to the problem
\begin{align}
	\widehat{\theta}\in \underset{\theta}{\text{argmin}} \left\{\frac{1}{2}\theta^\top \widehat{\Gamma}\theta + \widehat{K}^\top \theta+\mathcal{R}(\theta)\right\}. \label{scorematch}
\end{align}

Here $\mathcal{R}$ is the group penalty
\begin{align}
	\mathcal{R}(\theta) = \sum_{i,j\in V} \Vert\theta_{ij}\Vert_2.
\end{align}
This norm induces sparsity in groups (i.e. edges/vertices). In high dimensions, regularizing the vertex parameters is necessary, as \eqref{scorematch} need not exist otherwise. Both the scoring rule and regularizer of \eqref{scorematch} are convex in $\theta$, so it is a convex program. In particular, observe that it can be equivalently represented as
\begin{align}
	&\min_{t,t_{ij}}\left\{ t + \lambda\sum_{ij} t_{ij}\right\} \label{socpform}\\
	s.t. \qquad  &t  \geq \frac{1}{2}\theta^\top\widehat{\Gamma}\theta+\widehat{K}^\top\theta, \notag \\
	&t_{ij} \geq \Vert \theta_{ij}\Vert \notag. 
\end{align}
\eqref{socpform} is a second-order cone program (SOCP) \citep{boyd2004convex}, as the quadratic constraint can be re-written as a conic constraint. If $\widehat{\Gamma}$ is not positive definite, particularly when $n>d$, \eqref{scorematch} may not be unique. This is typical for high dimensional problems. One can impose further assumptions to guarantee uniqueness. For example various assumptions have been described for the lasso (see an overview of these assumptions in \citep{tibshirani2013lasso}) , but we won't go into those details here.

\subsection{Gaussian Score Matching}

Consider the Gaussian density:
\begin{align}
	q(x) \propto \exp\left\{ - \frac{1}{2} x^\top \Omega x \right\},
\end{align}
for $\Omega\succ 0$. We have
\begin{align}
	\nabla \log q (x) &= -\Omega x, \\
	\nabla_i (\nabla_i \log q(x)) &= -\Omega_{ii}.
\end{align}
so the Hyv{\"a}rinen score is given by
\begin{align}
	h(x,\Omega) &= -\sum_i \Omega_{ii}+\frac{1}{2} x^\top \Omega^\top\Omega x \\ 
	&= \text{trace}\left(-\Omega + \frac{1}{2}\Omega^2 x x^\top \right).
\end{align}


Let $\widehat{\Sigma}=\frac{1}{n}\sum_{r=1}^n X^r(X^r)^\top$. The  optimal regularized score estimator $\widehat{\Omega}$ is the solution to
\begin{align}
	&\min_{\Omega=\Omega^\top} \left\{ \text{trace}\left(\frac{1}{2}\Omega\widehat{\Sigma}\Omega-\Omega\right) + \lambda\Vert\Omega\Vert_1\right\}.\label{gaussreg}
\end{align}

In the notation of \eqref{scorematch}, we have $\theta=\text{vec}(\Omega)$, $\widehat{K}=\text{vec}(I_d)$ and  $\widehat{\Gamma}_i=\widehat{\Sigma}$ for each $i\in V$.
We do not impose a positive definite constraint on $\Omega$. Doing so would still result in a convex program, indeed it is a semidefinite program, but the resulting computation becomes more complicated and less scalable in practice. However, our theoretical results imply that $\widehat{\Omega}$ is positive definite with high probability. Indeed, denote $\Vert\widehat{\Omega}-\Omega^*\Vert_{sp}$ the spectral norm (maximum absolute value of eigenvalues) of the difference $\widehat{\Omega}-\Omega^*$. Since the spectral norm is dominated by the Frobenius norm (elementwise $L_2$ norm), the consistency result in the sequel implies consistency in spectral norm, and so the eigenvalues of $\widehat{\Omega}$ will be positive with probability approaching one, assuming the population precision matrix $\Omega^*$ has strictly positive eigenvalues. Furthermore, we note that our model selection guarantees still follow whether or not the estimator $\widehat{\Omega}$ is positive definite.

%

\section{Main Results}
We suppose we are given i.i.d. data $X^1,\ldots,X^n \sim p^*$. $p^*$ need not belong to the pairwise exponential family being estimated, in which case we may think of our consistency results as being relative to the population quantity
\begin{align}
	\theta^* &:= \left\{\mathbb{E}_{p^*}[A(X)]\right\}^{-1}\mathbb{E}_{p^*}[K(X)] \\
	&= (\Gamma^*)^{-1}K^*. \label{popsol}
\end{align}
Define the maximum column sum of $\theta^*$ by 
\begin{align}
\kappa_{\theta,1}:=\max_{i\in V} \sum_{j\in V,u\leq m} (\theta^*)_{ij}^u,
\end{align}
and define the maximum degree as
\begin{align}
s := \max_{i\in V}\left| \{(i,j):(i,j)\in E\}\right|.
\end{align}

\begin{assump} \label{eigenbound}
$\Gamma_i^*=\mathbb{E}_{p^*}[a_i(X)a_i(X)^\top]$ satisfies for each $i\in V$,
\begin{align}
	\infty>\bar{\epsilon}\geq\Lambda_{\max}(\Gamma_i^*)\geq\Lambda_{\min}(\Gamma_i^*) \geq \underline{\epsilon}>0.
\end{align}
Note that this also implies that the eigenvalues of $\Gamma^*$ are bounded as the rows of $\Gamma^*$ are non-trivial linear combinations of those of $\text{diag}(\Gamma_i^*)$, so the inverse in \eqref{popsol} exists and is unique.
\end{assump}
We also suppose $\theta^*$ is sparse, in the following sense:

\begin{assump}
$\theta^*$ belongs to the set
\begin{align}
	\tilde{\mathcal{P}} := \tilde{\mathcal{P}}(E)= \left\{\theta: \Vert\theta_{ij}\Vert_2=0,\text{  for } (i,j)\in E^c\right\}.
\end{align}
\end{assump}
For both parameter consistency and model selection we require the following tail conditions:
\begin{assump} \label{quasrconcentrate}
	For each $i,j,k\in V$ and $u\leq m$ and $t\leq \nu$, for some $c_1,c_2,\nu>0$,
	\begin{align}
		\mathbb{P}\left( \left|\widehat{K}_{ij}^u-(K^*)_{ij}^u\right|\geq t\right) &\leq \exp\left\{ - c_1nt^2 \right\} \\
		\mathbb{P}\left( \left|(\widehat{\Gamma}_{i})_{jk}^u-(\Gamma^*_{i})_{jk}^u\right|\geq t\right) &\leq \exp\left\{ - c_2nt^2 \right\}.
	\end{align}
\end{assump}
\subsection{Parameter Consistency}
We present results in terms of the (vector) $L_2$ norm. Note in particular that this result doesn't require any incoherence condition (though we do require for model selection consistency in the sequel).

For the parameter consistency results in particular, we require the following sub-Gaussian assumption:

\begin{assump}
For each $i\in V$ and $r=1,\ldots,n$, $a_i(X^r)$ is a sub-Gaussian random vector.
\end{assump}

\begin{thm}\label{parconsist}
Suppose the regularization parameter is chosen as
\begin{align}
	\lambda_n\asymp\sqrt{\frac{m\kappa_{1,\theta}^2\log(md)}{n}},
\end{align}
if the sample size satisfies
\begin{align}
	n=\Omega(md),
\end{align}
then any solution to regularized score matching satisfies
\begin{align}
	\Vert\widehat{\theta}-\theta^*\Vert_{2} &=O_p\left(\sqrt{\frac{(d+\left|E\right|)m\kappa_{1,\theta}^2\log(md)}{n}}\right).
\end{align}
\end{thm}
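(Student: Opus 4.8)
The plan is to run the standard analysis of regularized $M$-estimators with decomposable penalties (in the style of Negahban--Ravikumar--Wainwright--Yu), specialized to the quadratic loss $\mathcal{L}(\theta)=\frac{1}{2}\theta^\top\widehat{\Gamma}\theta+\widehat{K}^\top\theta$ and the group penalty $\mathcal{R}$. Writing $\Delta:=\widehat{\theta}-\theta^*$, the argument rests on three ingredients: (i) a high-probability bound on the dual norm $\mathcal{R}^*(\nabla\mathcal{L}(\theta^*))=\max_{ij}\Vert(\nabla\mathcal{L}(\theta^*))_{ij}\Vert_2$ of the gradient at the population optimum, which dictates the choice of $\lambda_n$; (ii) a cone/compatibility condition confining $\Delta$ to the set where the off-support mass is controlled by the on-support mass; and (iii) a restricted strong convexity (RSC) lower bound on $\Delta^\top\widehat{\Gamma}\Delta$.

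First I would control the gradient at the truth. Since $\theta^*$ is the population minimizer, the population gradient vanishes, so $\nabla\mathcal{L}(\theta^*)$ equals the centered empirical fluctuation $(\widehat{\Gamma}-\Gamma^*)\theta^*\pm(\widehat{K}-K^*)$. For the $(i,j)$ group, the $\widehat{K}-K^*$ contribution is bounded using the sub-Gaussian tail of Assumption \ref{quasrconcentrate} together with a union bound over the $O(md^2)$ coordinates, giving $\Vert(\widehat{K}-K^*)_{ij}\Vert_2=O_p(\sqrt{m\log(md)/n})$; the $(\widehat{\Gamma}-\Gamma^*)\theta^*$ contribution is bounded by $\Vert\widehat{\Gamma}-\Gamma^*\Vert_{\max}$ times the maximum column sum $\kappa_{1,\theta}$, which is exactly where the $\kappa_{1,\theta}$ factor enters. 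Combining yields $\mathcal{R}^*(\nabla\mathcal{L}(\theta^*))=O_p\big(\sqrt{m\kappa_{1,\theta}^2\log(md)/n}\big)$, so that the prescribed $\lambda_n$ satisfies $\lambda_n\geq 2\,\mathcal{R}^*(\nabla\mathcal{L}(\theta^*))$ with probability tending to one.

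On this event, decomposability of $\mathcal{R}$ with respect to $\tilde{\mathcal{P}}(E)$ and the optimality of $\widehat{\theta}$ force the cone inclusion $\mathcal{R}(\Delta_{E^c})\leq 3\,\mathcal{R}(\Delta_E)$, hence $\mathcal{R}(\Delta)\leq 4\,\mathcal{R}(\Delta_E)\leq 4\sqrt{d+\vert E\vert}\,\Vert\Delta\Vert_2$, using the subspace compatibility bound with compatibility constant $\sqrt{d+\vert E\vert}$ (the active groups being the $d$ vertex groups and $\vert E\vert$ edge groups). For RSC, since $\mathcal{L}$ is quadratic its second-order remainder is exactly $\frac{1}{2}\Delta^\top\widehat{\Gamma}\Delta$; I split this as $\frac{1}{2}\Delta^\top\Gamma^*\Delta+\frac{1}{2}\Delta^\top(\widehat{\Gamma}-\Gamma^*)\Delta$. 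The block-diagonal structure $\widehat{\Gamma}=\mathrm{diag}(a_i a_i^\top)$ averaged over samples, together with Assumption \ref{eigenbound}, gives $\Delta^\top\Gamma^*\Delta\geq\underline{\epsilon}\Vert\Delta\Vert_2^2$; the deviation term is handled uniformly over $\mathcal{C}\cap\{\Vert\Delta\Vert_2=1\}$ via the sub-Gaussian assumption on $a_i(X)$ and a covering/matrix-deviation argument, which is where $n=\Omega(md)$ enters, ensuring the deviation is at most $\tfrac12\underline{\epsilon}$ and hence $\frac{1}{2}\Delta^\top\widehat{\Gamma}\Delta\geq\tfrac14\underline{\epsilon}\Vert\Delta\Vert_2^2$. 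Plugging the cone and RSC bounds into the basic inequality gives $\tfrac14\underline{\epsilon}\Vert\Delta\Vert_2^2\leq \tfrac32\lambda_n\mathcal{R}(\Delta_E)\leq\tfrac32\lambda_n\sqrt{d+\vert E\vert}\,\Vert\Delta\Vert_2$, so $\Vert\Delta\Vert_2=O_p(\lambda_n\sqrt{d+\vert E\vert})$, which is the claimed rate.

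I expect the main obstacle to be the RSC step: showing the empirical quadratic form stays uniformly bounded below over the cone, given that each sample contributes only the rank-one blocks $a_i(X^r)a_i(X^r)^\top$ and that $\widehat{\Gamma}$ may itself be singular (so I cannot use global strong convexity, consistent with the theorem asserting the bound for \emph{any} solution). The delicate points are invoking the right restricted-eigenvalue concentration under the sub-Gaussian design, correctly tracking the effective dimension $(d+\vert E\vert)m$, and verifying that $n=\Omega(md)$ suffices. A secondary difficulty is propagating $\kappa_{1,\theta}$ cleanly through the $(\widehat{\Gamma}-\Gamma^*)\theta^*$ part of the gradient bound so that it appears precisely as $\kappa_{1,\theta}^2$ inside $\lambda_n$.
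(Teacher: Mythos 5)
Your proposal follows essentially the same route as the paper: the basic inequality for the quadratic loss, the choice $\lambda_n\geq 2\,\mathcal{R}^*\bigl((\widehat{\Gamma}-\Gamma^*)\theta^*+\widehat{K}-K^*\bigr)$ with the $\kappa_{1,\theta}$ factor entering through the column sums of $\theta^*$, decomposability of the group penalty with compatibility constant $\sqrt{d+\left|E\right|}$, and a curvature lower bound on $\delta^\top\widehat{\Gamma}\delta$ from the sub-Gaussian assumption under $n=\Omega(md)$. The one simplification in the paper is that the step you flag as the main obstacle is not actually restricted: each block $\widehat{\Gamma}_i$ is a sample covariance of sub-Gaussian vectors of dimension $O(md)$, so $n=\Omega(md)$ already yields a global bound $\delta_i^\top\widehat{\Gamma}_i\delta_i\geq\tfrac{\underline{\epsilon}}{2}\Vert\delta_i\Vert^2$ for all $\delta_i$ via standard sub-Gaussian covariance concentration, and the cone is needed only for the compatibility constant, not for any covering argument over a restricted set.
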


\begin{remark}
	Consider Gaussian score matching. Here $m=1$, so if $\kappa_{1,\theta}$ is bounded we have $\Vert\widehat{\theta}-\theta^*\Vert_2 =O_p\left(\sqrt{\frac{(d+\left|E\right|)\log(d)}{n}}\right)$. This rate is the same as the graphical lasso shown in \citep{rothman2008sparse}. Furthermore, here $\Gamma_i^*=\Sigma$, so our assumption \ref{eigenbound} amounts to bounds on the eigenvalues of $\Sigma$, which are the same as for sparse precision matrix MLE. The assumption that $\kappa_{1,\theta}$ is bounded here says that the  sums of the absolute value of rows of $\Omega^*$ are bounded, which is not necessary for the regularized MLE.
\end{remark}

\begin{remark}
We might reasonably expect $\kappa_{1,\theta}=O(sm)$, in which case $\Vert\widehat{\theta}-\theta^*\Vert_2 = O_p\left(\sqrt{\frac{(d+\left| E\right|)m^3s^2\log(md)}{n}}\right)$. In this setting the regularized MLE will have the rate 
\begin{align}
O_p\left(\sqrt{\frac{(d+\left| E\right|)m \log(md)}{n}}\right).
\end{align}
 (see results in Appendix A).
\end{remark}

\subsection{Model Selection} \label{smms}

For model selection we require several additional conditions. Denote $\widehat{E}$ as the edge set learned from $\widehat{\theta}$:
\begin{align}
	\widehat{E}:=\left\{(i,j): \Vert\widehat{\theta}_{ij}\Vert_2=0\right\}.
\end{align}
Furthermore, define 
\begin{align}
	\kappa_\Gamma := \Vert (\Gamma^*)^{-1}\Vert_\infty, \\
	\kappa_\theta := \Vert\theta^*\Vert_{\max},\\
	\rho^* := \underset{(i,j)\in E}{\min} \Vert\theta_{ij}\Vert_{\max}.
\end{align}

Here $\Vert A\Vert_\infty = \max_j\sum_i \left| A_{ij}\right|$ is the matrix $\infty$ norm and $\Vert\cdot\Vert_{\max}$ the elementwise max norm. We require an \emph{incoherence condition}:
\begin{assump}
\begin{align}
\max_{(i,j)\in E^c} \Vert \Gamma_{ij,E}^*(\Gamma^*_{EE})^{-1}\Vert_2 \leq \frac{1-\tau}{\sqrt{d+E}}, && \text{for some } \tau\in (0,1].
\end{align}
where $\Vert A\Vert_2$ is the matrix operator norm. 
\end{assump}

In the following theorem we suppose $\kappa_\Gamma,\kappa_\theta,s$ are are bounded, while $\rho^*$ may change with the sample size.

\begin{thm}\label{modelselectthm}
	Suppose the regularization parameter $\lambda_n$ is chosen to be
	\begin{align}
		\lambda_n \asymp \sqrt{\frac{m\kappa_{1,\theta}^2 \log(dm)}{n}},
	\end{align}
	then if
	\begin{align}
		n&=\Omega(\max\{m\kappa_{1,\theta}^2\log(dm),m^2s^2\log(dm)\}), \\
		\frac{1}{\rho^*} &= o\left(\sqrt{\frac{\kappa_{1,\theta}^2\log(dm)}{n}}\right),
	\end{align}
	there exists a solution to the regularized score matching estimator $\widehat{\theta}$ with estimated edge set $\widehat{E}$ satisfying
\begin{align}
	\mathbb{P}(\widehat{E}=E)\rightarrow 1.
\end{align}
\end{thm}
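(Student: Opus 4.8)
The plan is to use the \emph{primal-dual witness} (PDW) construction, which is the standard technique for establishing support recovery in $\ell_1$/group-$\ell_1$ regularized $M$-estimation (as in the analyses of the lasso and graphical lasso cited earlier). The idea is to construct a candidate solution $\widehat\theta$ that is \emph{restricted} to the true support $E$, and then certify via the KKT conditions that this restricted solution is in fact the global optimum of the unrestricted problem. Because the score-matching objective $\tfrac12\theta^\top\widehat\Gamma\theta+\widehat K^\top\theta$ is a quadratic form (this is the key structural simplification over the log-likelihood in Chapter 2), the stationarity conditions are \emph{linear} in $\theta$, which makes the witness construction especially clean.

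Concretely, I would first set $\widehat\theta_{E^c}=0$ and solve the restricted stationarity equations on $E$: $(\widehat\Gamma_{EE})\widehat\theta_E + \widehat K_E + \lambda_n \widehat z_E = 0$, where $\widehat z_E$ is a subgradient of $\mathcal R$. Under Assumption~\ref{eigenbound} the population block $\Gamma^*_{EE}$ is invertible with eigenvalues bounded away from zero, and using the tail bounds of Assumption~\ref{quasrconcentrate} together with the sub-Gaussian concentration, $\widehat\Gamma_{EE}$ inherits this invertibility with high probability (a bound on $\Vert\widehat\Gamma_{EE}-\Gamma^*_{EE}\Vert$ of order $\sqrt{m s^2\log(dm)/n}$, controlled by the $n=\Omega(m^2s^2\log(dm))$ requirement). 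This gives existence of a restricted solution and, following the parameter-consistency argument of Theorem~\ref{parconsist}, an $\ell_\infty$/$\ell_2$ control $\Vert\widehat\theta_E-\theta^*_E\Vert$ of order $\lambda_n\sqrt{|E|}$. The second step is the \emph{strict dual feasibility} check: I must verify that for every $(i,j)\in E^c$ the induced subgradient satisfies $\Vert\widehat z_{ij}\Vert_2<1$ strictly. Plugging the restricted solution into the $E^c$ block of the stationarity condition, $\widehat z_{ij}$ decomposes into a term $\widehat\Gamma_{ij,E}(\widehat\Gamma_{EE})^{-1}\widehat z_E$ plus a noise term driven by $\widehat K - K^*$ and $\widehat\Gamma-\Gamma^*$; the incoherence Assumption (the operator-norm bound by $(1-\tau)/\sqrt{d+|E|}$) is exactly what forces the deterministic part below $1-\tau/2$, while the concentration assumptions force the stochastic part below $\tau/4$ with the stated choice $\lambda_n\asymp\sqrt{m\kappa_{1,\theta}^2\log(dm)/n}$.

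The third step is to convert support containment into exact recovery: strict dual feasibility guarantees $\widehat E\subseteq E$ (no false inclusions), and the $\ell_\infty$ parameter bound combined with the minimum-signal condition $1/\rho^*=o\big(\sqrt{\kappa_{1,\theta}^2\log(dm)/n}\big)^{-1}$ guarantees $\Vert\widehat\theta_{ij}\Vert_{\max}>0$ for every true edge $(i,j)\in E$, giving $E\subseteq\widehat E$ and hence $\mathbb P(\widehat E=E)\to1$. I would also remark that because the objective is convex and, on the support, strictly convex under Assumption~\ref{eigenbound}, the PDW-constructed solution is \emph{the} (unique on $E$) optimum, so the conclusion holds for any returned minimizer with the correct support.

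The main obstacle I expect is controlling the stochastic fluctuation in the strict dual feasibility step, because the perturbation $\widehat\Gamma_{E^c,E}(\widehat\Gamma_{EE})^{-1}$ mixes errors in \emph{both} $\widehat\Gamma$ and $(\widehat\Gamma_{EE})^{-1}$, and the inverse must be expanded (e.g.\ via a Neumann/resolvent series $(\widehat\Gamma_{EE})^{-1}=(\Gamma^*_{EE})^{-1}\sum_{k\ge0}(-\Delta(\Gamma^*_{EE})^{-1})^k$) to separate the leading incoherence term from higher-order remainders. Keeping these remainders $o(\tau)$ uniformly over the roughly $d^2$ edges in $E^c$ is what drives the sample-size requirement $n=\Omega(m^2s^2\log(dm))$ and where the factors of $\kappa_\Gamma$, $\kappa_{1,\theta}$, and $s$ must be tracked carefully; the union bound over edges contributes the $\log(dm)$ factor via the sub-Gaussian tails of Assumption~\ref{quasrconcentrate}.
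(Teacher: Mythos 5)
Your overall architecture---primal-dual witness with a restricted solution on $E$, strict dual feasibility on $E^c$ via the incoherence condition, and a minimum-signal step---is exactly the paper's strategy (Lemmas \ref{primaldual1} and \ref{primaldual2}). The one genuine gap is in how you obtain the $\ell_\infty$ control on $\widehat\theta_E-\theta^*_E$. You propose to get it ``following the parameter-consistency argument of Theorem \ref{parconsist},'' quoting a rate of order $\lambda_n\sqrt{|E|}$. That argument (restricted strong convexity plus decomposability) only delivers an $\ell_2$ bound of order $\lambda_n\sqrt{d+|E|}$, and any elementwise bound extracted from it inherits that $\sqrt{d+|E|}$ factor. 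Feeding such a bound into the minimum-signal step would force $\rho^*\gg\lambda_n\sqrt{d+|E|}$, which is strictly stronger than what the theorem asserts ($\rho^*$ need only dominate $\lambda_n/\sqrt{m}$, per the final lines of the paper's proof). The paper instead proves the sharper groupwise bound $\max_{(i,j)\in E}\Vert\widehat\theta_{ij}-\theta^*_{ij}\Vert_2\le 2\kappa_\Gamma\left(2\kappa_{1,\theta}\Vert\widehat\Gamma_{EE}-\Gamma_{EE}\Vert_{\max}+\Vert\widehat K-K\Vert_\infty+\lambda_n/\sqrt{m}\right)$ by analyzing the restricted stationarity system row by row, using the control on the rows of $\Gamma_{EE}^{-1}$ to cancel the $\sqrt{d+|E|}$ coming from $\Vert\widehat Z_E\Vert_2$. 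You need this separate linear-algebra computation; the $\ell_2$ consistency theorem cannot substitute for it.

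On the sub-step where you handle the random block $\widehat\Gamma_{EE}$: your plan to invert it and expand via a Neumann series is workable but is not the paper's route. The paper never inverts the sample block; it defines the affine map $F(\Delta_E)=-\Gamma_{EE}^{-1}\left((\widehat\Gamma_{EE}-\Gamma_{EE})(\Delta_E+\theta^*_E)+\Gamma_{EE}\Delta_E+\widehat K_E-K_E+\lambda_n\widehat Z_E\right)+\Delta_E$ using only the population inverse, shows $F$ maps the $\ell_\infty$ ball of radius $\tilde r$ into itself whenever $\Vert\widehat\Gamma_{EE}-\Gamma_{EE}\Vert_{\max}\le 1/(2ms\kappa_\Gamma)$, and invokes Brouwer's fixed point theorem. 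This yields the same conclusion using only a $\Vert\cdot\Vert_{\max}$ concentration bound on $\widehat\Gamma$, with no need to control the spectrum of a random inverse; your Neumann route would additionally require a uniform operator-norm bound on $\widehat\Gamma_{EE}-\Gamma^*_{EE}$, obtainable under the same sample-size condition but at the cost of extra work. The rest of your outline---splitting the dual feasibility bound into a deterministic incoherence part below $1-\tau$ and stochastic parts each below a fraction of $\tau$, with the union bound over edges contributing the $\log(dm)$ factor---matches the paper's Lemma \ref{primaldual2} and the final assembly of the proof.
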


\begin{remark}
	Assuming $m,s,\kappa_{1,\theta}$ are bounded, this implies the dimension may grow nearly exponentially with the sample size:
\begin{align}
		d=o(e^n),
\end{align}
with the probability of model selection consistency still aproaching one.
\end{remark}
\begin{remark}[Gaussian score matching]

When $m=1$, the sample complexity matches that for structure learning of the precision matrix using the log-det divergence, in \citep{ravikumar2011high}. Thus Gaussian score matching in particular  benefits from identical model selection guarantees as the graphical lasso algorithm. However it should be noted that the assumptions are slightly different. In particular the graphical lasso requires an irrepresentable condition on $\Sigma\otimes\Sigma$, while our method involves an irrepresentable condition for $\Sigma\otimes I_d$.
\end{remark}
\subsection{Model Selection for the Nonparametric Pairwise Model}

In this section we consider model selection for the nonparametric pairwise model. We suppose the log of the true density $p^*$ belongs to $W_2^r$, the Sobolev space of order $r$. This implies, along with the pairwise assumption, that $\log p^*$ has the infinite expansion
\begin{align}
	\log p^* &\propto \exp\left\{ \sum_{i,j\in V,i\leq j}\sum_{k,l=1}^\infty (\theta^*)_{ij}^{kl}\phi_{kl}(x_i,x_j)+\sum_{i\in V}\sum_{k=1}^\infty (\theta^*)_{i}^k\phi_k(x_i)\right\}, \label{infexpans}
\end{align}
where here $\{\phi_k,\phi_{kl}\}$ is a basis over $[0,1]^2$. For an expansion in  $W_2^r$, we have that the coefficients decay at the following rates:
\begin{align}
	&\sum_k \theta_i^k k^{2r} <\infty, & \text{ for all } i\in V,\\
	&\sum_{k,l} \theta_{ij}^{kl} k^{2r_i}l^{2r_j}<\infty, & \text{ for all } (i,j)\in E,& \qquad r_i+r_j=r.
\end{align}

 For our results we assume $\{\phi_k\}$ is the orthonormal Legendre basis on $[0,1]$, and $\{\phi_{kl}\}$ is the tensor product basis $\phi_{kl}(x_i,x_j)=\phi_k(x_i)\cdot \phi_l(x_j)$. This is because the supporting lemmas are particular to the Legendre basis,  but in practice one is not limited to a particular basis. Now, consider forming a density by truncating \eqref{infexpans} after $m_1$ terms for the univariate expansions, and $m_2$ for bivariate:
\begin{align}
	\log p_{\theta} &\propto \exp\left\{ \sum_{i,j \in V,i\leq j}\sum_{k,l=1}^{m_2} \theta_{ij}^{kl}\phi_{kl}(x_i,x_j)+\sum_{i\in V}\sum_{k=1}^{m_1}\theta_{i}^k\phi_k(x_i)\right\}.
\end{align}

Observe that this is a finite-dimensional exponential family. Furthermore, the normalizing constant for this family will generally be intractable, requiring a $d$-fold integral. We choose our density estimate to be $p_{\widehat{\theta}}$, where $\widehat{\theta}$ is a solution to the score matching estimator \eqref{scorematch} for this family. Furthermore, we let the number of sufficient statistics $m_1,m_2$ grow with the sample size $n$ to balance the bias from truncation with the estimation error. We denote $E$ to be the support of $p^*$:
\begin{align}
	E:= \left\{ (i,j): \Vert\theta_{ij}^*\Vert=0\right\},
\end{align}

Now, decompose the vector $\theta^*$ into the included terms and truncated terms, $\theta^*=((\bar{\theta}^*)^\top,(\theta^*_T)^\top)^\top$ and corresponding sufficient statistics $\phi=((\bar{\phi})^\top,(\phi_T)^\top)$. Denote $(a_T)_i(x) := \frac{\partial}{\partial x_i} (\phi_T){\cdot,i}$, $A_T(x)= \text{diag}((a_T)_i(x)(a_T)_i(x)^\top)$, and $\Gamma_T^* =\mathbb{E}_p [A_T(X)]$. Applying the results in Section \ref{smef}, we have the linear relation
\begin{align}
	K^* = -\Gamma^*_T\theta^*_T - \Gamma^* \bar{\theta}^*.
\end{align}

In the following theorem we assume $\kappa_T := \Vert\Gamma^*_T\Vert_{\max}$ is bounded, and $\kappa_{1,\theta}=O(m_2^2)$, in addition to the assumptions for the parametric setting stated in Section \ref{smms}, with the exception of Assumption \ref{quasrconcentrate}. Since the number of statistics grows to infinity in the nonparametric case, we need more accurate accounting of the constant terms in the concentration inequality. In lieu of the concentration assumption, we have the following assumption on the boundedness of the marginals of $p$.

\begin{assump} \label{assump4}
		 For each $i,j\in V$,
	\begin{align}
		\underline{\epsilon} & \leq p_{ij}(x_i,x_j) \leq \bar{\epsilon},
		\end{align}
		for absolute constants $\underline{\epsilon}>0,\bar{\epsilon}<\infty$.
\end{assump}

This assumption is mild for density estimation as it only requires bounds on the bivariate marginals rather than the full distribution. This is the same assumption used in Chapter 2 for the TRW estimator.

\begin{thm}\label{msnonp}
	Suppose that the truncation parameters and regularization parameter are chosen to be
	\begin{align}
		m_2\asymp n^{\frac{1}{2r+13}} \\
		m_1\asymp n^{\frac{1}{2r+13}} \\
		\lambda_n\asymp\sqrt{\frac{\log nd}{n^{\frac{2r-1}{2r+13}}}} \
	\end{align}
	and the dimension $d$ and $\rho^*$ satisfy
	\begin{align}
		d= o\left(e^{n^{\frac{2r-1}{2r+13}}}\right) \\
		\frac{1}{\rho^*} = o\left(\sqrt{\frac{\log nd}{n^{\frac{2r+1}{2r+13}}}}\right),
	\end{align}
	then there exists a solution $\widehat{\theta}$ such that the edge set $\widehat{E}$ satisfies
	\begin{align}
		\mathbb{P}(\widehat{E}=E)\rightarrow 1.
	\end{align}
\end{thm}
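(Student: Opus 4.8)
The plan is to establish model selection consistency via the \emph{primal-dual witness} (PDW) construction of \citep{ravikumar2011high}, following the same architecture as the parametric Theorem \ref{modelselectthm} but carefully accounting for the two features that distinguish the nonparametric regime: the growing number of sufficient statistics $m_1,m_2$, and the truncation bias induced by discarding the tail coefficients $\theta^*_T$. A crucial simplification is that the score matching objective $\tfrac12\theta^\top\widehat\Gamma\theta + \widehat K^\top\theta$ is \emph{exactly} quadratic, so the stationarity (KKT) conditions are linear in $\theta$ and no Taylor-remainder term (unlike the MLE analysis of Chapter 2) needs to be controlled; the truncation bias enters only through the exact linear identity $K^* = -\Gamma^*_T\theta^*_T - \Gamma^*\bar\theta^*$.

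First I would set up the witness: fix $\widehat\theta_{E^c}=0$ and let $\widehat\theta_E$ solve the oracle problem restricted to the true support, with associated subgradient $\widehat z_E$ satisfying $\widehat\Gamma_{EE}\widehat\theta_E + \widehat K_E + \lambda_n\widehat z_E = 0$. Strict convexity of the restricted problem (guaranteed by Assumption \ref{eigenbound} once $\widehat\Gamma_{EE}$ concentrates) makes $\widehat\theta_E$ well-defined, and the standard PDW argument shows that if the dual variables on $E^c$, namely $\widehat z_{E^c} = -\tfrac{1}{\lambda_n}(\widehat\Gamma_{E^c E}\widehat\theta_E + \widehat K_{E^c})$, satisfy $\max_{(i,j)\in E^c}\|\widehat z_{ij}\|_2<1$, then this oracle solution is a global minimizer of the full convex program with the correct support. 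Subtracting the population identity restricted to $E$ yields the error equation $\widehat\Gamma_{EE}\Delta_E = -(\widehat\Gamma_{EE}-\Gamma^*_{EE})\bar\theta^*_E -(\widehat K_E - K^*_E) - \lambda_n\widehat z_E + (\Gamma^*_T\theta^*_T)_E$, where $\Delta_E = \widehat\theta_E - \bar\theta^*_E$; applying $\|\widehat\Gamma_{EE}^{-1}\|_\infty\lesssim\kappa_\Gamma$ bounds $\|\Delta_E\|_\infty$ by a sum of a sampling term, a bias term, and $\lambda_n$.

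The two estimates requiring genuinely new work, relative to the parametric case, are as follows. First, concentration of $\widehat\Gamma$ and $\widehat K$ with \emph{explicit} polynomial dependence on $m_1,m_2$: because Assumption \ref{quasrconcentrate} is unavailable here, I would derive the tail bounds directly from boundedness of the bivariate marginals (Assumption \ref{assump4}) together with the fact that the Hyv\"arinen statistics $a_i,K$ are built from derivatives of the Legendre basis and the weight $x_i(1-x_i)$, whose sup-norms grow as fixed powers of $m_2$; Hoeffding/Bernstein then gives $\|\widehat K-K^*\|_\infty$ and $\|\widehat\Gamma-\Gamma^*\|_\infty$ of order $\sqrt{\mathrm{poly}(m_2)\log(dm)/n}$. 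Second, the truncation bias $\|(\Gamma^*_T\theta^*_T)_E\|_\infty \le \kappa_T\|\theta^*_T\|_1$, which I would bound using the Sobolev coefficient-decay rates $|\theta^{*kl}_{ij}| = o(k^{-r_i-1/2}l^{-r_j-1/2})$ to show it vanishes polynomially in $m_2^{-1}$. Once $\|\Delta_E\|_\infty$ is controlled, dual feasibility follows as in Ravikumar et al.: writing $\widehat z_{E^c} = \widehat\Gamma_{E^c E}\widehat\Gamma_{EE}^{-1}\widehat z_E + \tfrac1{\lambda_n}(\widehat\Gamma_{E^c E}\widehat\Gamma_{EE}^{-1}\widehat K_E - \widehat K_{E^c})$, the incoherence condition forces the first term below $1-\tau$ (using $\|\widehat z_E\|_2\le\sqrt{d+|E|}$ against the $1/\sqrt{d+|E|}$ operator-norm scaling), while the remaining term is shown to be $<\tau$ with high probability from the concentration and bias bounds; finally $\widehat E = E$ (no false exclusions) follows because the signal condition $1/\rho^*=o(\cdot)$ makes $\|\Delta_E\|_\infty<\rho^*/2$, so no true edge group is thresholded to zero.

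The main obstacle is the delicate three-way balancing of truncation bias, estimation variance, and the preserved incoherence margin $\tau$, all under growing $m_1,m_2$. The variance terms are inflated not just by the number of statistics ($m\asymp m_2^2$ per group and $\kappa_{1,\theta}=O(m_2^2)$) but by the large sup-norms of the \emph{derivatives} of the Legendre polynomials constituting the Hyv\"arinen statistics, which contribute additional powers of $m_2$; meanwhile the bias shrinks only polynomially in $m_2^{-1}$ at a rate governed by $r$. Choosing $m_1,m_2\asymp n^{1/(2r+13)}$ and $\lambda_n\asymp\sqrt{\log(nd)/n^{(2r-1)/(2r+13)}}$ is exactly what equates the inflated variance contribution to the squared bias while keeping $\lambda_n$ large enough to dominate the sampling fluctuations in the dual-feasibility check; verifying that this single choice simultaneously satisfies all three requirements, and that the permitted near-exponential growth $d=o(e^{n^{(2r-1)/(2r+13)}})$ is consistent with the $\log(dm)$ factors appearing in every bound, is the technically demanding core of the argument.
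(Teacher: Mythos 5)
Your proposal matches the paper's proof in both architecture and all essential ingredients: the primal–dual witness construction inherited from the parametric Lemmas \ref{primaldual1} and \ref{primaldual2}, the truncation bias entering through the exact linear identity $K^* = -\Gamma^*_T\theta^*_T - \Gamma^*\bar{\theta}^*$, concentration of $\widehat{\Gamma}$ and $\widehat{K}$ via Hoeffding under Assumption \ref{assump4} with the polynomial-in-$m_2$ sup-norm bounds on the weighted Legendre derivatives (the paper's Lemma \ref{bonnetlem}, giving $\Vert A(x)\Vert_{\max}=O(m_2^4)$), the Sobolev decay bound on $\theta^*_T$, and the final bias--variance balancing that produces $m_2\asymp n^{1/(2r+13)}$. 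The only cosmetic difference is that you fold the truncation bias in as $\kappa_T\Vert\theta^*_T\Vert_1$ while the paper uses $\kappa_T m_2\Vert\theta^*_T\Vert_{\max}$, and you express the dual-feasibility check with empirical rather than population $\Gamma_{E^cE}\Gamma_{EE}^{-1}$; neither changes the argument.
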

\begin{remark}
	If $r=2$, and $s$ grows as a constant, we may have
	\begin{align}
		d=o\left(e^{n^{3/17}}\right),
	\end{align}
	and still ensure model selection consistency. In Chapter 2 it was shown that the sample complexity for model selection in the nonparametric pairwise model the regularized exponential series MLE using Legendre polynomials is $d=o\left(e^{n^{3/7}}\right)$, though this estimator can't be computed exactly. The optimal choice of regularization and truncation parameters is much different for these two methods. This is a consequence of different estimation errors. In our supporting lemmas (see Appendix B) we require convergence of the statistic $\widehat{\Gamma}$ to its expectation. In Appendix B we show that applying Hoeffding's inequality and a union bound,
\begin{align}
	\kappa_{1,\theta}\Vert\widehat{\Gamma}-\Gamma\Vert_{\max} = O_p\left(\sqrt{\frac{m_2^{12} \log nd}{n}}\right).
\end{align}

For the regularized MLE, we needed convergence of the sufficient statistics $\widehat{\mu}$, which converges at a much faster rate of $O_p\left(\sqrt{\frac{m_2^4 \log nd}{n}}\right)$. Our results agree with intuition, that the score matching statistics, derived from the derivatives of the log-density, should be harder to estimate than the sufficient statistics.

Also it should be noted that the assumptions underlying the two results are quite different. The MLE involves conditions on the covariance of the sufficient statistics $\text{cov}_{p^*}[\phi(X)]$, while the score matching estimator requires conditions on $\Gamma^*=\mathbb{E}_{p^*}[A(X)]$. An interesting stream of future work would be to better understand the relationship between these two approaches and their assumptions.
\end{remark}
\section{Algorithms}
In this section we consider algorithms for solving \eqref{scorematch}. In our experiments we denote our method QUASR, for \textbf{Qua}dratic \textbf{S}coring and \textbf{R}egularization. There are variety of generic approaches to solving problems which may be cast as the sum of a smooth convex function plus a sparsity-inducing norm  \citep{bach2011convex}, as well as generic solvers for solving second-order cone programs. Here we will propose two novel algorithms which exploit the unique structure of the problem at hand. First we will consider an ADMM algorithm; for a detailed exposition of this approach, see \citep{boyd2011distributed}. In section \ref{cwd}, we consider a coordinate-wise descent algorithm for Gaussian score matching \citep{friedman2007pathwise}.
\subsection{Consensus ADMM} \label{cadmm}

The idea behind ADMM is that the problem \eqref{scorematch} can be equivalently written as
\begin{align}
	\underset{\theta,z}{\text{min}} \left\{\frac{1}{2}\sum_{i\in V} \left(\theta_{\cdot,i}^\top \widehat{\Gamma}_i \theta_{\cdot,i} +\theta_{\cdot,i}^\top\widehat{K}_{\cdot,i}\right)+ \lambda\sum_{i,j\in V,i\leq j}\Vert z_{ij}\Vert_2\right\},
\end{align}
subject to the constraint that $\theta_{ij}=\theta_{ji}=z_{ij}$. The scaled augmented Lagrangian for this problem is given by
\begin{align}
	L\left(\theta,y,z\right) &= \frac{1}{2}\sum_{i\in V}\left( \theta_{\cdot,i}^\top \widehat{\Gamma}_i \theta_{\cdot,i} +\theta_{\cdot,i}^\top\widehat{K}_{\cdot,i}\right) \\
	&\quad+\sum_{i,j\in V: i\leq j}\bigg( \Vert z_{ij}\Vert_2+y_{ij}^\top(\theta_{ij}-z_{ij})+y_{ji}^\top(\theta_{ji}-z_{ij}) \\
	&\qquad\qquad\qquad+\frac{\rho}{2}\Vert \theta_{ij}-z_{ij}\Vert^2+\Vert \theta_{ji}-z_{ij}\Vert^2\bigg),
\end{align}
here $\{y\}:=\left\{y_{ij},y_{ji}\right\}$ are dual variables, and $\rho$ is a penalty parameter which we choose to be 1 for simplicity. The idea behind ADMM is to iteratively optimize $L$ over the $\theta,y,z$ variables in turn. In the first step, since $\theta_{ij}=\theta_{ji}$ is included as a constraint and may be considered separately, $L$ as a function of $\theta$ decouples into $d$ independent quadratic programs, one for each "column" of $\theta$, which may be solved in parallel. In the second step, $z_{ij}$ pools the estimates $\theta_{ij}$ and $\theta_{ji}$ from the previous step, and applies a group shrinkage operator. The third step is a simple update of the dual variables.

\begin{figure}
\begin{center}
\framebox{\begin{minipage}[t]{0.95\columnwidth}
\begin{enumerate}
	\item Initialize $\theta^{(0)}$, $z^{(0)}$, $y^{(0)}$, and choose $\rho=1$;
	\item For $t=1,\ldots,$ until convergence:
		\begin{enumerate}
			\item Update $\theta$ for $i\in V$: 
			\begin{align}
	\theta_{\cdot,i}^{(t)} = \left(\widehat{\Gamma}_i+\rho I_d\right)^{-1}\left(-\widehat{K}_{\cdot,i}-y_{\cdot,i}^{(t-1)}+\rho z_{\cdot,i}^{(t-1)}\right),
\end{align}
	
			\item Update $z$ for $i,j\in V$, $i\leq j$:
			\begin{align}
				z_{ij}^{(t)} = \tilde{S}\left(\frac{1}{2}\left(\theta_{ij}^{(t)}+\theta_{ji}^{(t)}+y_{ij}^{(t-1)}/\rho+y_{ji}^{(t-1)}/\rho\right),\lambda/\rho\right),
			\end{align}
			where $\tilde{S}(x,\lambda):= \left(1-\frac{\lambda}{\Vert x\Vert_2}\right)_+ x$.
			\item Update $y$ for $i,j\in V$, $i\leq j$:
			\begin{align}
				y_{ij}^{(t)} = y_{ij}^{(t-1)} + \rho\left(x_{ij}^{(t)}-z_{ij}^{(t)}\right),\\
				y_{ji}^{(t)} = y_{ji}^{(t-1)} + \rho\left(x_{ji}^{(t)}-z_{ij}^{(t)}\right).
			\end{align}
		\end{enumerate}		
\end{enumerate}
\end{minipage}}
\caption{QUASR Consensus ADMM}
\end{center}
\end{figure}
Due to parallel updating of $\theta$ in step (a) and subsequent averaging in step (b), this is known as \emph{consensus ADMM}. At convergence, the constraints $\theta_{ij}=\theta_{ji}=z_{ij}$ are binding. In practice, we stop when the average change in parameters is small:
\begin{align}
	\sum_{i,j\in V} \Vert\theta_{ij}^{(t)}-\theta_{ij}^{(t-1)}\Vert_1\big/\sum_{i,j\in V} \Vert\theta_{ij}^{(t)}\Vert_1 < 10^{-4}.
\end{align}

In addition to parallelizing the update (a), other speedups are possible. For example, we may compute the eigenvalues $\Lambda$ and eigenvectors $Q$ of $\widehat{\Gamma}_i$, which may be computed directly from the data matrix $\left[a_i(X^1),\ldots a_i(X^n)\right]^\top$ using the singular value decomposition ($Q$ being the right singular vectors, and $\sqrt{n}\Lambda$ being the squared singular values of the data matrix). We may then cache the matrix $Q(\text{diag}(\Lambda+\rho)^{-1})Q^\top$, which is equivalent to $(\widehat{\Gamma}_i+\rho I_d)^{-1}$ up to numerical error. This can be computed for each $i\in V$, also in parallel, and only needs to be computed once (even if estimating over a sequence of $\lambda s$). When optimizing over a path of truncation parameters $m_1,m_2$, one may utilize block matrix inversion formulas and the Woodbury matrix identity to avoid computing the inverse from scratch each time. In particular, let $\widehat{\Gamma}_i$ be the current matrix of statistics, and $\widehat{\Gamma}_i^{new}$ be the statistic with a higher degree of basis expansion. Then $\widehat{\Gamma}_i^{new}$ has the form for some $\widehat{b},\widehat{C}$,
\begin{align}
	\widehat{\Gamma}_i^{new} = \left(\begin{array}{cc}
		\widehat{\Gamma}_i & \widehat{b} \\
		\widehat{b}^\top & \widehat{C}
	\end{array}
	 \right).
\end{align}
The inverse takes the form
\begin{align}
	(\widehat{\Gamma}_i^{new}+\rho I)^{-1} = \left(\begin{array}{cc}
		L &  \\
		-\left(\widehat{C}+\rho I\right)^{-1}\widehat{b}^\top L & \left(\widehat{C}+\rho I -\widehat{b}^\top \left(\widehat{\Gamma}_i+\rho I\right)^{-1}\widehat{b}\right)^{-1}
	\end{array}
	 \right),
\end{align}
where
\begin{align}
	L&:=\left(\widehat{\Gamma}_i+\rho I - \widehat{b}^\top \left(\widehat{C}+\rho I\right)^{-1} \widehat{b}\right)^{-1} \\
	&= \left(\widehat{\Gamma}_i+\rho I\right)^{-1} - \left(\widehat{\Gamma}_i+\rho I\right)^{-1}\widehat{b}^\top\left(\widehat{C}+\rho I -\widehat{b}^\top \left(\widehat{\Gamma}_i+\rho I\right)^{-1}\widehat{b}\right)^{-1}\widehat{b}\left(\widehat{\Gamma}_i+\rho I\right)^{-1}.
\end{align}

If the dimension of $\widehat{C}$ is small relative to that of $\widehat{\Gamma}_i$, $\left(\widehat{\Gamma}_i^{new}+\rho I\right)^{-1}$ can be computed quickly using the cached $\left(\widehat{\Gamma}_i + \rho I\right)^{-1}$, without the need for any additional large matrix inversions.

\subsection{Coordinate-wise Descent} \label{cwd}
In this section we consider a coordinate-wise descent algorithm for the Gaussian score matching problem \eqref{gaussreg}. Coordinate-wise descent algorithms are known to be state-of-the-art for many statistical problems such as the lasso and group lasso \citep{friedman2007pathwise} and glasso for sparse Gaussian MLE \citep{friedman2008sparse}. Regularized score matching in the Gaussian case admits a particularly simple coordinate update.
Consider the stationary condition for $\Omega$ in \eqref{gaussreg}: 
\begin{align}
	\frac{1}{2}\left(\Omega\widehat{\Sigma}+\widehat{\Sigma}\Omega\right)-I_d + \widehat{Z}=0,
\end{align}
where $\widehat{Z}$ is an element of the subdifferential $\partial\Vert\Omega\Vert_1$:
\begin{align}
	\widehat{Z}_{ij} \in \begin{cases}
		\{\theta_{ij}: \Vert\theta_{ij}\Vert_2\leq 1\}, & \text{  if } \Vert\theta_{ij}\Vert=0; \\
		\frac{\theta_{ij}}{\Vert\theta_{ij}\Vert_2}, & \text{  if } \Vert\theta_{ij}\Vert\not=0.
	\end{cases}
\end{align}
in particular, the stationary condition for a particular $\Omega_{ij}$ is
\begin{align}
	\frac{1}{2}\left(\Omega_{\cdot,i}^\top\widehat{\Sigma}_{\cdot,j}+\widehat{\Sigma}_{\cdot,i}^\top\Omega_{\cdot,j}\right)-1\{i=j\}+\widehat{Z}_{ij}=0.\label{stationary}
\end{align}
%
%
Consider updating $\Omega_{ij}$ using equation \eqref{stationary}, solving for $\Omega_{ij}$ and holding the other elements of $\Omega$ fixed. After some manipulation, we get a fixed point for $\Omega_{ij}$ is given by \eqref{fixed}. We cycle through the entries of $\Omega$, applying this update, and repeat until convergence.

\begin{figure}
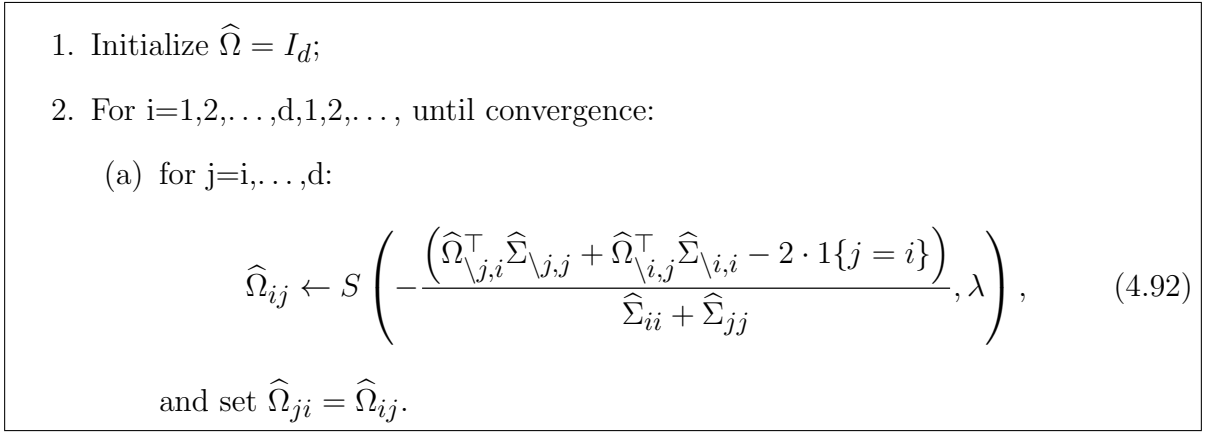

\begin{center}
\framebox{\begin{minipage}[t]{0.95\columnwidth}

\begin{enumerate}
	\item Initialize $\widehat{\Omega}=I_d$;
	\item For i=1,2,\ldots,d,1,2,\ldots, until convergence:
	\begin{enumerate}
	\item for j=i,\ldots,d:
			\begin{align}
	\widehat{\Omega}_{ij} \leftarrow S\left(-\frac{\left(\widehat{\Omega}_{\backslash j,i}^\top \widehat{\Sigma}_{\backslash j,j} + \widehat{\Omega}_{\backslash i,j}^\top \widehat{\Sigma}_{\backslash i,i} - 2\cdot 1\{j=i\} \right)}{\widehat{\Sigma}_{ii}+\widehat{\Sigma}_{jj}},\lambda\right) ,\label{fixed}
\end{align}
	and set $\widehat{\Omega}_{ji}=\widehat{\Omega}_{ij}$.
	\end{enumerate}
\end{enumerate}
\end{minipage}}
\caption{Gaussian QUASR Coordinate-wise descent}
\end{center}
\end{figure}
Here $S(x,\lambda)$ is the soft thresholding function $S(x,\lambda):=\text{max}\{\left| x\right|-\lambda,0\}\text{sign}(x)$, and $\backslash i := \{1,\ldots,i-1,i+1,\ldots,d\}$. Each update only requires two sparse inner products and a soft thresholding operation. As such, in our experiments this algorithm converges very quickly, sometimes much faster than glasso for the same set of data.

\subsection{Choosing Tuning Parameters}
As of yet we have not discussed how to practically choose the regularization parameter $\lambda$ and for nonparametric score matching, the truncation parameters $m_1,m_2$. We suppose the existence of a held-out tuning set; in the absence, one may use cross-validation. If the likelihood is available, for example if fitting Gaussian score matching, or for a fixed graph which is a tree, we minimize the negative log-likelihood risk in the held out set. In the absence of the likelihood, we choose the tuning parameters to minimize the Hyv\"arinen score of the held out set. For a discussion on using scoring rules as a replacement for the likelihood in model selection and using \emph{score differences} as surrogates for Bayes factors, see \citep{dawid2014theory}.

To save on computation, we use the idea of \emph{warm starts} which we detail in the sequel.
First, observe that the first-order necessary conditions for regularized score matching are:
\begin{align}
	 \widehat{\Gamma}\widehat{\theta}+\widehat{K}+\widehat{Z}=0,
\end{align}
where $\widehat{Z}$ denotes the sub gradient of the regularizer $\mathcal{R}$, at $\widehat{\theta}$, which is

\begin{equation}
	\widehat{Z}_{ij} = \begin{cases}
		\{x:\Vert x\Vert\leq 1\}, & \Vert\theta_{ij}\Vert = 0, \\
		\frac{\theta_{ij}}{\Vert\theta_{ij}\Vert}, & o/w. 	
	\end{cases}
\end{equation}
so $\widehat{\theta}=0$ when
\begin{equation}
	\lambda \geq \max_{ij}\Vert\widehat{K}_{ij}\Vert.
\end{equation}
This allows us to choose an upper bound $\lambda_{start}$ such that the solution will be the zero vector.

The idea behind warm starting is the following: we begin with estimating $\widehat{\theta}_{\lambda_{start}}=0$. Then we fit our model on a path of $\lambda$ decreasing from $\lambda_{start}$, initializing each new problem with the previous solution $\widehat{\theta}_{\lambda}$. The solution path for the regularized MLE is smooth as a function of $\lambda$, suggesting nearby choices of $\lambda$ will provide values of $\widehat{\theta}$ which are close to one another.
 
We can also incorporate warm-starting in choosing $m_1,m_2$. For a given $\lambda$, we first estimate the model for first-order polynomials, corresponding to $m_1=m_2=1$. We then increment the truncation parameters by increasing the degree of the polynomial of he sufficient statistics. We augment the previous parameter estimate vector with zeros in the place of the added parameters, and warm start \emph{ISTA} from this vector. See Section \ref{cadmm} for other computation savings when augmenting the sufficient statistics when choosing $m_1,m_2$.

\section{Experiments}

\subsection{Gaussian Score Matching}

We begin by studying Gaussian score matching, and comparing to the regularized Gaussian MLE, using the glasso package in $\tt{R}$ \citep{friedman2008sparse}. We consider experiments with two graph structures: in the first, a tree is generated randomly; this has $d-1$ edges. In the second, a graph is generated where an edge occurs between node $i$ and $j$ with probability $0.1$, denoted the Erd\"os-Renyi graph. This graph has expected number of edges $0.05\cdot d(d-1)$. The data is scaled to have unit variance and mean zero.

\subsubsection{Regularization Paths}
Figures \ref{regpath1} and \ref{regpath2} display regularization paths for one run of these simulations, where $d=100$ and $n$ is either 100 or 500. Relevant variables are plotted in black. For $n=500$, it appears that the score matching estimator does a better job screening out irrelevant variables for both graph types. For $n=100$ they perform similarly. The score matching estimator tends to produce nonzero parameter estimates which are larger in magnitude than the regularized MLE, which is more pronounced for $n<d$.

\begin{figure}
\centering
\includegraphics[scale=.5]{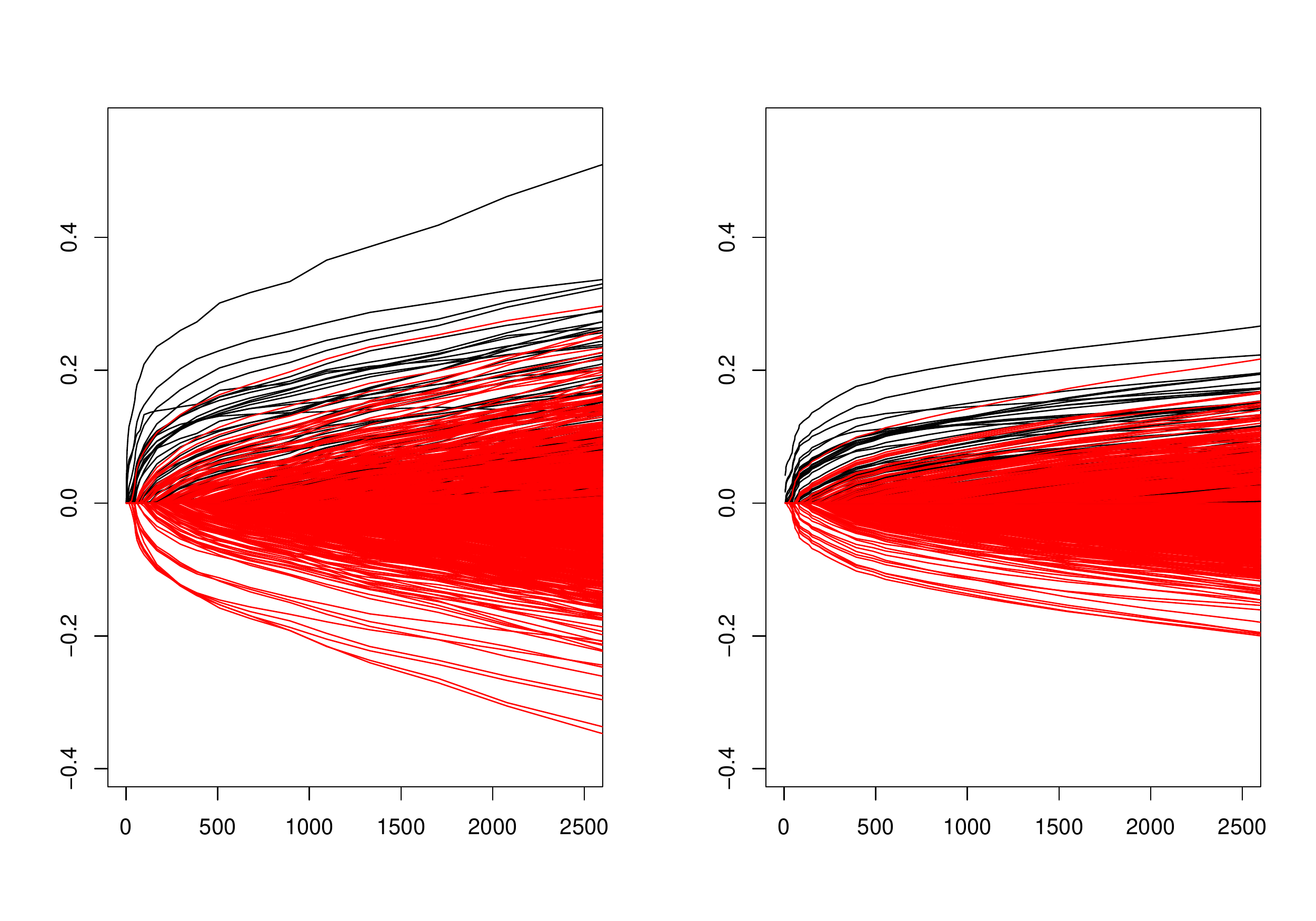}
\includegraphics[scale=.5]{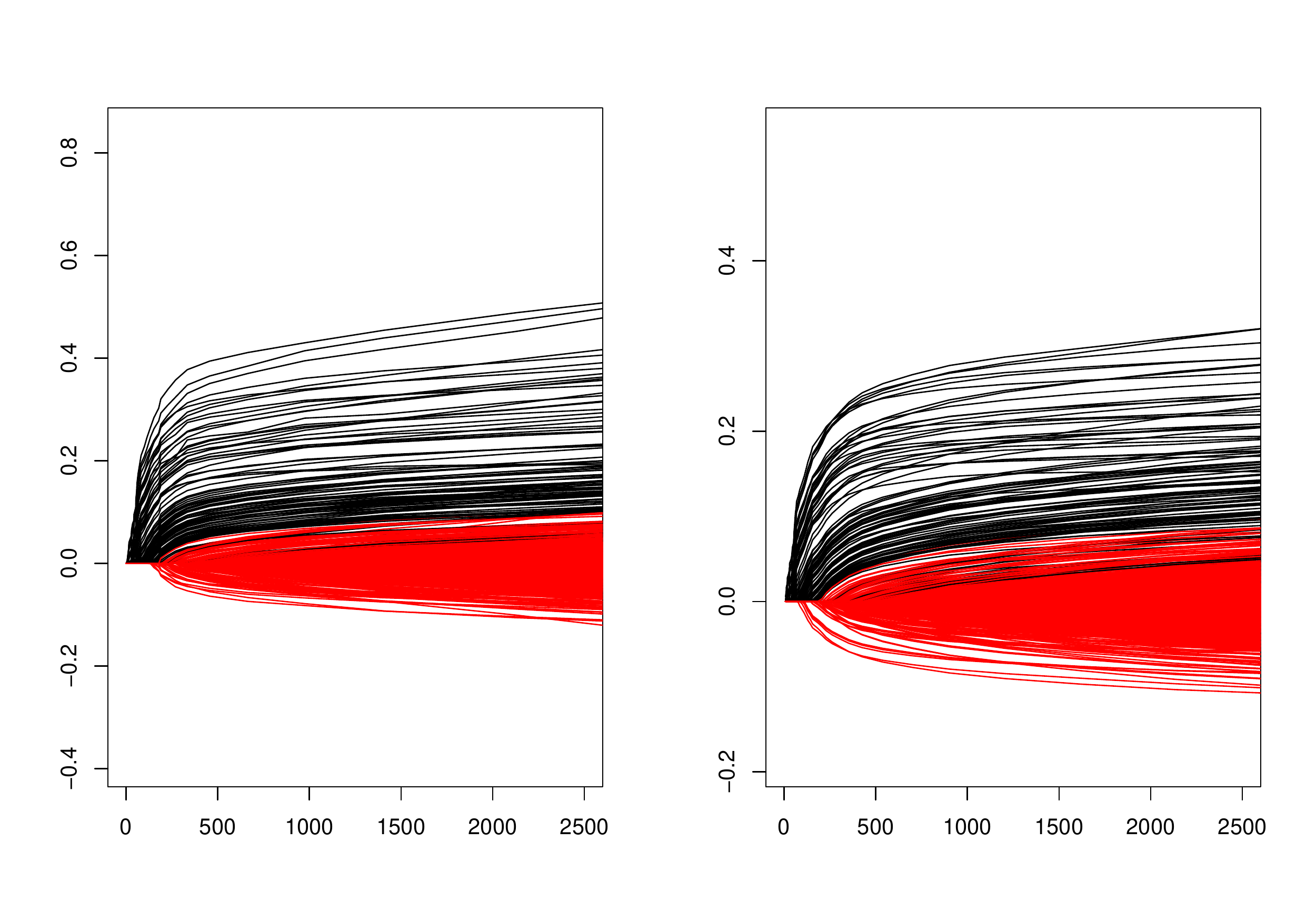}
\caption{Regularization path, tree graph. d=100. Top: n=100. Bottom: n=500.} \label{regpath1}
\end{figure}

\begin{figure}
\centering
\includegraphics[scale=.5]{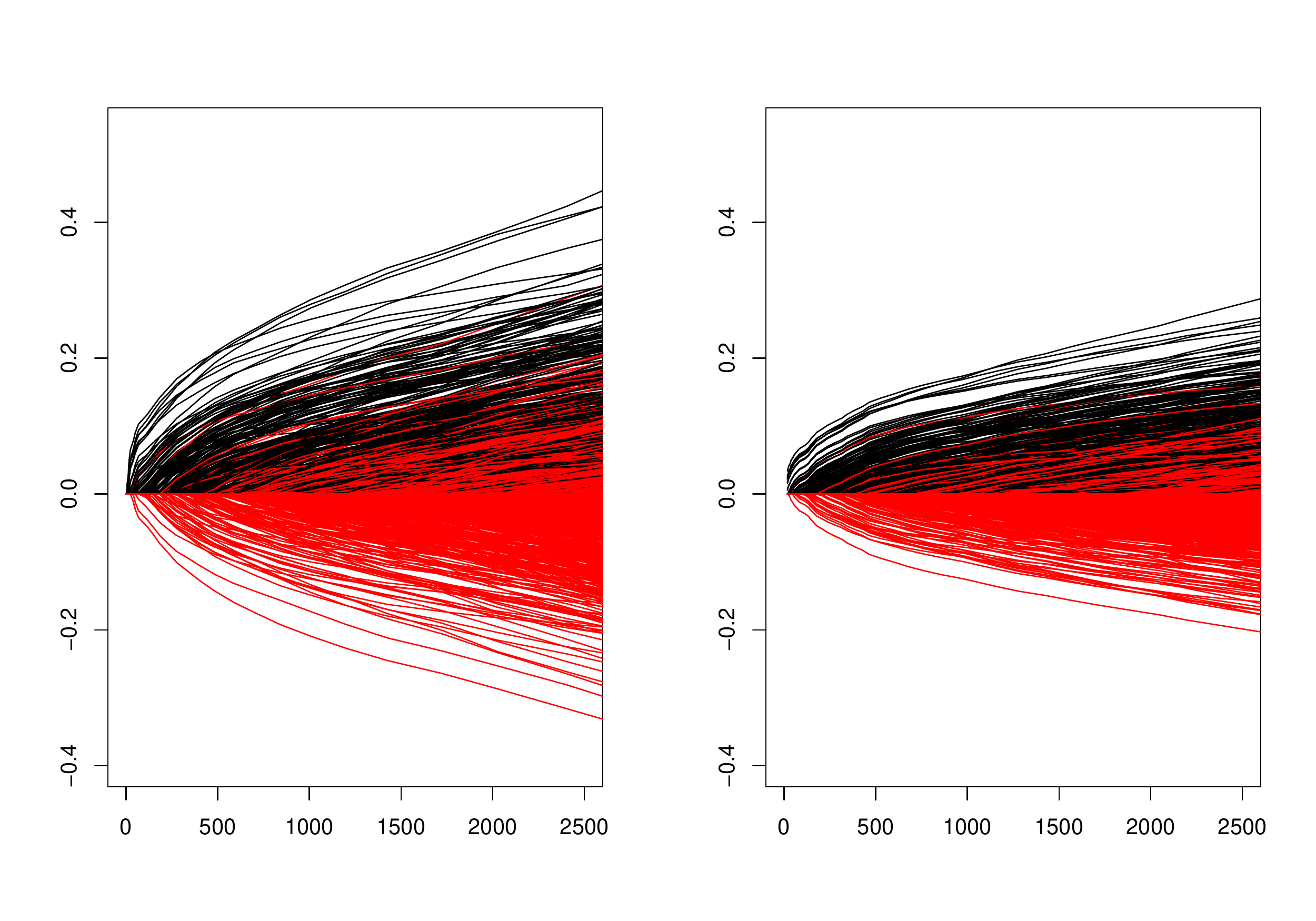}
\includegraphics[scale=.5]{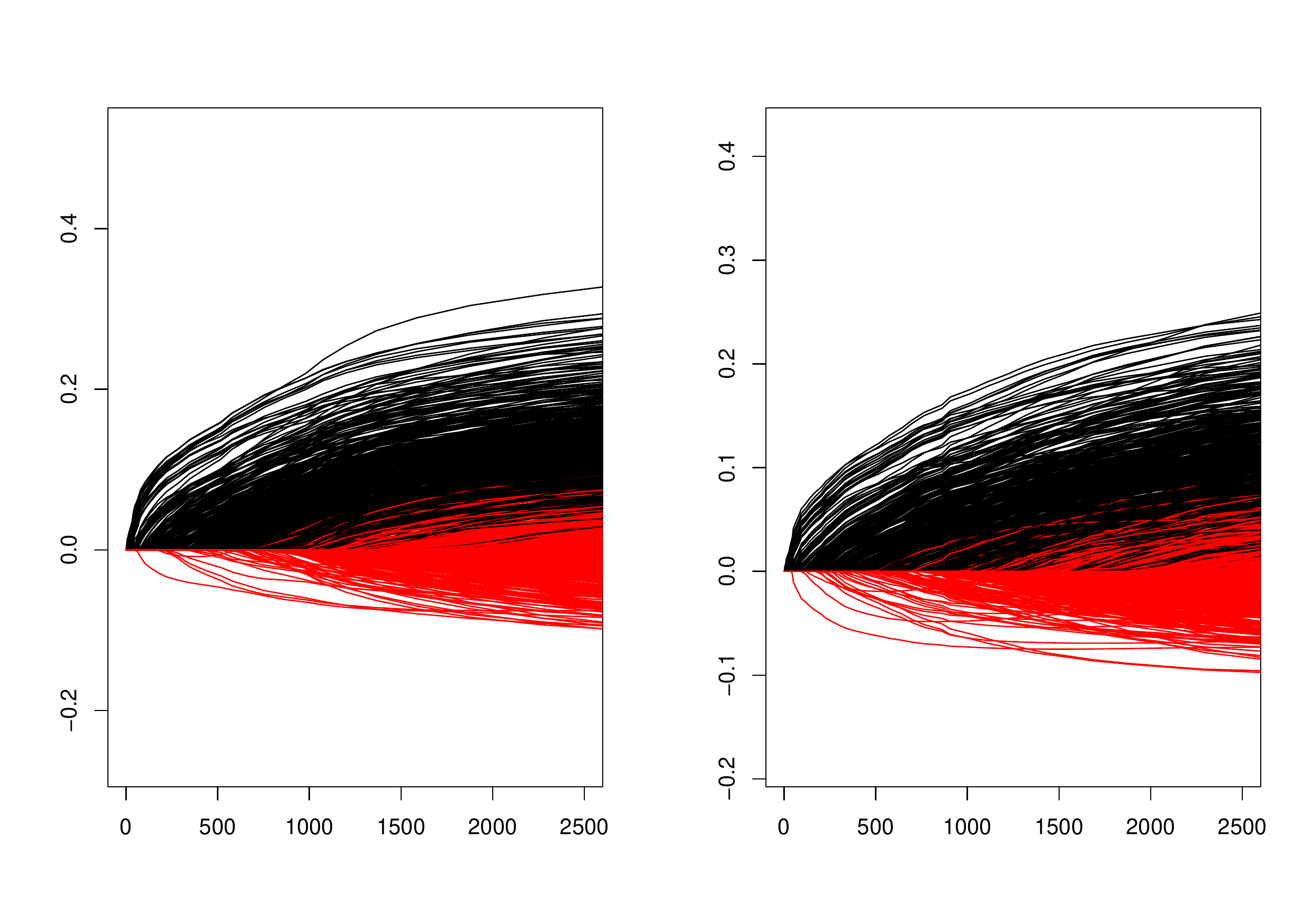}
\caption{Regularization path, Erdos-Renyi graph graph. d=100. Top: n=100. Bottom: n=500.} \label{regpath2}
\end{figure}

\subsubsection{Risk Paths}
Figures \ref{riskpath1} and \ref{riskpath2} show risk paths under the two graph structures; figure two has $d=150$, with 149 included edges; figure four has $d=100$, with 499 edges included. We choose $n=100$, and calculate the negative log likelihood risk using a held-out dataset of size $n$. The plotted curves are an average of 25 simulations from the same distribution.  In figure \ref{riskpath1}, we see the score matching estimator selects a sparser graph than the regularized MLE; furthermore, the score matching produces an estimator with smaller held-out risk. For the Erd\"os-Renyi simulation, the score matching estimator also selects a sparser graph, though it has risk slightly worse than the MLE.
\begin{figure}
\centering
\includegraphics[scale=.5]{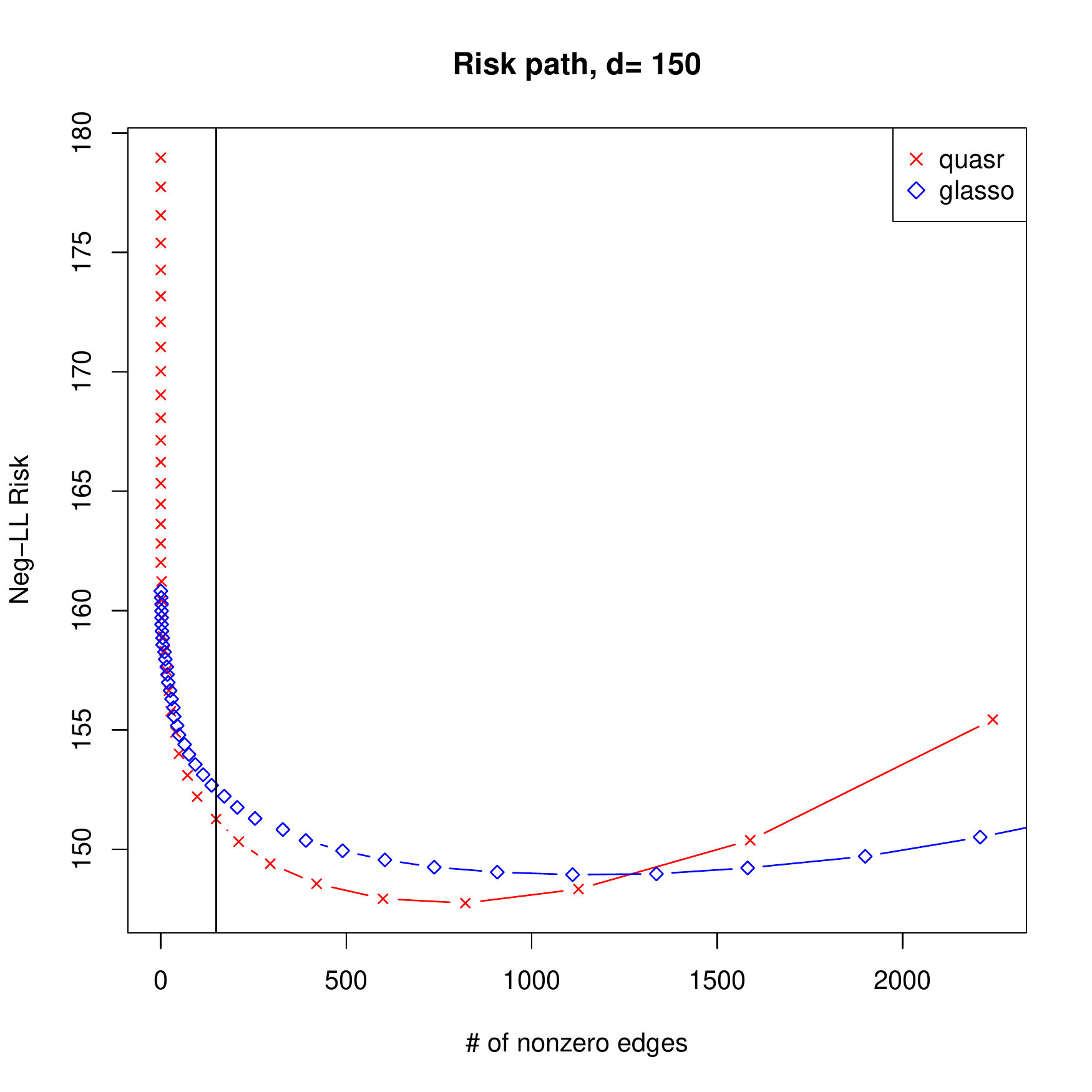}
\caption{Risk path, tree graph} \label{riskpath1}
\end{figure}

\begin{figure}
\centering
\includegraphics[scale=.5]{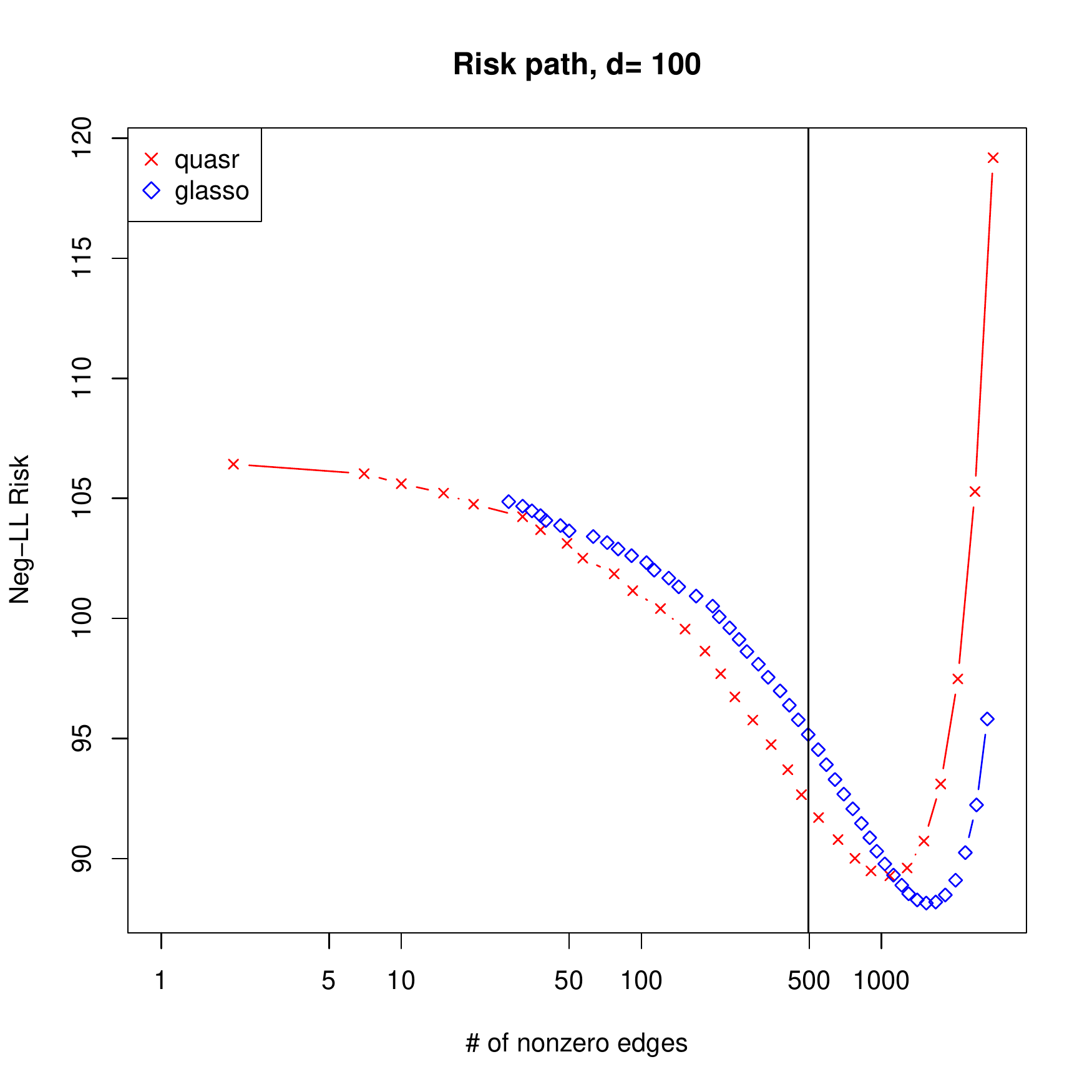}
\caption{Risk path, ER graph} \label{riskpath2}
\end{figure}
These findings are also validated in Tables \ref{table1} and \ref{table2}, varying $d$. For the tree graph, score matching dominates in risk for all values of $d$. Even for the Erd\"os-Renyi graph, score matching outperforms the regularized MLE when $d=30$.  Standard errors of 25 repetitions are in parentheses.
\begin{table}[ht]
\centering
\begin{tabular}{lll}
  \hline
 & quasr & glasso  \\
  \hline
d= 30 & 28.082 (0.341) & 28.189 (0.319)  \\
  d= 75 & 72.785 (0.472) & 73.202 (0.432)  \\
  d= 120 & 116.771 (0.578) & 117.631 (0.542) \\
  d= 150 & 147.26 (0.619) & 148.381 (0.583)  \\
   \hline
\end{tabular}
\caption{Held-out NLL error, tree graph} \label{table1}
\end{table}
\begin{table}[ht]
\centering
\begin{tabular}{lll}
  \hline
 & quasr & glasso  \\
  \hline
d= 30 & 26.157 (0.428) & 26.244 (0.386)  \\
  d= 50 & 44.351 (0.688) & 44.262 (0.718)  \\
  d= 100 & 91.383 (1.009) & 90.351 (1.077)  \\
  d= 150 & 139.03 (1.15) & 136.793 (1.336)  \\
   \hline
\end{tabular}
\caption{Held-out NLL error, ER graph} \label{table2}
\end{table}

\subsubsection{ROC Curves and Edge Selection}

Figures \ref{roc1} and \ref{roc2} show ROC curves under the same simulation setup in the previous section. The plotted points represent the graph selected from the minimal held-out risk in each of the 25 repetitions.
The two estimators display very similar ROC curves, and the score matching estimator tends to prefer higher sensitivity for lower specificity, when selecting using held-out data.

\begin{figure}
\centering
\includegraphics[scale=.5]{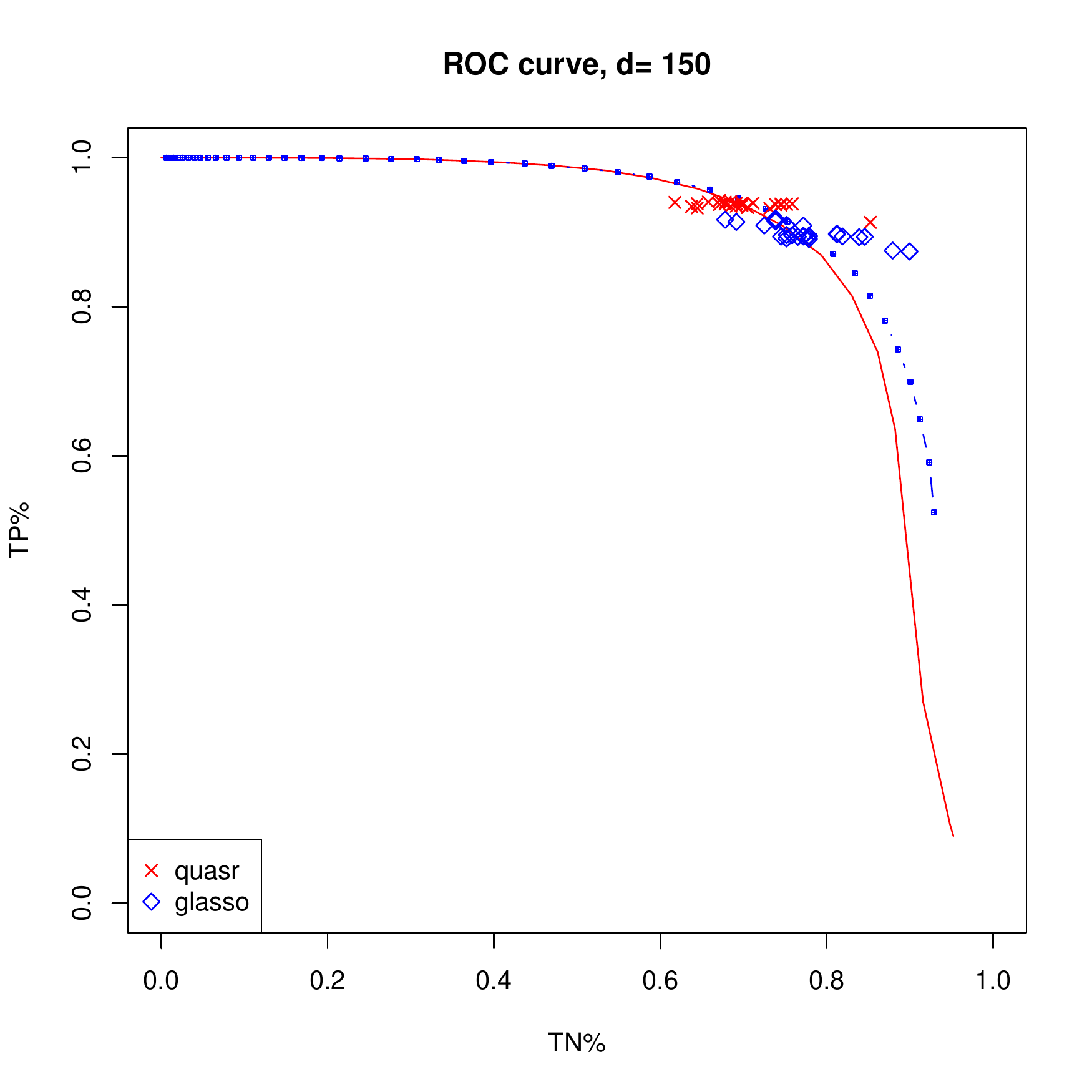}
\caption{ROC curve, tree graph. Solid line: Gaussian QUASR; dotted line: glasso.} \label{roc1}
\end{figure}

\begin{figure}
\centering
\includegraphics[scale=.5]{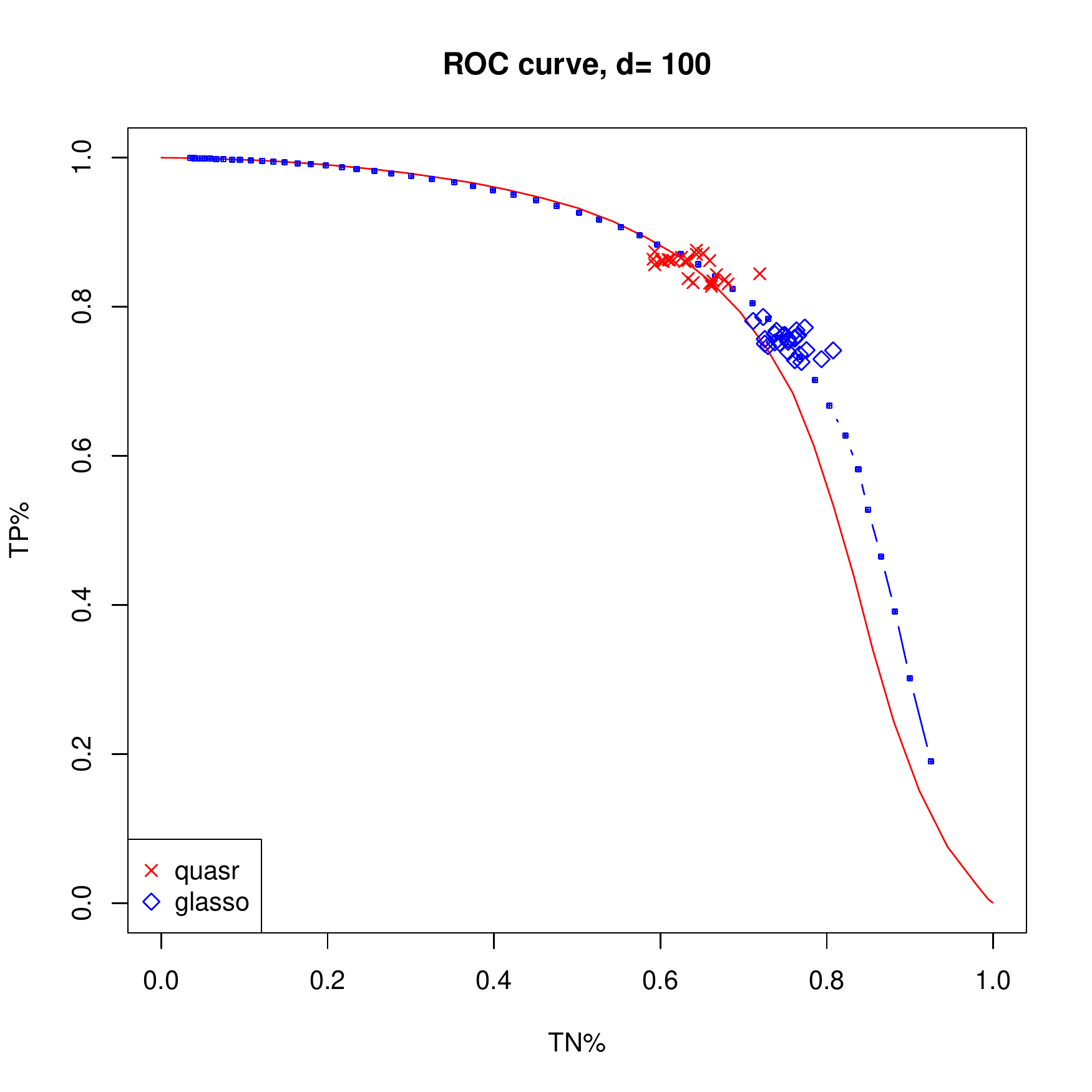}
\caption{ROC curve, Erdos-Renyi graph. Solid line: Gaussian QUASR; dotted line: glasso.} \label{roc2}
\end{figure}
Tables \ref{edgesel1} and \ref{edgesel2} displays true positive and true negative rates for varing choices of $d$, fixing $n=100$. The parentheses are the standard deviation for 25 repetitions of the experiment. As are suggested by the ROC curves, score matching prefers a higher sensitivity and lower specificity to the regularized MLE, and for simulations when $d$ is relatively small compared to $n$, score matching has a significantly higher true positive rate with only negligible reduction in true negative rate.

\begin{table}[ht]
\centering
\begin{tabular}{lll}
  \hline
 & quasr & glasso  \\
  \hline
d=30 \%TN & 0.941 (0.043) & 0.975 (0.025)  \\
  d=30 \%TP & 0.778 (0.051) & 0.674 (0.042)  \\
  d=75 \%TN & 0.821 (0.037) & 0.876 (0.042)  \\
  d=75 \%TP & 0.892 (0.017) & 0.832 (0.017)  \\
  d=120 \%TN & 0.751 (0.052) & 0.816 (0.044)  \\
  d=120 \%TP & 0.922 (0.01) & 0.874 (0.009)  \\
  d=150 \%TN & 0.699 (0.049) & 0.776 (0.052)  \\
  d=150 \%TP & 0.936 (0.006) & 0.899 (0.012)  \\
   \hline
\end{tabular}
\caption{Edge selection accuracy, tree graph.} \label{edgesel1}
\end{table}

\begin{table}[ht]
\centering
\begin{tabular}{lll}
  \hline
 & quasr & glasso  \\
  \hline
d=30 \%TN & 0.969 (0.028) & 0.986 (0.02)  \\
  d=30 \%TP & 0.743 (0.036) & 0.612 (0.039) \\
  d=50 \%TN & 0.878 (0.028) & 0.927 (0.025)  \\
  d=50 \%TP & 0.785 (0.027) & 0.673 (0.032)  \\
  d=100 \%TN & 0.633 (0.029) & 0.754 (0.02)  \\
  d=100 \%TP & 0.858 (0.013) & 0.753 (0.013)  \\
  d=150 \%TN & 0.486 (0.019) & 0.634 (0.018)  \\
  d=150 \%TP & 0.892 (0.01) & 0.791 (0.011)  \\
   \hline
\end{tabular}
\caption{Edge selection accuracy, ER graph} \label{edgesel2}
\end{table}

\subsubsection{Computation}

In Figure \ref{cputree} we compare the runtime of our algorithm with glasso. We simulate random Gaussian tree data with $n=100$, $d=50$. We fit over a path of $\lambda$s and plot runtime against number of selected edges. More regularization results in sparser graphs, and so convergence is faster. In this experiment our method is much faster than glasso, sometimes by a factor of 4 or more. The gap narrows for sparse estimated graphs. This is because while our algorithm is written efficiently in $\tt{C++}$, it doesn't (yet) utilize sparse matrix libraries, while glasso does. Since our coordinate-wise descent algorithm involves sparse inner products, we believe our runtimes can be improved in the sparse regime.

\begin{figure}
\centering
\includegraphics[scale=.9]{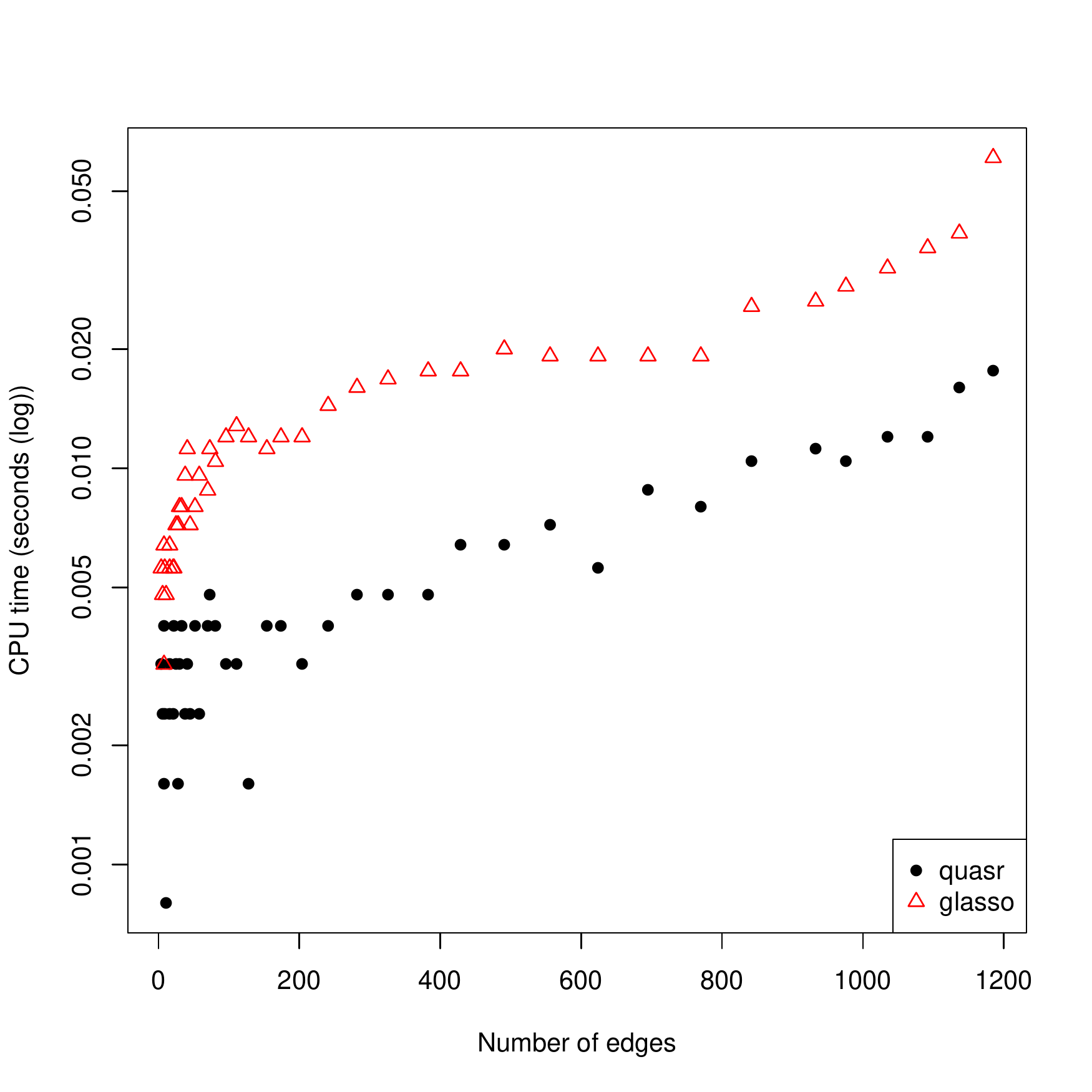}
\caption{Runtime, Gaussian QUASR and glasso. Gaussian data, n=100, d=50}. \label{cputree}
\end{figure}
\subsection{Nonparametric Score Matching}

\subsubsection{Risk and Density Estimation}
In this section we compare score matching and MLE when the sufficient statistics are chosen to be Legendre polynomials. 
For each choice of $n$, we simulate non-Gaussian data whose density factorizes as a tree. We do this by marginally applying the transformation $y=\text{sign}(x-0.5) \mid x - 0.5 \mid ^{0.6}/5 + 0.5$ to Gaussian data which has a tree factorization, which has been scaled to have means 0.5 and covariance $1/8^2$, so it fits in the unit cube; the resulting data follows a Gaussian copula distribution, which is also a pairwise distribution. We train the model using both regularized MLE and score matching under constraint that it factorizes according to the given tree, and we  estimate along a path of $\lambda$s and choose the regularization parameter $\lambda$ to minimize the held-out risk. Since the density has a (known) tree factorization, it is possible to compute the likelihood (and hence the MLE) exactly using functional message passing (Chapter 3). It is also possible to compute the marginals using the same message passing algorithm.

Figure \ref{nprisk} displays the held-out risk for both methods, with sample size varying from 24 to 5000 and $d=20$. We average over 5 replications of the experiment. The MLE outperforms the score matching estimator, but the score matching estimators performance greatly improves relative to the MLE as $n$ increases. 

Figure \ref{npcontour} displays contours from one bivariate marginal from the aforementioned simulation. The top row shows the MLE and score matching estimator for $n=24$, and the bottom for $n=182$. For $n=24$, the score matching marginal can make out  much of the distinguishing features of the density such as the multiple modes, but isn't as informative as the MLE. At $n=182$ the marginals appear almost identical.
\begin{figure}
\centering
\includegraphics[scale=.9]{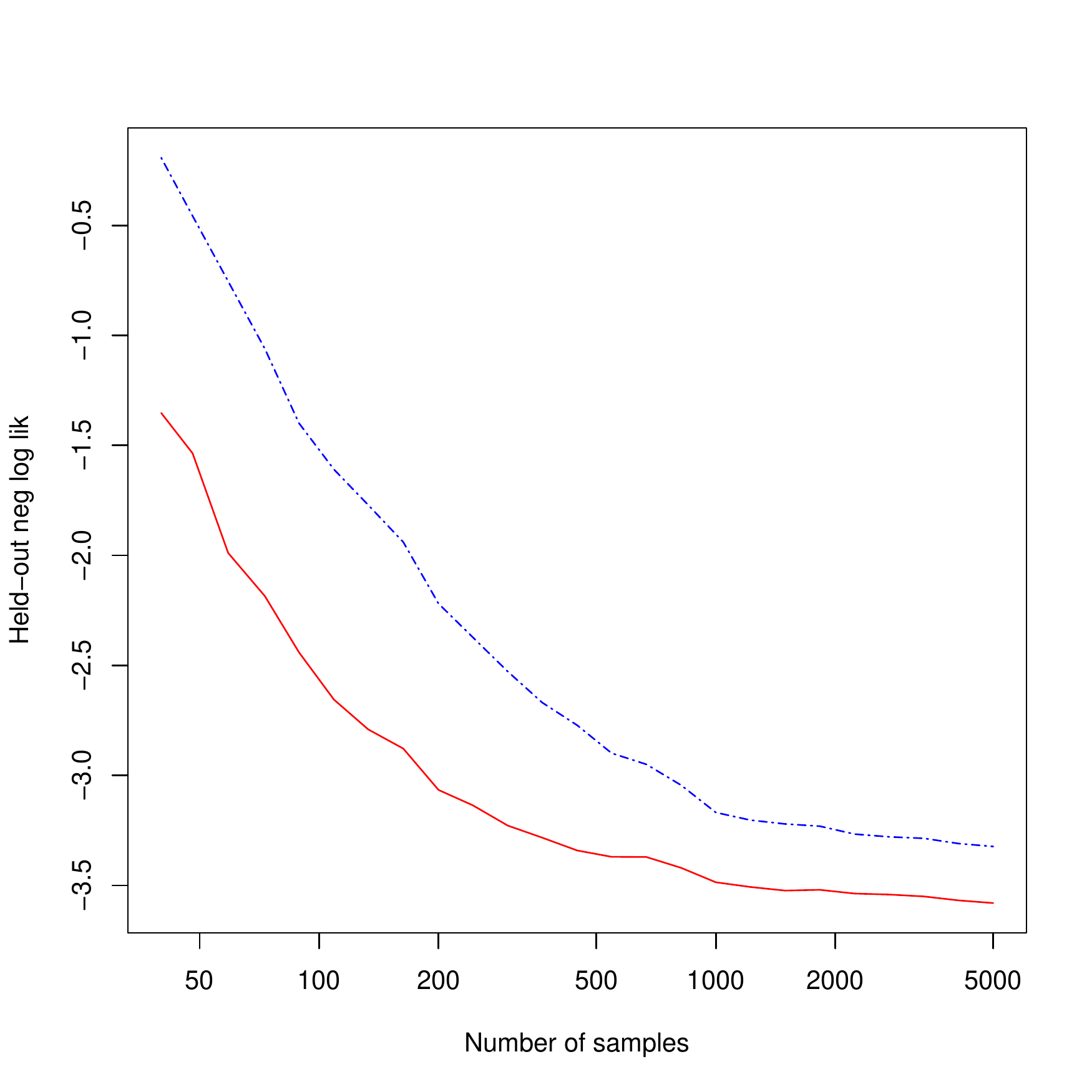}
\caption{Held-out negative log likelihood for different training sizes. Red: regularized MLE; Blue dashed: score matching. Data generated from a non-Gaussian tree distribution.} \label{nprisk}
\end{figure}
We conclude that while MLE is more efficient as may be expected, score matching performs quite well, especially with larger sample sizes. 
\begin{figure}
\centering
\includegraphics[page=1,scale=.4]{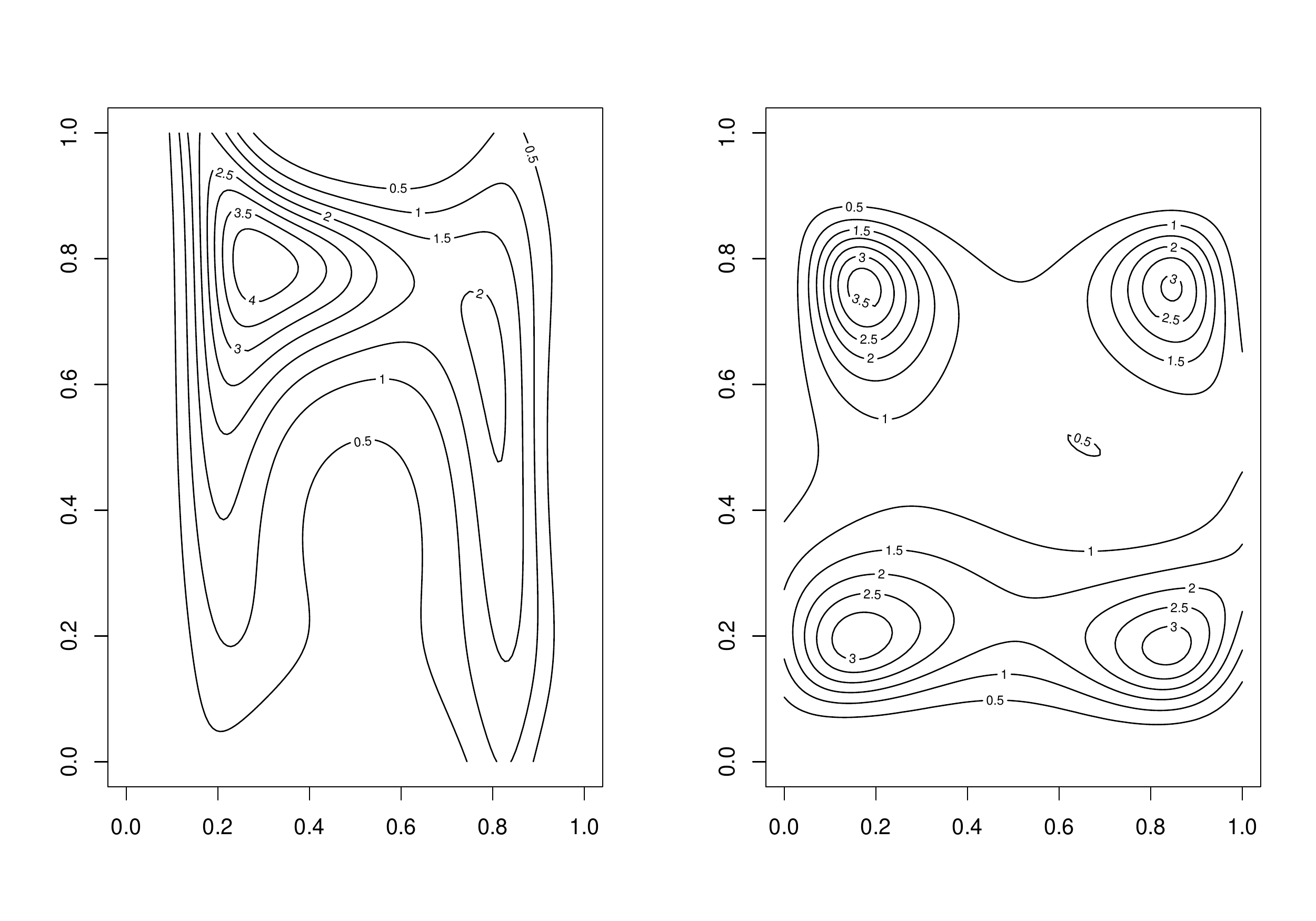}
\includegraphics[page=2,scale=.4]{bivdenrisk.pdf}
\includegraphics[page=3,scale=.4]{bivdenrisk.pdf}

\caption{Estimated bivariate marginal from non-Gaussian tree data, d=20. Left: regularized MLE; Right: Score matching. Top: n=40. Middle: n=244 Bottom: n=1828.} \label{npcontour}
\end{figure}
Furthermore, we emphasize that at this cost of statistical efficiency, score matching can be computed easily under any graph structure, even when the likelihood is not tractable, while MLE is typically not tractable and must be approximated.

\subsubsection{ROC Curves}

Figures \ref{rocquasr1} and \ref{rocquasr3} display ROC curves from four experiments. We simulate data with $d=20$ and $n$ either 30 or 100. In the first two experiments, the data is Gaussian; in the last two, it is non-Gaussian (copula). The graph is either a random spanning tree with $d-1$ edges, or a graph with each possible edge having inclusion probability $0.2$, or expected number of edges $d(d-1)*.1$. The ROC curves trace the true positive and true negative rates, varying the value of $\lambda$. The curves are averaged over 10 repetitions of the experiment (with the data i.i.d. from the same distribution). We choose $m_1,m_2$ to maximize $\max_\lambda TP(\lambda)+TN(\lambda)$. We compare our method to the SKEPTIC estimator from \citep{liu2012high},  which was designed in particular for model selection for copula graphical models, and the TRW estimator from Chapter 3. Our experiments show our method to do either just as well, or only slightly worse than competing methods.
\begin{figure}
\centering
\includegraphics[page=1,scale=.5]{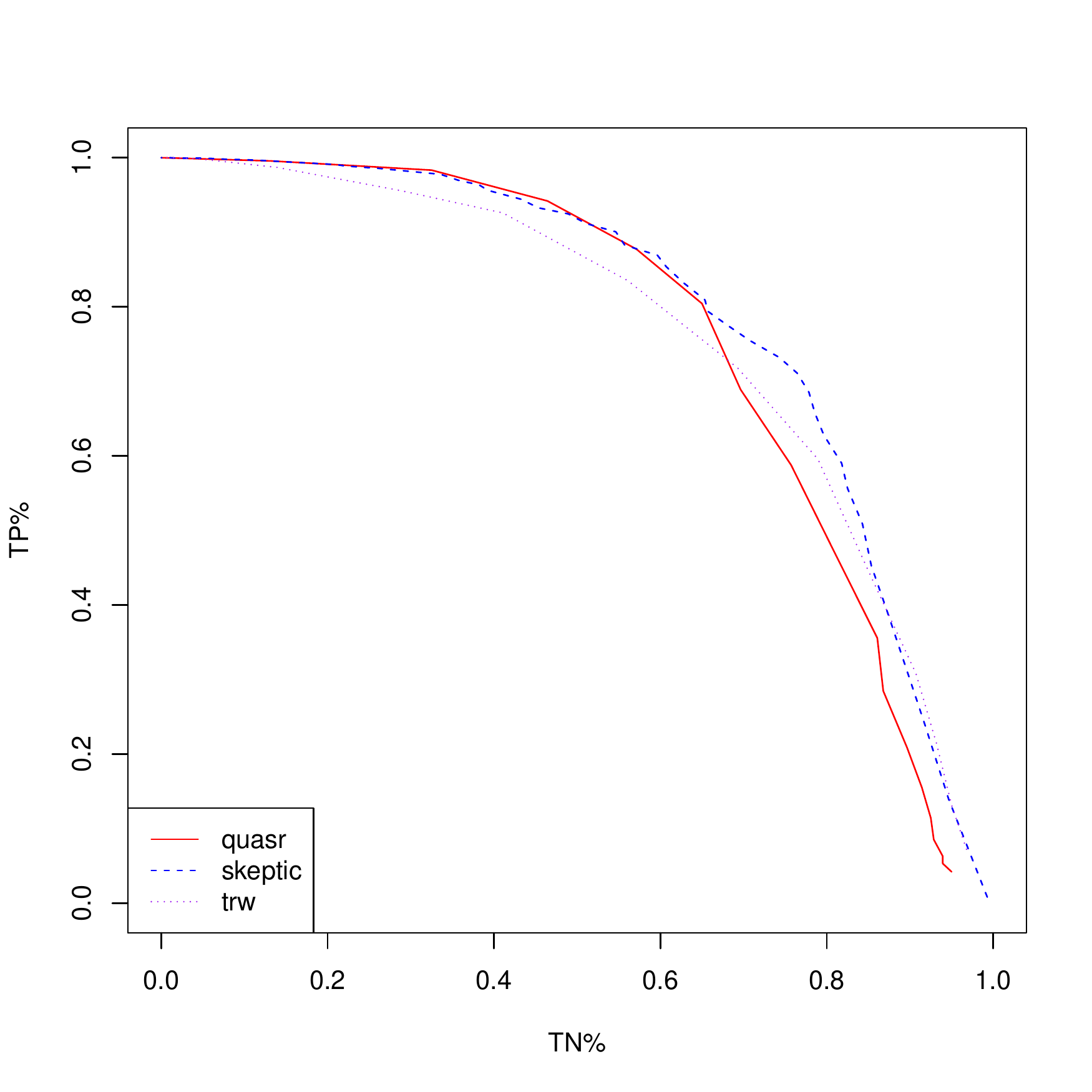}
\includegraphics[page=1,scale=.5]{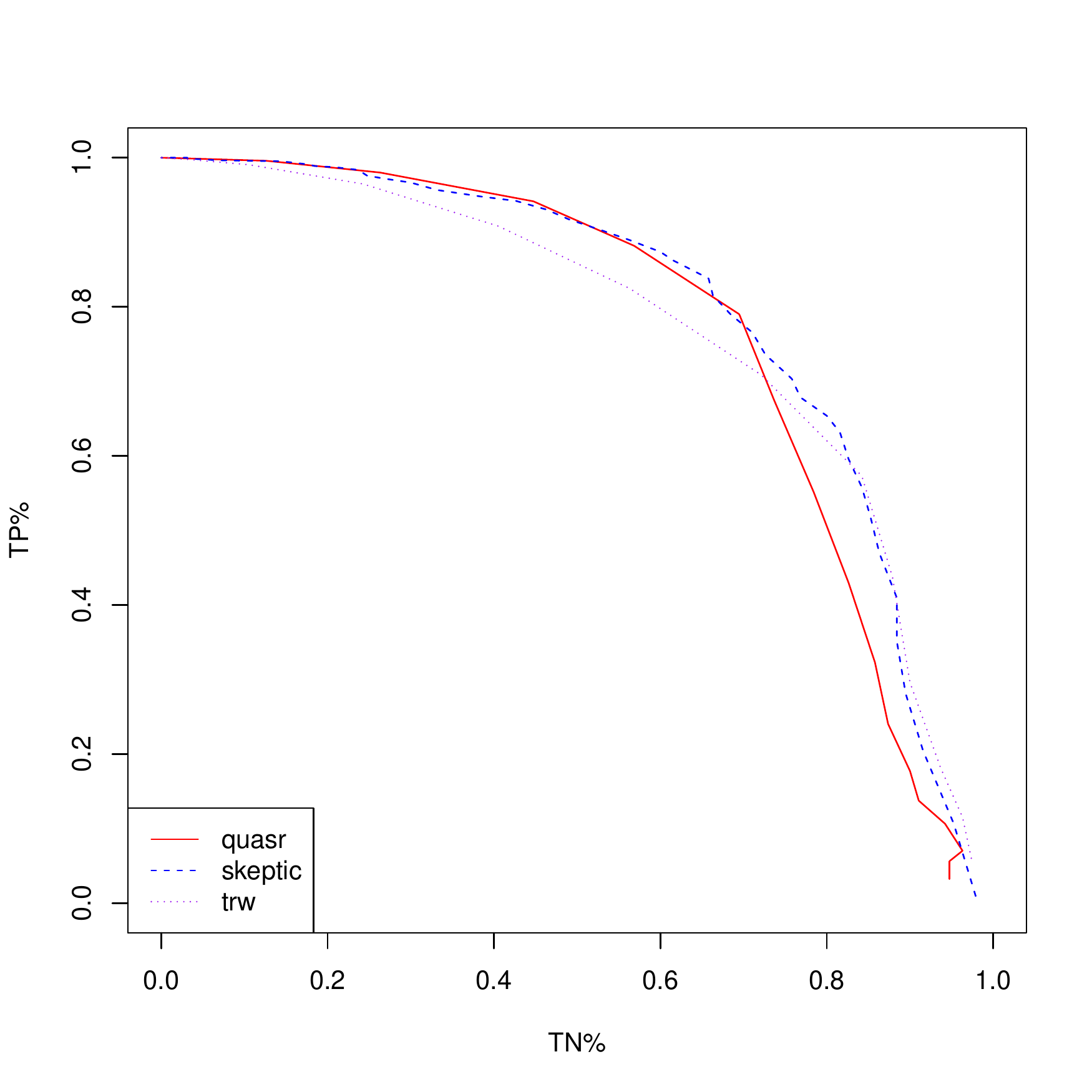}

\caption{ROC Curve, Gaussian data, n=30, d=20. Top: ER graph. Bottom: Tree graph.}\label{rocquasr1}
\end{figure}


\begin{figure}
\centering
\includegraphics[page=1,scale=.5]{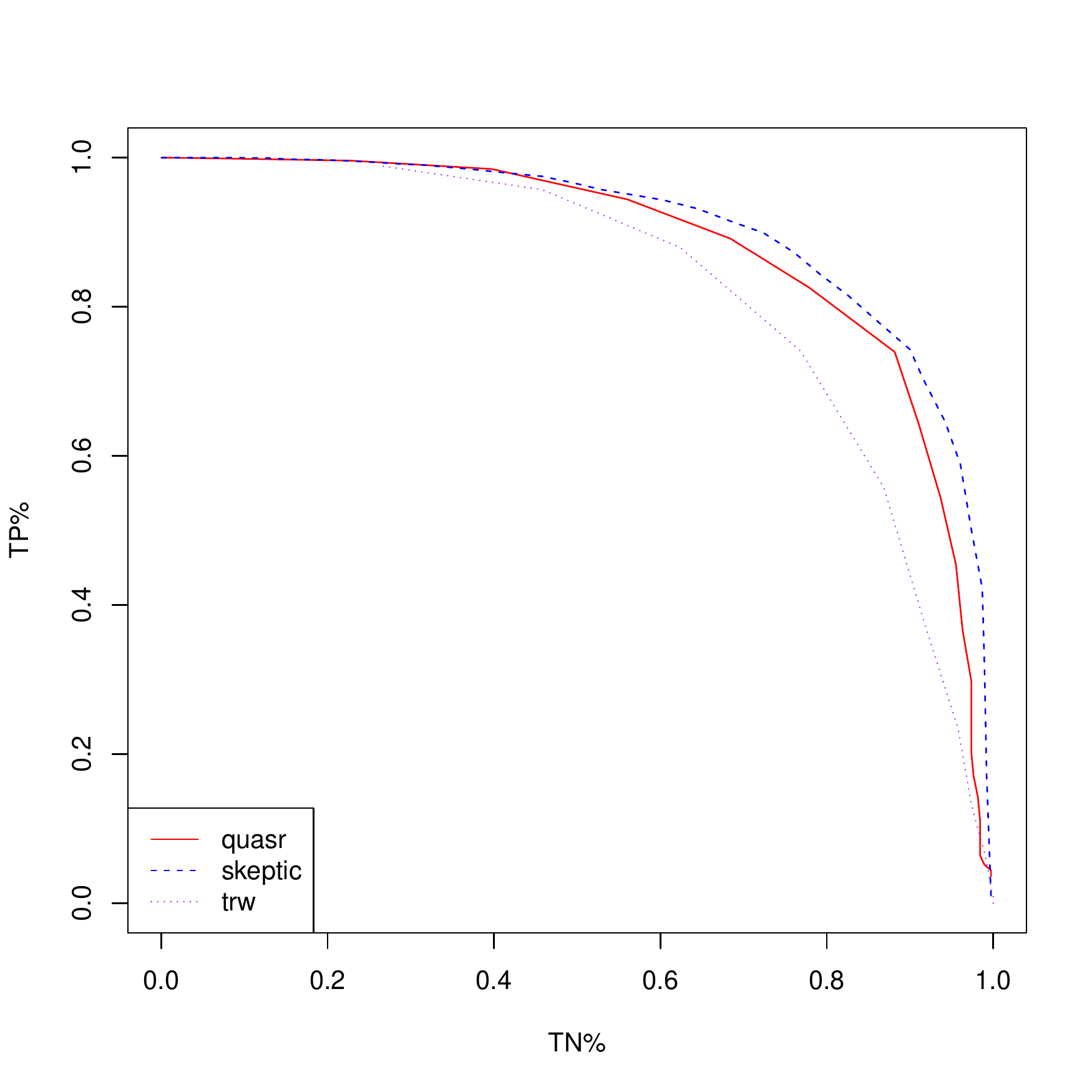}
\includegraphics[page=1,scale=.5]{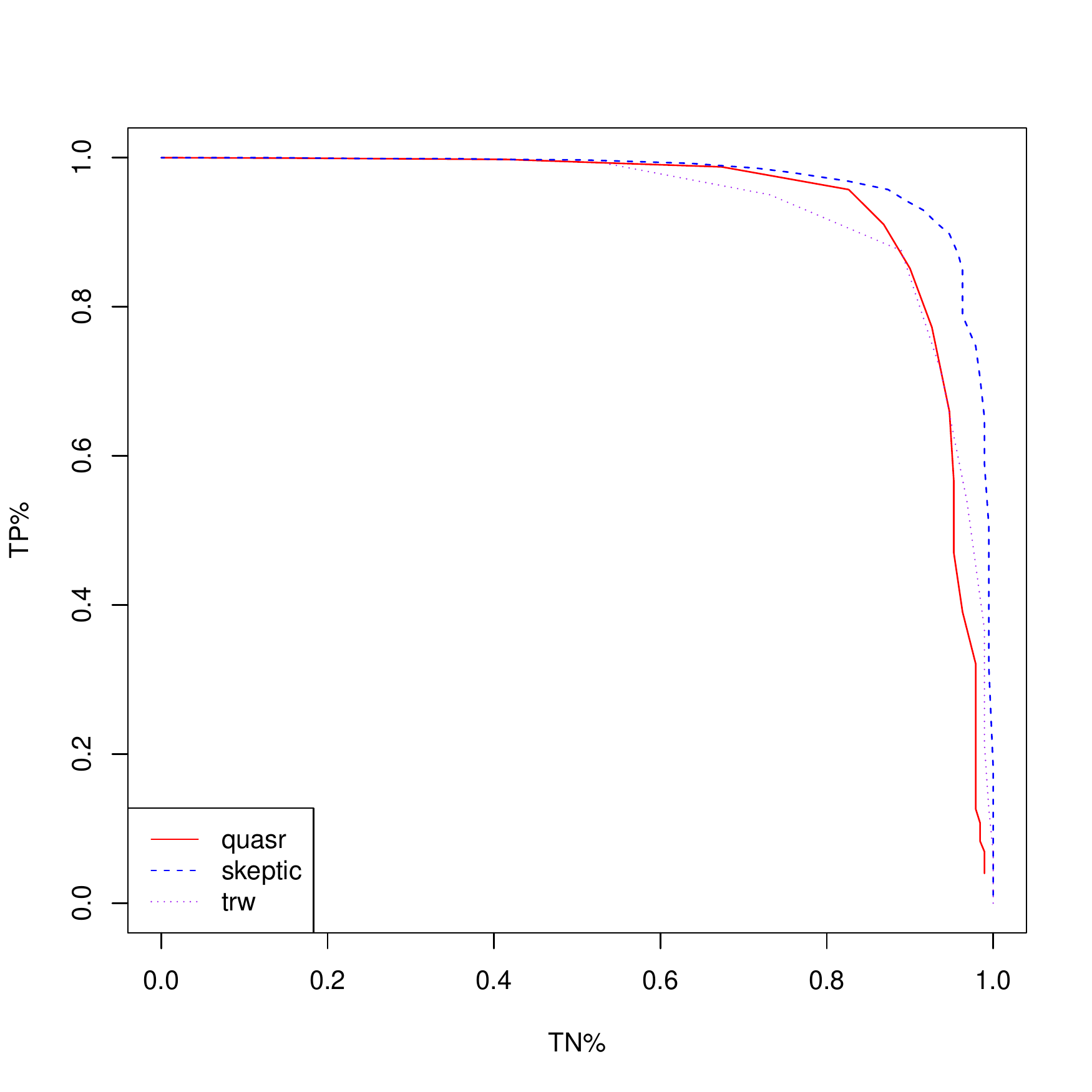}
\caption{ROC Curve, non-Gaussian data, n=100, d=20. Top: ER graph. Bottom: Tree graph.}\label{rocquasr3}
\end{figure}


\section{Discussion}

This chapter introduces a new approach to estimating pairwise, continuous, exponential family graphical models. Since the normalizing constant for these models is usually intractable, we propose a new scoring rule which obviates the need for it. Our resulting estimator may be expressed as a second-order cone program. We show consistency and edge selection results for this estimator, including as special cases a new method for precision matrix estimation, and for nonparametric edge selection with exponential series. We propose algorithms for solving the convex problem which are highly scalable and amenable to parallelization. This method has good experimental performance compared to other works in the literature.

\chapter{Conclusions}

This thesis proposes a new framework for density estimation, inference and structure learning for the nonparametric pairwise undirected graphical model. We consider approximating the log density using a truncated basis expansion, which results in a finite-dimensional exponential family. We consider two estimation approaches. The first is regularized maximum likelihood, for which we provide a variational approximation method. The second is a new method for estimation and graph learning of exponential families based on the scoring rule of \citep{hyvarinen2005estimation}. We show that score matching allows for provably consistent parameter estimation and structure learning for exponential families, despite exact inference for this class of densities being intractable. As a special case we derive a new method for sparse precision matrix estimation, which performs competitively in experiments. We also derive results for the exponential series approximation, and show its performance in experiments. 

This thesis contributes to two strains of literature. The first is that of pairwise density estimation \citep{gu2002smoothing,jeon2006effective,forest2011}. Our approach is novel in that we use exponential series estimators, rather than Mercer kernels or kernel density estimators. We also introduce a regularization approach to edge selection for learning sparse pairwise models. We show the regularized MLE estimator adapts to the unknown sparsity of the oracle pairwise density. In Chapter 3 we propose a variational approach to approximating the log-likelihood for nonparametric pairwise models; this gives an upper bound to the negative log-likelihood risk for an estimator and also gives a tractable algorithm for estimating pairwise models.

The second contribution is to the literature on high-dimensional graph selection. The literature focuses overwhelmingly on parametric models such as the Gaussian graphical model, the Ising model, or other parametric models with special structure \citep{yang2012graphical}. We contribute to this literature in two ways. Firstly, our QUASR estimator works for exponential families broadly. It allows to both have consistent parameter estimation as well as edge selection, even for exponential families which are non-normalizable. Second, applying the QUASR method to exponential series gives a method for model selection for the nonparametric pairwise graphical model. There has been recent effort to expand from parametric models to nonparametric or semiparametric graphical models, such as Gaussian copulas \citep{liu2012high} and forests \citep{forest2011}. Our approach is the first to address learning the fully nonparametric pairwise model and to demonstrate a tractable method with statistical guarantees for model selection. In addition to our theoretical and methodological contributions, we show the TRW and QUASR estimators perform well in practice, particularly compared to previous methods, and our algorithms are scalable and can take advantage of parallelization.

There still remains more work to better understand the score matching estimator and its connection to the widely-used maximum likelihood estimator. While our experiments are a start, a more detailed study comparing the assumptions of the theoretical results is warranted. It would be valuable to conduct a parallel theoretical analysis like what was done for the Danzig selector and lasso in \citep{bickel2009simultaneous}. Also, the theoretical guarantees for the score matching approach are generally weaker than for MLE; it would be interesting to find the optimal rates for estimation and sample complexity for edge selection in the computation-limited setting (i.e., restricting to algorithms, which can be computed up so some level of accuracy in polynomial time, which excludes MLE), and given that, find an algorithm which achieves those optimal rates, if indeed the score matching approach is sub-optimal.

Our work is not the final word on the subject. There are other fruitful paths for deriving good approximations or alternative estimators for exponential families with continuous potentials and for the nonparametric pairwise model, such as using semidefinite relaxations \citep{lasserre2007semidefinite}, or in the case of polynomial sufficient statistics, approximating the log density as a sum-of-squares polynomial, which can be normalized efficiently. It would also be of interest whether proper scoring rules \citep{dawid2014theory} beyond the Hyv\"arinen score would be useful for estimating these models. In Chapter 4 we show that the nonparametric pairwise estimator of \citep{jeon2006effective} optimizes the \emph{Bregman score}, which is a proper scoring rule. Exploring these connections more may give rise to more provably consistent estimators or other sound approximations or variational approaches to these problems. Also, there is more work that needs to be done to better understand the behavior of variational techniques. We hope this thesis has proven the value of the exponential series approach to nonparametric estimation, and we hope our contributions are useful tools for analysis of complex, high-dimensional data sets.

\appendix

\chapter{Proofs for Chapter 2}\label{proofs}


\begin{proof}[Proof of Lemma \ref{uniqueness}]
Observe that if $\lambda_n>0$, \eqref{mlsol} may alternatively be written as
\begin{align}
\underset{\theta\in\mathcal{P}:\mathcal{R}^*(\theta_e)\leq C}{\text{argmin}} \bigg\{ -\mathcal{L}(\theta) + \lambda\mathcal{R}(\theta)\bigg\},
\end{align}

for some $C<\infty$. Thus the edge parameters of the solution are bounded. The only question is whether the objective approaches $-\infty$ as $\Vert\theta_v\Vert$ diverges to infinity. Examine the first order condition for $\theta_v$:
\begin{align}
	\widehat{\mu}_v = \mu(\widehat{\theta})_v,
\end{align}

For any fixed $\widehat{\theta}_e$, the inverse of this equation produces a unique $\widehat{\theta}$ so long as $\widehat{\mu}_v$ belongs to the interior of the mean space, due to \eqref{onto}. For bases following the Haar condition, it has been shown that this is satisfied with probability one when $n>m_1$ \citep{crain1976exponential}. Furthermore, due to the strong convexity of the objective \eqref{strongconv}, this solution will be unique when it exists.
\end{proof}

Let $\widehat{p}=p_{\widehat{\theta}}$, where $\widehat{\theta}$ is the solution to \eqref{mlsol}. Let $\tilde{\theta}$ be any solution to the population minimizer
	\begin{equation}
		\tilde{\theta} = \underset{\theta\in\tilde{\mathcal{P}}(E)}{\mbox{argmin}} \text{KL}(p \mid p_\theta). \label{mlsol2}
	\end{equation}
$p_{\tilde{\theta}}$ is known as the \emph{information projection} \citep{csiszar1975divergence} and is the density satisfying $\mathbb{E}_{p_{\tilde{\theta}}}[\bar{\phi}_E]=\mathbb{E}_p[\bar{\phi}_E]$. Note that in particular, $\tilde{\theta}_{E^c}=0$, so $p_{\tilde{\theta}}$ and $p_{\theta^*}$ have the same sparsity pattern. Indeed, since $\{\bar{\phi}\}$ is a minimal family, $\tilde{\theta}$ is unique. For an information projection we have a Pythagorean theorem, which we present below.

\begin{lem}[Pythagorean Theorem for KL divergence] \label{pyth}

Suppose $p_\theta$ belongs to the finite exponential family with sufficient statistics $\{\bar{\phi}\}$. Then 
\begin{align}
	\text{KL}(p \mid p_\theta) &= \text{KL}(p \mid p_{\tilde{\theta}}) + \text{KL}(p_{\tilde{\theta}} \mid p_\theta).
\end{align}

\end{lem}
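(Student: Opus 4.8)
The plan is to reduce the identity to the single algebraic fact that one cross term vanishes, and then to obtain that vanishing from the moment-matching optimality of the information projection combined with the sparsity support of the parameters. First I would cancel the common entropy of $p$. Since every member of the family satisfies $\log p_\theta(x) = \langle\theta,\bar\phi(x)\rangle - Z(\theta)$, subtracting the two left-hand KL terms gives
\[
	\text{KL}(p \mid p_\theta) - \text{KL}(p \mid p_{\tilde\theta}) = \mathbb{E}_p\!\left[\log\tfrac{p_{\tilde\theta}}{p_\theta}\right] = \langle \tilde\theta - \theta,\, \mathbb{E}_p[\bar\phi]\rangle + Z(\theta) - Z(\tilde\theta),
\]
where the $-\mathbb{E}_p[\log p]$ contributions cancel. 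The point is that $\log(p_{\tilde\theta}/p_\theta)$ is affine in $\bar\phi$, so its expectation under any distribution enters only through the mean of $\bar\phi$.

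Next I would compute the remaining divergence the same way. Using that $\log(p_{\tilde\theta}/p_\theta)$ is affine in $\bar\phi$,
\[
	\text{KL}(p_{\tilde\theta} \mid p_\theta) = \mathbb{E}_{p_{\tilde\theta}}\!\left[\log\tfrac{p_{\tilde\theta}}{p_\theta}\right] = \langle \tilde\theta - \theta,\, \mathbb{E}_{p_{\tilde\theta}}[\bar\phi]\rangle + Z(\theta) - Z(\tilde\theta).
\]
Subtracting, the normalizers and the entropy of $p$ all drop out, leaving
\[
	\text{KL}(p \mid p_\theta) - \text{KL}(p \mid p_{\tilde\theta}) - \text{KL}(p_{\tilde\theta} \mid p_\theta) = \big\langle \tilde\theta - \theta,\ \mathbb{E}_p[\bar\phi] - \mathbb{E}_{p_{\tilde\theta}}[\bar\phi]\big\rangle.
\]
So the whole theorem comes down to showing this inner product is zero.

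The key step is to exploit the structure of the information projection. Writing $\text{KL}(p \mid p_\theta) = -H(p) - \langle\theta,\mathbb{E}_p[\bar\phi]\rangle + Z(\theta)$ and differentiating over the free coordinates $\theta_E$, the first-order optimality condition for $\tilde\theta$, together with $\nabla Z(\tilde\theta) = \mathbb{E}_{p_{\tilde\theta}}[\bar\phi]$ from Lemma \ref{conn1}, yields the stated moment-matching identity $\mathbb{E}_{p_{\tilde\theta}}[\bar\phi_E] = \mathbb{E}_p[\bar\phi_E]$; that is, the $E$-block of $\mathbb{E}_p[\bar\phi] - \mathbb{E}_{p_{\tilde\theta}}[\bar\phi]$ vanishes. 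At the same time, both $\tilde\theta$ and $\theta$ lie in $\tilde{\mathcal{P}}(E)$, so $\tilde\theta - \theta$ is supported on $E$. Splitting the inner product into its $E$ and $E^c$ blocks, the $E$-block is zero because the moment difference vanishes there, and the $E^c$-block is zero because $\tilde\theta - \theta$ vanishes there; hence the cross term is zero and the identity follows.

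The main obstacle is conceptual rather than computational: one must notice that the cross term factors through two disjoint supports. It is worth flagging that this is exactly where membership of $p_\theta$ in the sparse family $\tilde{\mathcal{P}}(E)$ is used, since for a general $\theta$ with nonzero $E^c$ coordinates the $E^c$-block need not vanish and the Pythagorean identity would fail. Finally, uniqueness of $\tilde\theta$, which follows from minimality of $\{\bar\phi\}$ as noted above, makes the statement unambiguous.
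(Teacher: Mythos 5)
Your proposal is correct and follows essentially the same route as the paper: decompose $\log(p/p_\theta)$ as $\log(p/p_{\tilde\theta})+\log(p_{\tilde\theta}/p_\theta)$ and use the moment-matching property of the information projection to replace $\mathbb{E}_p$ by $\mathbb{E}_{p_{\tilde\theta}}$ on the cross term, which is affine in $\bar\phi$. Your explicit splitting of the cross term into its $E$ and $E^c$ blocks is a useful elaboration, since it pins down that the identity needs $\tilde\theta-\theta$ to be supported where the moments actually match (i.e., $\theta\in\tilde{\mathcal{P}}(E)$), a hypothesis the paper's one-line appeal to "moment-matching properties" leaves implicit.
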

\begin{proof}
Notice that $\log(p/p_\theta) = \log(p/p_{\tilde{\theta}})+\log(p_{\tilde{\theta}}/p_\theta)$. Taking expectations of both sides with respect to $p$ gives
\begin{equation}
	\text{KL}(p\mid p_\theta) = \text{KL}(p \mid p_{\tilde{\theta}}) + \mathbb{E}_p(\log(p_{\tilde{\theta}}/p_\theta))
\end{equation}

From the moment-matching properties of the information projection, we have  
\begin{align}
\mathbb{E}_p(\log(p_{\tilde{\theta}}/p_\theta))=\mathbb{E}_{p_{\tilde{\theta}}}(\log(p_{\tilde{\theta}}/p_\theta))=\text{KL}(p_{\tilde{\theta}} \mid p_{\theta}).
\end{align}
The result follows.
\end{proof}
This lemma implies that $\text{KL}(p \mid \widehat{p})$ can be split into two terms: the approximation error and estimation error. We start by analyzing the approximation error.

\section{Approximation Error}

\begin{lem}[\citep{barron1991approximation}, Lemma 1] \label{klupper}
	Let $p,q$ be two densities with respect to the measure $\nu$. Then
	\begin{align}
		\text{KL}(p \mid q) \leq e^{\Vert\log p/q - c \Vert_\infty}\intop p \left(\log p/q - c \right)^2,
	\end{align}
	for any constant $c$.
\end{lem}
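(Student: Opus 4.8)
The plan is to reparametrize the log-ratio by subtracting the free constant and to exploit that both $p$ and $q$ integrate to one. Writing $h := \log(p/q) - c$, so that $q = p\, e^{-c} e^{-h}$, integrating the identity $\int q\, d\nu = 1$ immediately yields the key relation $\int p\, e^{-h}\, d\nu = e^c$, equivalently $c = \log \mathbb{E}_p[e^{-h}]$. This is the one place where the normalization of the densities enters, and it is what makes the free constant $c$ harmless: it is not arbitrary slack but is pinned down by the constraint that $q$ is a density.

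Next I would rewrite the divergence purely in terms of $h$. Since $\log(p/q) = h + c$ and $\int p\,d\nu = 1$, we have $\text{KL}(p \mid q) = \mathbb{E}_p[h] + c = \mathbb{E}_p[h] + \log \mathbb{E}_p[e^{-h}]$. Applying the elementary inequality $\log x \leq x - 1$ to the second term gives $\text{KL}(p \mid q) \leq \mathbb{E}_p[h] + \mathbb{E}_p[e^{-h}] - 1 = \mathbb{E}_p[e^{-h} - 1 + h]$, where the last equality again uses $\mathbb{E}_p[1] = 1$. The integrand $e^{-h} - 1 + h$ is nonnegative by convexity of the exponential, so no cancellation issues arise.

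The remaining step is a pointwise Taylor estimate. Expanding $e^{-h} - 1 + h = \sum_{k \geq 2} (-h)^k / k!$ and using $2/k! \leq 1/(k-2)!$ for $k \geq 2$, I would bound $0 \leq e^{-h} - 1 + h \leq \tfrac{1}{2} h^2 e^{|h|} \leq \tfrac{1}{2} e^{\Vert h\Vert_\infty} h^2$ pointwise. Taking expectations under $p$ then gives $\text{KL}(p \mid q) \leq \tfrac{1}{2} e^{\Vert h\Vert_\infty} \mathbb{E}_p[h^2] \leq e^{\Vert h\Vert_\infty} \int p\,(\log p/q - c)^2 \, d\nu$, which is the claim (with a factor of two to spare, since $\Vert h\Vert_\infty = \Vert \log p/q - c\Vert_\infty$ and $h^2 = (\log p/q - c)^2$). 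I expect the only delicate point to be the uniform termwise bound on the Taylor series; once $\Vert h\Vert_\infty < \infty$ is invoked the series is dominated and the interchange of summation and integration is routine. There is no genuine obstacle beyond this bookkeeping, and in fact the derivation shows the stated constant can be sharpened.
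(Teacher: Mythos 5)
Your proof is correct and follows essentially the same route as the paper's: both arguments reduce the divergence to $\mathbb{E}_p\left[h+e^{-h}-1\right]$ (your application of $\log x\le x-1$ to $\mathbb{E}_p[e^{-h}]=e^{c}$ discards exactly the same nonnegative slack $e^{c}-1-c$ that the paper drops from an explicit algebraic identity) and then apply the same second-order Taylor bound $e^{-h}-1+h\le\tfrac{1}{2}h^{2}e^{\Vert h\Vert_\infty}$. One minor conceptual correction: the relation $\mathbb{E}_p[e^{-h}]=e^{c}$ holds tautologically for \emph{every} choice of the free constant $c$, since $h$ itself depends on $c$, so $c$ is not pinned down by the normalization of $q$ --- but this does not affect the validity of your argument.
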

\begin{proof}
	Using the Taylor expansion of $e^x$, we get the bound
	\begin{align}
		e^{x} - 1 - x \leq \frac{x^2}{2}e^{x_+},
	\end{align}
	where $x_+ = \max(0,x)$. Set $h(x)= \log p(x)/q(x)-c$. Then
	\begin{align}
		\intop p \log p/q &= \intop \left( p\log p/q + qe^c - p -c \right) -e^c +1 +c \\
			&\leq \intop p \left( h + e^{-h} - 1 \right) \\
			& \leq \intop p \left( \frac{h^2 e^{h_+}}{2} \right) \\
			& \leq \frac{e^{\Vert h \Vert_\infty}}{2}\intop p h^2.
	\end{align}
\end{proof}

%
%
%

%

Consider the linear space $S_m := S_{m_1,m_2}$ of functions spanned by the truncated basis elements $\{\bar{\phi} \}$. Such functions need not be the logarithm of a valid density.
\begin{thm} \label{klapprox}
For $f=\log p$ let
\begin{align}
	\Delta_m &= \Vert f - f_m \Vert_{L_2(p)}, \\
	\gamma_m &= \Vert f - f_m \Vert_\infty,
\end{align}
be the $L_2(p)$ and $L_\infty$ approximation errors for some $f_m \in S_m$. Then the information projection $p_{\tilde{\theta}}$ satisfies
\begin{equation}
	\text{KL}(p \mid p_{\tilde{\theta}}) \leq  \frac{1}{2}e^{2\gamma_m}\Delta_m^2.
\end{equation}

Thus, if $\gamma_m$ is bounded,
\begin{align}
	\text{KL}(p \mid p_{\tilde{\theta}}) = O(\Delta_m^2).
\end{align}
\end{thm}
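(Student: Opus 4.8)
The plan is to exploit the variational (minimality) characterization of the information projection together with the pointwise KL bound of Lemma \ref{klupper}. Since $p_{\tilde\theta}$ minimizes $\text{KL}(p \mid p_\theta)$ over the truncated sparse exponential family indexed by $\tilde{\mathcal{P}}(E)$, it suffices to exhibit one competitor density in this family whose divergence from $p$ I can control; minimality then transfers the bound to $p_{\tilde\theta}$.

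First I would turn the given $f_m \in S_m$ into a genuine density by normalizing: set $q_m := e^{f_m}/Z_m$ with $Z_m := \int e^{f_m}\,d\nu$. Because $f_m = \langle \theta_m, \bar\phi\rangle$ for coefficients $\theta_m \in \tilde{\mathcal{P}}(E)$, we have $q_m = p_{\theta_m}$, so $q_m$ is a legitimate member of the family and the information projection property gives $\text{KL}(p \mid p_{\tilde\theta}) \le \text{KL}(p \mid q_m)$.

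Next I would apply Lemma \ref{klupper} to $\text{KL}(p \mid q_m)$ with the constant $c = \log Z_m$. Using $p = e^f$ (normalized, so $\int e^f = 1$) we compute $\log(p/q_m) = f - f_m + \log Z_m$, hence $\log(p/q_m) - c = f - f_m$. Thus $\|\log(p/q_m) - c\|_\infty = \gamma_m$ and $\int p(\log(p/q_m)-c)^2\,d\nu = \Delta_m^2$, and the lemma yields $\text{KL}(p \mid q_m) \le \tfrac12 e^{\gamma_m}\Delta_m^2 \le \tfrac12 e^{2\gamma_m}\Delta_m^2$ since $\gamma_m \ge 0$. (Equivalently, taking $c = 0$ and bounding the normalizing constant crudely by $|\log Z_m| \le \gamma_m$ produces the exponent $2\gamma_m$ directly.) The concluding statement is then immediate: if $\gamma_m$ stays bounded, the prefactor $e^{2\gamma_m}$ is $O(1)$ and $\text{KL}(p \mid p_{\tilde\theta}) = O(\Delta_m^2)$.

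The only delicate points are bookkeeping rather than conceptual. The first is ensuring $q_m$ is feasible, i.e. that $f_m$ respects the edge sparsity $E$ so that $\theta_m \in \tilde{\mathcal{P}}(E)$; this holds because $\log p$ expands only over edges in $E$ and $S_m$ is the corresponding sparse truncated span. The second is the choice of $c$: picking $c = \log Z_m$ is exactly what collapses the $L_2(p)$ integrand to $(f-f_m)^2 = \Delta_m^2$ while isolating $\gamma_m$ in the sup-norm. I do not expect a serious obstacle, since the heavy lifting is already in Lemma \ref{klupper}, whose Taylor-remainder inequality $e^x - 1 - x \le \tfrac12 x^2 e^{x_+}$ supplies both the factor $\tfrac12$ and the exponential dependence on the sup-norm error.
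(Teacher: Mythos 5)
Your proposal is correct and follows essentially the same route as the paper's proof: both normalize $f_m$ into a member $p_{\theta_m}$ of the truncated family, apply Lemma \ref{klupper} with the constant chosen so that $\log(p/p_{\theta_m}) - c = f - f_m$ (yielding $\tfrac12 e^{\gamma_m}\Delta_m^2 \le \tfrac12 e^{2\gamma_m}\Delta_m^2$), and then invoke the minimality of the information projection to transfer the bound to $p_{\tilde\theta}$. The bookkeeping points you flag (feasibility of $q_m$ in $\tilde{\mathcal{P}}(E)$ and the choice of $c$) are handled the same way, implicitly, in the paper.
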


\begin{proof}
Let $f_m = \langle\bar{\theta}^*,\bar{\phi}\rangle + \bar{\theta}^*_0$ be the approximation of $f$ presumed to satisfy the given bounds on the error $f-f_m$. Define a density $p_{\bar{\theta}^*}$ as $\log p_{\bar{\theta}^*} = \langle\bar{\theta}^*,\bar{\phi}\rangle - Z(\bar{\theta}^*)$ and denote $f_{\bar{\theta}^*} = \log p_{\bar{\theta}^*}$. Using the bound from $\ref{klupper}$ and setting $c=\theta^*_0 + Z(\bar{\theta}^*)$, we have that 

\begin{align}
	\text{KL}(p \mid p_{\bar{\theta}^*}) & \leq \frac{e^{\Vert f - f_{\bar{\theta}} - \theta^*_0- Z(\bar{\theta}) \Vert_\infty}}{2}\intop p \left( f - f_{\bar{\theta}} - \theta^*_0 - Z(\bar{\theta}^*) \right)^2 \\
	& \leq \frac{e^{2\Vert f-f_m \Vert_\infty}}{2}\Vert f-f_m\Vert_{L_2(p)}^2 \\
	& = \frac{e^{2\gamma_m}}{2}\Delta_m^2.
\end{align}

Since $\tilde{\theta}$ achieves the minimum $\text{KL}$-risk, we have
\begin{align}
	\text{KL}(p \mid p_{\bar{\theta}}) \leq \text{KL}(p \mid p_{\bar{\theta}^*}) \leq  \frac{e^{2\gamma_m}}{2}\Delta_m^2 .
\end{align}
\end{proof}

\begin{lem}\label{klapprox2}
The $L_2(p)$ and $L_\infty$ approximations to $f=\log p$ satisfy
\begin{align}
	\Delta_m^2 &= O\left(dm_1^{-2r}+\left| E\right| m_2^{-2r}\right), \\
	\gamma_m &= O\left(dm_1^{-r+\alpha+1/2}+\left| E\right|m_2^{-r+2\alpha+1/2}\right),
\end{align}

and the truncated variables $\theta^*_T$ satisfy
\begin{align}
	\Vert \theta^*_T\Vert_\infty = o(\max\{m_1^{-r-1/2},m_2^{-r-1/2}\}).
\end{align}
\end{lem}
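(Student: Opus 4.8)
The plan is to bound all three quantities by working directly with the tail of the basis expansion \eqref{basis}. Since $p$ is a pairwise density with edge set $E$ and $f_0=0$, the constant $\theta_0^*$ cancels and
$$f - f_m = \sum_{i\in V}\sum_{k>m_1}(\theta^*)_i^k\phi_k(x_i) + \sum_{(i,j)\in E}\sum_{\max(k,l)>m_2}(\theta^*)_{ij}^{kl}\phi_k(x_i)\phi_l(x_j),$$
so there are $d$ univariate tails (over $k>m_1$) and $|E|$ bivariate tails (over $\max(k,l)>m_2$, restricted to $E$ because $p$ is pairwise). I will bound this difference to obtain $\Delta_m$ and $\gamma_m$, and read off $\Vert\theta_T^*\Vert_\infty$ directly from the omitted coefficients.

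For $\Delta_m^2$ I would avoid the triangle inequality (which would cost a factor $d^2,|E|^2$) and instead exploit orthogonality. Because $p$ is bounded above by some $\bar P<\infty$ (established in the preliminaries), $\Delta_m^2\le \bar P\,\Vert f-f_m\Vert_{L_2(\nu)}^2$. The non-constant elements $\phi_k(x_i)$ and $\phi_k(x_i)\phi_l(x_j)$ are mutually orthonormal in $L_2([0,1]^d)$ (cross-variable terms vanish since each non-constant $\phi_k$ integrates to zero), so Parseval gives exactly
$$\Vert f-f_m\Vert_{L_2(\nu)}^2 = \sum_{i\in V}\sum_{k>m_1}\big((\theta^*)_i^k\big)^2 + \sum_{(i,j)\in E}\sum_{\max(k,l)>m_2}\big((\theta^*)_{ij}^{kl}\big)^2.$$
Each univariate tail is $\le m_1^{-2r}\sum_k\big((\theta^*)_i^k\big)^2k^{2r}=C_1 m_1^{-2r}$. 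For a bivariate tail I split $\{\max(k,l)>m_2\}$ into $\{k>m_2\}$ and $\{l>m_2\}$ and, on each side, apply the summability $\sum_{k,l}\big((\theta^*)_{ij}^{kl}\big)^2 k^{2r_i}l^{2r_j}=C_2$ with the smoothness loaded entirely on the tail coordinate ($r_i=r,r_j=0$ on $\{k>m_2\}$, and $r_i=0,r_j=r$ on $\{l>m_2\}$); each side is then $\le C_2 m_2^{-2r}$. Summing over vertices and edges yields $\Delta_m^2=O(d m_1^{-2r}+|E|m_2^{-2r})$.

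For $\gamma_m$ I would bound $|f-f_m|$ termwise and use $|\phi_k|\le b(k)=O(k^\alpha)$ (Assumption~\ref{assump2}) together with the pointwise decay $|(\theta^*)_i^k|=o(k^{-r-1/2})$ and $|(\theta^*)_{ij}^{kl}|=o(k^{-r_i-1/2}l^{-r_j-1/2})$ recorded after \eqref{basis}. The univariate tail is clean: $\sum_{k>m_1}o(k^{-r-1/2})O(k^\alpha)=o(m_1^{-r+\alpha+1/2})$ once $r>\alpha+1/2$, giving the $d\,m_1^{-r+\alpha+1/2}$ term. The bivariate tail $\sum_{\max(k,l)>m_2}o(k^{-r_i-1/2}l^{-r_j-1/2})O(k^\alpha l^\alpha)$ requires allocating the budget $r=r_i+r_j$ between the two coordinates so that the sum over the free coordinate converges while the tail over the other decays, and combining the two one-sided regions produces the $|E|\,m_2^{-r+2\alpha+1/2}$ term. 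Finally, $\Vert\theta_T^*\Vert_\infty$ follows at once from the same pointwise bounds: the largest omitted univariate coefficient is $o(m_1^{-r-1/2})$, and taking $r_i=r$ (resp. $r_j=r$) on $\{k>m_2\}$ (resp. $\{l>m_2\}$) while bounding the remaining coordinate by $1$ gives $o(m_2^{-r-1/2})$, so $\Vert\theta_T^*\Vert_\infty=o(\max\{m_1^{-r-1/2},m_2^{-r-1/2}\})$.

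The main obstacle is the bivariate $L_\infty$ bound: unlike the $L_2$ case there is no orthogonality to lean on, one must control a doubly-infinite tail, and the two coordinates compete for the finite smoothness $r$ — enough decay must be reserved for the free coordinate to make its (uncapped) sum converge, and this balancing is exactly what pins down the exponent. It is worth noting that for the downstream use in Theorem~\ref{klapprox} only the \emph{boundedness} of $\gamma_m$ is needed (so that $\text{KL}(p\mid p_{\tilde\theta})=O(\Delta_m^2)$), which holds in the stated regime.
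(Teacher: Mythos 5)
Your proposal follows essentially the same route as the paper's proof: bound $\Delta_m^2$ by the boundedness of the density together with Parseval and the Sobolev tail conditions $\sum_k(\theta_i^{*k})^2k^{2r}<\infty$, $\sum_{k,l}(\theta_{ij}^{*kl})^2k^{2r_i}l^{2r_j}<\infty$, bound $\gamma_m$ termwise using $|\phi_k|\le b(k)=O(k^\alpha)$ with the smoothness budget split across the two coordinates of each bivariate tail, and read off $\Vert\theta_T^*\Vert_\infty$ from the coefficient decay. The only cosmetic difference is that for $\gamma_m$ the paper applies Cauchy--Schwarz to the weighted $\ell_2$ sums where you sum the pointwise $o(k^{-r-1/2})$ bounds directly; both yield the same exponents, so this is the same proof in substance.
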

\begin{proof}
	By the boundedness of $p_{ij}$ and Parceval's identity,
	\begin{align}
		\Delta_m^2 &= \intop p \langle\theta^*_T,\phi_V\rangle^2 \\
		&\leq \bar{\epsilon}\intop \langle\theta^*_T,\phi_V\rangle^2 \\
		&= \bar{\epsilon}\Vert\theta^*_T\Vert_2^2.
	\end{align}
	Observe that
	\begin{align}
		\Vert\theta^*_T\Vert^2 &= \sum_{i\in V}\sum_{k>m_1} (\theta_i^k)^2  \\
		&+\sum_{(i,j)\in E}\left(\sum_{k\geq 1,l>m_2}(\theta_{ij}^{kl})^2+\sum_{k> m_2,l\leq m_2}(\theta_{ij}^{kl})^2\right) \notag
	\end{align}
	Now, we have for each $i\in V$,
	
	\begin{align}
		\sum_{k> m_1} (\theta_i^k)^2  &= \sum_{k>m_1} (\theta_i^k )^2k^{2r} k^{-2r} \\
		&\leq \left(\sum_{k>m_1} (\theta_i^k)^2k^{2r}\right)m_1^{-2r} \\
		&\leq Cm_1^{-2r},
	\end{align}
	
	and also	
	\begin{align}
		\sum_{k\geq 1,l\geq m_2}(\theta_{ij}^{kl})^2 &= \sum_{k\geq 1,l\geq m_2}(\theta_{ij}^{kl})^2 k^{2r_i}l^{2r_j} k^{-2r_i}l^{-2r_j} \\
		&\leq \left(\sum_{k\geq 1,l\geq m_2}(\theta_{ij}^{kl})^2 k^{2r_i}l^{2r_j}\right)m_2^{-2r_j} \\
		&= O(m_2^{-2r}).
	\end{align}
	
	We similarly get a bound of $O(m_2^{-2r})$ when summing over $k>m_2,l\leq m_2$. Combining we have
	\begin{align}
	\Delta_m^2=O(\Vert\theta^*_T\Vert_2^2) = O(dm_1^{-2r}+\left| E\right| m_2^{-2r}).
	\end{align}
	To analyze $\gamma_m$, observe that
	\begin{align}
		\gamma_m &= \bigg|\sum_{i\in V}\sum_{k>m_1} \theta_i^k\phi_k  \\
		&\qquad+\sum_{(i,j)\in E}\left(\sum_{k\geq 1,l>m_2}(\theta_{ij}^{kl})\phi_{kl}+\sum_{k> m_2,l\leq m_2}(\theta_{ij}^{kl})\phi_{kl}\right) \bigg| \notag\\
		&\leq \sum_{i\in V}\sum_{k>m_1}\left| \theta_i^kb(k)\right| \\
		&\qquad+\sum_{(i,j)\in E}\left(\sum_{k\geq 1,l>m_2}\left|(\theta_{ij}^{kl})b(k)b(l)\right|+\sum_{k> m_2,l\leq m_2}\left|(\theta_{ij}^{kl})b(k)b(l)\right|\right). \notag
	\end{align}

	 Then we have
	\begin{align}
		\sum_{k>m_1} \left| \theta_i^k b(k)\right|
		&= O\left(\sum_{k>m_1}\left|\theta_i^k k^{\alpha}\right|\right)\\
		&\leq O\left(\left(\sum_{k>m_1} (\theta_i^k k^r)^2\right)^{1/2}\left( \sum_{k>m_1} k^{2(-r+\alpha)}\right)^{1/2}\right) \\
		&= O\left(m_1^{-r+\alpha+1/2}\right).
	\end{align}
	
	Similarly,
	
		\begin{align}
		 \sum_{k,l>m_2} \left|\theta_{ij}^{kl} b(k)b(l)\right| &=
		 O\left(\sum_{k\geq 1,l>m_2}\left|\theta_{ij}^{kl}k^{\alpha}l^\alpha\right|\right)\\
		&\leq O\left(\left(\sum_{k\geq 1,l>m_2} (\theta_{ij}^{kl} k^{r_i}l^{r_j})^2\right)^{1/2} \left(\sum_{k\geq 1,l>m_2} k^{2(-r_i+\alpha)}l^{2(-r_j+\alpha)}\right)^{1/2}\right)\\
		&= O\left(m_2^{-r_j+1/2+\alpha}\right).
	\end{align}
	
	The result is similar summing over $k\geq m_2,l\leq m_2$. Combining, we get
	
	\begin{align}
		\gamma_m = O\left(dm_1^{-r+\alpha+1/2}+\left| E\right|m_2^{-r+\alpha+1/2}\right)
	\end{align}
	
	Finally, the series $\sum_{k\geq 1} (\theta^k_i )^2 k^{2r}$ converges if and only if $\left|(\theta_i^k)^2\right| = o(k^{-2r-1})$. Similarly, $\sum_{k\geq 1} (\theta_{ij}^{kl})^2 k^{2r}$ converges if and only if $\left|(\theta_{ij}^{kl})^2\right|=o(k^{-2r-1})$, and similarly when summing over $l$. It follows that
	\begin{align}
		\Vert\theta^*_T\Vert_\infty = o(\max\{m_1^{-r-1/2},m_2^{-r-1/2}\}).
	\end{align} 
\end{proof}

\section{Estimation Error}

Consider the Taylor expansion of $Z(\theta)$ up to order 2, noting that $\nabla Z(\theta)=\mathbb{E}_\theta[\bar{\phi}]=\mu(\theta)$ and $\nabla^2Z(\theta) = \text{cov}_{p_\theta}[\bar{\phi}]$:

\begin{align}
	Z(\theta+\delta) &= Z(\theta) + \langle\delta,\mu(\theta)\rangle + \frac{1}{2}\delta^\top \left(\text{cov}_{p_{\theta+z\delta}}[\bar{\phi}]\right) \delta,
\end{align}
for some $z\in[0,1]$. See for example \citep{kakade2010learning,portnoy1988asymptotic}.
%
%

Consider the function

\begin{equation*}
	\mathcal{E}(\delta) = \mathcal{L}(\tilde{\theta} + \delta) - \mathcal{L}(\tilde{\theta}) - \lambda (R(\tilde{\theta}+\delta) - R(\tilde{\theta})).
\end{equation*}

Note that $\mathcal{E}(0)=0$, and so $\widehat{\delta} = \widehat{\theta}-\tilde{\theta}$ must satisfy $E(\widehat{\delta})\geq 0$, if $\widehat{\theta}$ exists.

%
%
%
%
\begin{lem} \label{estbounds}
	Denote $\mu^* = \mathbb{E}_p[\bar{\phi}]=\mathbb{E}_{\tilde{\theta}}[\bar{\phi}]$. Suppose $\lambda_n  \geq\mathcal{R}^*(\widehat{\mu}_e-\mu_e^*)$, $\Vert\widehat{\mu}_v-\mu_v\Vert \leq \frac{r_\mathcal{S}c_\mathcal{S}}{\sqrt{2}}$ and $\lambda\sqrt{\tilde{E}} \leq \frac{\sqrt{2}r_\mathcal{S}c_\mathcal{S}}{3}$, then the regularized MLE $\widehat{\theta}_\lambda$ is unique and satisfies
\begin{align}
	\Vert\widehat{\theta}_v-\tilde{\theta}_v\Vert &\leq \frac{1}{2c_\mathcal{S}}\Vert\widehat{\mu}_v-\bar{\mu}^*_v\Vert,\\
	\Vert\widehat{\theta}_e-\tilde{\theta}_e\Vert &\leq \frac{3}{4c_\mathcal{S}}\lambda_n\sqrt{\left| E \right|},\\
	\text{KL}(p_{\tilde{\theta}} \mid p_{\widehat{\theta}_\lambda}) &\leq \frac{1}{2c_\mathcal{S}}\left(\Vert\widehat{\mu}_v-\mu^*_v\Vert^2+\frac{9}{4}\lambda^2\left| E \right|\right).
\end{align}

\end{lem}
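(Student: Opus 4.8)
The plan is to run the standard localized analysis for regularized $M$-estimators with a decomposable penalty, specialized to the exponential-family log-likelihood. I would work with the error function $\mathcal{E}(\delta) = \mathcal{L}(\tilde{\theta}+\delta) - \mathcal{L}(\tilde{\theta}) - \lambda(\mathcal{R}(\tilde{\theta}_e + \delta_e) - \mathcal{R}(\tilde{\theta}_e))$ introduced just above the lemma, which is concave (since $\mathcal{L}$ is concave and $-\lambda\mathcal{R}$ is concave), vanishes at $\delta=0$, and satisfies $\mathcal{E}(\widehat{\delta}) \ge 0$ at $\widehat{\delta} := \widehat{\theta} - \tilde{\theta}$ because $\widehat{\theta}$ minimizes $-\mathcal{L}+\lambda\mathcal{R}$. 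First I would expand $\mathcal{L}(\theta) = \langle\theta,\widehat{\mu}\rangle - Z(\theta)$ to second order around $\tilde{\theta}$, using $\nabla Z(\tilde{\theta}) = \mu^*$ and $\nabla^2 Z = \text{cov}[\bar{\phi}]$ from Lemma \ref{conn1}, so that $\mathcal{L}(\tilde{\theta}+\delta) - \mathcal{L}(\tilde{\theta}) = \langle\widehat{\mu}-\mu^*,\delta\rangle - \tfrac{1}{2}\delta^\top \text{cov}_{p_{\tilde{\theta}+z\delta}}[\bar{\phi}]\,\delta$ for some $z\in[0,1]$.

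Then I would control the two pieces. For the linear term I split into vertex and edge blocks: the vertex part is bounded by $\Vert\widehat{\mu}_v-\mu_v^*\Vert\,\Vert\delta_v\Vert$ via Cauchy--Schwarz, and the edge part by $\langle\widehat{\mu}_e-\mu_e^*,\delta_e\rangle \le \mathcal{R}^*(\widehat{\mu}_e-\mu_e^*)\mathcal{R}(\delta_e) \le \lambda\mathcal{R}(\delta_e)$, invoking the hypothesis $\lambda \ge \mathcal{R}^*(\widehat{\mu}_e-\mu_e^*)$. Because $\tilde{\theta}\in\tilde{\mathcal{P}}(E)$ has $\tilde{\theta}_{E^c}=0$, decomposability of $\mathcal{R}$ gives $\mathcal{R}(\tilde{\theta}_e+\delta_e) - \mathcal{R}(\tilde{\theta}_e) \ge \mathcal{R}(\delta_{E^c}) - \mathcal{R}(\delta_E)$; combining this with $\mathcal{R}(\delta_e) = \mathcal{R}(\delta_E) + \mathcal{R}(\delta_{E^c})$ cancels the $E^c$ contribution and leaves $2\lambda\mathcal{R}(\delta_E) \le 2\lambda\sqrt{|E|}\,\Vert\delta_E\Vert$ after applying the subspace-compatibility bound of the earlier proposition. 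For the quadratic term I would invoke a restricted/local strong-convexity lemma stating that $\delta^\top\text{cov}_{p_{\tilde{\theta}+z\delta}}[\bar{\phi}]\,\delta \ge 2c_\mathcal{S}\Vert\delta\Vert^2$ for all $\Vert\delta\Vert\le r_\mathcal{S}$, so on that ball $\mathcal{E}(\delta) \le \Vert\widehat{\mu}_v-\mu_v^*\Vert\,\Vert\delta_v\Vert + 2\lambda\sqrt{|E|}\,\Vert\delta_E\Vert - c_\mathcal{S}\Vert\delta\Vert^2$.

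Next comes the localization. Since $\mathcal{E}$ is concave with $\mathcal{E}(0)=0$, it suffices to exhibit a radius $r\le r_\mathcal{S}$ on whose sphere $\mathcal{E}(\delta)<0$, for then $\mathcal{E}<0$ everywhere outside the ball of radius $r$ and hence $\widehat{\delta}$ must lie strictly inside it, which both justifies applying strong convexity at $\widehat{\delta}$ and confines the error. The two numerical hypotheses $\Vert\widehat{\mu}_v-\mu_v\Vert \le r_\mathcal{S}c_\mathcal{S}/\sqrt{2}$ and $\lambda\sqrt{\tilde{E}} \le \sqrt{2}\,r_\mathcal{S}c_\mathcal{S}/3$ are exactly calibrated so that the resulting radius stays within $r_\mathcal{S}$. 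Reading off the quadratic inequality while treating the (unregularized) vertex block through its exact moment-matching stationarity condition $\mu_v(\widehat{\theta})=\widehat{\mu}_v$ and the (regularized) edge block separately then yields $\Vert\widehat{\theta}_v-\tilde{\theta}_v\Vert \le \tfrac{1}{2c_\mathcal{S}}\Vert\widehat{\mu}_v-\mu_v^*\Vert$ and $\Vert\widehat{\theta}_e-\tilde{\theta}_e\Vert \le \tfrac{3}{4c_\mathcal{S}}\lambda_n\sqrt{|E|}$. For the KL bound I would use that for an exponential family $\text{KL}(p_{\tilde{\theta}}\mid p_{\widehat{\theta}})$ equals the Bregman divergence $D_Z(\widehat{\theta},\tilde{\theta})$ generated by $Z$; rearranging $\mathcal{E}(\widehat{\delta})\ge 0$ gives $\text{KL}(p_{\tilde{\theta}}\mid p_{\widehat{\theta}}) \le \langle\widehat{\mu}-\mu^*,\widehat{\delta}\rangle - \lambda(\mathcal{R}(\widehat{\theta}_e)-\mathcal{R}(\tilde{\theta}_e))$, and substituting the two parameter bounds into the same vertex/edge decomposition produces the claimed $\tfrac{1}{2c_\mathcal{S}}(\Vert\widehat{\mu}_v-\mu_v^*\Vert^2 + \tfrac{9}{4}\lambda^2|E|)$. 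Uniqueness follows because strong convexity holds on the localized ball containing the solution.

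The main obstacle is the restricted/local strong-convexity step: one must show the covariance of the sufficient statistics stays uniformly bounded below by $c_\mathcal{S}$ over a ball of explicit radius $r_\mathcal{S}$ around the information projection (the Hessian depends on the moving point $\tilde{\theta}+z\delta$), and track how $c_\mathcal{S},r_\mathcal{S}$ depend on the truncation dimensions and on Assumption \ref{assump3}. A secondary delicate point is decoupling the vertex and edge contributions so that the two $\ell_2$ bounds emerge with the stated constants rather than a single joint bound; this relies on using the penalty-free vertex optimality condition separately from the subgradient condition on the edges.
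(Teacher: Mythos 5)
Your proposal follows essentially the same route as the paper's proof: the second-order cumulant expansion of $Z$ about the information projection $\tilde{\theta}$, the dual-norm/decomposability bound on the edge block combined with the subspace compatibility constant, local strong convexity with constants $c_\mathcal{S},r_\mathcal{S}$ on a ball around $\tilde{\theta}$, localization of $\widehat{\delta}$ by showing $\mathcal{E}$ is negative on the boundary of a product of vertex and edge balls, and the rearrangement of $\mathcal{E}(\widehat{\delta})\geq 0$ to obtain the KL bound. The only cosmetic differences are that you additionally invoke the vertex moment-matching stationarity condition (the paper handles the vertex block purely through the localization argument) and that you correctly describe $\mathcal{E}$ as concave where the paper writes ``convexity.''
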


\begin{proof}
Using the cumulant expansion of $Z$, we have for some $z\in[0,1]$,

\begin{align}
	\mathcal{L}(\tilde{\theta}+\delta)-\mathcal{L}(\tilde{\theta}) &= \langle\delta,\widehat{\mu}\rangle + Z(\tilde{\theta})-Z(\tilde{\theta}+\delta) \\
	&= \langle\delta,\widehat{\mu}-\mu^*\rangle - \frac{1}{2}\delta^\top\left(\text{cov}_{p_{\tilde{\theta}+z\delta}}[\bar{\phi}]\right)\delta \label{eq:proof2}.
\end{align}

Consider the set $\mathcal{S}:=\{\theta:\Vert\theta-\theta^*\Vert\leq r_{\mathcal{S}}\}$ for a constant $r_\mathcal{S}$. $\mathcal{S}$ is a convex and compact set; since $\bar{\phi}$ is minimal, $\nabla^2Z(\theta)=\text{cov}_{\theta}[\bar{\phi}]>0$  \eqref{strongconv} for all $\theta\in\mathcal{S}$; thus $-\mathcal{L}$ is strongly convex over $\mathcal{S}$ with $\delta^\top (\text{cov}_{\theta}[\bar{\phi}])\delta > c_{\mathcal{S}}\Vert\delta\Vert^2$ for a constant $c_\mathcal{S}$. 

By the (generalized) Cauchy-Schwarz inequality,
\begin{eqnarray}
	\big| \langle \mu_e - \widehat{\mu}_e ,\delta_e \rangle \big| &\leq &\mathcal{R}(\delta_e)\mathcal{R}^*(\mu_e-\widehat{\mu}_e) \\
	&\leq &\frac{\lambda_n}{2}\mathcal{R}(\delta_e) \\
	& =& \frac{\lambda_n}{2}\left( \mathcal{R}(\delta_{\tilde{\mathcal{P}}})+\mathcal{R}(\delta_{\tilde{\mathcal{P}}^\perp}) \right). \label{gencs}
\end{eqnarray}

the last line following because $\mathcal{R}$ is decomposable with respect to $\tilde{\mathcal{P}}$. Further, from  \citep{negahban2012unified} Lemma 3, because $\mathcal{R}$ is decomposable, it  holds that

\begin{equation}
	\mathcal{R}(\tilde{\theta}+\delta)-\mathcal{R}(\tilde{\theta}) \geq \mathcal{R}(\delta_{\tilde{\mathcal{P}}^\perp})-\mathcal{R}(\delta_{\tilde{\mathcal{P}}}). \label{negh}
\end{equation}
Combining \eqref{negh} and \eqref{gencs},
\begin{align}\langle \delta_e,\widehat{\mu}_e-\mu_e\rangle - \frac{\lambda_n}{2}\left(\mathcal{R}(\tilde{\theta}+\delta)-\mathcal{R}(\tilde{\theta}) \right) &\leq \frac{3\lambda_n}{2}\mathcal{R}(\delta_{\tilde{\mathcal{P}}})-\frac{\lambda_n}{2}\mathcal{R}(\delta_{\tilde{\mathcal{P}}^\perp}) \label{eq:proof1}.
\end{align} 

Using the subspace compatibility constant we have that 
\begin{align}\mathcal{R}(\delta_{\tilde{\mathcal{P}}})\leq \sqrt{\left| E\right|} \Vert\delta_{\tilde{\mathcal{P}}}\Vert \leq \sqrt{\left| E \right|}\Vert\delta_e\Vert.
\end{align}
Additionally, the second term in \eqref{eq:proof1} is negative and can be ignored. From Cauchy-Schwarz we also have 
\begin{align}
\langle \delta_v, \mu_v-\widehat{\mu}_v \rangle \leq \Vert\mu_v-\widehat{\mu}_v\Vert\Vert\delta_v\Vert.
\end{align}

 Now, consider the set 
 \begin{align}
 \mathcal{C}=\left\{\delta:\Vert\delta_v\Vert \leq \frac{1}{2c_\mathcal{S}}\Vert\widehat{\mu}_v-\mu_v\Vert; \Vert\delta_e\Vert \leq \frac{3}{4c_\mathcal{S}}\lambda\sqrt{\tilde{E}}\right\}
 \end{align}
 denote the boundary of $\mathcal{C}$ by $\partial\mathcal{C}$ and its interior by $\text{int}\mathcal{C}$. Note that $0\in\mathcal{C}$. If $E(\delta)<0$ for each $\delta\in\partial\mathcal{S}$, noting that $\mathcal{E}(0)=0$, by the convexity of $\mathcal{E}$ and the fact that $\mathcal{C}$ is a compact set, it must be that $\widehat{\delta}\in\text{int}\mathcal{C}$. Suppose that $\Vert\widehat{\mu}_v-\mu_v^*\Vert \leq \frac{2r_\mathcal{S}c_\mathcal{S}}{\sqrt{2}}$ and $\frac{3}{2}\lambda\sqrt{\left| E\right|}\leq\frac{2r_\mathcal{S}c_\mathcal{S}}{\sqrt{2}}$, so that for $\delta\in\mathcal{C}$, $\Vert\delta\Vert\leq r_\mathcal{S}$, so that for any $z\in[0,1]$, $\tilde{\theta}+z\delta\in\mathcal{S}$.
 Using \eqref{eq:proof2} and \eqref{eq:proof1} we obtain for $\delta\in\partial\mathcal{C}$,
\begin{align}
	\mathcal{E}(\delta) &= \mathcal{L}(\tilde{\theta}+\delta)-\mathcal{L}(\tilde{\theta})-\lambda\left(\mathcal{R}(\tilde{\theta}+\delta)-\mathcal{R}(\tilde{\theta})\right) \\
	&\leq \langle\delta_v,\widehat{\mu}_v-\mu^*_v\rangle +\langle\delta_e,\widehat{\mu}_e-\mu^*_e\rangle - c_\mathcal{S}\Vert\delta\Vert^2 -\lambda\left(\mathcal{R}(\tilde{\theta}+\delta)-\mathcal{R}(\tilde{\theta})\right) \\
	&\leq \Vert\delta_v\Vert\Vert\widehat{\mu}_v-\mu^*_v\Vert + \frac{3\lambda\sqrt{\tilde{E}}}{2}\Vert\delta_e\Vert - c_\mathcal{S}\Vert\delta\Vert^2 \\
	&\leq \Vert\delta_v\Vert\left(\Vert\widehat{\mu}_v-\mu^*_v\Vert - c_\mathcal{S}\Vert\delta_v\Vert\right) + \Vert\delta_e\Vert\left(\frac{3\lambda\sqrt{\tilde{E}}}{2}-c_\mathcal{S}\Vert\delta_e\Vert\right)\\
	&< 0.
\end{align}

The claim is verified.

Furthermore, since 

\begin{align}
\text{KL}(p_{\tilde{\theta}}\mid p_{\widehat{\theta}})=\langle\tilde{\theta}-\widehat{\theta},\mu^*\rangle-Z(\tilde{\theta})+Z(\widehat{\theta}),
\end{align}
note that 
\begin{align}
	\mathcal{E}(\widehat{\delta}) &= \langle\widehat{\delta},\widehat{\mu}\rangle - Z(\tilde{\theta}+\widehat{\delta})+Z(\tilde{\theta}) - \lambda (\mathcal{R}(\tilde{\theta}+\delta)-\mathcal{R}(\tilde{\theta})) \\
	&= -\text{KL}(p_{\tilde{\theta}} \mid p_{\widehat{\theta}})+\langle\widehat{\delta},\widehat{\mu}-\mu^*\rangle - \lambda(\mathcal{R}(\tilde{\theta}+\delta)-\mathcal{R}(\theta^*)).
\end{align}
because $\mathcal{E}(\widehat{\delta})\geq 0$,
\begin{align}
	\text{KL}(p_{\tilde{\theta}}\mid p_{\widehat{\theta}}) &\leq \langle\widehat{\delta},\widehat{\mu}-\mu^*\rangle - \lambda\left(\mathcal{R}(\tilde{\theta}+\widehat{\delta})-\mathcal{R}(\tilde{\theta})\right) \\
	&\leq \Vert\widehat{\delta}_v\Vert\Vert\widehat{\mu}_v-\mu^*_v\Vert + \frac{3\lambda\sqrt{\left| E\right|}}{2}\Vert \widehat{\delta}_e\Vert \\
	&\leq \frac{1}{2c_\mathcal{S}}\left(\Vert\widehat{\mu}_v-\mu^*_v\Vert^2 + \frac{9\lambda^2\tilde{E}}{4}\right).
\end{align}

\end{proof}

\begin{lem} \label{concentration}
Suppose that the univariate sufficient statistics satisfy $\mid \phi_k\mid \leq b(k)$ for all $k$ and $b(k)$ is increasing in $k$. For any $t>0$,
\begin{align}
	\mathbb{P}(\Vert\widehat{\mu}_v-\mu_v\Vert^2 > t) &\leq \frac{\bar{\epsilon}dm_1}{nt}, \\
	\mathbb{P}(\mathcal{R}^*(\widehat{\mu}_e-\mu^*_e) > t) &\leq 2\exp\left\{ \log(d^2m_2^2) - \frac{nt^2}{8\bar{\epsilon}m_2^2b(m_2)^4}\right\}, \\
	\mathbb{P}(\Vert\widehat{\mu}-\mu^*\Vert_\infty >t) &\leq 2\exp\left\{\log(dm_1+d^2m_2) - \frac{nt^2}{8\bar{\epsilon}\max\{b(m_2)^4,b(m_1)^2\}}\right\}.
\end{align}
\end{lem}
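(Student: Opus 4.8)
The plan is to prove the three concentration bounds in Lemma \ref{concentration} by controlling each empirical mean parameter $\widehat\mu_a - \mu_a^*$ as an average of i.i.d. bounded (or, for the variance bound, $L_2$-bounded) statistics, and then aggregating via the appropriate tail inequality and union bound. The key observation is that each component $\widehat\mu_a = \frac1n\sum_{r=1}^n \phi_a(X^r)$ is a sample mean, so $\widehat\mu_a - \mu_a^* = \frac1n\sum_r (\phi_a(X^r) - \mathbb{E}_p[\phi_a])$ is an average of centered i.i.d. terms. The three bounds use two different tools: Chebyshev/Markov for the first (giving an in-probability $L_2$ rate) and Hoeffding for the latter two (giving exponential concentration), with the boundedness of the statistics $|\phi_k|\le b(k)$ supplying the ranges.

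For the first bound I would write $\mathbb{E}[\Vert\widehat\mu_v - \mu_v\Vert^2] = \sum_{a\in v}\operatorname{var}(\widehat\mu_a) = \frac1n\sum_a \operatorname{var}_p(\phi_a)$. There are $dm_1$ vertex statistics, and each variance is bounded using Assumption \ref{assump3} (boundedness of the marginals) together with orthonormality: $\operatorname{var}_p(\phi_k) \le \mathbb{E}_p[\phi_k^2] \le \bar\epsilon\int\phi_k^2 = \bar\epsilon$. This gives $\mathbb{E}\Vert\widehat\mu_v-\mu_v\Vert^2 \le \bar\epsilon d m_1/n$, and Markov's inequality applied to the nonnegative random variable $\Vert\widehat\mu_v-\mu_v\Vert^2$ yields the stated tail $\bar\epsilon d m_1/(nt)$.

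For the second and third bounds I would fix a single coordinate $a$ and apply Hoeffding's inequality to the bounded i.i.d. summands. For an edge statistic $\phi_k(x_i)\phi_l(x_j)$ the range is controlled by $b(k)b(l)\le b(m_2)^2$, so Hoeffding gives $\mathbb{P}(|\widehat\mu_a - \mu_a^*|>s)\le 2\exp\{-ns^2/(2b(m_2)^4)\}$; the factor $\bar\epsilon m_2^2$ in the denominator comes from tightening the effective variance proxy using the marginal bound rather than the crude range, as in \citep{barron1991approximation}. Since $\mathcal{R}^*(\widehat\mu_e-\mu_e^*) = \max_{(i,j)}\Vert\widehat\mu_{ij}-\mu^*_{ij}\Vert$ is a max over roughly $d^2m_2^2$ coordinates (bounding the group $L_2$ norm by a union over its entries), a union bound over all $d^2m_2^2$ edge statistics produces the $\log(d^2m_2^2)$ term. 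The third bound is identical but unions over all $dm_1+d^2m_2^2$ vertex and edge statistics and uses $\max\{b(m_2)^4,b(m_1)^2\}$ as the common range.

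\textbf{The main obstacle} is the bookkeeping that converts the per-coordinate variance proxy into the stated denominators: getting the factor $\bar\epsilon m_2^2 b(m_2)^4$ (rather than simply $b(m_2)^4$) requires carefully combining Hoeffding's range-based bound with a Bernstein-type variance improvement from Assumption \ref{assump3}, and then absorbing the group-norm-to-coordinate reduction $\Vert\widehat\mu_{ij}-\mu_{ij}^*\Vert \le \sqrt{m_2^2}\,\max_{k,l}|\widehat\mu_{ij}^{kl}-\mu_{ij}^{*kl}|$ into the exponent. Everything else is routine: the i.i.d. decomposition, the boundedness inputs from Assumptions \ref{assump2} and \ref{assump3}, and the union bound are all standard, so the argument is a careful assembly rather than a conceptually deep step.
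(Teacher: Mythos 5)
Your proposal is correct and follows essentially the same route as the paper's proof: a Markov/Chebyshev bound using $\text{var}_p(\phi_a)\le\bar\epsilon$ (from orthonormality plus Assumption \ref{assump3}) for the vertex statistics, and per-coordinate Hoeffding with ranges $b(k)b(l)$ combined with union bounds and the reduction of the group $L_2$ norm over $m_2^2$ coordinates to a per-coordinate deviation for the edge and sup-norm bounds. The only quibble is your stated ``main obstacle'': no Bernstein-type variance improvement is needed, since the $\bar\epsilon$ in the denominator only weakens the exponent (as $\bar\epsilon\ge 1$ for a bivariate density on $[0,1]^2$) and the $m_2^2$ factor comes entirely from the norm-to-coordinate reduction you already describe.
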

\begin{proof}
Let $(\text{cov}_p[\bar{\phi}])_v$ be the covariance matrix of the sufficient statistics $\phi$ restricted to the vertex parameters. Then $((\text{cov}_p[\bar{\phi}])_v)^{-1/2}(\widehat{\mu}_v-\mu_v)$ is a vector of  $dm_1$ mean zero, uncorrelated random variables each with variance $1/n$. It follows from Markov's inequality that

\begin{align}
	\mathbb{P}\left((\widehat{\mu}_v-\mu_v)^\top((\text{cov}_p[\phi])_v)^{-1}(\widehat{\mu}_v-\mu_v) > t \right) &\leq \frac{dm_1}{nt}.
\end{align}

From the boundedness of the marginals $p_{ij}$ and the orthonormality of $\{\phi\}$, $\bar{\epsilon}^{-1}\Vert\delta\Vert^2\leq \delta^\top(\text{cov}_p[\phi])^{-1}\delta$ for any $\delta$. It follows that 
\begin{align}
	\mathbb{P}(\Vert\widehat{\mu}_v-\mu_v\Vert^2 > \bar{\epsilon}t) \leq \frac{dm_1}{nt}.
\end{align}
Now, to bound $\mathcal{R}^*(\widehat{\mu}_e-\mu^*_e)$ we will need to find an exponential concentration for each $\Vert\widehat{\mu}_{ij}-\mu_{ij}\Vert$. Suppose that the univariate sufficient statistic is bounded: $\mid \phi_k\mid\leq b(k)$ for all $k$. Applying Hoeffding's inequality, we get for each indices $k,l$,

\begin{align}
	\mathbb{P}(\bar{\epsilon}^{-1/2}\big|\widehat{\mu}_{ij}^{kl}-\mu_{ij}^{kl} \big| \geq t) &\leq 2 \exp\left\{-n\frac{t^2}{8b(k)^2b(l)^2}\right\},
\end{align}

and so

\begin{align}
	\mathbb{P}\left(\bar{\epsilon}^{-1}\Vert\widehat{\mu}_{ij}-\mu_{ij}\Vert^2 \geq m_2^2b(m_2)^4t^2\right) &\leq \mathbb{P}\left(\bigcup_{k,l}\left\{\bar{\epsilon}^{-1/2}\mid\widehat{\mu}_{ij}^{kl}-\mu_{ij}^{kl}\mid \geq tb(k)b(l)\right\}\right) \\
	&\leq \sum_{k,l=1}^{m_2} \exp\left\{-\frac{nt^2}{8} \right\} \\
	&\leq 2m_2^2 \exp\left\{ - \frac{nt^2}{8}\right\}.
\end{align}

Applying the union bound once more, 
\begin{align}
	\mathbb{P}(\mathcal{R}^*(\widehat{\mu}_e-\mu^*_e) \geq t^2\bar{\epsilon}) \leq 2\exp\left\{\log(d^2 m_2^2) - \frac{nt^2}{8m_2^2b(m_2)^4} \right\}.
\end{align}

\end{proof}

\begin{proof}[Proof of Theorem \ref{riskthm}]
	From theorem \ref{klapprox}, we have the approximation error
	\begin{align}
		\text{KL}(p\mid p_{\theta}) = O\left(e^{2\gamma_m}\Delta_m^2 \right),
	\end{align}
	From \ref{klapprox2}, we know that 
\begin{align}	
\Delta_m^2=O(dm_1^{-2r}+\left|E\right|^{-2r}),
\end{align}
 and 
\begin{align}
\gamma_m=O(dm_1^{-r+\alpha+1/2}+\left| E\right|m_2^{-r+\alpha+1/2}).
\end{align}
	To keep $e^{\gamma_m}$ bounded, we require
	
	\begin{align}
		m_1 = \Omega(d^{\frac{1}{r-\alpha-1/2}}), \\
		m_2 = \Omega(\left| E\right|^{\frac{1}{r-\alpha-1/2}}).
	\end{align}
	From lemma \ref{estbounds}, the estimation error satisfies, for $\lambda_n \geq\mathcal{R}^*(\widehat{\mu}_e-\mu^*_e)$,
	\begin{align}
		\text{KL}(p_{\tilde{\theta}}\mid p_{\widehat{\theta}}) = O\left(\lambda_n^2\left|E\right| + \Vert\widehat{\mu}_v-\mu^*_v\Vert_2^2d\right).
	\end{align}
	
	Applying lemma \ref{concentration}, $\mathcal{R}^*(\widehat{\mu}_e-\mu^*_e)>\lambda_n$ with probability no more than
	
	\begin{align}
		2\exp\left\{\log(m_2^2d^2)-\frac{\lambda_n^2 n}{8\bar{\epsilon}m_2^2b(m_2)^4}\right\},
	\end{align}
	
	choosing $\lambda_n =\frac{2\bar{\epsilon}^{1/2}m_2b(m_2)^2\log^{1/2}(md)}{n^{1/2}}$, the probability is bounded by $\frac{2}{m_2^2d^2}$. Also, 
\begin{align}	
\Vert\widehat{\mu}_v-\mu^*_v\Vert_2^2 > \frac{d\bar{\epsilon}m_1}{n}t
\end{align}
with probability no more than $\frac{1}{t}$.
	 Thus applying lemma \ref{pyth}, under the stated conditions we have
	
	\begin{align}
		\text{KL}(p\mid p_{\widehat{\theta}}) &= \text{KL}(p\mid p_{\tilde{\theta}}) + \text{KL}(p_{\tilde{\theta}}\mid p_{\widehat{\theta}}) \\
		&= O_p\left(dm_1^{-2r}+\left| E\right|m_2^{-2r} + \frac{m_2^2b(m_2)^4\log(nd)}{n}\left| E\right|+\frac{m_1}{n} d\right)
	\end{align}

\end{proof}

\section{Model Selection}

\subsubsection{Taylor expansion in Hilbert Space}
For any $f=\langle\theta,\phi\rangle\in W_r^2$, $\Vert\theta\Vert_2=\Vert f\Vert_{L_2}<\infty$. Thus the set of such $\theta$ belong to a Hilbert space. Applying the Taylor expansion with remainder for Hilbert spaces, for $\Vert\delta\Vert<\infty$ we have
\begin{align}
	Z(\theta+\delta) &= Z(\theta) +  DZ(\theta)\cdot\delta+\frac{1}{2}D^2 Z(\delta)\cdot \delta^2+\frac{1}{6}D^3(\theta+z\delta)\cdot \delta^3.
\end{align}

here $z$ is some point in $[0,1]$, and $D^k Z$ is the $k$th Fr\'echet derivative of $Z$ represented as a multilinear map, $\cdot \delta^k$ denotes evaluation at $(\delta,\ldots,\delta)$. Now, consider $\theta^*,\delta$ which can be decomposed to a truncated vector and a remainder, $\theta^*=(\bar{\theta}^*,\theta^*_T)^\top$ and $\delta=(\bar{\delta},\delta_T)^\top$ and write $\phi=(\bar{\phi},\phi_T)$. Taking the derivative of $Z$ with respect to $\bar{\theta}^*$, and inputting the particular expression for the terms of the Taylor series (see \citep{portnoy1988asymptotic}), we get the Taylor expansion for $\bar{\mu}$,

\begin{align}
	\bar{\mu}(\theta^*+\delta)-\bar{\mu}(\theta^*) &= \text{cov}_{\theta^*}[\bar{\phi},\phi]\delta + \frac{1}{2}\mathbb{E}_{\theta^*+z\delta}[\langle\delta,\phi\rangle^2 \bar{\phi}] \\
	&= \text{cov}_{\theta^*}[\bar{\phi},\bar{\phi}]\bar{\delta}+\text{cov}_{\theta^*}[\bar{\phi},\phi_T]\cdot\delta_T \\
	&\qquad+ \underset{\bar{R}}{\underbrace{\frac{1}{2}\mathbb{E}_{\theta^\dagger}\left[\left(\langle\bar{\delta},\bar{\phi}-\mathbb{E}_{\theta^\dagger}[\bar{\phi}]\rangle+\langle\delta_T,\phi_T-\mathbb{E}_{\theta^\dagger}[\phi_T]\rangle\right)^2\cdot(\bar{\phi}-\mathbb{E}_{\theta^\dagger}[\bar{\phi}])\right]}}. \notag \label{sobtaylor}
\end{align}

Here $\theta^\dagger=\theta^*+z\delta$ for some $z\in[0,1]$. We denote the final term the \emph{remainder}, $\bar{R}:=\bar{R}(\delta)$.
\subsubsection{Primal Dual Witness}
Our proof technique is the \emph{primal dual witness} method, used previously in analysis of model selection for graphical models \citep{modelselecttaylor2014,ravikumar2011high}. It proceeds as follows: construct a primal-dual pair $(\widehat{\theta},\widehat{Z})$ which satisfies $\text{supp}(\widehat{\theta})=\text{supp}(\theta^*)$, and also satisfies the stationary conditions for \ref{mlsol} with high probability. The stationary conditions for \ref{mlsol} are
\begin{align}
	\widehat{\mu}-\mu(\widehat{\theta}) + \lambda_n\widehat{Z} = 0,
\end{align}

where $\widehat{Z}$ is an element of the subdifferential $\partial\mathcal{R}(\widehat{\theta}_e)$:

\begin{align}
	(\partial\mathcal{R}(\theta_e))_{ij} = \begin{cases}
		\{\theta_{ij}: \Vert\theta_{ij}\Vert_2\leq 1\}, & \text{  if } \theta_{ij}=0; \\
		\frac{\theta_{ij}}{\Vert\theta_{ij}}\Vert_2, & \text{  if } \theta_{ij}\not=0; \\
		0, & \text{  if } i=j.
	\end{cases}
\end{align}
From this we may conclude that the solution to \ref{mlsol} is sparsistent. In particular, we have the following steps:
\begin{enumerate}
	\item Set $\widehat{\theta}_{E^c} = 0$;
	\item Set $\widehat{Z}_{ij} = \partial\mathcal{R}(\theta^*_e)_{ij}=\frac{\bar{\theta}^*_{ij}}{\Vert\bar{\theta}^*_{ij}\Vert}$ for $(i,j)\in E$;
	\item Given these choices for $\widehat{\theta}_{E^c}$ and $\widehat{Z}_E$, choose $\widehat{\theta}_E$ and $\widehat{Z}_{E^c}$ to satisfy the stationary condition \ref{mlsol}.
\end{enumerate}

For our procedure to succeed, we must show this primal-dual pair $(\widehat{\theta},\widehat{Z})$ is optimal for \ref{mlsol}, in other words

\begin{align}
	&\widehat{\theta}_{ij} \not = 0 , &\text{for}\qquad (i,j)\in E;\label{pd1} \\
	&\Vert\widehat{Z}_{ij}\Vert <1, &\text{for}\qquad (i,j)\not\in E. \label{pd2}
\end{align}

In the sequel we show these two conditions hold with probability approaching one.

We begin by proving a bound on the remainder.

\begin{lem}\label{remainderlem}
	The remainder \eqref{sobtaylor} satisfies
	
	\begin{align}
		\bar{R} = \left(\Vert\Delta_E\Vert_\infty^2+\Vert\theta^*_T\Vert_\infty^2\right)\bar{K}_{\theta^\dagger}.
	\end{align}
\end{lem}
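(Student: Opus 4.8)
The plan is to read $\bar{R}$ directly off its definition in the remainder term of \eqref{sobtaylor} and bound it by successive applications of H\"older's and Cauchy--Schwarz inequalities, matching the outcome against the definition of $\bar{K}_{\theta^\dagger}$. First I would record the two structural facts supplied by the primal-dual witness construction: since $\widehat{\theta}_{E^c}=0$ and $\theta^*\in\tilde{\mathcal{P}}(E)$, the included error $\bar{\delta}$ is supported on $E$, so that $\bar{\delta}=\Delta_E$ there and $\langle\bar{\delta},\bar{\phi}-\mathbb{E}_{\theta^\dagger}[\bar{\phi}]\rangle=\langle\bar{\delta},\bar{\phi}_E-\mathbb{E}_{\theta^\dagger}[\bar{\phi}_E]\rangle$; and since the truncated coordinates of $\widehat{\theta}$ vanish, $\delta_T=-\theta^*_T$, whence $\Vert\delta_T\Vert_\infty=\Vert\theta^*_T\Vert_\infty$.

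Next, apply H\"older's inequality to each of the two inner products appearing inside the square. Writing $x:=\Vert\bar{\phi}_E-\mathbb{E}_{\theta^\dagger}[\bar{\phi}_E]\Vert_1$ and $y:=\Vert\phi_T-\mathbb{E}_{\theta^\dagger}[\phi_T]\Vert_1$, this gives $|\langle\bar{\delta},\bar{\phi}_E-\mathbb{E}_{\theta^\dagger}[\bar{\phi}_E]\rangle|\leq\Vert\Delta_E\Vert_\infty\,x$ and $|\langle\delta_T,\phi_T-\mathbb{E}_{\theta^\dagger}[\phi_T]\rangle|\leq\Vert\theta^*_T\Vert_\infty\,y$. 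I would then combine the two terms with the elementary Cauchy--Schwarz bound $(ax+by)^2\leq(a^2+b^2)(x^2+y^2)$ applied to the vectors $(\Vert\Delta_E\Vert_\infty,\Vert\theta^*_T\Vert_\infty)$ and $(x,y)$, and finally use $x^2+y^2\leq(x+y)^2$ (valid since $x,y\geq0$) to replace $x^2+y^2$ by $U=(x+y)^2$. This yields the pointwise bound $\left(\langle\bar{\delta},\bar{\phi}-\mathbb{E}_{\theta^\dagger}[\bar{\phi}]\rangle+\langle\delta_T,\phi_T-\mathbb{E}_{\theta^\dagger}[\phi_T]\rangle\right)^2\leq(\Vert\Delta_E\Vert_\infty^2+\Vert\theta^*_T\Vert_\infty^2)\,U$, in which the scalar prefactor is deterministic.

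Substituting this into the expectation defining $\bar{R}$ and pulling the deterministic scalar outside, the remaining expectation is exactly $\mathbb{E}_{\theta^\dagger}[U\cdot(\bar{\phi}-\mathbb{E}_{\theta^\dagger}[\bar{\phi}])]=\bar{K}_{\theta^\dagger}$, which delivers the claimed identity (the harmless factor $\tfrac12$ being absorbed). The one point requiring care --- and the place I expect the main obstacle --- is that $\bar{\phi}-\mathbb{E}_{\theta^\dagger}[\bar{\phi}]$ is sign-indefinite, so the pointwise upper bound on the squared bracket cannot be moved inside the expectation coordinatewise without attention to sign. The rigorous reading of the statement is therefore an elementwise $L_\infty$ bound, $\Vert\bar{R}\Vert_\infty\leq(\Vert\Delta_E\Vert_\infty^2+\Vert\theta^*_T\Vert_\infty^2)\Vert\bar{K}_{\theta^\dagger}\Vert_\infty$, obtained by bounding $|\bar{R}_a|$ for each included coordinate $a$ and taking the maximum. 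This is precisely the form in which the lemma is invoked later, through the assumption $\Vert\bar{K}_{\theta^\dagger}\Vert_\infty\leq\kappa_R\max\{b(m_2)^2,b(m_1)\}$, so the $L_\infty$ version suffices for the downstream model-selection argument.
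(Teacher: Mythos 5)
Your proposal is correct and follows essentially the same route as the paper: H\"older on each inner product, the elementary bound $(ax+by)^2\leq(a^2+b^2)(x+y)^2$ (the paper gets there via $\max\{a,b\}^2$ rather than your Cauchy--Schwarz step, a trivial difference), identification of the remaining expectation with $\bar{K}_{\theta^\dagger}$, and passage to the elementwise $L_\infty$ bound. Your remark that the statement must be read as an $L_\infty$ inequality rather than a vector identity is well taken --- the paper's own proof does exactly this, concluding with $\Vert\bar{R}\Vert_\infty\leq(\Vert\Delta_E\Vert_\infty^2+\Vert\theta^*_T\Vert_\infty^2)\kappa_R\max\{b(m_2)^2,b(m_1)\}$, which is the form used downstream.
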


\begin{proof}
	By Cauchy-Schwarz,
	\begin{align}
		\langle\bar{\phi}_E-\mathbb{E}_{\theta^\dagger}[\bar{\phi}_E],\Delta_E\rangle &\leq \Vert\Delta_E\Vert_\infty\Vert\bar{\phi}_E-\mathbb{E}_{\theta^\dagger}[\bar{\phi}_E]\Vert_1,
	\end{align}
	and similarly for $\langle\phi_T,\theta^*_T\rangle$,
	
	\begin{align}
		\langle \phi_T-\mathbb{E}_{\theta^\dagger}[\phi_T],\theta^*_T\rangle &\leq \Vert\theta^*_T\Vert_\infty\Vert \phi_T-\mathbb{E}_{\theta^\dagger}[\phi_T]\Vert_1.
	\end{align}
	 So
	\begin{align*}
		\bar{R} &\leq \max\{\Vert\Delta\Vert_\infty,\Vert\theta^*_T\Vert_\infty\}^2\mathbb{E}[(\Vert\bar{\phi}_E\Vert_1+\Vert\phi_T\Vert_1)^2 \cdot(\bar{\phi}-\mathbb{E}[\bar{\phi}])] \\
		&=\max\{\Vert\Delta\Vert_\infty,\Vert\theta^*_T\Vert_\infty\}^2\bar{K} \\
		&\leq (\Vert\Delta_E\Vert_\infty^2 + \Vert\theta^*_T\Vert_\infty^2) \bar{K}.
	\end{align*}
	
	Applying the $L_\infty$ norm gives
	
	\begin{align}
		\Vert\bar{R}\Vert_\infty &\leq (\Vert\Delta_E\Vert_\infty^2 + \Vert\theta^*_T\Vert_\infty^2)\kappa_R \max\{b(m_2)^2,b(m_1)\}.
	\end{align}
\end{proof}

\subsubsection{Condition \eqref{pd1}}

Denote $b:=\max\{b(m_2)^2,b(m_1)\}$ and $m:=\max\{m_2,\sqrt{m_1}\}$.

\begin{lem} \label{infnorm}
Let 
	\begin{align}
	\tilde{r}:= 2\kappa_\Gamma(\Vert W_E\Vert_\infty+ \lambda_n/m + (\kappa_T+1) \Vert\theta^*_T\Vert_\infty),
	\end{align}
	 and suppose $\tilde{r}\leq\frac{1}{2b\kappa_R\kappa_\Gamma}$ and $\Vert \theta^*_T\Vert_\infty \kappa_R b \leq 1$. Then
	\begin{align}
		\Vert\widehat{\theta}_E-\bar{\theta}^*_E\Vert_\infty \leq \tilde{r}.
	\end{align}
\end{lem}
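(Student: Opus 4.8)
The plan is to carry out the primal--dual witness construction: with $\widehat\theta_{E^c}=0$, $\widehat\theta_T=0$, and $\widehat Z_E$ frozen at the subgradient direction $\bar\theta^*_{ij}/\Vert\bar\theta^*_{ij}\Vert$, I would solve the restricted stationary condition $\mu(\widehat\theta)_E = \widehat\mu_E + \lambda_n\widehat Z_E$ for $\widehat\theta_E$ and bound $\Delta_E := \widehat\theta_E-\bar\theta^*_E$ in $\ell_\infty$. Substituting the exact second-order expansion \eqref{sobtaylor}, namely $\mu(\widehat\theta)_E-\mu^*_E = \Gamma_{EE}\Delta_E - \Gamma_{ET}\theta^*_T + \bar R_E(\Delta_E)$, and writing $W_E:=\widehat\mu_E-\mu^*_E$, the stationary condition becomes the fixed-point relation
\begin{align*}
\Delta_E = F(\Delta_E) := \Gamma_{EE}^{-1}\bigl(W_E + \lambda_n\widehat Z_E + \Gamma_{ET}\theta^*_T - \bar R_E(\Delta_E)\bigr).
\end{align*}
Since $F$ is continuous ($\mu$, hence $\bar R$, being smooth) and the restricted system has a unique solution by strict convexity of $Z$, it suffices to show $F$ maps the ball $B=\{\Delta_E:\Vert\Delta_E\Vert_\infty\leq\tilde r\}$ into itself; Brouwer's fixed-point theorem then places the solution in $B$, yielding $\Vert\widehat\theta_E-\bar\theta^*_E\Vert_\infty\leq\tilde r$.

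The crux is bounding $\Vert F(\Delta_E)\Vert_\infty$ term by term with the sharpest available norm. For the fluctuation and the coupling to truncated coordinates I would use the matrix $\infty$-norm, converting the operator-norm hypothesis via $\Vert\Gamma_{EE}^{-1}\Vert_\infty\leq\sqrt{p}\,\Vert\Gamma_{EE}^{-1}\Vert_2$ with $p=dm_1+\vert E\vert m_2^2\leq m^2(d+\vert E\vert)$, which cancels the $m\sqrt{d+\vert E\vert}$ in the assumption to give $\Vert\Gamma_{EE}^{-1}\Vert_\infty\leq\kappa_\Gamma$; hence $\Vert\Gamma_{EE}^{-1}W_E\Vert_\infty\leq\kappa_\Gamma\Vert W_E\Vert_\infty$ and $\Vert\Gamma_{EE}^{-1}\Gamma_{ET}\theta^*_T\Vert_\infty\leq\kappa_\Gamma\kappa_T\Vert\theta^*_T\Vert_\infty$ using $\kappa_T=\Vert\Gamma_{ET}\Vert_\infty$. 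The key trick is the regularizer term, which I would instead route through $\ell_2$: since each active group of $\widehat Z_E$ is a unit vector (vertex groups vanishing) we have $\Vert\widehat Z_E\Vert_2=\sqrt{\vert E\vert}$, so $\Vert\Gamma_{EE}^{-1}\lambda_n\widehat Z_E\Vert_\infty\leq\Vert\Gamma_{EE}^{-1}\Vert_2\,\lambda_n\sqrt{\vert E\vert}\leq\kappa_\Gamma\lambda_n/m$, which is exactly the $\lambda_n/m$ appearing in $\tilde r$.

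It remains to absorb the remainder. By Lemma \ref{remainderlem}, $\Vert\bar R_E\Vert_\infty\leq(\Vert\Delta_E\Vert_\infty^2+\Vert\theta^*_T\Vert_\infty^2)\kappa_R b$, so on $B$ the four bounds combine to $\Vert F(\Delta_E)\Vert_\infty\leq \tfrac12\tilde r - \kappa_\Gamma\Vert\theta^*_T\Vert_\infty + \kappa_\Gamma\kappa_R b\,\tilde r^2 + \kappa_\Gamma\kappa_R b\Vert\theta^*_T\Vert_\infty^2$, where I have used the explicit form $\tilde r=2\kappa_\Gamma(\Vert W_E\Vert_\infty+\lambda_n/m+(\kappa_T+1)\Vert\theta^*_T\Vert_\infty)$ to identify the first three terms with $\tfrac12\tilde r-\kappa_\Gamma\Vert\theta^*_T\Vert_\infty$. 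The hypothesis $\tilde r\leq(2b\kappa_R\kappa_\Gamma)^{-1}$ gives $\kappa_\Gamma\kappa_R b\,\tilde r^2\leq\tfrac12\tilde r$, while $\Vert\theta^*_T\Vert_\infty\kappa_R b\leq1$ gives $\kappa_\Gamma\kappa_R b\Vert\theta^*_T\Vert_\infty^2\leq\kappa_\Gamma\Vert\theta^*_T\Vert_\infty$; adding these, the stray $\pm\kappa_\Gamma\Vert\theta^*_T\Vert_\infty$ cancels and $\Vert F(\Delta_E)\Vert_\infty\leq\tilde r$, establishing the self-map property and hence the claim.

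The main obstacle is the delicate norm bookkeeping rather than any deep estimate: one must route each contribution through either the $\infty$- or the $2$-norm so that the bounds land precisely on $\tilde r$, and in particular recognize that the regularizer term is controlled via $\ell_2$ to produce the $\lambda_n/m$ factor. A secondary subtlety is that $\bar R_E$ depends on the unknown $\Delta_E$, which is what forces the Brouwer argument (a self-map on $B$) rather than a one-shot inequality; I would also confirm that the fixed point supplied by Brouwer coincides with the genuine restricted minimizer, which follows from uniqueness via strict convexity of $Z$.
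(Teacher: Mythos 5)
Your proposal is correct and follows essentially the same route as the paper's proof: the same primal--dual witness fixed-point map built from the second-order expansion of $\mu$, the same Brouwer self-map argument on the $\ell_\infty$ ball of radius $\tilde r$, the same $\ell_2$ routing of the $\lambda_n\widehat Z_E$ term to produce the $\lambda_n/m$ factor, and the same absorption of the remainder via Lemma \ref{remainderlem} together with the two smallness hypotheses. The only difference is cosmetic bookkeeping (the paper bounds every block $\Vert F_{ij}\Vert_2$ through $\Vert\Gamma_{ij,E}^{-1}\Vert_2$ and then takes $\Vert F\Vert_\infty\leq\max_{ij}\Vert F_{ij}\Vert_2$, whereas you convert to the matrix $\infty$-norm for the fluctuation and coupling terms), which lands in exactly the same place.
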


\begin{proof}
The stationary condition for $\widehat{\theta}_E$ is given by

\begin{align}
	\widehat{\mu}_E - \mu(\widehat{\theta})_E + \lambda_n \widehat{Z}_E = 0.
\end{align}

Denote $W:= \widehat{\mu}-\bar{\mu}^*$. Then we may re-write

\begin{align}
	&W_E + \bar{\mu}^*_E - \mu(\widehat{\theta})_E + \lambda_n \widehat{Z}_E \\
	&=W_E-\Gamma_{EE}\widehat{\Delta}_E-\Gamma_{ET} \theta^*_T - R_E + \lambda_n\widehat{Z}_E, \label{edgepar}
\end{align}
where $\widehat{\Delta}_E=\widehat{\theta}_E-\bar{\theta}^*_E$. Re-arranging and applying the $L_2$ norm, we get for $(i,j)\in E$,

\begin{align}
	\Vert\widehat{\Delta}_{ij}\Vert_2 &= \Vert\Gamma_{ij,E}^{-1}(-W_E+\Gamma_{ET}\theta^*_T+R_E-\lambda_n\widehat{Z}_E)\Vert_2 \\	
	&=\Vert\Gamma_{ij,E}^{-1}\Vert_2(\Vert W_{E}\Vert_2+m\sqrt{d+E}\Vert\Gamma_{E,T}\Vert_\infty\Vert \theta^*_T\Vert_\infty+\Vert R_{E}\Vert_2 +\lambda_n\Vert\widehat{Z}_{E}\Vert_2) \\
	&\leq\frac{\kappa_\Gamma}{m\sqrt{d+E}}(\Vert W_{E}\Vert_2+m\sqrt{d+E}\kappa_T \Vert \theta^*_T\Vert_\infty+\Vert R_{E}\Vert_2+\sqrt{d+E}\lambda_n). \\
	&=\kappa_\Gamma \left(\Vert W_E\Vert_\infty + \kappa_T \Vert\theta^*_T\Vert_\infty + \Vert R_E\Vert_\infty +\lambda_n/m\right).
\end{align}
Consider the mapping

\begin{align}
	F(\Delta_E) &:= -\Gamma^{-1}_{EE}(W_E-\Gamma_{EE}\Delta_E-\Gamma_{EV}\theta^*_T-R_E(\Delta_E)+\lambda_n\widehat{Z}_E)+\Delta_E \\
	 &=-\Gamma^{-1}_{EE}(W_E -\Gamma_{ET} \theta^*_T - R_E + \lambda_n\widehat{Z}_E).
\end{align}

Due to the uniqueness of the solution to the stationary conditions, $F(\Delta_E)$ has a unique fixed point $F(\Delta_E)=\Delta_E$ at $\widehat{\Delta}=\widehat{\theta}-(\bar{\theta}^*)$. If we can show that $\Vert F(\Delta_E)\Vert_\infty\leq\tilde{r}$ for every $\Vert\Delta_E\Vert_\infty\leq\tilde{r}$, since $F$ is continuous and $\{\Delta_E:\Vert \Delta_E\Vert_\infty\leq\tilde{r}\}$ is a convex and compact set, applying Brouwer's fixed point theorem \citep{ortega2000iterative} implies that the unique fixed point of $F$ satisfies $\Vert\widehat{\Delta}_E\Vert_\infty\leq\tilde{r}$. The $L_2$ norm of the map follows, for $(i,j)\in E$,

\begin{align}
	\Vert F(\Delta_E)_{ij}\Vert_2 &\leq \Vert\Gamma_{ij,E}^{-1}\Vert_2(\Vert W_{E}\Vert_2+m\Vert\Gamma_{E,T}\Vert_{\infty}\Vert \theta^*_T\Vert_\infty+\Vert R_{E}\Vert_2+\lambda_n\Vert\widehat{Z}_E\Vert_2)\\
	&\leq\kappa_\Gamma\left(\Vert W_{E}\Vert_\infty + \kappa_T \Vert \theta^*_T\Vert_\infty+ \Vert R_{E} \Vert _\infty + \lambda_n/m \right).
\end{align}

Let $\tilde{r}:= 2\kappa_\Gamma (\Vert W_E\Vert_\infty + (\kappa_T+1)\Vert \theta^*_T\Vert_\infty +\lambda_n/m)$ and consider $\Vert\Delta_{ij}\Vert_\infty \leq\tilde{r}$ for $(i,j)\in E$. Suppose that also $\tilde{r}\leq\frac{1}{2b\kappa_R\kappa_\Gamma}$.

From lemma \ref{remainderlem} the remainder is bounded by

\begin{align}
	\Vert \bar{R}_{E}\Vert_\infty &\leq b\tilde{\kappa}_R(\Vert\Delta_E\Vert_\infty^2 + \Vert \theta^*_T\Vert_\infty^2).
\end{align}

If $\Vert\Delta_E\Vert_\infty\leq \frac{1}{2b\tilde{\kappa}_R\kappa_\Gamma}$, and $\Vert \theta^*_T\Vert_2 b\kappa_R\leq 1$, this is bounded by 

\begin{align}
	\frac{1}{2\kappa_\Gamma}\Vert\Delta_E\Vert_\infty + \Vert \theta^*_T\Vert_\infty,
\end{align}

Thus, since $\Vert F\Vert_\infty \leq \text{max}_{ij}\Vert F_{ij}\Vert_2$,
\begin{align}
	\Vert F(\Delta_E)\Vert_\infty &\leq \kappa_\Gamma\left(\Vert W_E\Vert_\infty+(\kappa_\Gamma+1)\Vert \theta^*_T\Vert_\infty+\lambda_n/m+ \frac{\Vert\Delta_E\Vert_\infty}{2\kappa_\Gamma}\right)\\
	&\leq \frac{\tilde{r}}{2}+\frac{\tilde{r}}{2}=\tilde{r}.
\end{align}

It follows that the fixed point $\widehat{\Delta}_E=\widehat{\theta}_E-\bar{\theta}^*_E$ satisfies
%

\begin{align}
	\Vert\widehat{\Delta}_E\Vert_\infty &\leq 2\kappa_\Gamma(\Vert W_E\Vert_\infty + \lambda_n/m + (\kappa_T+1)\Vert \theta^*_T \Vert_\infty).
\end{align}
\end{proof}

\subsubsection{Condition \eqref{pd2}}
\begin{lem}\label{subgradpdw}
	Suppose that 
	\begin{align}
		\max\left\{8\kappa_\Gamma m/\tau\Vert W\Vert_\infty ,m(\Vert W\Vert_\infty+(1+\kappa_T)\Vert\theta^*_T\Vert_\infty)\right\}\leq\lambda_n\leq\frac{\tau m}{128b \kappa_R\kappa_\Gamma^2},\\
		\Vert\theta^*_T\Vert_\infty \leq\min\left\{\frac{\tau\lambda_n}{4(1+\kappa_T)m},\frac{m}{b\kappa_R}\right\},	
	\end{align}
 Then $\max_{(i,j)\in E^c}\Vert\widehat{Z}_{ij}\Vert_2<1$.
\end{lem}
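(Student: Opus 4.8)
The plan is to apply the primal–dual witness construction set up just above, and verify the strict dual feasibility condition \eqref{pd2} on the inactive block $E^c$. Recall that the witness sets $\widehat{\theta}_{E^c}=0$ and $\widehat{Z}_E$ equal to the true subgradient, and then solves the stationary conditions on the active block for $\widehat{\theta}_E$. Writing the Taylor expansion \eqref{sobtaylor} with $W:=\widehat{\mu}-\bar{\mu}^*$ and remainder $R$, the $E$-block stationary condition (as in the proof of Lemma \ref{infnorm}) gives $\Gamma_{EE}\widehat{\Delta}_E = W_E-\Gamma_{ET}\theta^*_T-R_E+\lambda_n\widehat{Z}_E$, while the $E^c$-block condition reads $\lambda_n\widehat{Z}_{E^c}=-W_{E^c}+\Gamma_{E^cE}\widehat{\Delta}_E+\Gamma_{E^cT}\theta^*_T+R_{E^c}$. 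First I would eliminate $\widehat{\Delta}_E=\Gamma_{EE}^{-1}(W_E-\Gamma_{ET}\theta^*_T-R_E+\lambda_n\widehat{Z}_E)$ to obtain the explicit decomposition
\begin{align}
\widehat{Z}_{E^c} &= \Gamma_{E^cE}\Gamma_{EE}^{-1}\widehat{Z}_E \notag\\
&\quad + \frac{1}{\lambda_n}\Big[-W_{E^c}+\Gamma_{E^cE}\Gamma_{EE}^{-1}W_E + (\Gamma_{E^cT}-\Gamma_{E^cE}\Gamma_{EE}^{-1}\Gamma_{ET})\theta^*_T + (R_{E^c}-\Gamma_{E^cE}\Gamma_{EE}^{-1}R_E)\Big].
\end{align}

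The strategy is to bound the $L_2$ norm of each $(i,j)\in E^c$ group so that the total is strictly below one. The leading term is controlled by the irrepresentable condition: since $\widehat{Z}_E$ stacks unit vectors over the $|E|$ active edges (vertex blocks contribute nothing, as $\mathcal{R}$ regularizes edges only), $\Vert\widehat{Z}_E\Vert_2\leq\sqrt{|E|}\leq\sqrt{d+|E|}$, and hence $\Vert\Gamma_{ij,E}\Gamma_{EE}^{-1}\widehat{Z}_E\Vert_2\leq \frac{1-\tau}{\sqrt{d+|E|}}\sqrt{d+|E|}=1-\tau$. It then suffices to bound each of the three remaining bracketed groups by $\tau/4$, so that $\Vert\widehat{Z}_{ij}\Vert_2\leq (1-\tau)+3\tau/4<1$.

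For the noise group I would use that each edge block has at most $m^2$ coordinates, so $\Vert(W_{E^c})_{ij}\Vert_2\leq m\Vert W\Vert_\infty$, and $\Vert\Gamma_{ij,E}\Gamma_{EE}^{-1}W_E\Vert_2\leq\frac{1-\tau}{\sqrt{d+|E|}}\cdot m\sqrt{d+|E|}\Vert W\Vert_\infty$; dividing by $\lambda_n$ and invoking the lower bound $\lambda_n\geq 8\kappa_\Gamma m\Vert W\Vert_\infty/\tau$ yields $\leq\tau/4$. For the truncation-bias group, $\kappa_T=\Vert\Gamma_{ET}\Vert_\infty$ gives $\Vert(\Gamma_{E^cT}\theta^*_T)_{ij}\Vert_2\leq m\kappa_T\Vert\theta^*_T\Vert_\infty$ and likewise for the projected term, so the group is $\leq \frac{2m\kappa_T\Vert\theta^*_T\Vert_\infty}{\lambda_n}$, which is $\leq\tau/4$ under $\Vert\theta^*_T\Vert_\infty\leq\frac{\tau\lambda_n}{4(1+\kappa_T)m}$. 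The remainder group is the delicate one: applying Lemma \ref{remainderlem} together with the active-block bound $\Vert\widehat{\Delta}_E\Vert_\infty\leq\tilde r\leq 6\kappa_\Gamma\lambda_n/m$ (which follows from Lemma \ref{infnorm} once the hypotheses $\Vert W\Vert_\infty\leq\lambda_n/m$ and $(1+\kappa_T)\Vert\theta^*_T\Vert_\infty\leq\lambda_n/m$, both implied here, are checked) gives $\Vert R_E\Vert_\infty,\Vert R_{E^c}\Vert_\infty=O(b\kappa_R\kappa_\Gamma^2\lambda_n^2/m^2)$; propagating through the same $m$-coordinate and incoherence bounds and dividing by $\lambda_n$ produces $O(b\kappa_R\kappa_\Gamma^2\lambda_n/m)$, which the upper bound $\lambda_n\leq\frac{\tau m}{128 b\kappa_R\kappa_\Gamma^2}$ forces below $\tau/4$.

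The main obstacle I anticipate is precisely this remainder control, and managing its apparent circularity: $R$ depends quadratically on $\widehat{\Delta}_E$, whose bound from Lemma \ref{infnorm} in turn presupposes that all of its side conditions (the upper bound $\tilde r\leq\tfrac{1}{2b\kappa_R\kappa_\Gamma}$ and $\Vert\theta^*_T\Vert_\infty\kappa_R b\leq 1$) are met. I would therefore open the proof by verifying, from the stated hypotheses on $\lambda_n$ and $\Vert\theta^*_T\Vert_\infty$, that the premises of Lemma \ref{infnorm} hold, so that $\Vert\widehat{\Delta}_E\Vert_\infty\leq\tilde r$ is available before any remainder estimate is invoked. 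The secondary bookkeeping hazard is tracking the exact powers of $m$, $b$, $\sqrt{d+|E|}$, $\kappa_\Gamma$ through the multiplications by $\Gamma_{EE}^{-1}$ and the conversion between elementwise $\infty$-norms and group $L_2$-norms, which is what pins down the specific numerical constants ($8$, $128$, $4$) appearing in the hypotheses.
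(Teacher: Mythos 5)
Your proposal is correct and follows essentially the same route as the paper's proof: the same elimination of $\widehat{\Delta}_E$ via the $E$-block stationary condition, the same use of the irrepresentable condition to extract the $(1-\tau)$ term from $\Gamma_{E^cE}\Gamma_{EE}^{-1}\widehat{Z}_E$, and the same control of the remainder through Lemma \ref{remainderlem} combined with the $\Vert\widehat{\Delta}_E\Vert_\infty\leq O(\kappa_\Gamma\lambda_n/m)$ bound from Lemma \ref{infnorm}, whose hypotheses are verified first exactly as you anticipate. The only differences are immaterial constants (e.g.\ your $6\kappa_\Gamma\lambda_n/m$ versus the paper's $4\kappa_\Gamma\lambda_n/m$), both yielding the final bound $1-\tau/4<1$.
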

\begin{proof}
Recall that $\widehat{\theta}_{E^c}=\bar{\theta}^*_{E^c}=0$. For $(i,j)\in E^c$, the stationary condition gives

\begin{align}
	W_{E^c} -\Gamma_{E^cE}((\theta^*_m)_E-\widehat{\theta}_E) -\Gamma_{E^cV} \theta^*_T - R_{E^c}+\lambda_n \tilde{Z}_{E^c} &=0
\end{align}

Now, re-arranging \eqref{edgepar},
\begin{align}
	\bar{\theta}^*_E-\widehat{\theta}_E = \Gamma_{EE}^{-1}(W_E+\lambda_n\widehat{Z}_E+R_E+\Gamma_{ET}\theta^*_T).
\end{align}
It follows that

\begin{align}
	\widehat{Z}_{E^c} = \frac{1}{\lambda_n} \left\{-W_{E^c}-\Gamma_{E^cE}\Gamma_{EE}^{-1}(W_E+\Gamma_{E^c V} \theta^*_T+R_E + \lambda_n\widehat{Z}_E)-\Gamma_{E^cT} \theta^*_T -R_{E^c} \right\}.
\end{align}

Now, for $(i,j)\in E^c$,

\begin{align}
	\Vert\widehat{Z}_{ij}\Vert_2 &\leq \frac{1}{\lambda_n} \left\{ \Vert W_{ij}\Vert_2 + \Vert \Gamma_{ij,E}\Gamma^{-1}_{EE}(W_E+\Gamma_VV+R_E+\lambda_n\tilde{Z}_E)\Vert_2+\Vert\tilde{\Gamma}_{ij,E}\theta^*_T\Vert_2+\Vert R_{ij}\Vert_2 \right\} \\
	&\leq \frac{1}{\lambda_n}\bigg\{ \Vert W_{ij}\Vert_2 + \Vert\Gamma_{ij,E}\Gamma_{EE}^{-1}\Vert_2\bigg((\Vert W_E\Vert_2 + m\sqrt{d+E}\kappa_{T}\Vert \theta^*_T\Vert_\infty + \Vert R_E\Vert_2  \\ 
	&\qquad+\lambda_n\Vert\widehat{Z}_E\Vert_2\bigg)+\Vert R_{ij}\Vert_2\bigg\}\notag\\
	&\leq \frac{1}{\lambda_n}\bigg\{ \Vert W_{ij}\Vert_2 +\Vert R_{ij}\Vert_2+ \frac{(1-\tau)}{\sqrt{d+E}}\bigg(\Vert W_E\Vert_2  \\
	&\qquad+m\sqrt{d+E}\kappa_{T}\Vert \theta^*_T\Vert_\infty +\Vert R_E\Vert_2 \bigg)+(1-\tau)\lambda_n \bigg\} \notag\\
	&\leq \frac{1}{\lambda_n}\left\{ m(2-\tau)(\Vert W\Vert_\infty + \Vert R\Vert_\infty)\right\} \\
	&\qquad+\frac{1}{\lambda_n}(1-\tau)\kappa_T m\Vert\theta^*_T\Vert_\infty+1-\tau. \notag
\end{align}

From the assumptions of the lemma, and applying Lemma \ref{infnorm}, we get

\begin{align}
	\Vert\Delta_E\Vert_\infty &\leq 2\kappa_\Gamma(\Vert W_E\Vert_\infty + \lambda_n/m + (\kappa_T+1) \Vert \theta^*_T\Vert_\infty) \\
	&\leq 4\kappa_\Gamma\lambda_n/m.
\end{align}
From Lemma \ref{remainderlem}, if $16b\kappa_R\kappa_\Gamma^2\lambda_n\leq\frac{\tau m}{8}$ and $b\kappa_R\Vert\theta^*_T\Vert_\infty\leq m$,
\begin{align}
	\Vert \bar{R}\Vert_\infty &\leq b\kappa_R (\Vert \Delta_E\Vert_\infty^2 + \Vert \theta^*_T\Vert_\infty^2) \\
	&\leq  16b\kappa_R\kappa_\Gamma^2 \lambda_n^2/m^2+m\Vert\theta^*_T\Vert_\infty \\
	&\leq \frac{\tau\lambda_n}{8m}+m\Vert\theta^*_T\Vert_\infty,
\end{align}

if $(1+\kappa_T)m\Vert\theta^*_T\Vert_\infty\leq\frac{\tau}{4}\lambda_n$, we may bound $\Vert\widehat{Z}_{ij}\Vert_2$ by:

\begin{align}
	\Vert\widehat{Z}_{ij}\Vert_2 &\leq(2-\tau)\frac{\tau}{4}+1-\tau+\frac{1+\kappa_T}{\lambda_n}m\Vert\theta^*_T\Vert_\infty \\
	&\leq 1-\tau+\frac{\tau}{2}+\frac{\tau}{4} = 1-\frac{3\tau}{4}<1.
\end{align}

\end{proof}

\begin{proof}[Proof of Theorem \ref{modelselect}]

For the assumptions of Lemma \ref{subgradpdw} to hold, we need

\begin{align}
	\lambda_n =\Omega( m(\Vert W\Vert_\infty+ \Vert\theta^*_T\Vert_\infty)). \label{subgradcond}
\end{align}

From Lemma \ref{concentration}, $\Vert W\Vert_\infty>t$ with probability no more than
\begin{align}
	2\exp\left\{2\log(nd) - \frac{nt^2}{8\underline{\epsilon}\max\{b(m_2)^4,b(m_1)^2\}}\right\},
\end{align}

and the truncation parameters satisfy

\begin{align}
	\Vert\theta^*_T\Vert_\infty = O(\max\{m_1^{-r-1/2},m_2^{-r-1/2}\}).
\end{align}

Supposing that $m_2= m_1$, \eqref{subgradcond} is satisfied with the choice 

\begin{align}
	\lambda_n\asymp m_2\left(\sqrt{\frac{m_2^{4\alpha}\log(nd)}{n}} + m_2^{-r-1/2}\right),
\end{align}

with probability approaching one. Balancing the two terms, if we choose $m_2\asymp n^{\frac{1}{2r+1+4\alpha}}$, we get

\begin{align}
	\lambda_n \asymp \sqrt{\frac{\log(nd)}{n^{\frac{2r-1}{2r+1+4\alpha}}}}
\end{align}

Lastly, for condtion \ref{pd1} to hold, we need $\max_{(i,j)\in E}\Vert\widehat{\theta}_{ij}-\bar{\theta}_{ij}^*\Vert_\infty\leq\frac{\rho^*}{2}$. Using lemma \ref{infnorm}, this is satisfied if
\begin{align}
	\frac{\lambda_n}{m_2\rho^*}\rightarrow 0,
\end{align}

which is satisfied with high probability when

\begin{align}
	\frac{1}{\rho^*} = o\left(\sqrt{\frac{n^{\frac{2r+1}{2r+1+4\alpha}}}{\log(nd)}}\right).
\end{align}

\end{proof}

\chapter{Proofs for Chapter 4}

\begin{proof}[Proof of Proposition \ref{boundedscorematch}]

Consider the functional

\begin{align}
	J(q) &:= \mathbb{E}_p\left[(\Vert\nabla \log p-\nabla \log q )\otimes x(1-x)\Vert_2^2\right].
\end{align}

If $J(q)=0$, then it must be $\nabla \log p= \nabla \log q$  a.e., because their integrated squared distance is zero with respect to a weight function which is nonzero a.e. This implies $\log q = \log p + c$ a.e. for some constant $c$, but $c=0$ because $p$ and $q$ must both integrate to one. Furthermore, $J$ is non-negative so it is minimized when $q=p$. If $p$ and $q$ belong to an exponential family with respective natural parameters $\theta$ and $\theta'$, $\theta=\theta'$ when the family is minimal.

Now,

\begin{align}
	J(q) &= \mathbb{E}_p\left[\Vert\nabla \log q \otimes x(1-x)\Vert_2^2\right] \\
	&\qquad+ 2\mathbb{E}_p\left[\sum_{i\in V} (\nabla_i \log q \cdot \nabla_i \log p)\otimes x(1-x)\Vert_2^2\right]+  constant, \notag
\end{align}

the constant not depending on $q$. We have, by integration by parts,

\begin{align}
	\mathbb{E}_p\left[(\nabla_i \log q \nabla_i \log p)x_i(1-x_i)\right] &=\intop p_i (\nabla_i \log q \nabla_i \log p)x_i(1-x_i) \\
	&= \intop p_i \frac{\nabla_i p_i}{p_i}(\nabla_i \log q )x_i(1-x_i) \\
	&= p_i(x_i)(\nabla_i \log q x_i(1-x_i))\bigg]_{x_i=1} \\
	&\qquad-p_i(x_i)(\nabla_i \log q x_i(1-x_i))\bigg]_{x_i=0} \notag\\
	&\qquad- \intop p_i \nabla_i (\nabla_i \log q x_i(1-x_i)) \notag\\
	&= - \intop p_i \nabla_i (\nabla_i \log q x_i(1-x_i)), 
\end{align}

where in the last line we applied the boundary assumption. Thus, we see that $J(q)$ is equal to $\mathbb{E}_p\left[h(X,q)\right]$ plus some terms which don't depend on $q$, so from the argument above $\mathbb{E}_p\left[ h(X,q)\right]$ is minimized when $p=q$. We conclude that $h$ is a proper scoring rule.

%
%
%
%
%

\end{proof}

\section{Parameter Estimation}

\begin{lem}\label{parconsistlem}
	If $n\geq Cmd$ and $\lambda_n\geq 2\mathcal{R}^*((\widehat{\Gamma}-\Gamma)\theta^*+\widehat{K}-K)$, with probability at least $1-2d\exp\left\{-\frac{\bar{\epsilon}^2}{4\underline{\epsilon}^2}md\right\}$, the regularized score matching estimator $\widehat{\theta}$ satisfies
	
\begin{align}
	\Vert\widehat{\theta}-\theta^*\Vert_2 &\leq \frac{7\lambda_n}{\underline{\epsilon}}\sqrt{d+\left| E\right|}.
\end{align}
\end{lem}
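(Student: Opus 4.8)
The plan is to run the decomposable-regularizer M-estimation argument of \citep{negahban2012unified}, already invoked in Chapter 2, but supplying a restricted strong convexity bound tailored to the block structure of $\widehat{\Gamma}$. Write $\widehat{\delta} := \widehat{\theta} - \theta^*$ and $F(\theta) = \tfrac{1}{2}\theta^\top\widehat{\Gamma}\theta + \widehat{K}^\top\theta + \lambda_n\mathcal{R}(\theta)$. Because the score matching objective is \emph{exactly} quadratic (no Taylor remainder, unlike the likelihood case), optimality of $\widehat{\theta}$ gives $F(\theta^*+\widehat{\delta}) \leq F(\theta^*)$, which rearranges to
$$\tfrac{1}{2}\widehat{\delta}^\top\widehat{\Gamma}\widehat{\delta} \leq -\langle W, \widehat{\delta}\rangle - \lambda_n\big(\mathcal{R}(\theta^*+\widehat{\delta}) - \mathcal{R}(\theta^*)\big),$$
where $W := \widehat{\Gamma}\theta^* + \widehat{K} = (\widehat{\Gamma}-\Gamma)\theta^* + (\widehat{K}-K)$, using the population first-order condition $\Gamma\theta^* + K = 0$. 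This $W$ is precisely the vector whose dual norm is assumed to satisfy $\mathcal{R}^*(W)\leq\lambda_n/2$, so generalized Cauchy--Schwarz yields $|\langle W,\widehat{\delta}\rangle| \leq \mathcal{R}^*(W)\mathcal{R}(\widehat{\delta}) \leq \tfrac{\lambda_n}{2}\mathcal{R}(\widehat{\delta})$.

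Next I would exploit decomposability. Let $M = \tilde{\mathcal{P}}(E)$ be the support subspace of $\theta^*$ and $M^\perp$ its complement (edges in $E^c$). Since $\mathcal{R}$ is a group norm over vertices and edges it splits as $\mathcal{R}(\widehat{\delta}) = \mathcal{R}(\widehat{\delta}_M) + \mathcal{R}(\widehat{\delta}_{M^\perp})$, and Lemma 3 of \citep{negahban2012unified} gives $\mathcal{R}(\theta^*+\widehat{\delta}) - \mathcal{R}(\theta^*) \geq \mathcal{R}(\widehat{\delta}_{M^\perp}) - \mathcal{R}(\widehat{\delta}_M)$. Substituting both facts and using $\widehat{\Gamma}\succeq 0$ collapses the bound to $\tfrac{1}{2}\widehat{\delta}^\top\widehat{\Gamma}\widehat{\delta} \leq \tfrac{3\lambda_n}{2}\mathcal{R}(\widehat{\delta}_M) - \tfrac{\lambda_n}{2}\mathcal{R}(\widehat{\delta}_{M^\perp})$, which forces the cone condition $\mathcal{R}(\widehat{\delta}_{M^\perp}) \leq 3\mathcal{R}(\widehat{\delta}_M)$ together with the working inequality $\tfrac{1}{2}\widehat{\delta}^\top\widehat{\Gamma}\widehat{\delta} \leq \tfrac{3\lambda_n}{2}\mathcal{R}(\widehat{\delta}_M)$. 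The subspace compatibility constant of the group norm bounds $\mathcal{R}(\widehat{\delta}_M) \leq \sqrt{d+|E|}\,\Vert\widehat{\delta}\Vert_2$, since $M$ contains $d+|E|$ active groups.

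The crux is a restricted strong convexity lower bound $\widehat{\delta}^\top\widehat{\Gamma}\widehat{\delta} \geq \gamma\Vert\widehat{\delta}\Vert_2^2$ with $\gamma$ a constant multiple of $\underline{\epsilon}$. In the $\tilde{\theta}$ representation $\widehat{\Gamma}$ is block diagonal with blocks $\widehat{\Gamma}_i = \tfrac{1}{n}\sum_r a_i(X^r)a_i(X^r)^\top$, so $\widehat{\delta}^\top\widehat{\Gamma}\widehat{\delta} = \sum_i \widehat{\delta}_{\cdot,i}^\top\widehat{\Gamma}_i\widehat{\delta}_{\cdot,i} \geq \big(\min_i\Lambda_{\min}(\widehat{\Gamma}_i)\big)\Vert\widehat{\delta}\Vert_2^2$, the last step using that each edge coordinate is counted at least once across blocks. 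Here the sub-Gaussian assumption on $a_i(X)$, Assumption \ref{eigenbound}, and the sample-size requirement $n = \Omega(md)$ enter: by a non-asymptotic lower bound on the smallest eigenvalue of a sub-Gaussian sample covariance, once $n \geq Cmd$ each block satisfies $\Lambda_{\min}(\widehat{\Gamma}_i) \geq \tfrac{3}{7}\underline{\epsilon}$ with probability at least $1 - 2\exp\{-\tfrac{\bar{\epsilon}^2}{4\underline{\epsilon}^2}md\}$, and a union bound over the $d$ vertices yields the stated failure probability $2d\exp\{-\tfrac{\bar{\epsilon}^2}{4\underline{\epsilon}^2}md\}$. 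I expect this eigenvalue concentration — tracking the dependence on the block dimension $md$ and pinning down the constants $\bar{\epsilon},\underline{\epsilon}$ — to be the main obstacle, the remainder being bookkeeping. Combining with the working inequality gives $\tfrac{3}{14}\underline{\epsilon}\,\Vert\widehat{\delta}\Vert_2^2 \leq \tfrac{3\lambda_n}{2}\sqrt{d+|E|}\,\Vert\widehat{\delta}\Vert_2$, and dividing through yields $\Vert\widehat{\delta}\Vert_2 \leq \tfrac{7\lambda_n}{\underline{\epsilon}}\sqrt{d+|E|}$, as claimed.
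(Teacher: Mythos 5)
Your proposal follows essentially the same route as the paper's proof: the same quadratic expansion of the score objective around $\theta^*$ using $\Gamma\theta^*+K=0$, the same generalized Cauchy--Schwarz and decomposability steps from \citep{negahban2012unified}, the same subspace compatibility bound, and the same blockwise sub-Gaussian eigenvalue concentration with a union bound over the $d$ vertices. The only (immaterial) difference is at the end, where you divide the working inequality directly while the paper argues that $\mathcal{E}$ is positive on the boundary of the ball of radius $\tfrac{7\lambda_n}{\underline{\epsilon}}\sqrt{d+|E|}$ and invokes convexity; both give the stated bound.
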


\begin{proof}
Define the function

\begin{align}
	\mathcal{E}(\delta) &= \mathcal{L}(\theta^*+\delta)-\mathcal{L}(\theta^*)+\lambda_n(\mathcal{R}(\theta^*+\delta)-\mathcal{R}(\theta^*))  \\
	&=\frac{1}{2}(\theta^*+\delta)^\top\widehat{\Gamma}(\theta^*+\delta)+(\theta^*+\delta)^\top\widehat{K}-\frac{1}{2}(\theta^*)^\top\widehat{\Gamma}(\theta^*)^\top - (\theta^*)^\top\widehat{K} \\
	&\qquad+ \lambda_n(\mathcal{R}(\theta^*+\delta)-\mathcal{R}(\theta^*))\notag \\
	&=\frac{1}{2}\delta^\top\widehat{\Gamma}\delta +  \delta^\top(\widehat{\Gamma}\theta^*+\widehat{K})  + \lambda_n(\mathcal{R}(\theta^*+\delta)-\mathcal{R}(\theta^*)) \\
	&= \frac{1}{2}\delta^\top\widehat{\Gamma}\delta +  \delta^\top(\widehat{\Gamma}\theta^*-\Gamma\theta^*+\widehat{K}-K)  + \lambda_n(\mathcal{R}(\theta^*+\delta)-\mathcal{R}(\theta^*)).
\end{align}

Since $\mathcal{E}(0)=0$, it must be that $\mathcal{E}(\widehat{\delta})\leq 0$.

Using the sub-Gaussian assumption, we may apply \citep{vershynin2010introduction} Remark 5.51, which says for any $c\in(0,1),t\geq 1$, with probability at least $1-2\exp\{-t^2md\}$, if $n\geq C(t/c)^2 md$, then for any $i\in V$, and any vector $\delta_i$
\begin{align}
	\delta_i^\top \widehat{\Gamma}_i\delta_i \geq \delta_i^\top \Gamma_i\delta_i + \bar{\epsilon} c\Vert\delta_i\Vert^2,
\end{align}

setting $c=\frac{\underline{\epsilon}}{2\bar{\epsilon}}$, and $t=1/c$ we get that if $n\geq Cd$, with probability at least $1-2\exp\left\{\frac{\bar{\epsilon}^2}{4\underline{\epsilon}^2}d\right\}$,

\begin{align}
	\frac{\underline{\epsilon}}{2}\Vert\delta_i\Vert^2 \leq \delta_i^\top\widehat{\Gamma}_i\delta_i.
\end{align}

Applying the union bound over all $i\in V$, with probability at least $1-2d\exp\left\{\frac{\bar{\epsilon}^2}{4\underline{\epsilon}^2}d\right\}$,

\begin{align}
	\frac{\underline{\epsilon}}{2}\delta^\top\delta \leq \delta^\top\widehat{\Gamma}\delta,
\end{align}

where $\delta=(\delta_1^\top,\ldots,\delta_d^\top)^\top$.
%

By (generalized) Cauchy-Schwarz,

\begin{align}
	\left|\langle\delta,(\widehat{\Gamma}-\Gamma)\theta^*+\widehat{K}-K\rangle\right| &\leq\mathcal{R}(\delta)\mathcal{R}^*((\widehat{\Gamma}-\Gamma)\theta^*+\widehat{K}-K) \\
	&\leq \frac{\lambda_n}{2}(\mathcal{R}(\delta_\mathcal{P}+\mathcal{R}(\delta_{\mathcal{P}^\perp}).\label{gencs}
\end{align}

where $\delta_A$ denotes the projection of $\delta$ onto the set $A$. From  \citep{negahban2012unified} Lemma 3, because $\mathcal{R}$ is decomposable, it  holds that

\begin{align}
	\mathcal{R}(\theta^*+\delta)-\mathcal{R}(\theta^*) \geq \mathcal{R}(\delta_{\tilde{\mathcal{P}}^\perp})-\mathcal{R}(\delta_{\tilde{\mathcal{P}}}). \label{negh}
\end{align}

Combining \eqref{gencs} and \eqref{negh},

\begin{align}
&\langle \delta,(\widehat{\Gamma}-\Gamma)\theta^*+\widehat{K}-K\rangle + \frac{\lambda_n}{2}\left(\mathcal{R}(\theta^*+\delta)-\mathcal{R}(\theta^*) \right) \\&\geq -\frac{3\lambda_n}{2}\mathcal{R}(\delta_{\tilde{\mathcal{P}}})-\frac{\lambda_n}{2}\mathcal{R}(\delta_{\tilde{\mathcal{P}}^\perp}) \geq -\frac{3\lambda_n}{2}\mathcal{R}(\delta_{\tilde{P}})\label{eq:proof1}.
\end{align} 
Using the subspace compatibility constant we have that 
\begin{align}\mathcal{R}(\delta_{\tilde{\mathcal{P}}})\leq \sqrt{d+\left| E\right|} \Vert\delta_{\tilde{\mathcal{P}}}\Vert \leq \sqrt{d+\left| E \right|}\Vert\delta\Vert.
\end{align}
Thus conditioning on the aformentioned probability,

\begin{align}
	\mathcal{E}(\delta) &\geq \frac{\underline{\epsilon}}{4} \Vert\delta\Vert^2- \frac{3\lambda}{2}\Vert\delta\Vert\sqrt{d+E} \\
	&=\Vert\delta\Vert\left(\frac{\underline{\epsilon}}{4}\Vert\delta\Vert-\frac{3\lambda_n}{2}\sqrt{d+E}\right). \label{ineq}
\end{align}

Now, consider the set 

\begin{align}
\mathcal{C} = \left\{\delta: \Vert\delta\Vert\leq\frac{7\lambda_n}{\underline{\epsilon}}\sqrt{d+E}\right\}.
\end{align}

$\mathcal{C}$ is a compact, convex set. Furthermore, for all $\delta\in\partial\mathcal{C}$, from \eqref{ineq} we see that $\mathcal{E}(\delta)>0$. Also observe that $0\in\text{int}\mathcal{C}$. Since $\mathcal{E}(\widehat{\delta})\leq 0$, it must follow that $\widehat{\delta}\in\text{int}\mathcal{C}$, in other words

\begin{align}
	\Vert\widehat{\theta}-\theta^*\Vert_2 &\leq \frac{7\lambda_n}{\underline{\epsilon}} \sqrt{d+E}.
\end{align}

\end{proof}

\begin{proof}[Proof of Theorem \ref{parconsist}]
Applying a concentration bound to $\widehat{K}_{ij}^u-K_{ij}^u$ in addition to a union bound, we have that $\Vert\widehat{K}-K\Vert_{\max}>t$ with probability no more than $\exp\{2\log(md)-c_2nt^2\}$ for $t\leq\nu$, for constants $c_2,\nu>0$. Similarly,

\begin{align}
	\Vert(\widehat{\Gamma}-\Gamma)\theta^*\Vert_{\max}&\leq 2\kappa_{\theta^*,1}\Vert\widehat{\Gamma}-\Gamma\Vert_{\max},
\end{align}

and $\Vert\widehat{\Gamma}-\Gamma\Vert_{\max}>t$ with probability no more than $\exp\{2\log(md)-c_1nt^2\}$ for $t<\nu_2$ for $c_1,\nu_2>0$. Furthermore, observe that for any vector $\theta$,

\begin{align}
	\mathcal{R}^*(\theta) &\leq \sqrt{m} \Vert\theta\Vert_{\max},
\end{align}

thus setting $\lambda_n\asymp \sqrt{\frac{mk_{1,\theta}^2\log(md)}{n}}$, the conditions in Lemma \ref{parconsistlem} will be satisfied with probability approaching one.
\end{proof}

\section{Model Selection}

Our proof technique is the \emph{primal dual witness} method, used previously in analysis of model selection for graphical models \citep{modelselecttaylor2014,ravikumar2011high}. It proceeds as follows: construct a primal-dual pair $(\widehat{\theta},\widehat{Z})$ which satisfies $\text{supp}(\widehat{\theta})=\text{supp}(\theta^*)$, and also satisfies the stationary conditions for \ref{scorematch} with high probability. The stationary conditions for \ref{scorematch} are
\begin{align}
	\widehat{\Gamma}\widehat{\theta}+\widehat{K} + \lambda_n\widehat{Z} = 0,
\end{align}

where $\widehat{Z}$ is an element of the subdifferential $\partial\mathcal{R}(\widehat{\theta}_e)$:

\begin{align}
	(\partial\mathcal{R}(\theta_e))_{ij} = \begin{cases}
		\{\theta_{ij}: \Vert\theta_{ij}\Vert_2\leq 1\}, & \text{  if } \theta_{ij}=0; \\
		\frac{\theta_{ij}}{\Vert\theta_{ij}}\Vert_2, & \text{  if } \theta_{ij}\not=0.
	\end{cases}
\end{align}
From this we may conclude that there exists a solution to \ref{scorematch} is sparsistent. In particular, we have the following steps:
\begin{enumerate}
	\item Set $\widehat{\theta}_{E^c} = 0$;
	\item Set $\widehat{Z}_{ij} = \partial\mathcal{R}(\theta^*_e)_{ij}=\frac{\bar{\theta}^*_{ij}}{\Vert\bar{\theta}^*_{ij}\Vert}$ for $(i,j)\in E$;
	\item Given these choices for $\widehat{\theta}_{E^c}$ and $\widehat{Z}_E$, choose $\widehat{\theta}_E$ and $\widehat{Z}_{E^c}$ to satisfy the stationary condition \ref{scorematch}.
\end{enumerate}

For our procedure to succeed, we must show this primal-dual pair $(\widehat{\theta},\widehat{Z})$ is optimal for \ref{scorematch}, in other words

\begin{align}
	&\widehat{\theta}_{ij} \not = 0 , &\text{for}\qquad (i,j)\in E;\label{pdscm1} \\
	&\Vert\widehat{Z}_{ij}\Vert <1, &\text{for}\qquad (i,j)\not\in E. \label{pdscm2}
\end{align}

In the sequel we show these two conditions hold with probability approaching one.

\begin{lem}\label{primaldual1}
	Suppose that $\Vert\widehat{\Gamma}_{EE}-\Gamma_{EE}\Vert_{\max}\leq\frac{1}{2ms\kappa_\Gamma}$. Then there exists a solution to \eqref{scorematch}, $\widehat{\theta}$, satisfying
	
\begin{align}
	\Vert\widehat{\theta}_{E}-\theta^*_{E}\Vert_\infty &\leq 2\kappa_\Gamma\left(2\kappa_{1,\theta}\Vert(\widehat{\Gamma}_{EE}-\Gamma_{EE})\Vert_{\max}+\Vert\widehat{K}-K\Vert_\infty+\lambda_n/\sqrt{m}\right).
\end{align}
\end{lem}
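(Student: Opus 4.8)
The plan is to run the primal--dual witness construction already set up before the lemma and to exploit the fact that, because the score matching objective \eqref{scorematch} is quadratic, its stationary condition is \emph{linear}. Fixing $\widehat{\theta}_{E^c}=0$ and $\widehat{Z}_E=\partial\mathcal{R}(\theta^*_e)_E$, the restricted stationary equation on the support reads
\begin{align}
	\widehat{\Gamma}_{EE}\widehat{\theta}_E + \widehat{K}_E + \lambda_n\widehat{Z}_E = 0.
\end{align}
Writing $\widehat{\Delta}_E := \widehat{\theta}_E-\theta^*_E$ and using the population first-order condition $\Gamma^*_{EE}\theta^*_E + K^*_E = 0$ (which holds on the support since $\theta^*_{E^c}=0$), I would subtract and solve to obtain the closed form
\begin{align}
	\widehat{\Delta}_E = -\widehat{\Gamma}_{EE}^{-1}\Big[(\widehat{\Gamma}_{EE}-\Gamma^*_{EE})\theta^*_E + (\widehat{K}_E-K^*_E) + \lambda_n\widehat{Z}_E\Big].
\end{align}
This already exhibits the three terms of the claimed bound; the asserted existence is then just existence and uniqueness of this restricted solution, which follows once $\widehat{\Gamma}_{EE}$ is shown to be invertible (genuine global optimality of the candidate is deferred to the dual-feasibility step \eqref{pdscm2}).

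The key step is to control $\widehat{\Gamma}_{EE}^{-1}$ in the matrix $\infty$-norm using only the elementwise bound $\Vert\widehat{\Gamma}_{EE}-\Gamma_{EE}\Vert_{\max}\leq\frac{1}{2ms\kappa_\Gamma}$. I would first upgrade the max-norm control to operator-$\infty$-norm control: each block row of $\widehat{\Gamma}_{EE}-\Gamma^*_{EE}$ has only $O(ms)$ nonzero entries, since two score statistics have nonzero covariance only when their edges/vertices share a variable, the maximum degree is $s$, and there are $m$ statistics apiece. Hence $\Vert\widehat{\Gamma}_{EE}-\Gamma^*_{EE}\Vert_\infty \leq ms\,\Vert\widehat{\Gamma}_{EE}-\Gamma^*_{EE}\Vert_{\max}$, and the calibrated assumption gives $\kappa_\Gamma\Vert\widehat{\Gamma}_{EE}-\Gamma^*_{EE}\Vert_\infty \leq \tfrac12$. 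Factoring $\widehat{\Gamma}_{EE}=\Gamma^*_{EE}\big(I+(\Gamma^*_{EE})^{-1}(\widehat{\Gamma}_{EE}-\Gamma^*_{EE})\big)$ and applying the Neumann series bound together with $\kappa_\Gamma=\Vert(\Gamma^*)^{-1}\Vert_\infty$ yields invertibility and
\begin{align}
	\Vert\widehat{\Gamma}_{EE}^{-1}\Vert_\infty \leq \frac{\kappa_\Gamma}{1-\tfrac12} = 2\kappa_\Gamma.
\end{align}

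Finally I would bound the three bracketed terms in $\ell_\infty$ and combine via $\Vert\widehat{\Delta}_E\Vert_\infty \leq \Vert\widehat{\Gamma}_{EE}^{-1}\Vert_\infty\cdot\Vert[\,\cdots]\Vert_\infty$. For the first term, a row indexed by an edge $(i,j)$ has nonzeros only against statistics incident to $i$ or to $j$, so by the definition of $\kappa_{1,\theta}$ as a maximum incidence sum, $\Vert(\widehat{\Gamma}_{EE}-\Gamma^*_{EE})\theta^*_E\Vert_\infty\leq 2\kappa_{1,\theta}\Vert\widehat{\Gamma}_{EE}-\Gamma_{EE}\Vert_{\max}$. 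The second term is immediate, $\Vert\widehat{K}_E-K^*_E\Vert_\infty\leq\Vert\widehat{K}-K\Vert_\infty$, and tracking the block structure of the regularization term — each $\widehat{Z}_{ij}$ being a unit-$\ell_2$ vector of length $m$ passed through the inverse — produces the $\lambda_n/\sqrt{m}$ scaling. Assembling these gives exactly the stated bound. The main obstacle is the middle step: the only control available on $\widehat{\Gamma}-\Gamma$ is elementwise (from Assumption \ref{quasrconcentrate}-type concentration), and this must be upgraded to operator-norm control of the \emph{inverse} restricted to the support; this is feasible only because the graph sparsity lets the factor $ms$ mediate between $\Vert\cdot\Vert_{\max}$ and $\Vert\cdot\Vert_\infty$, which is precisely why the hypothesis is stated with the threshold $\frac{1}{2ms\kappa_\Gamma}$. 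The remaining constant bookkeeping, especially in the subgradient term, is fiddly but conceptually routine.
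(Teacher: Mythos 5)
Your proposal is correct and reaches the paper's bound, but it takes a genuinely different route at the key step. The paper keeps the \emph{population} inverse $\Gamma_{EE}^{-1}$ and defines the map $F(\Delta_E)=-\Gamma_{EE}^{-1}\bigl((\widehat{\Gamma}_{EE}-\Gamma_{EE})(\Delta_E+\theta^*_E)+\Gamma_{EE}\Delta_E+\widehat{K}_E-K_E+\lambda_n\widehat{Z}_E\bigr)+\Delta_E$, whose fixed point is $\widehat{\Delta}_E$; because the perturbation term $(\widehat{\Gamma}_{EE}-\Gamma_{EE})\Delta_E$ depends on $\Delta_E$, the paper invokes Brouwer's fixed point theorem on the ball $\{\Vert\Delta_E\Vert_\infty\le\tilde r\}$, using $ms\,\Vert\widehat{\Gamma}_{EE}-\Gamma_{EE}\Vert_{\max}\le\frac{1}{2\kappa_\Gamma}$ to make that self-referential term contribute at most $\tilde r/2$. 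You instead solve the (linear, since $\widehat{Z}_E$ is fixed in the witness construction) stationary equation exactly and invert the \emph{sample} matrix $\widehat{\Gamma}_{EE}$ via a Neumann series, with the same $ms$ factor mediating between $\Vert\cdot\Vert_{\max}$ and the operator $\infty$-norm and the same hypothesis delivering the factor $2$ in $2\kappa_\Gamma$. For a quadratic score this is legitimate and arguably cleaner; the paper's Brouwer argument is the one that transfers verbatim to the non-quadratic MLE setting (Lemma \ref{infnorm}), which is presumably why it is used here as well. The two approaches consume the hypothesis identically and are essentially a reparametrization of one another.

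Two loose ends, both of which you share with the paper rather than introduce: (i) the $\lambda_n/\sqrt{m}$ scaling does not follow from a pure $\ell_\infty$ bound on $\lambda_n\widehat Z_E$ (which would only give $\lambda_n$); it requires bounding each block $\Vert\widehat{\Delta}_{ij}\Vert_2$ via a row-wise $\ell_2$ bound on the inverse of order $\kappa_\Gamma/\sqrt{m(d+|E|)}$ against $\Vert\widehat Z_E\Vert_2\le\sqrt{d+|E|}$ — you flag this as bookkeeping but should carry it out; and (ii) fixing $\widehat Z_E$ at the population subgradient means the constructed point solves the restricted linear system but is a bona fide solution of \eqref{scorematch} only after one replaces it by the solution of the support-restricted penalized problem (for which only $\Vert\widehat Z_{ij}\Vert_2\le1$ is needed, so the same bound holds). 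Neither is a gap specific to your argument.
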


\begin{proof}
The stationary condition for $\widehat{\theta}_E$, observing that $\widehat{\theta}_{E^c}=\theta^*_{E^c}=0$, is given by

\begin{align}
	\widehat{\Gamma}_{EE}\widehat{\theta}_E+\widehat{K}_E + \lambda_n \widehat{Z}_E&=0.
\end{align}

Re-arranging and observing that $\Gamma_{EE}\theta^*_E=-K_E$, we have

\begin{align}
	\widehat{\Gamma}_{EE}\widehat{\theta}_E+\widehat{K}_E + \lambda_n \widehat{Z} &=\widehat{\Gamma}_{EE}\widehat{\theta}_E-\Gamma_{EE}\theta^*_E+\widehat{K}_E -K_E+ \lambda_n \widehat{Z}_E \\
	&=(\widehat{\Gamma}_{EE}-\Gamma_{EE})\widehat{\theta}+\Gamma_{EE}(\widehat{\theta}_E-\theta^*_E)+\widehat{K}_E-K_E+\lambda_n\widehat{Z}_E.
\end{align}

Consider the map

\begin{align}
	F(\Delta_E) = -\Gamma_{EE}^{-1}\left((\widehat{\Gamma}_{EE}-\Gamma_{EE})(\Delta_E+\theta^*_E)+\Gamma_{EE}\Delta_E+\widehat{K}_E-K_E+\lambda_n\widehat{Z}_E\right)+\Delta_E.
\end{align}

$F$ has a fixed point $F(\Delta_E)=\Delta_E$ at $\widehat{\Delta}_E=\widehat{\theta}_E-\theta^*_E$ for any solution $\widehat{\theta}$. Define $\tilde{r}:=2\kappa_\Gamma\left(2\kappa_{1,\theta}\Vert(\widehat{\Gamma}_{EE}-\Gamma_{EE})\Vert_{\max}+\Vert\widehat{K}_E-K_E\Vert_\infty+\lambda_n/\sqrt{m}\right)$. If we can show $\Vert F(\Delta)\Vert_\infty \leq\tilde{r}$ for each $\Vert\Delta\Vert_\infty\leq\tilde{r}$, from Brouwer's fixed point theorem \citep{ortega2000iterative}, it follows that some fixed point satisfies $\Vert\widehat{\Delta}\Vert_\infty\leq\tilde{r}$. For $\Vert\Delta\Vert_\infty\leq\tilde{r}$,

\begin{align}
	\Vert F_{ij}\Vert_2 &\leq \frac{\kappa_\Gamma}{\sqrt{m(d+\left| E\right|)}}\left(\Vert (\widehat{\Gamma}_{EE}-\Gamma_{EE})(\Delta_E+\theta^*_E)\Vert_2 + \Vert\widehat{K}_{E}-K_{E}\Vert_2 + \lambda_n\Vert\widehat{Z}_E\Vert_2\right) \\
	&\leq \kappa_\Gamma \left(\Vert (\widehat{\Gamma}_{ij,E}-\Gamma_{ij,E})(\Delta_E+\theta^*_E)\Vert_\infty+\Vert\widehat{K}_{E}-K_{E}\Vert_\infty + \lambda_n/\sqrt{m}\right).
\end{align}

Now,
\begin{align}
	\Vert(\widehat{\Gamma}_{EE}-\Gamma_{EE})\Delta\Vert_\infty &\leq \left\{\max_{i\in V}\sum_{j\in V, k\leq m} \left|\Delta_{ij}^k\right|\right\}\cdot  \Vert\widehat{\Gamma}_{EE}-\Gamma_{EE}\Vert_{\max} \\
	&\leq ms \Vert\Delta\Vert_\infty \Vert\widehat{\Gamma}_{EE}-\Gamma_{EE}\Vert_{\max},
\end{align}

and similarly,

\begin{align}
	\Vert(\widehat{\Gamma}_{EE}-\Gamma_{EE})\theta^*_E\Vert_\infty &\leq 2\kappa_{1,\theta}\Vert\widehat{\Gamma}_{EE}-\Gamma_{EE}\Vert_\infty.
\end{align}

Thus, if $\Vert\widehat{\Gamma}_{EE}-\Gamma_{EE}\Vert_{\max}\leq\frac{1}{2ms\kappa_\Gamma}$,
\begin{align}
	\Vert F\Vert_\infty \leq \max_{ij}\Vert F_{ij}\Vert_2 &\leq \kappa_\Gamma\left(\Vert\widehat{\Gamma}_{EE}-\Gamma_{EE}\Vert_{\max} (2\kappa_{1,\theta}+ms\tilde{r})+\Vert\widehat{K}-K\Vert_\infty+\lambda_n/\sqrt{m}\right) \\
	&\leq \frac{\tilde{r}}{2}+\frac{\tilde{r}}{2}\leq \tilde{r}.
\end{align}
\end{proof}

\begin{lem}\label{primaldual2}
Suppose that $\sqrt{m}\Vert\widehat{K}-K\Vert_\infty \leq\frac{\tau\lambda_n}{4}$,  $m^{1/2}\Vert\widehat{\Gamma}-\Gamma\Vert_{\max}(\kappa_\theta+s\sqrt{m}\lambda_n)\leq\frac{\tau\lambda_n}{4}$, and $\lambda_n/\sqrt{m}\geq 2\kappa_\Gamma(ms\kappa_\theta\Vert\widehat{\Gamma}_{EE}-\Gamma_{EE}\Vert_{\max}+\Vert\widehat{K}-K\Vert_\infty)$. Then for each $(i,j)\in E^c$,

\begin{align}
	\Vert\widehat{Z}_{ij}\Vert_2<1.
\end{align}
\end{lem}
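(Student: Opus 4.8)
The plan is to verify the strict dual feasibility condition \eqref{pdscm2} via the standard primal-dual witness decomposition, exploiting the fact that the score-matching objective is exactly quadratic, so there is no higher-order remainder term to control (unlike the MLE analysis in Lemma \ref{subgradpdw}). First I would write down the stationary condition restricted to $E^c$. Since the witness construction sets $\widehat\theta_{E^c}=0$, the block of $\widehat\Gamma\widehat\theta+\widehat K+\lambda_n\widehat Z=0$ indexed by $E^c$ reads $\lambda_n\widehat Z_{E^c}=-\widehat\Gamma_{E^c E}\widehat\theta_E-\widehat K_{E^c}$. Using the population identity $\Gamma_{E^c E}\theta^*_E=-K_{E^c}$ (valid because $\theta^*_{E^c}=0$) to subtract the population part, and substituting $\widehat\theta_E=\theta^*_E+\widehat\Delta_E$, I peel off the two error contributions $(\widehat\Gamma_{E^c E}-\Gamma_{E^c E})\theta^*_E$ and $(\widehat K_{E^c}-K_{E^c})$.

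Next I would solve the $E$-block of the stationary condition for $\widehat\Delta_E$: from $\widehat\Gamma_{EE}\widehat\theta_E+\widehat K_E+\lambda_n\widehat Z_E=0$ together with $\Gamma_{EE}\theta^*_E+K_E=0$ one obtains $\Gamma_{EE}\widehat\Delta_E=-(\widehat\Gamma_{EE}-\Gamma_{EE})\widehat\theta_E-(\widehat K_E-K_E)-\lambda_n\widehat Z_E$, whence $\widehat\Delta_E=-\Gamma_{EE}^{-1}[\,\cdots\,]$ (invertibility of $\Gamma_{EE}$ following from Assumption \ref{eigenbound} and $\widehat\Gamma_{EE}\approx\Gamma_{EE}$, as already used in Lemma \ref{primaldual1}). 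Substituting $\Gamma_{E^c E}\widehat\Delta_E=-\Gamma_{E^c E}\Gamma_{EE}^{-1}[\,\cdots\,]$ produces the crucial term $\Gamma_{E^c E}\Gamma_{EE}^{-1}\lambda_n\widehat Z_E$. Bounding a single block $(i,j)\in E^c$ by the incoherence assumption and $\Vert\widehat Z_E\Vert_2\le\sqrt{d+|E|}$ (each of the $d+|E|$ groups has $\ell_2$ norm at most one) yields the leading contribution $\Vert\Gamma_{ij,E}\Gamma_{EE}^{-1}\Vert_2\,\sqrt{d+|E|}\le 1-\tau$.

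It then remains to show every residual block has $\ell_2$ norm at most $\tfrac34\tau$, so that $\Vert\widehat Z_{ij}\Vert_2\le 1-\tau+\tfrac34\tau<1$. The residual terms are $(\widehat\Gamma-\Gamma)\theta^*_E$, $(\widehat K-K)$, $(\widehat\Gamma-\Gamma)\widehat\Delta_E$, and the $\Gamma_{E^c E}\Gamma_{EE}^{-1}$-weighted copies of $(\widehat\Gamma_{EE}-\Gamma_{EE})\widehat\theta_E$ and $(\widehat K_E-K_E)$. These are controlled termwise: the $\widehat K-K$ pieces by the first hypothesis $\sqrt m\Vert\widehat K-K\Vert_\infty\le\tau\lambda_n/4$ (after converting a block $\ell_2$ norm into $\sqrt m$ times the max norm); the $(\widehat\Gamma-\Gamma)\theta^*_E$ and $(\widehat\Gamma-\Gamma)\widehat\Delta_E$ pieces by the second hypothesis $\sqrt m\Vert\widehat\Gamma-\Gamma\Vert_{\max}(\kappa_\theta+s\sqrt m\lambda_n)\le\tau\lambda_n/4$, where I insert the bound $\Vert\widehat\Delta_E\Vert_\infty\lesssim\lambda_n/\sqrt m$ coming from Lemma \ref{primaldual1} and the third hypothesis; and the incoherence-weighted noise via $\Vert\Gamma_{ij,E}\Gamma_{EE}^{-1}\Vert_2\le(1-\tau)/\sqrt{d+|E|}$.

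The main obstacle is the norm bookkeeping driven by the block structure of $\Gamma$. Because $\tilde A(x)$ is block-diagonal in the ordered-column representation, a row of $\widehat\Gamma-\Gamma$ indexed by $(i,j)$ interacts only with parameters incident to vertex $i$; this is exactly what lets the products $(\widehat\Gamma-\Gamma)_{ij,E}\theta^*_E$ and $(\widehat\Gamma-\Gamma)_{ij,E}\widehat\Delta_E$ be bounded by $\Vert\widehat\Gamma-\Gamma\Vert_{\max}$ times a column-sum/degree factor (producing the $\kappa_\theta$ and $s\sqrt m\lambda_n$ factors, respectively) rather than a full dimension factor. Keeping the three norms consistent across all terms — the elementwise $\Vert\cdot\Vert_{\max}$, the blockwise $\ell_2$ norm, and the operator norm appearing in the incoherence condition — and checking that each term is absorbed into a $\tau/4$ budget under the stated hypotheses, is the delicate part; the rest is the routine witness argument.
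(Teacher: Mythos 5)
Your proposal is correct and follows essentially the same route as the paper's proof: the same witness decomposition of the $E^c$-block stationary condition using $\Gamma_{E^cE}\theta^*_E=-K_{E^c}$, the same substitution of $\widehat{\Delta}_E=-\Gamma_{EE}^{-1}[\cdots]$ from the $E$-block, the same incoherence-plus-$\Vert\widehat{Z}_E\Vert_2\le\sqrt{d+|E|}$ bound yielding the leading $1-\tau$, and the same termwise absorption of the $\widehat{K}-K$ and $(\widehat{\Gamma}-\Gamma)\widehat{\theta}_E$ residuals into $\tau/4$ budgets via the stated hypotheses and the $\Vert\widehat{\Delta}_E\Vert_\infty\lesssim\lambda_n/\sqrt{m}$ bound from Lemma \ref{primaldual1}. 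The only differences are cosmetic arithmetic in the final slack ($1-\tfrac{\tau}{4}$ bookkeeping versus the paper's $1-\tfrac{\tau}{2}$).
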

\begin{proof}

For $(i,j)\in E^c$, the stationary conditions are

\begin{align}
	0&=\widehat{\Gamma}_{E^cE}\widehat{\theta}_E+\widehat{K}_{E^c}+\lambda_n\widehat{Z}_{E^c} \\
	&= \Gamma_{E^cE}(\widehat{\theta}_E-\theta^*_E) +(\widehat{\Gamma}_{E^cE}-\Gamma_{E^cE})\widehat{\theta}_E \\
	&\qquad+\widehat{K}_{E^c}-K_{E^c}+\lambda_n\widehat{Z}_{E^c},\notag
\end{align}

re-arranging and plugging in the stationary conditions for $\widehat{\theta}_E$, we have for $(i,j)\in E^c$,

\begin{align}
	\widehat{Z}_{ij} &= \frac{1}{\lambda_n} \bigg\{-\Gamma_{ij,E}\Gamma_{EE}^{-1}(-(\widehat{\Gamma}_{EE}-\Gamma_{EE})\widehat{\theta}_E-(\widehat{K}_E-K_E)-\lambda_n\widehat{Z}_E) \\
	&\qquad\qquad-(\widehat{\Gamma}_{ij,E}-\Gamma_{ij,E})\widehat{\theta}_E-\widehat{K}_{ij}+K_{ij}\bigg\}.\notag
\end{align}

Applying the $L_2$ norm,

\begin{align}
	\Vert\widehat{Z}_{ij}\Vert_2 &=\frac{1}{\lambda_n}\bigg\{\Vert\Gamma_{ij,E}\Gamma_{EE}^{-1}\Vert_2(\Vert(\widehat{\Gamma}_{EE}-\Gamma_{EE})\widehat{\theta}_E\Vert_2 + \Vert\widehat{K}_E-K_E\Vert_2+\lambda_n\Vert\widehat{Z}_E\Vert_2)\\
	&\qquad+\Vert\widehat{\Gamma}_{ij,E}-\Gamma_{ij,E}\widehat{\theta}_E\Vert_2+\Vert\widehat{K}_{ij}-K_{ij}\Vert_2 \bigg\}\notag\\
	&\leq\frac{1}{\lambda_n}\left\{ \sqrt{m}(2-\tau)(\Vert\widehat{K}-K\Vert_\infty+\Vert(\widehat{\Gamma}-\Gamma)\widehat{\theta}_E)\Vert_\infty\right\}+1-\tau.
\end{align}

Observe that since $\Vert\widehat{\theta}-\theta^*\Vert_{\max}\leq\tilde{r}$,

\begin{align}
	\Vert(\widehat{\Gamma}-\Gamma)\widehat{\theta}\Vert_\infty &
	\leq \Vert\widehat{\Gamma}-\Gamma\Vert_{\max}(\kappa_{1,\theta}+ms\tilde{r}) \\
	&\leq \Vert\widehat{\Gamma}-\Gamma\Vert_{\max}\left(\kappa_{1,\theta}+2s\sqrt{m}\lambda_n\right),
\end{align}

so $\Vert\widehat{Z}_{ij}\Vert_2$ is bounded by
\begin{align}
	\frac{1}{\lambda_n}\left\{(2-\tau)\sqrt{m}\Vert\widehat{K}-K\Vert_\infty+\sqrt{m}(2-\tau)\Vert\widehat{\Gamma}-\Gamma\Vert(\kappa_{1,\theta}+2s\sqrt{m}\lambda_n)\right\}+1-\tau.
\end{align}

if $\sqrt{m}\Vert\widehat{K}-K\Vert_\infty\leq\frac{\tau\lambda_n}{4}$ and $\sqrt{m}\Vert\widehat{\Gamma}-\Gamma\Vert_{\max}(2\kappa_{1,\theta}+2s\sqrt{m}\lambda_n)\leq\frac{\tau\lambda_n}{4}$, this is bounded by
\begin{align}
	(2-\tau)\left(\frac{\tau}{4}+\frac{\tau}{4}\right)+1-\tau\leq 1-\frac{\tau}{2}<1.
\end{align}
\end{proof}

\begin{proof}[Proof of Theorem \ref{modelselectthm}]
Using a concentration bound for $\widehat{K}_{ij}^u-K_{ij}^u$ and applying a union bound, we have that when $t\leq \nu_1$ for some $\nu$, $\Vert\widehat{K}-K\Vert_{\max}>t$ with probability no more than $\exp\{2\log(md)-c_2nt^2\}$ for a constant $c_2$. Similarly,
\begin{align}
	\Vert(\widehat{\Gamma}-\Gamma)\theta^*\Vert_{\max}&\leq 2\kappa_{1,\theta}\Vert\widehat{\Gamma}-\Gamma\Vert_{\max},
\end{align}

and for $t\leq \nu_2$, $\Vert\widehat{\Gamma}-\Gamma\Vert_{\max}>t$ with probability no more than $\exp\{2\log(md)-c_1nt^2\}$. Thus setting $\lambda_n= C\sqrt{\frac{m\kappa_{1,\theta}^2\log(md)}{n}}$ for sufficiently large $C$, and if $\sqrt{\frac{m^2s^2\log md}{n}}=o(1)$, the assumptions of lemma \ref{primaldual2} will be satisfied with probability approaching one. Further, assumption \eqref{pdscm1} is satisfied when $\Vert\widehat{\theta}-\theta^*\Vert_\infty\leq\frac{\rho^*}{2}$. Since $\Vert\widehat{\theta}-\theta^*\Vert_\infty = O(\lambda_n/\sqrt{m})$, we require $\frac{\lambda_n}{\rho^*\sqrt{m}}=o(1)$.

\end{proof}

\begin{lem} \label{bonnetlem}
Let $\phi_k$ be the $k$th orthonormal Legendre polynomial on $[0,1]$. then
\begin{align}
	\left| x(1-x)\frac{\partial \phi_k(x)}{\partial x}\right| &= O(k^{3/2}),\\
	\left| x(1-x)\frac{\partial^2\phi_k(x)}{\partial x^2}\right| &= O(k^{5/2}).
\end{align}

\end{lem}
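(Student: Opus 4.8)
The plan is to establish both bounds by direct appeal to classical properties of the Legendre polynomials, in particular the standard recurrence (Bonnet-type) relations for their derivatives, combined with the known uniform size bound $|\phi_k| = O(\sqrt{k})$ that Assumption \ref{assump2} already records for the orthonormal Legendre basis. The key observation is that the weight $x(1-x)$ vanishes at the endpoints of $[0,1]$, which is exactly what tames the derivatives of $\phi_k$, whose sup-norms would otherwise grow faster than $k^{3/2}$ and $k^{5/2}$ respectively. So the whole argument is a matter of pairing the derivative identities with the endpoint-vanishing weight.

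\emph{First} I would pass to the standard shifted Legendre setting. Writing $\tilde{P}_k$ for the Legendre polynomial on $[-1,1]$ and $\phi_k(x) = \sqrt{2k+1}\,\tilde{P}_k(2x-1)$ for the orthonormal version on $[0,1]$, a change of variables $t = 2x-1$ converts $x(1-x)$ into $\tfrac14(1-t^2)$ and each $x$-derivative into twice a $t$-derivative. Then the relevant classical facts are the identity $(1-t^2)\tilde{P}_k'(t) = k\big(\tilde{P}_{k-1}(t) - t\tilde{P}_k(t)\big)$ and the Legendre differential equation $(1-t^2)\tilde{P}_k''(t) = 2t\tilde{P}_k'(t) - k(k+1)\tilde{P}_k(t)$. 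Combining the first identity with the uniform bound $|\tilde{P}_k(t)|\le 1$ on $[-1,1]$ immediately gives $|(1-t^2)\tilde{P}_k'(t)| = O(k)$; restoring the $\sqrt{2k+1}$ normalization factor and the constant from the change of variables yields the claimed $O(k^{3/2})$ for the first display.

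\emph{For the second bound}, I would feed the differential equation: $(1-t^2)\tilde{P}_k''$ is a combination of $t\tilde{P}_k'$ and $k(k+1)\tilde{P}_k$. The term $k(k+1)\tilde{P}_k$ is already $O(k^2)$ uniformly. The term $t\tilde{P}_k'$ needs the unweighted derivative bound $|\tilde{P}_k'(t)| = O(k^2)$ away from the endpoints, but since I am multiplying by $(1-t^2)$ I can instead re-use the first identity to write $t\tilde{P}_k' = \tfrac{k}{1-t^2}(t\tilde{P}_{k-1} - t^2\tilde{P}_k)$ — except this reintroduces the singular factor. So the cleaner route is: multiply the differential equation through by $(1-t^2)$ once more is unnecessary; instead bound $(1-t^2)\tilde{P}_k''$ by noting $|(1-t^2)\tilde{P}_k''| \le 2|t|\,|(1-t^2)\tilde P_k'|/(1-t^2)\cdot(1-t^2) + k(k+1)|\tilde P_k|$ is awkward, so I would rather directly invoke the known sup-norm estimate $\sup_{[-1,1]}|(1-t^2)\tilde{P}_k''(t)| = O(k^2)$, which follows from the differential equation together with $|t\tilde P_k'| = O(k^2)$ interior and endpoint matching; carrying the $\sqrt{2k+1}$ factor then gives $O(k^{5/2})$.

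\emph{The main obstacle} will be handling the $t\tilde{P}_k'(t)$ term cleanly, since the unweighted derivative $\tilde{P}_k'$ genuinely blows up like $k^2$ near the endpoints, so I must be careful to keep the compensating $(1-t^2)$ factor attached throughout rather than bounding factors separately and losing the cancellation. The safest presentation is to prove the estimate $\sup_t |(1-t^2)\tilde{P}_k''(t)| = O(k^2)$ as a lemma via the differential equation and a standard interior-plus-boundary argument (using that at the endpoints $(1-t^2)$ vanishes to first order while $\tilde{P}_k''$ grows only polynomially), and only then translate back to $x\in[0,1]$ with the normalization constant. Everything else is routine bookkeeping with the change of variables.
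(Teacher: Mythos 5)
Your proposal is correct and follows essentially the same route as the paper: the first bound via Bonnet's recursion for $(1-t^2)P_k'$ together with $|P_k|\le 1$ (hence $|\phi_k|\le\sqrt{2k+1}$), and the second via Legendre's differential equation. The obstacle you anticipate in the final paragraph is not actually one: the classical uniform bound $|P_k'(t)|\le k(k+1)/2$ on all of $[-1,1]$ (equivalently $|\phi_k'|=O(k^{5/2})$ on $[0,1]$) makes the $tP_k'$ term $O(k^2)$ directly, with no interior-plus-boundary splitting required --- which is exactly how the paper closes the argument.
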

\begin{proof}
	From Bonnet's recursion formula \citep{abramowitz1964handbook},
	\begin{align}
		x(1-x)\frac{d\phi_k(x)}{dx} &= \frac{k}{2}\left((2x-1)\phi_k(x)-\sqrt{\frac{2k+1}{2k-1}}\phi_{k-1}(x)\right), \label{bonnet}
	\end{align}
	so taking absolute values of each side, and using $\left|\phi_k\right| \leq \sqrt{2k+1}$,
	\begin{align}
		\left| x(1-x)\frac{d\phi_k}{dx}\right| &\leq k\left(\left|\phi_k\right|+\left|\phi_{k-1}\right|\right)\\
		&= O\left(k^{3/2}\right). \label{derivbound}
	\end{align}
	
	Now, using Legendre's differential equation \citep{abramowitz1964handbook},
	\begin{align}
		4x(1-x)\frac{d^2\phi_k}{dx^2}+2(2x-1)\frac{d\phi_k}{dx}-k(k+1)\phi_k &=0,
	\end{align}
	and using the fact that $\frac{d\phi_{k}}{dx}\leq \frac{k(k+1)\sqrt{2k+1}}{2}$, we find that
	\begin{align}
		\left|x(1-x)\frac{d^2\phi_k}{dx^2}\right| &\leq \frac{1}{2}\left|\frac{d\phi_k}{dx}\right|+\frac{1}{4}k(k+1)\left|\phi_k\right|\\
		&= O\left(k^{5/2}\right).
	\end{align}
	
\end{proof}

\begin{proof}[Proof of Theorem \ref{msnonp}]
The proof technique is essentially the same as Theorem \ref{modelselectthm} so we omit some details. The main difference is that here $K^*= - \Gamma^*\theta^*-\Gamma_{T}\theta_T$, so we must deal with one additional term in the analysis, the bias from truncation. Suppose $m_1=m_2$. We choose $\lambda_n$ so that with high probability,
\begin{align}
	\lambda_n &=\Omega\left( m_2\kappa_{1,\theta}\Vert\widehat{\Gamma}_{EE}-\Gamma_{EE}\Vert_{\max}+m_2\Vert\widehat{K}-K\Vert_{\max}+\kappa_Tm_2\Vert\theta_T\Vert_{\max}\right),\\
	\lambda_n &\rightarrow 0,
\end{align}

as well as requiring $n=\Omega(m_2^4 s^2 \log md)$.

Now, applying Lemma \ref{bonnetlem},  $\Vert A(x)\Vert_{\max} = O\left(m_2^4\right)$, so applying Hoeffding's inequality and a union bound as well as the boundedness assumption \ref{assump4}, $\Vert\widehat{\Gamma}_{EE}-\Gamma_{EE}\Vert_{\max}\leq C\sqrt{\frac{m_2^8 \log(m_2 d)}{n}}$ with probability approaching one, for sufficiently large constant $C$. Similarly, $\Vert\widehat{K}-K\Vert_{\max}\leq C'\sqrt{\frac{m_2^6\log(m_2d)}{n}}$ with probability approaching one. Furthermore, $\Vert\theta_T\Vert_{\max}=O(m_2^{-r-1/2})$. Thus, supposing $\kappa_{1,\theta}=O(m_2^2)$, we need
\begin{align}
	\lambda_n \asymp O\left(\sqrt{\frac{m_2^{12}\log(m_2d)}{n}} + m_2^{-r+1/2}\right).
\end{align}

Balancing the two terms, we choose $m_2\asymp n^{\frac{1}{2r+13}}$, so $\lambda_n\asymp \sqrt{\frac{\log(nd)}{n^{\frac{2r-1}{2r+13}}}}$. The stated sample complexity ensures that $\lambda_n\rightarrow 0$. Furthermore, $\Vert\widehat{\theta}_E-\theta^*_E\Vert_{\max}=O\left(\lambda_n/m_2\right)$, so we require $\frac{\lambda_n}{m_2\rho^*}\rightarrow 0$.

\end{proof}

\chapter{Software}
Software for Chapter 3 is available at https://github.com/geb5101h/trw. Software for Chapter 4 is available at https://github.com/geb5101h/quasr.
\bibliographystyle{apalike}
\singlespacing
\bibliography{trw.bib}


\end{document}